\newcommand{\R}{\mathbb{R}}
\newcommand{\C}{\mathbb{C}}
\newcommand{\Z}{\mathbb{Z}}
\newcommand{\D}{\mathbb{D}}
\newcommand{\Imm}{\mathrm{Im}}
\newcommand{\hooklongrightarrow}{\lhook\joinrel\longrightarrow}
\newcommand{\wf}{\widehat{\varphi}}
\newcommand{\vf}{\varphi}
\newcommand{\bs}{\boldsymbol}
\newcommand{\Fix}{\mathrm{Fix}}
\newcommand{\Int}{\mathrm{int}}
\newtheorem{Thm}{\textbf{Theorem}}[section]
\newtheorem{Lemma}[Thm]{\textbf{Lemma}}
\newtheorem{Cor}[Thm]{\textbf{Corollary}}
\newtheorem{Prop}[Thm]{\textbf{Proposition}}
\theoremstyle{remark}
\numberwithin{equation}{section}
\newtheorem{Ex}[Thm]{\textbf{Example}}
\newtheorem{Def}[Thm]{\textbf{Definition}}
\newtheorem{Rmk}[Thm]{\textbf{Remark}}
\newtheorem{Cnj}[Thm]{\textbf{Conjecture}}
\title[Knot Floer homology and surface diffeomorphisms]{Knot Floer homology of fibred knots and Floer homology of surface diffeomorphisms}
\author{Paolo Ghiggini}
\address{Laboratoire de Mathématiques Jean Leray, CNRS and Universit\'e de Nantes}
\email{paolo.ghiggini@univ-nantes.fr}
 \author{Gilberto Spano}
 \email{gilbertospano.math@gmail.com}
\begin{document}
\setcounter{tocdepth}{4}

\begin{abstract}
We prove that the Knot Floer homology group of a fibred knot of genus $g$ in the Alexander grading $1-g$ is isomorphic to a version of the fixed point Floer homology of an area-preserving representative of the monodromy.
\end{abstract}

\maketitle

\tableofcontents

\section{Introduction}
Knot Floer homology \cite{OS3, Ra} is a a family of abelian groups --- or vector spaces; here
we will work over a field of characteristic two ---  $\widehat{HFK}(Y, K, i)$ 
associated to any oriented null-homologous knot $K$ in an oriented three-manifold $Y$. 
If $g$ is the minimal genus of a Seifert surface of $K$, then $\widehat{HFK}(Y, K, i)=0$
if $|i|>0$ and $\widehat{HFK}(Y, K, i) \ne 0$ for $i=-g,g$ by \cite{OSgenus, Nigenus}. 
Moreover, $\widehat{HFK}(Y, K, -g)$ has rank one if and only if $K$ is fibred by \cite{Ghi, 
Nifibred}.  See also \cite{Ju1}. 

The power of knot Floer homology, which is not at all limited to the results mentioned above,
comes from its connections to many areas of low-dimensional topology, but its topological 
meaning is obscured by the complexity of its definition. In this article we try to shed some light
on it by relating the knot Floer homology of a fibered knot to the dynamics of its 
monodromy. This is a partial result in the direction of
 a relative version of the isomorphism between Heegaard Floer homology and embedded
contact homology; see \cite{CGH2}. 

A fibred knot $K \subset Y$ gives rise to an open book decomposition of $Y$ with binding $K$,
fibre $S$ and monodromy $\vf$; see Section \ref{sec: open books}.
The monodromy is well defined up to isotopy relative to the boundary of $S$. We will fix an
area form on $S$ and assume that $\varphi$ is an area-preserving diffeomorphism. Then we 
can associate to it a Floer homology group $HF^\sharp(\varphi)$, which is an intermediate 
version between the Floer homology groups $HF(\varphi, +)$ and $HF(\varphi, -)$ considered 
by Seidel in \cite{Se4}. In \cite{CGH1}  Colin, Ghiggini and Honda defined an embedded 
contact homology group $ECH^\sharp(T_\varphi)$, which is conjecturally isomorphic to 
$\widehat{HFK}(Y, K)$ when $T_\vf$ is the complement of a tubular neighbourhood of $K$.
The group $HF^\sharp(\varphi)$ is isomorphic to the subgroup of
$ECH^\sharp(T_\varphi)$ generated by the Reeb orbits with algebraic intersection one with 
a fibre.

Let $\overline{K}$ and $\overline{Y}$ denote $K$ and, respectively, $Y$ with the reversed 
orientation. If $Y = S^3$, then $(\overline{Y}, \overline{K})= (Y, m(K))$, where $m(K)$
denotes the mirror of $K$. The main result of this paper is the following:
\begin{Thm} \label{Theorem: Main theorem in introduction} Let $K \subset Y$ be a fibered 
knot with associated monodromy $\vf \colon S \rightarrow S$.
Then there exists an isomorphism of vector spaces over $\Z / 2 \Z$
\[ \widehat{HFK}(\overline{Y},\overline{K};-g+1) \cong  HF^{\sharp}(\vf). \]
\end{Thm}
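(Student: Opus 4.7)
The plan is to construct a doubly pointed Heegaard diagram for $(\overline{Y},\overline{K})$ that is adapted to the open book decomposition, identify the generators of $\widehat{CFK}$ in Alexander grading $-g+1$ with the fixed points of $\vf$, and then match the Heegaard Floer differential with the symplectic Floer differential by a neck-stretching argument along the binding.

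\textbf{Step 1: Heegaard diagram from the open book.} Following the construction of Ozsv\'ath--Szab\'o and Honda--Kazez--Mati\'c, I would work with a Heegaard surface $\Sigma=S_{1/2}\cup(-S_{0})$ obtained by gluing two fibres of the mapping torus along their common boundary, with $\alpha$-curves of the form $\alpha_{i}=a_{i}\cup a_{i}$ for a cut system of arcs $\{a_{i}\}_{i=1}^{2g}$ on $S$, and $\beta$-curves of the form $\beta_{i}=b_{i}\cup \vf(a_{i})$, where $b_{i}$ is a small Hamiltonian push-off of $a_{i}$. Placing the basepoints $z,w$ on opposite sides of the binding recovers the knot $K$ with the reversed orientation, and the relative position of a generator with respect to the binding controls its Alexander grading.

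\textbf{Step 2: Generators in grading $-g+1$.} In this diagram the unique generator in Alexander grading $-g$ is the ``canonical'' tuple $\mathbf{x}_{0}$ dual to the contact element, with all components lying at the preferred intersections $a_{i}\cap b_{i}$. A generator in grading $-g+1$ differs from $\mathbf{x}_{0}$ by replacing exactly one of these components with an intersection of the form $a_{j}\cap \vf(a_{\sigma(j)})$ across the binding; after tracking the matching condition coming from the remaining components, such replacements are in natural bijection with fixed points of $\vf$ in the interior of $S$, which are exactly the generators of the chain complex defining $HF^{\sharp}(\vf)$.

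\textbf{Step 3: Matching the differentials.} Using Lipshitz's cylindrical reformulation, the $\widehat{HFK}$ differential is a count of embedded holomorphic curves in $\Sigma\times[0,1]\times\R$. I would perform a neck-stretch along a circle parallel to $\partial S\subset\Sigma$. In the limit, a contributing curve splits into a piece of bounded energy in the mapping torus $T_{\vf}$, which projects to a pseudo-holomorphic cylinder in the sense of Seidel contributing to the differential of $HF^{\sharp}(\vf)$, together with a trivial piece near the binding. An index and energy computation then identifies the two differentials up to a gluing bijection.

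\textbf{Main obstacle.} The principal difficulty is the neck-stretching compactness argument in Step 3: one must rule out sphere and boundary degenerations at the binding, control multiply covered Reeb orbits, verify surjectivity of the gluing maps and transversality for the relevant moduli spaces, and match index computations on the two sides. In particular, justifying that the surviving limit curves produce the \emph{intermediate} version $HF^{\sharp}(\vf)$, rather than one of Seidel's versions $HF(\vf,\pm)$, requires a careful accounting of how the basepoints $z$ and $w$ interact with the Reeb orbits along $\partial S$, which is precisely where the choice between the $\sharp$, $+$, and $-$ conventions is made.
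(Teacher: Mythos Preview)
Your Step~2 contains a genuine error: there is no chain-level bijection between generators of $\widehat{CFK}$ in Alexander grading $-g+1$ and fixed points of $\vf$. A generator in grading $-g+1$ is a $2g$-tuple with exactly one component an interior intersection point of some $a_j$ with $\vf(a_{\sigma(j)})$ and the remaining $2g-1$ components in the collar $A$; the count of such tuples depends on the choice of the arc system $\bm{a}$ and on the combinatorics of the many intersection points in $A$, and it is in general \emph{not} equal to the number of fixed points of $\vf$. Intersection points of arcs with their images are simply a different kind of object from fixed points of the monodromy, and no ``matching condition from the remaining components'' produces the bijection you assert. Consequently Step~3 cannot even get started as a direct identification of differentials, and the neck-stretching you describe would at best produce a \emph{chain map} between the two complexes (this is essentially the open--closed map $\Phi^\sharp$ that the paper constructs via the cobordism $W_+$), not a bijection of moduli spaces.

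The paper's proof follows a completely different route. It defines $\Phi^\sharp$ by counting index-zero holomorphic multisections of a symplectic fibration interpolating between the Heegaard Floer and the fixed-point Floer sides, checks that $\Phi^\sharp$ respects the Alexander filtration, and then proves that $\Phi^\sharp$ induces an isomorphism in homology by \emph{induction on the length of a Dehn-twist factorisation of $\vf$}. The base case $\vf\sim id$ is an explicit computation; the inductive step compares Ozsv\'ath--Szab\'o's surgery exact triangle for $\widehat{HFK}$ with Seidel's exact triangle for a Dehn twist in fixed-point Floer homology, shows that $\Phi^\sharp$ intertwines the two triangles (this requires building homotopies $\mathcal{R}$, $\mathcal{S}$, and a homotopy of homotopies $\mathcal{T}$), and concludes via the Five Lemma. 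No neck-stretching limit or direct identification of generators is used to establish the isomorphism.
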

This isomorphism is induced by an open-close map 
$$\Phi^\sharp_* \colon \widehat{HFK}(\overline{Y},\overline{K};-g+1) \to 
HF^\sharp(\varphi)$$
which is a simplified version of the open-close map $\Phi_* \colon \widehat{HF}(Y) \to
\widehat{ECH}(Y)$ defined in \cite{CGH3}. The proof that $\Phi^\sharp_*$ is an 
ismomorphism is by induction on
the length of a minimal factorisation of $\varphi$ as a product of Dehn twists and, unlike the 
proof of the isomorphism between Heegaard Floer homology and embedded contact homology,
does not require the construction of an inverse map. The initial step of the induction, for
$\varphi=id$, is an explicit computation. The inductive step relies on the comparison between
two exact triangles: Ozsv\'ath and Szab\'o's sergery exact triangle for knot Floer homology 
\cite[Theorem~8.2]{OS3}  and Seidel's exact triangle for Dehn twists in the context of fixed
point Floer homology \cite[Theorem 4.2]{Se4}. We give the first complete proof of the latter
for exact symplectomorphisms of closed surfaces, a result which can be of independent 
interest.

More precisely, an essential circle $L$ embedded in $S$ induces a knot 
in $Y$ via the open book decomposition. We denote by $Y_+$ and $Y_0$ the manifolds 
obtained by $+1$-- and, respectively,  $0$--surgery on $Y$ along $L$ with surgery coefficient 
computed with respect to the framing induced by the page. The knot $K \subset Y$
induces knots $K_+ \subset Y_+$ and $K_0 \subset Y_0$.  Moreover, $K_+$ is the binding of 
an open book decomposition of $Y_+$ with page $S$ and monodromy $\vf \circ \tau_L^{-1}$, 
where $\tau_L$ is a positive Dehn twist around $L$.


Seidel's exact sequence for fixed point Floer homology, in this context, is 
\begin{equation}\label{Triangle: L phi phi-tau without maps}
\begin{tikzpicture}[->,auto,node distance=1.5cm,>=latex',baseline=(current  bounding  box.center)]

  \node (1) {$HF(\vf(L),L)$};
  \node (2) [right = of 1] {$HF^{\sharp}(\vf)$};
  \node (3) [right = of 2] {$HF^{\sharp}(\vf \circ \tau_L^{-1})$};

  \path[every node/.style={font=\sffamily\small}]
    (1) edge node[above] {} (2)
    (2) edge node[above] {} (3)
    (3) edge[bend left=20] node {} (1);
\end{tikzpicture}
\end{equation}
while Ozsv\'ath and Szab\'o's exact sequence for knot Floer homology is
 \begin{equation} \label{Triangle: Exact triangle in HF without maps}
 \begin{tikzpicture}[->,auto,node distance=0.8cm,>=latex',baseline=(current  bounding  box.center)]

  \node (1) {$\widehat{HFK}(\overline{Y_0}; \overline{K_0}; -g+1$)};
  \node (2) [right = of 1] {$\widehat{HFK}(\overline{K},\overline{Y};-g+1)$};
  \node (3) [right = of 2] {$\widehat{HFK}(\overline{K_+},\overline{Y_+};-g+1)$.};

  \path[every node/.style={font=\sffamily\small}]
    (1) edge node[above] {} (2)
    (2) edge node[above] {} (3)
    (3) edge[bend left=20] node {} (1);
 \end{tikzpicture}
 \end{equation}
It is easy to see, using the surface decomposition formula in sutured Floer homology 
\cite[Theorem 1.3]{Ju1},  that $\widehat{HFK}(\overline{Y_0}; \overline{K_0}; -g+1) 
\cong HF(\vf(L), L)$. Then we will prove that the diagram
\begin{equation} \label{Diagram: Main diagram between HF and HF without maps}
\begin{tikzpicture}[->,auto,node distance=1.5cm,>=latex',baseline=(current  bounding  box.center)]
\matrix (m) [matrix of nodes, row sep=1cm,column sep=0.85cm]
{ $\widehat{HFK}(\overline{Y_0}; \overline{K_0}; -g+1)$ & $\widehat{HFK}(\overline{K},\overline{Y};-g+1)$ &  
$\widehat{HFK}(\overline{K}_-,\overline{Y_{1}(L)};-g+1)$ \\
  $HF(\vf(L),L)$ & $HF^{\sharp}(\vf)$ & $HF^{\sharp}(\vf \circ \tau_L^{-1})$. \\ };
 \path[->]
  (m-1-1) edge node[above] {} (m-1-2)
          edge node[right] {} node[anchor=center,rotate=-90,yshift=1.8ex]{$\cong$} (m-2-1);
 \path[->]
  (m-1-2) edge node[right] {$\Phi^\sharp_*$} (m-2-2)
          edge node[above] {} (m-1-3);
 \path[->]
  (m-1-3) edge node[right] {$\Phi^\sharp_*$} (m-2-3)
          edge[bend right=25] node[above] {} (m-1-1);
 \path[->]
  (m-2-1) edge node[below] {} (m-2-2);
 \path[->]
  (m-2-2) edge node[below] {} (m-2-3);
 \path[->]
  (m-2-3) edge[bend left=25] node[below] {} (m-2-1);
\end{tikzpicture}
\end{equation}
commutes. For a suitable choice of $L$, one between $\vf$ and $\vf \circ \tau^{-1}$ has a 
factorisation in Dehn twists which is shorter than the other's, and thus the inductive step
will follow from the Five Lemma.

\color{black}

In the last section we will give some applications of Theorem \eqref{Theorem: Main theorem in introduction}. In particular we show that, under certain hypotheses, the dimension of $\widehat{HFK}(Y,K,-g+1)$ detects the minimal number $F_{min}([\vf])$ of fixed points that an area-preserving non-degenerate representative of $\vf$ may have.
\begin{Thm}
 Let $K \subset Y$ be a genus $g>0$ fibered knot and let $\vf$ be a representative of the monodromy of its complement. Assume that either 
 \begin{enumerate}
  \item $Y$ is a rational homology sphere or
  \item the mapping class $[\vf]$ is irreducible in the sense of the Nielsen--Thurston classification.
 \end{enumerate}
 Then:
  \begin{equation*}
  \dim\left(\widehat{HFK}(Y,K;-g+1)\right) = \left\{ \begin{array}{ll}
                                                                             \mathcal{F}_{min}({[\vf]}) - 1 & \mbox{if } \vf \sim id;\\
                                                                             \mathcal{F}_{min}({[\vf]}) + 1 & \mbox{otherwise}.
                                                                         \end{array} \right. 
 \end{equation*}
\end{Thm}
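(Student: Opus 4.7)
The plan is to use Theorem \ref{Theorem: Main theorem in introduction} to reduce the statement to a computation of $\dim HF^\sharp(\vf)$ from the fixed-point data of $\vf$, and then to verify vanishing of the Floer differential under the two hypotheses.

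\textbf{Reduction.} Over $\Z/2\Z$ the conjugation symmetry of knot Floer homology gives $\dim \widehat{HFK}(Y,K;-g+1) = \dim \widehat{HFK}(\overline{Y},\overline{K};-g+1)$, so by Theorem \ref{Theorem: Main theorem in introduction} it is enough to prove
\[
\dim HF^\sharp(\vf) = \begin{cases} \mathcal{F}_{\min}([\vf])-1 & \text{if }\vf\sim id, \\ \mathcal{F}_{\min}([\vf])+1 & \text{otherwise.}\end{cases}
\]

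\textbf{Generators and a priori upper bound.} For a non-degenerate area-preserving representative, the generators of $CF^{\sharp}(\vf)$ are the interior fixed points of $\vf$ together with one distinguished generator attached to the single boundary component of $S$. This gives the a priori upper bound $\dim HF^\sharp(\vf) \leq |\Fix(\vf)|+1$, and minimising the representative in its mapping class yields $\dim HF^\sharp(\vf) \leq \mathcal{F}_{\min}([\vf])+1$.

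\textbf{Vanishing of the differential in the irreducible case.} If $[\vf]$ is pseudo-Anosov, I would model $\vf$ on a smoothing of its canonical pseudo-Anosov representative and exploit the rigidity of pseudo-Anosov dynamics to exclude Floer strips between distinct fixed points, in the spirit of the vanishing of the symplectic Floer differential for pseudo-Anosov maps of closed surfaces. If $[\vf]$ is periodic, I would instead model $\vf$ as an isometry of a constant-curvature metric on $S$ and invoke the same rigidity principle. In both subcases the upper bound is attained and $\dim HF^\sharp(\vf) = \mathcal{F}_{\min}([\vf])+1$.

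\textbf{Vanishing of the differential in the rational homology sphere case.} When $Y$ is a rational homology sphere, $\widehat{HFK}(\overline{Y},\overline{K};\cdot)$ splits along spin$^{c}$ structures, and this splitting is transported by Theorem \ref{Theorem: Main theorem in introduction} to a decomposition of $HF^\sharp(\vf)$ refining the Nielsen fixed-point classification of $\Fix(\vf)$. Since $H_1(Y)$ is finite, each essential Nielsen class is represented by a single fixed point in a minimal representative, so $CF^\sharp(\vf)$ splits as a direct sum of one-dimensional subcomplexes and the differential must vanish.

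\textbf{The $\pm 1$ correction.} The discrepancy between the two formulas corresponds to whether the distinguished boundary generator survives to homology. If $\vf\sim id$, the minimal representative is Hamiltonian and can be realised as the time-$1$ map of a $C^2$-small Morse function; its unique index-$0$ critical point is connected to the boundary generator by a canonical trivial Floer strip, cancelling two generators and producing the shift $-1$. The resulting identity $\dim HF^\sharp(\vf)=(2g+1)-1=2g$ is compatible with the direct computation of $\widehat{HFK}$ for the binding of the trivial open book in $\#^{2g}(S^1\times S^2)$. In every other mapping class no such trivial strip exists and the boundary generator contributes $+1$.

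\textbf{Main obstacle.} The most delicate step is the rational homology sphere case, where the absence of dynamical rigidity makes vanishing of the Floer differential nontrivial. The core issue is to identify $\mathcal{F}_{\min}([\vf])$ with the count of essential Nielsen fixed-point classes and to verify that Theorem \ref{Theorem: Main theorem in introduction} transports the spin$^{c}$ splitting of $\widehat{HFK}$ into a splitting of $CF^\sharp(\vf)$ that separates these classes; both steps rely on the hypothesis $b_1(Y)=0$ and require careful bookkeeping of the homology classes of orbits in the mapping torus $T_\vf$. Producing the trivial-strip cancellation in the identity case, in a way that is compatible with the open-close map $\Phi^\sharp_*$ underlying Theorem \ref{Theorem: Main theorem in introduction}, will also need explicit attention.
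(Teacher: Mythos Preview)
Your reduction step and the overall architecture---reduce to $HF^\sharp$, choose a good representative, and control the boundary generator---match the paper. But several of the key mechanisms you invoke are either wrong or too vague to carry the argument.

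\textbf{Nielsen classes are not singletons.} Your central claim in the rational homology sphere case, that ``each essential Nielsen class is represented by a single fixed point in a minimal representative,'' is false. Already for a pseudo-Anosov map, Cotton-Clay's smooth representative $\vf_{sm}$ replaces each unrotated $p$-pronged singularity by $p-1$ positive hyperbolic fixed points \emph{all in the same Nielsen class}. The differential vanishes not because the classes are singletons but because of the principle (used throughout the paper) that $\langle \partial x, y\rangle \neq 0$ forces $x,y$ to lie in the same Nielsen class \emph{and} to have opposite Lefschetz signs; within a single unrotated-singularity class all signs are $-1$. Your spin$^c$ splitting idea does not see Lefschetz signs and so cannot by itself force vanishing. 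The paper's actual route in the rational homology sphere case is different: $b_1(Y)=0$ forces $\ker(\vf_*-\mathrm{id})=0$, which makes every non-degenerate symplectic representative monotone; this lets one apply Cotton-Clay's results (including a twisted-coefficients computation that collapses to the untwisted one) to obtain $\dim HF(\vf)=\mathcal{F}_{\min}([\vf])$ even in the reducible case.

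\textbf{The boundary generator is more subtle than a single ``trivial strip.''} Your treatment of the $\pm 1$ correction assumes the dichotomy is governed solely by whether $\vf\sim id$. In the paper this is the conclusion of a substantial case analysis (Proposition~\ref{Proposition: dim HF and dim HF^sharp}) organised by the fractional Dehn twist coefficient $t_c(\vf)$ and the canonical rotation angle $\vartheta_c$. When $k_\vf\neq 0$ there are additional boundary-parallel circles of fixed points, each in its own Nielsen class, that must be accounted for; and one must check, via Lemma~\ref{Lemma: No fixed point is related to the boundary}, exactly which interior fixed points are $\vf$-related to $\partial S$ and that those are all positive hyperbolic (hence share the Lefschetz sign of $p_h$ and cannot cancel it). Your proposal skips this entirely, and the statement ``in every other mapping class no such trivial strip exists'' is exactly what needs to be proved.

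\textbf{The irreducible case.} ``Rigidity of pseudo-Anosov dynamics'' is not the mechanism; the canonical pseudo-Anosov representative is not smooth, and Floer strips are not excluded by dynamical rigidity. The paper (following Gautschi and Cotton-Clay) constructs the special smooth representative $\vf_{sm}$ and then uses the Nielsen-class/Lefschetz-sign principle above. You should replace the rigidity appeal by this argument.
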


As a consequence we obtain that if $K$ is a fibered knot satisfying the hypotheses of last theorem then
\[\dim\left(\widehat{HFK}(Y,K;-g+1)\right) \geq 1\]
with equality if and only if the mapping class of monodromy of its complement admits an area-preserving non-degenerate representative with no fix points. This implies the following corollary.
\begin{Cor}
$L$-space knots in $S^3$ admit a representative of the monodromy with no interior fixed points.
\end{Cor}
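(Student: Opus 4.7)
The plan is to deduce the corollary by verifying the sufficient condition stated just before it: for a fibered knot $K \subset Y$ of genus $g > 0$ with $Y$ a rational homology sphere (or $[\vf]$ irreducible), the monodromy admits an area-preserving non-degenerate representative without interior fixed points if and only if $\dim \widehat{HFK}(Y,K;-g+1) = 1$. I would apply this equivalence with $(Y,K) = (S^3,K)$ for an arbitrary L-space knot $K$.

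The hypotheses of the preceding theorem are automatic in this setting: $S^3$ is a rational homology sphere, so condition (1) holds, and L-space knots in $S^3$ are known to be fibered, of positive genus $g$ since the unknot is excluded. It remains to compute $\dim \widehat{HFK}(S^3,K;-g+1)$. The displayed consequence of the preceding theorem immediately gives the lower bound $\dim \widehat{HFK}(S^3,K;-g+1) \geq 1$, while the Ozsv\'ath--Szab\'o structure theorem for L-space knots provides the matching upper bound $\dim \widehat{HFK}(S^3,K;i) \leq 1$ in every Alexander grading $i$.

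Together these two bounds force equality in grading $-g+1$, and the equivalence recalled above then produces the desired representative of the monodromy. In this deduction essentially all of the technical difficulty has been absorbed into the preceding theorem; here one only combines that theorem with two standard properties of L-space knots in $S^3$, namely fiberedness and the knot Floer rank bound. The only point requiring care is that the conclusion of the preceding theorem concerns non-degenerate area-preserving representatives, so the corollary should be understood in the same class, but this is strictly stronger than producing some representative with no interior fixed points.
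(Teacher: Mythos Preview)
Your proposal is correct and follows essentially the same route as the paper: both arguments combine the preceding theorem (via the rational homology sphere hypothesis, since $Y=S^3$) with the Ozsv\'ath--Szab\'o rank bound $\dim \widehat{HFK}(S^3,K;i)\le 1$ for $L$-space knots to force $\dim \widehat{HFK}(S^3,K;-g+1)=1$, and then invoke the equivalence with the existence of a non-degenerate area-preserving representative with no interior fixed points.
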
 

Another application of our main theorem gives a link between Heegaard Floer homology and the geometry of three-manifolds. Let $K^n$ be the branched locus of the $n$-th branched cover $Y^n(K)$ of $Y$ over $K$: if $(K,S,\vf)$ is an open book decomposition of $Y$ then $(K^n,S,\vf^n)$ is an open book decomposition of $Y^n(K)$. Combining Theorem \ref{Theorem: Main theorem in introduction} with a result of Fel'shtyn \cite{Fel} we recover the following result, that was already proven in \cite{LOT2} by Lipshitz, Ozsv\'{a}th and Thurston using bordered Floer homology techniques.
\begin{Cor}\label{Corollary: HFK detects the stretching factor intro}
 If $(K,S,\vf)$ is an open book decomposition of $Y$ the growth rate for $n \rightarrow \infty$ of the dimensions of $\widehat{HFK}(Y^n(K),K^n;-g+1)$ coincides with the largest dilatation factor among all the pseudo--Anosov components of the canonical representative of the mapping class $[\vf]$.
\end{Cor}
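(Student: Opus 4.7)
The plan is to iterate Theorem \ref{Theorem: Main theorem in introduction} and combine it with Fel'shtyn's asymptotic formula for Nielsen numbers of surface homeomorphisms.

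First, since $(K^n,S,\vf^n)$ is an open book decomposition of $Y^n(K)$, the main theorem gives
\[ \widehat{HFK}(\overline{Y^n(K)},\overline{K^n};-g+1) \cong HF^\sharp(\vf^n) \]
for every $n\geq 1$. As the dimension of $\widehat{HFK}$ over $\Z/2\Z$ is preserved under simultaneous orientation reversal of $Y$ and $K$, the problem reduces to computing the exponential growth rate of $\dim HF^\sharp(\vf^n)$.

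Second, I would choose an area-preserving non-degenerate representative $\vf$ of the Nielsen--Thurston canonical form of the mapping class. Every iterate $\vf^n$ is then non-degenerate, and the chain complex computing $HF^\sharp(\vf^n)$ is generated by (interior) fixed points of $\vf^n$, so
\[ \dim HF^\sharp(\vf^n) \leq \#\Fix(\vf^n). \]
Conversely, two fixed points in distinct Nielsen classes correspond to sections of the mapping torus of $\vf^n$ in distinct free homotopy classes; a Floer strip between them would provide such a homotopy, a contradiction. Hence the differential preserves the Nielsen-class decomposition, and each class contributes at least one generator in homology, yielding
\[ N(\vf^n) \leq \dim HF^\sharp(\vf^n). \]

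Finally, Fel'shtyn's theorem states that the asymptotic growth rate of the Nielsen numbers $N(\vf^n)$ equals $\lambda := \max_i \lambda_i$, the largest dilatation among the pseudo-Anosov components of $[\vf]$. Since the periodic and reducing-annulus pieces contribute only polynomially many fixed points to $\#\Fix(\vf^n)$, the total count $\#\Fix(\vf^n)$ also has exponential growth rate exactly $\lambda$. The sandwich $N(\vf^n) \leq \dim HF^\sharp(\vf^n) \leq \#\Fix(\vf^n)$ then forces the growth rate of $\dim HF^\sharp(\vf^n)$, and hence of $\dim\widehat{HFK}(Y^n(K),K^n;-g+1)$, to equal $\lambda$. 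The main technical point requiring care is the Nielsen-class decomposition in the specific variant $HF^\sharp$, that is, checking that the boundary conventions distinguishing $HF^\sharp$ from Seidel's $HF(\vf,\pm)$ do not glue distinct Nielsen classes together; the remaining estimates are routine dynamical bookkeeping.
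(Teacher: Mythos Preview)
Your overall approach is correct but takes a more circuitous path than the paper's. The paper simply observes that $\widehat{HFK}(Y^n(K),K^n;-g+1)\cong HF^\sharp(\vf^n)$ by the main theorem, and then invokes Fel'shtyn's result \emph{directly}: the cited theorem of Fel'shtyn is already the statement that the growth rate of $\dim HF(\vf^n)$ equals the largest dilatation $\lambda_{[\vf]}$, not merely a statement about Nielsen numbers. Since $\dim HF^\sharp(\vf^n)$ and $\dim HF(\vf^n)$ differ by at most one (this is Equation~\eqref{Equation: dim HF^sharp = dim HF pm 1} in the proof of Proposition~\ref{Proposition: dim HF and dim HF^sharp}), their growth rates coincide and the corollary follows immediately.

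Your sandwich $N(\vf^n)\le \dim HF^\sharp(\vf^n)\le \#\Fix(\vf^n)$ is essentially a reconstruction of Fel'shtyn's argument. It works, but two points deserve more care than you indicate. First, the assertion that a single representative $\vf$ has all iterates $\vf^n$ non-degenerate fails for the Nielsen--Thurston form whenever there are periodic pieces (after the period the restriction becomes the identity, giving infinitely many fixed points), so the upper bound by $\#\Fix(\vf^n)$ requires choosing a different perturbed representative for each $n$ and then arguing that the counts are still controlled by $\lambda^n$. Second, the lower bound $N(\vf^n)\le \dim HF^\sharp(\vf^n)$ needs the observation that each \emph{essential} Nielsen class carries nonzero Euler characteristic in its Floer subcomplex, hence nonzero homology; you state only that distinct classes do not interact, which by itself would not prevent a class from being acyclic. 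Both issues are fixable, but the paper avoids them entirely by citing Fel'shtyn's packaged result.
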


Another interesting consequence of Theorem \ref{Theorem: Main theorem in introduction} is about algebraic knots and comes from an analogous result of McLean about fix point Floer homology (see \cite[Corollary 1.4]{McLean}).
\begin{Cor}\label{Corollary: HFK detects the multiplicity intro}
 Let $K \subset S^3$ be the $1$--component link of an isolated singularity of an irreducible complex polynomial $f$ with two variables. Then
 \[\min \left\{n>0\ |\ \dim\left(\widehat{HFK}((S^3)^n(K),K^n;-g+1)\right) \neq 1 \right\} = \mathfrak{m}(f) \]
 where $\mathfrak{m}(f)$ is the multiplicity of $f$ in $0$.
\end{Cor}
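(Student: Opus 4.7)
The plan is to combine Theorem~\ref{Theorem: Main theorem in introduction} with McLean's Corollary~1.4 via the iterated open book description of cyclic branched covers.

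First, I would use the fact recalled in the paragraph preceding Corollary~\ref{Corollary: HFK detects the stretching factor intro}: the open book $(K,S,\vf)$ of $S^3$ lifts to an open book $(K^n,S,\vf^n)$ of the $n$-fold cyclic branched cover $(S^3)^n(K)$. Consequently $K^n\subset (S^3)^n(K)$ is a fibered knot whose page $S$ still has genus $g$ and whose monodromy is the $n$-th iterate $\vf^n$. Applying Theorem~\ref{Theorem: Main theorem in introduction} to each such fibered knot yields
\[
\widehat{HFK}\bigl(\overline{(S^3)^n(K)},\,\overline{K^n};\,-g+1\bigr)\ \cong\ HF^{\sharp}(\vf^n)
\]
for every $n>0$. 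Since our coefficients are $\Z/2\Z$ and knot Floer homology is symmetric under orientation reversal up to duality, the dimensions are unchanged by the overlines, so
\[
\dim\widehat{HFK}\bigl((S^3)^n(K),\,K^n;\,-g+1\bigr)\ =\ \dim HF^{\sharp}(\vf^n).
\]

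Second, I would invoke McLean's Corollary~1.4: for an isolated planar curve singularity of an irreducible polynomial $f\in\C[x,y]$, the multiplicity $\mathfrak{m}(f)$ is characterized as the smallest positive integer $n$ for which the relevant fixed point Floer homology of the $n$-th iterate of the Milnor monodromy has dimension different from one. Because the monodromy of the open book associated to the algebraic knot $K$ coincides with the geometric Milnor monodromy of $f$ around the origin, substituting through the displayed identification gives exactly the equality claimed in the corollary.

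The main obstacle is a compatibility check: one must verify that the flavour of fixed point Floer homology used in McLean's Corollary~1.4 coincides with $HF^{\sharp}$ on the iterates $\vf^n$, or at least that the set of $n$ for which the dimension equals one is the same for both versions. Both groups count interior fixed points of $\vf^n$ with suitable boundary conventions, so the identification should follow either by a direct comparison of generators and differentials for an area-preserving representative of the Milnor monodromy, or by placing both theories within Seidel's framework \cite{Se4} and tracking how McLean's boundary conventions match the intermediate choice that defines $HF^{\sharp}$. Once this translation is in place, the equality of the two minima is immediate.
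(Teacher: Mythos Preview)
Your overall strategy matches the paper's: apply Theorem~\ref{Theorem: Main theorem in introduction} to the iterated open book $(K^n,S,\vf^n)$ of the cyclic branched cover, then feed into McLean's Corollary~1.4. The paper, however, does not resolve your ``main obstacle'' by a direct comparison of McLean's flavour with $HF^\sharp$. McLean's statement concerns the standard fixed point Floer homology $HF(\vf^n)$ (with his boundary convention), and his criterion is that $\mathfrak{m}(f)$ is the least $n$ with $HF(\vf^n)\neq 0$. The paper bridges this to $HF^\sharp$ via Proposition~\ref{Proposition: dim HF and dim HF^sharp}, which gives $\dim HF^\sharp(\vf^n)=\dim HF(\vf^n)+1$ provided one is not in the exceptional case $\vf_c=id$, $t_c=0$; for the Milnor monodromy of a genuine singularity the fractional Dehn twist coefficient is positive, so one is always in the ``otherwise'' case and the translation $HF(\vf^n)=0 \Longleftrightarrow \dim HF^\sharp(\vf^n)=1$ is immediate.

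So your proposal is correct in outline, but the compatibility check you flag is handled in the paper not by matching generators or invoking Seidel's framework, but by the explicit dimension formula of Proposition~\ref{Proposition: dim HF and dim HF^sharp}. That proposition is doing real work here (it absorbs the discrepancy between the $\pm$ boundary conventions and the intermediate $\sharp$ version via the analysis of the orbit $p_h$ near $\partial S$), and you should cite it rather than leave the bridge as a loose end.
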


Similar results have recently been proved by Ni in \cite{str1, str2}.

\section{Preliminaries}
\subsection{Open book decompositions}\label{sec: open books}
Let $K$ be an oriented knot in a closed oriented $3$--manifold
$Y$. Recall that $K$ is fibred if there exists a neighborhood
$U \cong K \times \mathbb{D}^2$ of $K$ and a fibration
$Y\setminus U \rightarrow S^1$ that extends the fibration
$$U\setminus K \cong K \times (\mathbb{D}^2 \setminus \{0 \}) \rightarrow S^1; \quad (x, z) 
\mapsto \frac{z}{|z|}.$$  
This implies that there exists an oriented surface $S$ with boundary and an orientation
preserving diffeomorphism $\vf \colon S \rightarrow S$ such that
$\vf|_{\partial S}$ is the identity and $Y\setminus U$ is diffeomorphic
to the mapping torus
\[T_{\vf} = \frac{\R \times S}{(t+1,x) \sim (t,\vf(x))}.\]
The images of the fibers
$S_t \coloneqq \{t\} \times S$ under this diffeomorphism extend canonically to Seifert
surfaces for $K$. The triple $(K,S,\vf)$ is often called an \emph{open
  book decomposition} of $Y$ with \emph{binding} $K$, \emph{page} $S$
and \emph{monodromy} $\vf$. We will refer to the genus of
$S$ as to the genus of the open book or the genus of the knot $K$.

Let $Y$ be a closed oriented $3$-manifold endowed with an open book decomposition 
$(K,S,\vf)$. We parametrise a collar neighborhood 
$A \cong [0,2] \times S^1$ of $\partial S$ by $\{(y,\vartheta) \in [0,2] \times \frac{[0,
2\pi]}{0 \sim 2\pi}\}$ so that $\partial S$ corresponds to $\{ 2 \} \times  \frac{[0,
2\pi]}{0 \sim 2\pi}$. On $S$ we fix once and for all a Liouville form $\lambda$ such that,
on $A$, it can be written as $y d \theta$ and denote $\omega=d \lambda$. We also assume, 
without loss of generality, that the monodromy satisfies 
\begin{equation}\label{exactness of phi}
\vf^* \lambda - \lambda = d\chi
\end{equation}
for a compactly supported function $\chi \colon S \to \R$ (see \cite[Lemma 9.3.2]{CGH1}) and,
  moreover, for every $(y,\vartheta) \in A$,
\begin{equation} \label{phi near the boundary}
\vf(y,\vartheta) = (y,\vartheta + f(y))
\end{equation}
 for a function $f \in \mathcal{C}^{\infty}([0,2])$ such that 
\begin{itemize}
\item $f(2) =0$, 
\item $f(y) = 3\varepsilon$ for some $\varepsilon \in \left (0, \frac{\pi}{g} \right )$ when $y \in [0,1]$, and
\item  $f'(y) < 0$ when $y\in (1,2)$.
\end{itemize}


\subsection{Heegaard Floer homology via open books} \label{Subsection: 
Heegaard Floer homology for open books}

As shown in \cite{HKM} by Honda, Kazez and Mati\'{c}, to an open book decomposition of 
$Y$ it is possible to associate a special kind of Heegaard diagram for $Y$. Let us recall the slightly 
different construction given by Colin, Honda and the first author in \cite{CGH3}.

If $S$ is a surface of genus $g$, a \emph{basis of arcs of $S$} is a set $\bm{a}=\{a_1,\ldots,
a_{2g}\}$ of smooth properly embedded arcs such that $S \setminus \{a_1,\ldots,a_{2g}\}$ is 
a topological disk. Given the open book decomposition $(K,S,\vf)$, the surface 
$$\Sigma \coloneqq S_{\frac{1}{2}}\sqcup_{\partial_{S_0}} \overline{S}_0$$
is a Heegaard surface for $Y$, where $\overline{S}_0$ denotes $S_0$ with the orientation 
reversed. Fix a basis of arcs $\{a_1,\ldots,a_{2g}\}$ for $S_0$, let $\{\underline{a}_1, \ldots, 
\underline{a}_{2g} \}$ be a copy of the basis in $S_{\frac{1}{2}}$ and define the set of closed 
curves
$\bm{\alpha}= \{\alpha_1,\ldots,\alpha_{2g}\} \subset \Sigma$ by 
$$\alpha_i=\underline{a}_i \sqcup_{\partial a_i}a_i,$$
conveniently smoothed near $\partial_{S_0}$.

Consider now the set of arcs $\{ \underline{b}_1, \ldots, \underline{b}_{2g} \}$ in 
$S_{\frac{1}{2}} \subset \Sigma$ obtained by modifying the $\underline{a}_i$'s by a small 
isotopy relative to the boundary such that, for every $i \in \{1,\ldots,2g\}$:
\begin{itemize}
\item $\underline{a}_i \cap \underline{b}_i = \{ c_i, c_i', c_i'' \}$, where $c_i''$ is in the interior of $S_{\frac{1}{2}}$, $c_i$ and $c_i'$ are on the boundary and all intersections are transverse,
\item if we orient $\underline{a}_i$ and give $\underline{b}_i$ the orientation induced 
from that of $\underline{a}_i$, then an oriented basis of $T_{c_i''}\underline{b}_i$ followed 
by an oriented basis of $T_{c_i''}\underline{a}_i$ yields an oriented basis of $T_{c_i''}\Sigma$,
 \item $\underline{a}_i \cap \underline{b}_j = \emptyset$ for $j\neq i$ and
 \item in a neighborhood of $\partial S_{\frac{1}{2}} \subset \Sigma$, $\underline{b}_i$ is a 
smooth extension to $S_\frac{1}{2}$ of $\vf(a_i) \subset S_0$.
\end{itemize}
Then we define the set of curves $\bm{\beta} = \{\beta_1,\ldots,\beta_{2g}\} \subset \Sigma$ by 
\[\beta_i=\underline{b}_i\sqcup_{\partial \vf(a_i)}\vf(a_i).\]

\begin{figure} [ht!] 
  \begin{center}
  \includegraphics[scale = 1.7]{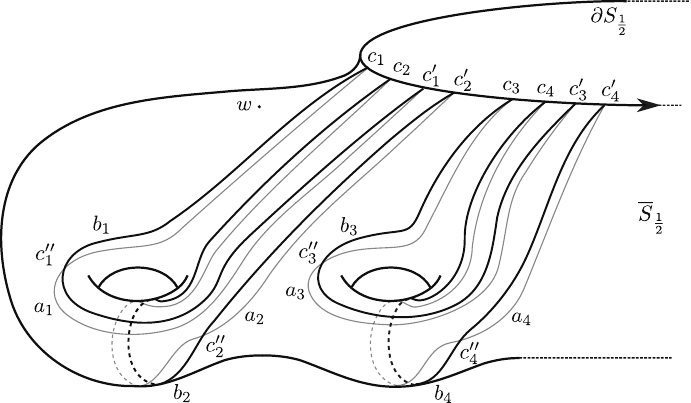}
  \end{center}
  \caption{A basis of arcs in $\overline{S}_{\frac{1}{2}}$  and its perturbation.}
\end{figure}

If we choose a basepoint $w \in S_\frac{1}{2}$ outside of the thin strips given by the
 isotopies from the $\underline{a}_i$'s to the $\underline{b}_i$'s, then $(\Sigma,
\bm{\alpha},\bm{\beta},w)$ is a pointed \emph{Heegaard diagram for $Y$ compatible with 
$(K,S,\vf)$}.
However, we prefer to work with a Heegaard diagram where the orientation of 
the Heegaard surface matches the orientation of $S_0$, and therefore we will consider the
Heegaard diagram $(\overline{\Sigma}, \bm{\alpha}, \bm{\beta}, z)$ for $\overline{Y}$.
This Heegaard diagram is clearly weakly admissible: see \cite{HKM}.



Let $\mathfrak{S}_{k}$ denote the symmetric group with $k$ elements.
We recall that the Heegaard Floer chain complex $\widehat{CF}(\overline{\Sigma}, 
\bm{\alpha}, \bm{\beta}, w)$ is defined as the vector space over 
$\Z /2 \Z$ generated by the $2g$-tuples of intersection points $\mathbf{x} = \{x_1, \ldots,
x_{2g}\}$ such that $x_i \in \alpha_i \cap \beta_{\sigma(i)}$ for some $\sigma \in 
\mathfrak{S}_{2g}$. 


To define the Heegaard Floer differential we will use Lipshitz's cylindrical reformulation from \cite{Li1} with the conventions of \cite{CGH3}. A generator $\mathbf{x} = 
\{x_1,\ldots,x_{2g}\}$ of $\widehat{CF}(\overline{\Sigma}, \bm{\alpha}, \bm{\beta}, w)$ 
can be identified with the set of $2g$ chords $[0,1] \times \{x_i\}_{i=1}^{2g} \subset [0,1] 
\times \Sigma$. 
We endow $\R \times [0,1] \times \Sigma$ with the symplectic form
\[ds \wedge dt + \omega_{\overline{\Sigma}},\]
where $s$ and $t$ are the coordinates of $\R$ and $[0,1]$ respectively and 
$\omega_{\overline{\Sigma}}$ is an area form on $\overline{\Sigma}$ which restricts to the area 
form $\omega$ on $S$, and choose  an \emph{admissible} almost complex structure $J$ 
(see \cite[Definition 4.2.1]{CGH3}). 

For every $i \in \{1,\ldots,2g\}$, call $L_{\alpha_i}$ and $L_{\beta_i}$ the Lagrangian 
submanifolds $\R \times \{1\} \times \alpha_i$ and, respectively, $\R \times \{0\} \times 
\beta_i$ of $\R \times [0,1] \times \overline{\Sigma}$. Define moreover $L_{\bm{\alpha}} = 
\bigsqcup_{i=1}^{2g} L_{\alpha_i}$ and $L_{\bm{\beta}} = \bigsqcup_{i=1}^{2g} L_{\beta_i}$.

Let $(F,j)$ be a compact (possibly disconnected) Riemann surface with two sets of punctures 
$\mathbf{p^+}= \{p^+_1,\ldots,p^+_k\}$ and $\mathbf{p^-}= \{p^-_1,\ldots,p^-_k\}$ on 
$\partial F$ such that 
\begin{itemize}
\item[(i)] every connected component of $F$ has nonempty boundary, and
\item[(ii)] every connected component of $\partial F$ contains at least one element of 
$\mathbf{p^+}$ and one of $\mathbf{p^-}$, and 
\item[(iii)] the elements of $\mathbf{p^+}$ and $\mathbf{p^-}$ alternate along $\partial F$. 
\end{itemize}
Let $\dot{F}$ denote $F$ with the sets of punctures $\mathbf{p^+}$ and $\mathbf{p^-}$
removed.

\begin{Def} \label{Definition: Multisections of R times I times Sigma }
Let $\mathbf{x} = \{x_1,\ldots,x_k\}$ and $\mathbf{y} = \{y_1,\ldots,y_k\}$ be two 
$k$-tuple ($k\leq 2g$) of points in $\Sigma$ with $x_i \in \alpha_i \cap \beta_{\sigma(i)}$ 
and $y_i \in \alpha_i \cap \beta_{\sigma'(i)}$ for some permutations $\sigma, \sigma' \in 
\mathfrak{S}_k$. A \emph{degree-$k$ multisection of $\R \times [0,1] \times \Sigma$
 from $\mathbf{x}$ to $\mathbf{y}$} is a $J$-holomorphic map 
$$u:(\dot{F},j) \longrightarrow (\R \times [0,1] \times \Sigma,J)$$
satisfying the following conditions:
\begin{enumerate}
 \item $(\dot{F},j)$ is a punctured Riemann surface as above;
 \item $u(\partial \dot{F}) \subset L_{\bm{\alpha}} \cup L_{\bm{\beta}}$ and each 
connected component of $\partial \dot{F}$ is mapped to a different $L_{\alpha_i}$ or $L_{\beta_i}$;
 \item $\lim_{w\rightarrow p_i^+} u_{\R}(w) = +\infty$ and $\lim_{w\rightarrow p_i^-} u_{\R}(w) 
= -\infty$ where $u_{\R}$ is the component of $u$ to $\R$;
 \item near $p_i^+$ (respectively $p_i^-$), $u$ is asymptotic to the strip over $[0,1] \times 
\{x_i\}$ (respectively $[0,1] \times \{y_i\}$);
\end{enumerate}
\end{Def}


For a $J$-holomorphic map $u$ as above, we define $n_w(u)$ as the algebraic intersection 
number between the image of $u$ and the $J$-holomorphic section $\R \times [0,1] \times 
\{w\}$. By positivity of intersection $n_w(u) \ge 0$.

We define $\widehat{\mathcal{M}}_1(\mathbf{x},\mathbf{y})$ as the set of equivalence 
classes (modulo reparametrisations and $\R$-translations) of holomorphic multisections
$u :(\dot{F},j) \longrightarrow (\R \times [0,1] \times \Sigma,J)$ of degree $2g$ from $\mathbf{x}$ 
to $\mathbf{y}$ such that:
\begin{enumerate}
 \item $n_w(u) = 0$  and
 \item $u$ has Lipshitz's $ECH$-type index $1$ (see \cite[Section 4]{CGH3} for details).
\end{enumerate}
Note that, for multisections of $\R \times [0,1] \times \Sigma$, having $ECH$-type index 
$1$ is equivalent to being embedded and having Fredholm index $1$.

\color{black}

The Heegaard Floer differential (in the hat version) is then defined by
\[\partial^{HF}(\mathbf{x}) = \sum_{\mathbf{y}} \#_{2} 
\widehat{\mathcal{M}}_1(\mathbf{x},\mathbf{y},J) \mathbf{y}, \]
where the sum is taken over the set of generators of $\widehat{CF}(\overline{\Sigma},
\boldsymbol{\alpha}, \boldsymbol{\beta}, w)$ and $\#_2$ denotes the cardinality modulo 
$2$. The homology of $\widehat{CF}(\overline{\Sigma},
\boldsymbol{\alpha}, \boldsymbol{\beta}, w)$ does not depend on the various choices and
is denoted by $\widehat{HF}(\overline{Y})$. 

In \cite[Section 4]{CGH3} it was shown that $\widehat{HF}(\overline{Y})$ can be computed using 
only the part of the diagram $(\overline{\Sigma},\bm{\alpha},\bm{\beta},w)$
contained in $S_0$. To recall the construction, observe first that if $u$ is a connected degree $k$   multisection with $n_w(u)=0$ and a positive end at a chord associated to $c_i$ or $c_i'$, then $k=1$ and $u$ is a trivial strip over that chord, i.e. a reparametrisation of either $\R \times [0,1] \times \{ c_i \}$ or $\R \times [0,1] \times \{ c_i' \}$.

Let $CF'(S,\bm{a},\vf(\bm{a}))$ be the submodule of $\widehat{CF}(\overline{\Sigma},
\boldsymbol{\alpha}, \boldsymbol{\beta},w)$ generated by the $2g$-tuples of intersection 
points contained in $S_0$ and endow it with the restriction of $\partial$. 
$CF'(S,\bm{a},\vf(\bm{a}))$ is a subcomplex of 
$\big(\widehat{CF}(\overline{\Sigma},\bm{\alpha},\bm{\beta}, w),\partial\big)$. 

Let ${\mathcal R} \subset CF'(S,\bm{a},\vf(\bm{a}))$ be the subspace generated by all 
elements $\mathbf{x}- \mathbf{x}'$ where there exists $i$ such that
$\mathbf{x}=\{x_1, \ldots,c_i, \ldots,x_{2g}\}$ and $\mathbf{x}'=\{x_1, \ldots,c_i', 
 \ldots,x_{2g}\}.$ Then define 
\begin{equation}\label{eq: quotient}
\widehat{CF}(S,\bm{a},\vf(\bm{a})) = CF'(S,\bm{a}, \vf(\bm{a})) / {\mathcal R}.
\end{equation}
In \cite[Subsection 4.9]{CGH3} it was proved, with a slightly different terminology, that 
${\mathcal R}$ is a subcomplex of $CF'(S,\bm{a}, \vf(\bm{a}))$, and therefore the 
quotient $\widehat{CF}(S,\bm{a},\vf(\bm{a}))$ is a chain complex with the induced 
differential.
We will call $\widehat{HF}(S,\bm{a},\vf(\bm{a}))$
its homology.
\begin{Thm}(see \cite[Theorem 4.9.4]{CGH3})\label{Theorem: HF on a page}
 $$\widehat{HF}(S,\bm{a},\vf(\bm{a})) \cong \widehat{HF}(\overline{Y}).$$
\end{Thm}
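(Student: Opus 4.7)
The plan is to exhibit $\widehat{CF}(S,\bm{a},\vf(\bm{a}))$ as a deformation retract of $\widehat{CF}(\overline{\Sigma},\bm{\alpha},\bm{\beta},w)$ obtained by cancelling the ``extra'' generators supported in $S_{\frac{1}{2}}$. First I would classify generators according to their profile relative to $S_0 \cup S_{\frac{1}{2}}$: each coordinate $x_i \in \alpha_i \cap \beta_{\sigma(i)}$ lies either in $S_0$ at an interior intersection $a_i \cap \vf(a_{\sigma(i)})$, or (only when $\sigma(i)=i$) at one of the three perturbation points $c_i, c_i', c_i''$. The trivial-strip statement recalled just above---that any connected degree-$k$ multisection with $n_w(u)=0$ and a positive end at a chord over $c_i$ or $c_i'$ is forced to be a trivial strip---then shows that the coordinates equal to $c_i$ or $c_i'$ are preserved by $\partial^{HF}$. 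In particular, the submodule $CF'(S,\bm{a},\vf(\bm{a}))$ spanned by configurations entirely in $S_0$ is a subcomplex, as already asserted in the excerpt.

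Next I would pin down the cancelling holomorphic data. For each $i$, the arcs $\underline{a}_i$ and $\underline{b}_i$ cobound two small embedded bigons in $S_{\frac{1}{2}}$, one from $c_i''$ to $c_i$ and one from $c_i''$ to $c_i'$; since $w$ is chosen outside the thin isotopy strip, both have $n_w=0$. A standard transversality and positivity-of-intersection argument should show that these are the only index-$1$ holomorphic multisections with positive end at $c_i''$ and negative end at $c_i$ or $c_i'$, and that each contributes $1$ modulo $2$. Consequently, for any generator $\mathbf{x}$ with $x_i = c_i''$, the differential $\partial^{HF}\mathbf{x}$ contains the terms $\mathbf{x}_c + \mathbf{x}_{c'}$, where $\mathbf{x}_c$ and $\mathbf{x}_{c'}$ are obtained from $\mathbf{x}$ by replacing $c_i''$ by $c_i$ and $c_i'$ respectively.

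Combining these two inputs, I would run the spectral sequence associated with the filtration by $h(\mathbf{x}) = \#\{i : x_i \in S_{\frac{1}{2}}\}$, or equivalently perform an acyclic matching: the small-bigon differentials pair each generator containing some $c_i''$ with its partner $\mathbf{x}_c$, collapsing the complex onto generators supported in $S_0$ and leaving behind the surviving identifications $\mathbf{x}_c \sim \mathbf{x}_{c'}$. These are precisely the relations generating $\mathcal{R}$, and the induced differential on the quotient coincides with the differential declared on $\widehat{CF}(S,\bm{a},\vf(\bm{a})) = CF'/\mathcal{R}$. This produces a chain homotopy equivalence $\widehat{CF}(\overline{\Sigma},\bm{\alpha},\bm{\beta},w) \simeq \widehat{CF}(S,\bm{a},\vf(\bm{a}))$, hence the desired isomorphism in homology.

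The main obstacle will be to rule out unexpected contributions to $\partial^{HF}$ involving an end at $c_i''$: one must verify that the only index-$1$ multisections with such an end are the two small bigons, for otherwise the cancellation breaks down, and one must check that the restriction of $\partial^{HF}$ to $CF'$ genuinely descends to $CF'/\mathcal{R}$. Both reduce to local energy and area estimates in the thin region $S_{\frac{1}{2}}$ combined with the trivial-strip lemma for ends at $c_i, c_i'$, and are the technical content of \cite[Subsection 4.9]{CGH3}.
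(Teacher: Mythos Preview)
This statement is not proved in the present paper---it is simply cited from \cite[Theorem~4.9.4]{CGH3}---so there is no in-house argument to compare against; your outline is essentially the proof carried out there. Your identification of the main obstacle is accurate: once one knows that every index-one irreducible component with $n_w=0$ having an end at some $c_i''$ is one of the two thin bigons (and in particular that no such component has a \emph{negative} end at $c_i''$, which is what makes $CF'$ a subcomplex and makes the filtration by the number of $c_i''$-coordinates legitimate), the cancellation produces $CF'/\mathcal{R}$ on the nose. In \cite{CGH3} this lemma is established by a combinatorial analysis of domains near $\partial S_0$---the regions of $\overline{\Sigma}$ adjacent to each thin bigon, other than the companion bigon, all meet the base-point region, so any $n_w=0$ domain with a corner at $c_i''$ is forced to be supported on the bigons and hence has $c_i$ or $c_i'$ as its other corner---rather than by the energy or area estimates you suggest.
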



\subsection{Knot Floer homology via open book decompositions} 
\label{Subsubsection: The knots filtrations}
In this section we show how the hat version of knot Floer homology can be computed using 
the diagram $(S, \mathbf{a}, \vf(\mathbf{a}))$. To obtain this, however, it is necessary to 
be slightly more careful in the construction of the diagram. 
 Let $\varepsilon$ be the same constant as in Equation \ref{phi near the boundary}. We 
assume that, for every $i \in \{1,\ldots,2g\}$,
\begin{enumerate}
 \item $\{c_i,c_i'\} = a_i \cap \partial S \subset \{2\} \times (0,\varepsilon)$, and 
 \item $a_i \cap [1,2] \times \partial S = [1,2] \times\{c_i,c_i'\}$.
 \end{enumerate}
These conditions can be achieved by a Hamiltonian isotopy of the arcs  $a_i$. 

\begin{figure} [ht!] 
  \begin{center}
  \includegraphics[scale = 0.45]{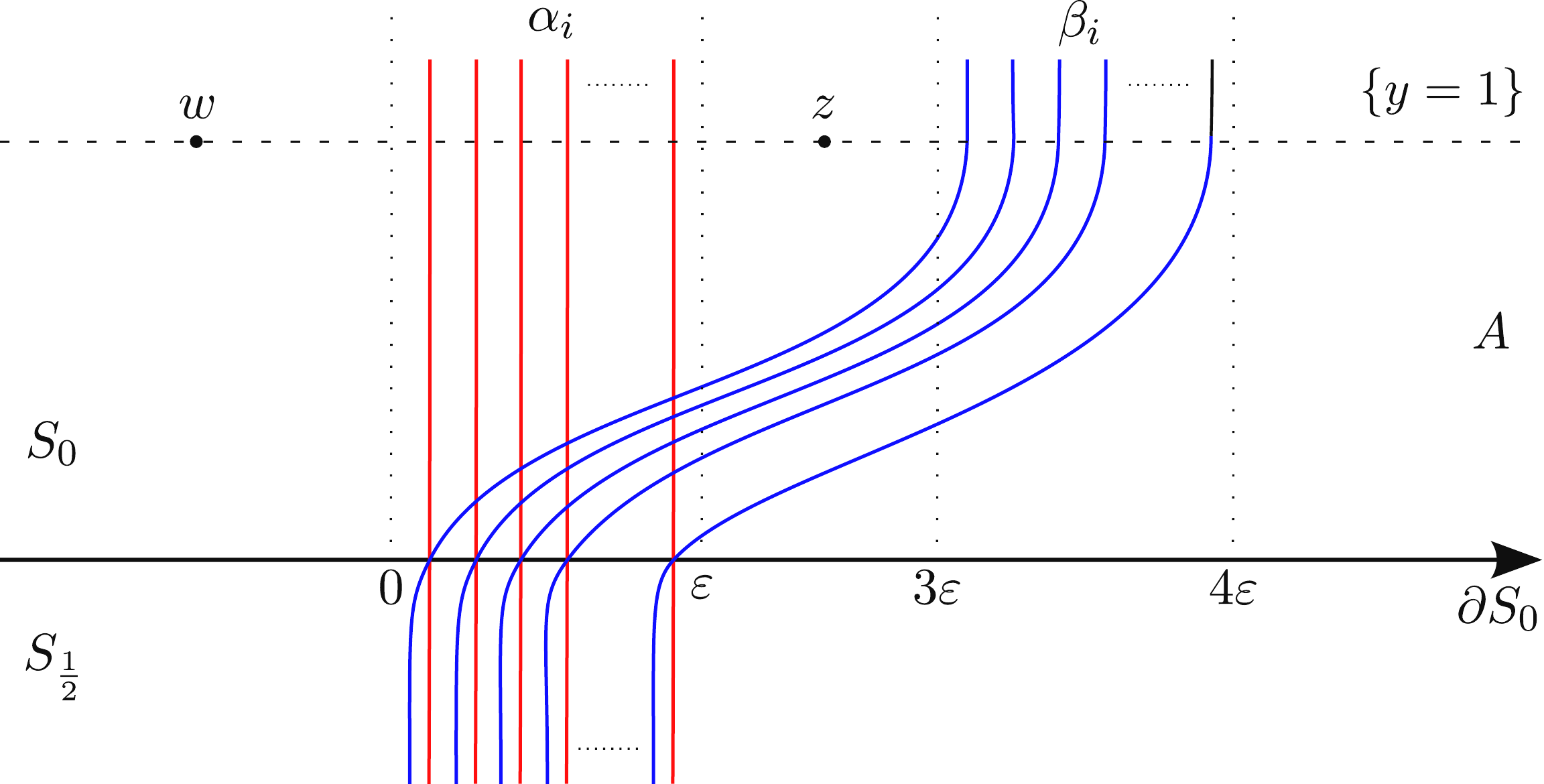}
  \end{center}
  \caption{A diagram compatible with the knot in a neighborhood of $\partial S_0$.}
 \label{fig:trivial region}
\end{figure}


If $z = (1,2\varepsilon)$, then $(\overline{\Sigma},\bm{\alpha},\bm{\beta},z,w)$ is a a 
doubly pointed Heegaard diagram for $(\overline{K},\overline{Y})$.
Let $\overline{Y}_0(K)$ be the zero surgery on $Y$ along $K$. 
Following Ozsv\'{a}th and Szab\'{o} \cite{OS3}, given a generator $\mathbf{x}$ of 
$\widehat{CF}(\overline{\Sigma},\bm{\alpha},\bm{\beta},w)$, its Alexander degree
$\mathcal{A}(\mathbf{x})$ with respect to $S_0$ is the integer
\begin{equation} \label{Equation: Alexander degree in terms of spin^c structures}
 \mathcal{A}(\mathbf{x}) = \frac{1}{2}\langle c_1(\underline{\mathfrak{s}}_w(\mathbf{x})),[\widetilde{S}_0] \rangle 
\end{equation}
where $\underline{\mathfrak{s}}_w(\mathbf{x})$ is the $Spin^c$--structure on $\overline{Y}_0(K)$ determined by $\mathbf{x}$ and $w$ and $\widetilde{S}_0$ is the surface $S_0 \setminus A$ capped off in $\overline{Y}_0(K)$. By standard computations using periodic domains (see for example \cite[Section 7.1]{OS2}, and also \cite[Lemma 6.1]{BVV} for a similar computation) one can check that 
\begin{equation} \label{Equation: Alexander degree in terms of components on S_0}
 \mathcal{A}(\mathbf{x}) = -g + \#\left(\mathbf{x} \cap \big(S_0 \setminus A\right)\big).
\end{equation}
Moreover the positivity of intersection between holomorphic curves in dimension $4$ implies that, if $u$ is a multisection from $\mathbf{x}$ to $\mathbf{y}$, then: 

\[\mathcal{A}(\mathbf{x}) - \mathcal{A}(\mathbf{y}) = n_z(u) \geq 0. \]
For this reason the Alexander grading ${\mathcal A}$ induces a filtration on 
$\widehat{CF}(\overline{\Sigma}, \bm{\alpha}, \bm{\beta}, w)$. The knot Floer homology 
complex of $(\overline{Y}, \overline{K})$ is the associated graded 
complex 
\[\left(\widehat{CFK}(\overline{\Sigma}, \bm{\alpha},\bm{\beta}, z, w),\partial_K \right) 
\coloneqq \bigoplus_{i \in \Z} \left( \widehat{CFK}(\overline{\Sigma}, \bm{\alpha}, 
\bm{\beta}, z, w, i),\partial_i\right), \]
where 
$\widehat{CFK}(\overline{\Sigma}, \bm{\alpha}, 
\bm{\beta}, z, w, i)$ is the subspace of $\widehat{CF}(\overline{\Sigma}, \bm{\alpha}, 
\bm{\beta}, w)$ generated by the $2g$-tuples of intersection points $\mathbf{x}$ with
$\mathcal{A}(\mathbf{x})=i$ and
\[ \partial_i \colon \widehat{CFK}(\Sigma,\bm{\beta},\bm{\alpha},z,w,i) \longrightarrow 
\widehat{CFK}(\overline{\Sigma}, \bm{\alpha},\bm{\beta},z,w,i) \]
is the restriction to $\widehat{CFK}(\Sigma,\bm{\beta},\bm{\alpha},z,w,i)$ of the 
component of $\partial$ that preserves the Alexander degree. The resulting homology
\[\widehat{HFK}(\overline{Y},\overline{K}) = \bigoplus_{i \in \Z}\widehat{HFK}(\overline{Y},
\overline{K};i) = \bigoplus_{i \in \Z}H_*\left(\widehat{CFK}(\overline{\Sigma}, \bm{\alpha},
\bm{\beta},z,w,i),\partial_i\right)\]
is the knot Floer homology of $\overline{K}$ in $\overline{Y}$.


The restriction to generators and holomorphic curves that are contained in the $S_0$ part of 
the diagram and the quotient \eqref{eq: quotient} are compatible with the Alexander grading.

The result is the chain complex
\[\left(\widehat{CFK}(S,\bm{a},\vf(\bm{a}),z),\partial_K \right) = \bigoplus_{i \in \Z} \left(\widehat{CFK}(S,\bm{a},\vf(\bm{a}),z;i),\partial_i\right) \]
whose homology is isomorphic to $\widehat{HFK}(\overline{Y},\overline{K})$. The proof is the same of that of \cite[Theorem 4.9.4]{CGH3}.
The base point $z$ will often be dropped from the notation, as we did for $w$, because it is
placed  in the region $A$ where all our diagrams will have the standard form 
described above.



\subsection{Fixed point Floer homology} \label{Subsection: Review of symplectic 
Floer homology}
Let $M$ be a closed manifold endowed with a symplectic form and let $\psi \colon M \rightarrow M$ be a symplectomorphism. If $\psi$ satisfies suitable properties (see below for some of the details), one can define the fixed point Floer homology $HF(\psi)$ of $\psi$ as the homology of a finite dimensional chain complex whose generators are the fixed points of $\psi$. This homology was defined by Floer (\cite{Fl}) in the case $\psi$ is Hamiltonian isotopic to $id_M$ and by Dostoglou and Salamon (\cite{DS}) in the general case.
In general $HF(\psi)$ is an invariant of the Hamiltonian isotopy class of $\psi$. However, if $\dim(M)=2$, Seidel showed in \cite{Se2} that $HF(\psi)$ is in fact a topological invariant of the mapping class $[\psi]$.

Fix now $(K,S,\vf)$ as in Section \ref{sec: open books}. To define the fixed point Floer homology of $\vf$, in order to get a finitely generated chain complex, one needs first to perturb the infinite family of fixed points given by $\vf|_{\partial S} = id$. One way to do this is to compose $\vf$ with a small rotation along $\partial S$: this gives two versions $HF(\vf,+)$ and $HF(\vf,-)$ of Floer homologies of $\vf$, which correspond to choosing a positive or, respectively, negative rotation along $\partial S$ (with respect to the orientation induces by $S$). See for example \cite{Se4} for details.
In this section we introduce an intermediate version of fixed point Floer homology $HF^\sharp(\vf)$ 


Let $\mathrm{Fix}(\vf)$ be the set of fixed points of $\vf$. There is a natural identification 
between $\mathrm{Fix}(\vf)$ and the set of closed orbits of period one of the vector field 
$\partial_t$ in the mapping torus $T_{\vf}$: to a fixed point $x$ corresponds the 
periodic orbit $\gamma_x$ through $x$.
Up to Hamiltonian isotopy, we can suppose that $\vf$ has only \emph{non--degenerate} 
fixed points in the interior of $S$. We recall that a fixed point $x$ is 
non-degenerate if $\det(\mathbbm{1} - d_x\vf) \neq 0$. This implies, in particular, that 
$\vf$ has finitely many points in the interior of $S$. 
Recall that a non degenerate fixed point $x$, and the corresponding orbit $\gamma_x$, are 
\emph{elliptic} if $d_x \vf$ has complex conjugated eigenvalues and is \emph{positive} 
or, respectively, \emph{negative hyperbolic} if the eigenvalues of $d_x \vf$ are positive or, 
respectively, negative real.

The boundary $\partial S$ is a Morse-Bott circle of degenerate fixed points and, 
correspondingly, $\partial T_\vf$ is a Morse-Bott torus of degenerate orbits. 
We will describe here only the effect of the Morse-Bott perturbation of $\partial T_\vf$,  referring the reader to Bourgeois \cite{Bour} for a more complete discussion of
the subject and to Colin, Ghiggini and Honda \cite{CGH1} for one more adapted to the situation at hand.

The Morse--Bott perturbation takes place in the mapping torus of an
extension of $\vf$ to a larger surface $\widehat{S}$ that
we now describe (cf. \cite[Section 2]{CGH3}). Let
$(y,\vartheta) \in [1,2] \times S^1$ and
$f \in \mathcal{C}^{\infty}([1,2])$ be the system of coordinates and,
respectively, the function introduced in Section
\ref{sec: open books}. We define $\widehat{S} \coloneqq S \cup ([2,4] \times S^1)$ where
$\partial S$ is glued to $\{2\} \times S^1$ and extend the coordinates
$(y,\vartheta)$ to $[2,4] \times S^1$ in the natural way. We extend also
$f$ to a function ${f} \in \mathcal{C}^{\infty}([1,4])$ with
${{f}}'(y)<0$ for all $y \in (1,4]$ and
$\frac{\pi}{g} < {f}(4) < 0$. Then we extend $\vf$ to a
symplectomorphism (still denoted by $\vf$) of $\widehat{S}$ by setting
${\vf}(y,\vartheta) = (y,\vartheta + f(y))$ $\forall y \in [1,4]$. Let
$\widehat{T}_{\vf}$ be the mapping torus of
$(\widehat{S},{\vf})$. Observe that the properties of ${f}$ imply that
${\vf}$ has no periodic orbits of period smaller or equal to $2g$
crossing a page in the region $\{y \in (2,4]\}$. As for the fibers of
$T_{\vf}$, we denote $\widehat{S}_{t} \coloneqq \{t\} \times \widehat{S} \subset
\widehat{T}_{\vf}$.



We also extend $\omega$ to $\widehat{S}$ using the Liouville structure
and let $\omega_v$ denote the closed two-form on $\widehat{T}_\vf$ which
restricts to $\omega$ on every fiber and such that $\iota_{\partial_t} \omega_v=0$. Then
$(\omega_v, dt)$ is a stable Hamiltonian structure on $\widehat{T}_\phi$ with Reeb vector 
field $\partial_t$. The goal 
of the Morse-Bott perturbation is to replace $(\omega_v, \partial_t)$ with a new stable 
Hamiltonian structure $(\widehat{\omega}_v, dt)$ with Reeb vector field $\widehat{R}$ 
such that all periodic orbits of the flow of $R$ of period at most $2g$ are nondegenerate.
The first return map of the flow of $\widehat{R}$ is a symplectomorphism $\wf$ of 
$(S, \omega)$ which is Hamiltonian isotopic to $\vf$.

The Morse--Bott perturbation 
can be made with support in the neighborhood
$\{y \in (1,3)\}$ of $\partial T_{\vf}$. Before the perturbation each
of the boundary parallel tori
$T_{y_0} \coloneqq \{(t,y,\vartheta) \in \widehat{T}_{\vf}\ |\ y =
y_0\}$,
for $y_0 \in [1,4]$, is linearly foliated by orbits of $\partial_t$
and $T_2 = \partial T_{\vf}$ is the only one that is foliated by
orbits with period smaller or equal to $2g$.
After the perturbation the only periodic
orbits with period smaller than or equal to $2g$ and crossing the region
$\{y \in [1,4]\}$ are two period--1 orbits $e$ and $h$ contained in
$\partial T_{\vf}$. The tori $T_{y}$ which are contained in the support of the perturbation are not foliated by trajectories of $\widehat{R}$. On the other hand, a new family of
tori foliated by trajectories of $\widehat{R}$ is created (see Figure
\ref{Figure: Modifica MB}) and each of these tori bounds a solid torus
in $\widehat{T}_{\vf}$ with core curve $e$.
\begin{figure} [ht!] 
  \begin{center}
  \includegraphics{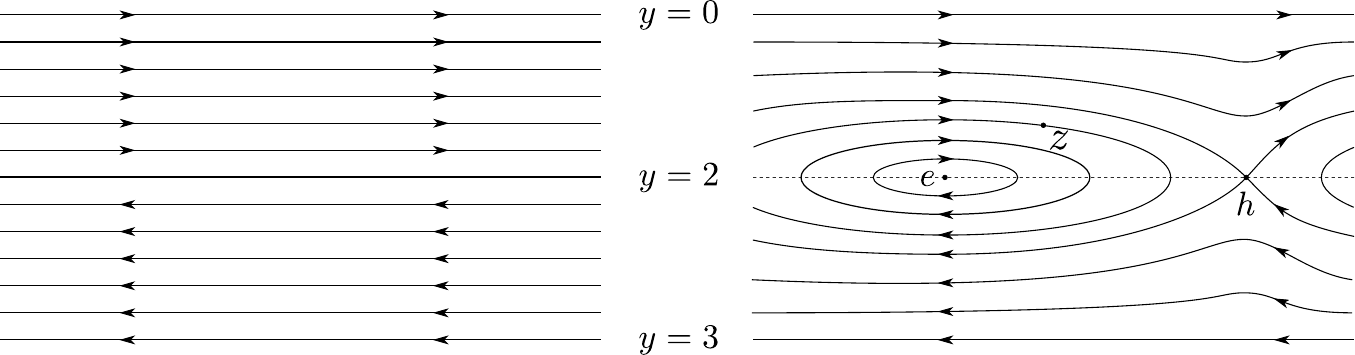}
  \end{center}
  \caption{The dynamics before and after the Morse--Bott perturbation
    of the torus $\partial T_{\vf}$. Both pictures represent the
    annulus $\{y \in [0,3]\}$ in $\widehat{S}_0$ (the left side has to
    be identified with the corresponding right side).  Each flow line
    represents an invariant subset of $\widehat{S}_0$ under the first
    return map of $\partial_t$ or $\widehat{R}$ and the arrows give
    the direction in which any point is mapped. In particular, on both
    annuli, each closed non-singular flow line is the intersection
    between the page and an embedded torus foliated by flow
    trajectories of the corresponding vector field.}
  \label{Figure: Modifica MB}
\end{figure}

\begin{Rmk}
  The reason for which the two orbits above are called $e$ and $h$ is
  that the first is elliptic and the second is (positive)
  hyperbolic. We will denote $p_e$ and $p_h$ the corresponding fixed
  points of $\wf$.
\end{Rmk}
\begin{Rmk} \label{remark: exactness of omega_v} 
The form $\widehat{\omega}_v$ is exact. In fact $\omega_v$ is exact because Equation 
\ref{exactness of phi} allows us to construct a primitive by interpolating between $\beta$ 
and $\vf^* \beta$, and $\widehat{\omega}_v = \omega_v + dH \wedge dt$ for a function 
$H \colon \widehat{T}_\vf \to \R$.
\end{Rmk}
We define $CF^\sharp(\vf)$ as the vector space generated over $\Z/2 \Z$  by 
$\mathrm{Fix}(\wf) \setminus \{ p_e \}$. We endow it with a differential as follows.  
Consider the symplectc fibration $(\R \times \widehat{T}_{\vf},\Omega)$ 
over $\R \times S^1$ with symplectic form $\Omega = ds\wedge dt + \widehat{\omega}_v)$ 
and fiber $\widehat{S}$, and endow it with an $\R$--invariant $\Omega$--tame 
almost complex structure $J$ such that $J(\partial_s) = \widehat{R}$ and $J(T\widehat{S}) 
= T\widehat{S}$. If $x_+, x_-$ are fixed points of $\wf$, let 
$\mathcal{M}(x_+,x_-,J)$ be the moduli space of $J$--holomorphic sections $u\colon \R
\times S^1 \rightarrow \R \times \widehat{T}_{\vf}$ such that
\[ \lim_{s \rightarrow \pm\infty} u(s,\cdot) = \gamma_{x_\pm}.\]

To each $J$--holomorphic cylinder $u$ is associated a Fredholm operator $D_u$ of index 
$\mathrm{ind}(u)$. Call $\mathcal{M}_k(x,y,J)$ the subset of $\mathcal{M}(x,y,J)$ with 
$\mathrm{ind}(u)=k$.
We define the differential $\partial^\sharp$ on $CF^\sharp(\vf)$ by
\[\partial^\sharp (x) = \sum_{y\in \mathrm{Fix}(\wf) \setminus \{ p_e \}} \#_2 
\overline{\mathcal{M}}_1(x,y,J) y \]

where $\#_2$ denotes the cardinality modulo $2$ and $\overline{\mathcal{M}}_1(x,y,J)$ 
is $\mathcal{M}_1(x,y,J)$ quotiented by the $\R$--action given by translations in the 
$\R$-direction. The map $\partial^\sharp$ is well defined because 
exactness of the form $\widehat{\omega}_v$ (see Remark \ref{remark: exactness of 
omega_v}) implies the compactness of the moduli spaces
$\overline{\mathcal{M}}_1(x,y,J)$ and the finiteness of the sum. Moreover it is a differential
because every holomorphic cylinder in $\R \times \widehat{T}_\vf$ with positive end at $e$ is 
a trivial cylinder over $e$; see   \cite[Section 7]{CGH1}. 

The {\em sharp version} of the fixed point Floer homology of $\vf$ is then
\[HF^\sharp(\vf) \coloneqq H_*(CF^\sharp(\vf),\partial^\sharp).\]
The argument of \cite[Section 3]{Se2} can be easily adapted to show that $HF^\sharp(\vf)$ is an
invariant of the mapping class of $\vf$ in the mapping class group of $(S, \partial S)$.

\begin{Rmk}
The definition of $HF^\sharp$ given here is inspired by the definition of $ECH^\sharp$ in 
\cite[Section 7]{CGH1}. An equivalent definition has been given by A. Kotelsky in \cite{Kot}.
\end{Rmk}
\color{black}

\subsection{Periodic Floer homology} \label{Subsection: PFH} 
The construction of symplectic Floer homology can be generalized to
consider orbits of $\vf$ of higher period: the resulting invariant is
the \emph{periodic Floer homology} of $\vf$. We will recall the
definition of two versions of this homology defined when $\vf$ is the
monodromy of a fibered knot $K$ in a three-manifold $Y$: the first, denoted
$PFH_{2g}(T_{\vf})$, is defined in this subsection and is an invariant of $Y$ 
(cf. \cite{CGH4}); the second version, denoted
$PFH^\sharp_i(T_\vf)$, for $i= -g, \ldots, g$, will be defined in the next subsection and
is  an invariant of $(Y, K)$ (cf. \cite[Section 10]{CGH2}). For the details we refer
the reader to \cite[Section 3]{CGH3} and \cite{HS}.

Consider the stable Hamiltonian structure $(\widehat{\omega}_v, dt)$ on 
$\widehat{T}_{\vf}$ with Reeb vector field $\widehat{R}$ introduced in the 
previous section. 
An \emph{orbit set} (or \emph{multiorbit}) in $T_{\vf}$ is a formal finite product 
$\bs{\gamma} = \prod_i \gamma_i^{k_i}$, where $\gamma_i$ is a simple (i.e. embedded)
orbit of the flow of $\widehat{R}$ and $k_i \in \mathbb{N}$ is the \emph{multiplicity} of 
$\gamma_i$ in $\bs{\gamma}$, with $k_i \in \{0,1\}$ whenever $\gamma_i$ is hyperbolic. 

We will denote by $\mathcal{P}$ the set of 
 simple orbits $\gamma$ of $\widehat{R}$ in $T_\vf$ and 
$\mathcal{O}_d$  the set of orbit sets $\bs{\gamma} = \prod_i 
\gamma_i^{k_i}$ with $\gamma_i \in \mathcal{P}$ and 
$$\langle \bs{\gamma}, S_0 \rangle = \sum_i k_i \langle \gamma_i,S_0 \rangle =d.$$


Given $d \in \Z$, we define $PFC_d(T_{\vf})$ to be the vector space over $\mathbb{Z} / 2 \Z$ 
generated by the orbit sets in $\mathcal{O}_d$.
Observe that we are considering orbit sets for the perturbed Reeb vector field $\widehat{R}$, 
so that both orbits $e$ and $h$ can appear in the generators. Remark moreover that, since 
$\langle \delta,S_0 \rangle>0$ for any orbit in $\mathcal{P}$, we have $PFC_d(T_{\vf}) = 0$ 
whenever $d < 0$, and $PFC_0(T_{\vf}) \cong  \Z /2 \Z$ is generated by the empty orbit set.

Let $J$ be an alost complex structure as in the previous section. Given orbit sets 
$\bs{\gamma}_-$ and $\bs{\gamma}_+$ in $\mathcal{O}_d$, let 
${\mathcal M}(\bm{\gamma}_+, \bs{\gamma}_-, J)$ be the moduli space of degree $d$,
possibly disconnected, $J$-holomorphic multisections of $\R \times \widehat{T}_\vf$ which are
positively asymptotic to $\bs{\gamma}_+$ and negatively asymptotic to 
$\bs{\gamma}_-$. Here convergence to $\bs{\gamma}_+$ and $\bs{\gamma}_-$ has to be 
understood in the sense of ECH; see \cite[Definition 1.2]{Hu2}. We call ${\mathcal M}_1(\bs{\gamma}_+, 
\bs{\gamma}_-, J)$ the subset of multisections $u \in {\mathcal M}(\bs{\gamma}_+, 
\bs{\gamma}_-, J)$ with ECH index $I(u)=1$. (see \cite[Section 2]{HS} for details)

For any $d \ge 0$, the group $PFC_d(T_{\vf})$ can be endowed with the differential 
\begin{equation} \label{PFH differential} 
\partial (\bs{\gamma}_+) = \sum_{\bs{\gamma}_- \in \mathcal{O}_d} \#_2 
\overline{\mathcal M}_1(\bs{\gamma}_+, \bs{\gamma}_-, J) \bs{\gamma}_-,
\end{equation}
where $\overline{\mathcal M}_1(\bs{\gamma}_+, \bs{\gamma}_-, J)$ is the quotient of
${\mathcal M}_1(\bs{\gamma}_+, \bs{\gamma}_-, J)$ by the $\R$-action given by translations in the $\R$-direction. The resulting homology 
$${PFH}_d(T_{\vf}) \coloneqq  H_*({PFC}_d(T_{\vf}),\partial)$$
is the periodic Floer homology of $\vf$ (or $T_{\vf}$).

The following theorem is one of the main results of \cite{CGH3, CGH4} and an important step
toward the isomorphism between Heegaard Floer homology and embedded contact homology.
However we will not need it in the present article.
\begin{Thm} \label{Theorem: HF hat iso to PFH_2g} Let $(K,S,\vf)$ be an open book 
decomposition of genus $g\geq 1$ of $Y$. Then there is an isomorphism
\begin{equation*}
 \widehat{HF}(\overline{Y}) \cong {PFH}_{2g}(T_{\vf}),
\end{equation*}
\end{Thm}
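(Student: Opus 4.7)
The plan is to construct a chain-level isomorphism $\Psi \colon \widehat{CF}(S,\bm{a},\vf(\bm{a})) \to PFC_{2g}(T_\vf)$ and verify that it intertwines the two differentials. Combined with Theorem \ref{Theorem: HF on a page}, this would give the desired result. The strategy follows the ``HF = PFH'' philosophy of \cite{CGH3, CGH4}: on both sides the chain complexes secretly count $2g$--chords or $2g$--strands travelling through the same surface $S$, so the bijection is essentially tautological once the boundary degenerations are handled carefully.

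First I would identify the generators. Given a $2g$-tuple $\mathbf{x}=\{x_1,\dots,x_{2g}\}$ with $x_i \in a_i\cap \vf(a_{\sigma(i)})$ all lying in $S_0$, each interior intersection point $x_i$ yields a period-$1$ orbit of $\partial_t$ in $T_\vf$ passing through $x_i$, which after the Morse--Bott perturbation survives as a nondegenerate orbit of $\widehat R$. Each boundary intersection $c_i$ or $c_i'$ sits on $\partial S$ (where $\vf = \mathrm{id}$) and, in view of the dynamics depicted in Figure \ref{Figure: Modifica MB}, contributes one copy of the elliptic orbit $e$ after perturbation; the quotient by $\mathcal R$ that identifies $\{\dots, c_i, \dots\}$ with $\{\dots, c_i', \dots\}$ is exactly what is needed so that both types of boundary chords descend to the same factor of $e$. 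Since the number of strands is $2g$ and each strand contributes $1$ to $\langle \bs\gamma, S_0\rangle$, the resulting orbit set lies in $\mathcal O_{2g}$, and the multiplicity restrictions on hyperbolic orbits are respected (the only hyperbolic orbit in the region is $h$, which cannot occur with multiplicity $>1$). I would check that this assignment is a bijection onto $PFC_{2g}(T_\vf)$.

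Second, I would match the differentials by comparing moduli spaces. On the HF side, $\partial^{HF}$ counts index-$1$ multisections of $\R\times[0,1]\times\Sigma$ with boundary on $L_{\bm\alpha}\cup L_{\bm\beta}$ and $n_w=0$; on the PFH side, $\partial$ counts ECH-index-$1$ multisections of $\R\times \widehat T_\vf$. The key observation is that the Heegaard surface $\Sigma=S_{1/2}\sqcup_{\partial}\overline S_0$ is two copies of the page glued along the boundary, and the Lagrangian conditions encode the gluing rule ``$S_{1/2}\to S_0$ via $\mathrm{id}$ on $\bm\alpha$, via $\vf$ on $\bm\beta$''. Collapsing these identifications presents $\R\times[0,1]\times\Sigma$ (with $L_{\bm\alpha}, L_{\bm\beta}$ as boundary conditions) as a branched cover of an open piece of $\R\times \widehat T_\vf$, under which holomorphic multisections correspond. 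The condition $n_w=0$ pushes all action into $S_0$, matching the degree-$2g$ condition. By \cite{Li1} the Lipshitz ECH-type index used to define $\widehat{\mathcal M}_1$ agrees with the ECH index used to define $\overline{\mathcal M}_1(\bs\gamma_+,\bs\gamma_-,J)$ for the embedded index-$1$ curves counted by each differential, so the mod-$2$ counts agree.

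The main obstacle is in the second step: controlling holomorphic curves in the boundary collar where the Morse--Bott perturbation takes place. One must show that trivial strips over $c_i$ and $c_i'$ on the HF side correspond on the PFH side to the contributions involving the elliptic orbit $e$ (and are killed by the quotient by $\mathcal R$), and that no extra curves appear or disappear during the identification. Exactness of $\widehat\omega_v$ from Remark \ref{remark: exactness of omega_v} provides the energy bounds needed for compactness of the PFH moduli spaces, matching the compactness already built into $\widehat{\mathcal M}_1$. A careful gluing/degeneration analysis near $\partial S$, together with the classification of low-ECH-index curves near $e$ and $h$ as in \cite[Section 7]{CGH1}, would close the gap and yield the chain map property and bijectivity of $\Psi$.
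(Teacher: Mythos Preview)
The paper does not prove this theorem at all: immediately before stating it, the authors say it is ``one of the main results of \cite{CGH3, CGH4}'' and explicitly add that they ``will not need it in the present article.'' So there is nothing to compare against here, but your proposal has a genuine mathematical gap that would prevent it from ever becoming a proof.

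Your first step is wrong: there is no bijection between the generators of $\widehat{CF}(S,\bm{a},\vf(\bm{a}))$ and those of $PFC_{2g}(T_\vf)$. An intersection point $x_i \in a_i \cap \vf(a_{\sigma(i)})$ is \emph{not} a fixed point of $\vf$ (it only means $\vf^{-1}(x_i) \in a_{\sigma(i)}$), so it does not correspond to any period-$1$ orbit of $\partial_t$. Conversely, orbit sets in $\mathcal{O}_{2g}$ can contain simple orbits of period $>1$, which have no counterpart among $2g$-tuples of intersection points. The two generating sets have different cardinalities in general, so no chain-level isomorphism of the kind you describe can exist.

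The actual mechanism, outlined in Section~\ref{Subsection: Review of the chain map Phi} of this paper and carried out in \cite{CGH3, CGH4}, is completely different: one builds a symplectic cobordism $W_+$ interpolating between $\R\times[0,1]\times\widehat S$ and $\R\times\widehat T_\vf$, and defines a chain map $\Phi$ by counting ECH-index-$0$ holomorphic multisections of $W_+$ from a $2g$-tuple $\widehat{\mathbf{x}}$ to an orbit set $\bs\gamma$ (Definition~\ref{Definition: Multisections of W_+} and Equation~\eqref{eqn: definition of Phi}). This $\Phi$ is not a bijection on generators; proving that it induces an isomorphism in homology is the content of \cite[Theorem~1.0.1]{CGH4} and requires substantial additional work, including the construction of an inverse map and chain homotopies.
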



\subsection{Periodic Floer homology for fibered knots} 
\label{Subsection: PFH for fibered knots}
In this section define the {\em sharp-version} of periodic Floer homology 
$PFH^\sharp(T_{\vf})$ associated to an open book decomposition. The construction is 
analogous to the construction of $ECH^\sharp$ from \cite[Section 7]{CGH2}.

\begin{Def} \label{Definition: Alexander degree in PFH} For any generator $\bs{\gamma} = 
e^k \prod_j\gamma_j^{k_j}$ of $PFC_d(T_{\vf})$, with $\gamma_j \neq e$ for all $j$,
we define $\underline{\bs{\gamma}}\coloneqq \prod_j\gamma_j^{k_j}$. The 
\emph{Alexander degree} of $\bs{\gamma}$ with respect to $S_0$ is the integer, or half 
integer,
 \[\mathcal{A}(\bs{\gamma}) \coloneqq  \langle \underline{\bs{\gamma}}, S_0\rangle 
- \frac d2 = \frac d2 - k.\]
\end{Def}
The most interesting case is when $d=2g$. If $\bs{\gamma} \in \mathcal{O}_{2g}$, 
 then $\mathcal{A}(\bs{\gamma}) \in \{-g,\ldots,g\}$
and $\mathcal{A}(\bs{\gamma}) = -g$ if and only if $\gamma= e^{2g}$. 
\begin{Lemma}\label{Alexander is a filtration}
if $\bs{\gamma}_+$ and $\bs{\gamma}_-$ are two orbit sets in $\mathcal{O}_d$ such that 
$\mathcal{M}(\bs{\gamma}_+, \bs{\gamma}_-, J) \neq \emptyset$, then 
$\mathcal{A}(\bs{\gamma}_+) \geq \mathcal{A}(\bs{\gamma}_-)$. 
\end{Lemma}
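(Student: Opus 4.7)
The plan starts by translating the inequality into a statement about the multiplicity of $e$ at the ends. Writing $\bs{\gamma}_\pm = e^{k_\pm}\underline{\bs{\gamma}}_\pm$ and using that $e$ is a period--one orbit, so $\langle e, S_0\rangle = 1$, the assumption $\bs{\gamma}_\pm \in \mathcal{O}_d$ gives $\langle \underline{\bs{\gamma}}_\pm, S_0\rangle = d - k_\pm$. Consequently $\mathcal{A}(\bs{\gamma}_\pm) = d/2 - k_\pm$, so the inequality to prove is equivalent to
\[ k_+ \;\leq\; k_-. \]

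The proposed strategy is to apply positivity of intersection between a $J$--holomorphic multisection $u \in \mathcal{M}(\bs{\gamma}_+, \bs{\gamma}_-, J)$ and the trivial $J$--holomorphic cylinder $C_e \coloneqq \R \times e$; the latter is indeed $J$--holomorphic because the chosen $J$ satisfies $J(\partial_s) = \widehat{R}$ and preserves the vertical tangent spaces. Any connected components of $u$ that coincide with $C_e$ reduce $k_+$ and $k_-$ by the same amount, so they can be harmlessly peeled off; after this reduction one may assume no component of $u$ is contained in $C_e$. Under this assumption, Siefring's asymptotic intersection theory (as reformulated in the ECH setup of \cite[Section~2]{HS}) yields a finite algebraic intersection number $u \cdot C_e \geq 0$, which decomposes as a non-negative interior intersection count plus an asymptotic writhe contribution coming from the ends of $u$ along covers of $e$.

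The heart of the argument is an explicit writhe computation at these asymptotic ends, exploiting that $e$ is elliptic with small positive transverse rotation angle, a property arranged by the Morse--Bott perturbation of Subsection \ref{Subsection: Review of symplectic Floer homology}. This smallness makes the ECH partition conditions at every $e^m$ with $m \leq d$ trivial, and the linking behaviour of each unbranched end covering a power of $e$ is then determined purely by the sign of the transverse rotation: positive ends contribute non-positively and negative ends contribute non-negatively to the asymptotic intersection. Summing the resulting contributions shows that the asymptotic part is bounded above by $k_- - k_+$, so that $u \cdot C_e \geq 0$ forces $k_+ \leq k_-$.

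The main technical obstacle will be the precise sign and bookkeeping in the writhe computation, in particular handling the case in which $\bs{\gamma}_+$ and $\bs{\gamma}_-$ share common covers of $e$ and choosing a trivialisation of the normal bundle of $C_e$ compatible with the small positive rotation. Once this is set up, the computation reduces to the standard asymptotic analysis of \cite{Hu2}.
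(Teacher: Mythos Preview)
Your reduction to $k_+\le k_-$ and the peeling off of trivial $e$--cylinders are correct, but from there the paper takes a different and much shorter route than the Siefring/writhe computation you outline. The paper does not intersect with $C_e$ at all; instead it proves the stronger \emph{trapping} statement that every irreducible component of $u$ carrying a positive end at a cover of $e$ is itself a cover of the trivial cylinder over $e$. The argument is essentially a one--line appeal to \cite[Lemma~5.3.2]{CGH1}: because $\partial T_\vf$ is a \emph{negative} Morse--Bott torus, a non--trivial such component would force the positive end to approach $e$ with nonzero winding relative to the longitude determined by $\partial T_\vf$, and this is then excluded by the ambient topology of $\widehat{T}_\vf$. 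The inequality $k_+\le k_-$ follows at once, and the same trapping principle is reused elsewhere in the paper (for instance to show $\partial^\sharp\circ\partial^\sharp=0$).

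Two points in your plan deserve scrutiny. First, the rotation number of $e$ in the $\partial T_\vf$--framing is slightly \emph{negative}, not positive: this is precisely what ``negative Morse--Bott torus'' encodes, and it is what makes the winding bound at a positive end read $w\le\lfloor m\theta\rfloor=-1$ rather than $\le 0$ (indeed, the paper's ``nonzero winding'' conclusion would fail if $\theta>0$). Your floor/ceiling values and partition remarks should be adjusted accordingly. Second, the sentence ``$u\cdot C_e$ decomposes as a non-negative interior count plus an asymptotic writhe contribution'' conflates two different quantities: $u\cdot C_e$ \emph{is} the finite count of interior intersections, and the asymptotic corrections enter only when one tries to compute it via a homological or linking formula, which you have not written down; in particular you have not explained why that homological term is controlled by $k_\pm$ alone rather than by the full relative class of $u$. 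With the correct sign and a careful formulation (e.g.\ pairing with a pushoff of $C_e$, or with the invariant torus $\R\times\mathcal{T}$ as in the later proof of Theorem~\ref{Theorem: Phi is filtered}) an intersection--theoretic argument can be completed, but it is longer than the paper's trapping argument and yields only the filtration inequality, not the stronger trapping conclusion.
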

\begin{proof}
Let $u \colon (\dot F, j) \to \R \times \widehat{T}_\vf$ be a $J$-holomorphic curve multisection 
in $\mathcal{M}(\bs{\gamma}_+, \bs{\gamma}_-, J)$   and let $\dot F_0$ be a connected 
component of $F$ with a positive puncture asymptotic to $e$, or a multiple cover of $e$. 
If $u|_{\dot F_0}$ is not a cover of a trivial cylinder on $e$, then the projection to 
$\widehat{T}_\vf$ of the end of $u$ which is asymptotic to $e$ must approach $e$ with 
nonzero winding number with 
respect to the longitude induced by $\partial T_\vf$: this is a consequence of Lemma 5.3.2 of \cite{CGH1} and the fact that  $\partial T_\vf$ is a negative Morse-Bott torus. 
However, for topological reasons, this is not possible for a map 
with value in $\widehat{T}_{\vf}$ and thus $u|_{\dot F_0}$ is a cover of a trivial cylinder on 
$e$. This implies that the total multiplicity of $e$ in $\bs{\gamma}_-$ is at least equal to 
the total multiplicity in $\bs{\gamma}_+$
\end{proof}
The previous lemma shows that the Alexander degree induces a filtration on 
$PFC_{2g}(T_\vf)$ called the {\em Alexander filtration}.

Now we recall the sharp-version of periodic Floer homology, which is similar to the 
sharp-version of embeded contact homology defined in \cite[Section 7]{CGH1} and 
generalises the sharp-version of fixed point Floer homology defined in 
Section~\ref{Subsection: Review of symplectic Floer homology}. 
Given $d \ge 0$, we denote by $\mathcal{O}^\sharp_d$ the subset of $\mathcal{O}_d$
consisting of orbit sets wich do not contain $e$. Then we define $PFC_d^{\sharp}(T_{\vf})$ as 
the vector space generated over $\Z / 2 \Z$ by the orbit sets in ${\mathcal O}_d^\sharp$. 
By Lemma~\ref{Alexander is a filtration} we can identify $PFC_d^{\sharp}(T_{\vf})$ to the
quotent of the complex of $PFC_d(T_{\vf})$ by the subcomplex generated by the orbit sets 
containing $e$, and therefore it carries an induced differential 
$\partial^{\sharp}$. We denote 
$$PFH_d^\sharp(T_\vf)= H_* \left ( PFC_d(T_{\vf}), \partial^\sharp \right ).$$
From Lemma~\ref{Alexander is a filtration} it follows that the graded complex of 
$PFC_{2g}(T_\vf)$ associated to the Alexander filtration is isomorphic to
$$\bigoplus_{i=-g}^g PFC^\sharp_{i+g}(T_\vf).$$

Combining the isomorphism $PFH^\sharp_*(T_\vf) \cong ECH^\sharp_*(T_\vf)$ from 
\cite[Section 3.6]{CGH3}, the isomorphism between $PFH^\sharp_*(T_\vf)$ and the 
sutured contact homology of the knot complement from \cite[Theorem 10.3.2]{CGH1} and 
 \cite[Conjecture 1.5]{CGHH}, we obtain the following conjecture. 
\begin{Cnj}
If $K \subset Y$ is a fibered knot with monodromy $\vf$, then 
$$\widehat{HFK}(\overline{Y}, \overline{K}, i) \cong PFH_{i+g}^\sharp(T_\phi).$$
\end{Cnj}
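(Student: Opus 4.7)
The plan is to extend the open--close map underlying Theorem 1.1 to all Alexander gradings. Concretely, I would construct a chain-level map
\[ \Phi^{\sharp}_* \colon \widehat{CFK}(\overline{\Sigma}, \bm{\alpha}, \bm{\beta}, z, w, i) \longrightarrow PFC^{\sharp}_{i+g}(T_{\vf}) \]
modelled on the open--close map of \cite{CGH3}. The equality $\mathcal{A}(\mathbf{x}) - \mathcal{A}(\mathbf{y}) = n_z(u)$ on the Heegaard side, together with Lemma 2.7.2 on the periodic Floer side, guarantees that such a map respects the two Alexander filtrations, so it decomposes Alexander degree by Alexander degree and should coincide with the map of Theorem 1.1 when $i = -g+1$.

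Then I would mimic the Dehn-twist induction used in the proof of Theorem 1.1, one Alexander degree at a time. For a fixed $i$ and an essential curve $L \subset S$ with $\vf' = \vf \circ \tau_L^{-1}$, the target is a commutative diagram with exact rows whose top row is the $i$-th Alexander piece of the Ozsv\'{a}th--Szab\'{o} surgery triangle relating $\widehat{HFK}(\overline{Y}_0, \overline{K}_0; i)$, $\widehat{HFK}(\overline{Y}, \overline{K}; i)$ and $\widehat{HFK}(\overline{Y}_+, \overline{K}_+; i)$; whose bottom row is a periodic-Floer analogue of Seidel's exact triangle linking an Alexander-$i$ analogue of $HF(\vf(L), L)$ to $PFH^{\sharp}_{i+g}(T_{\vf})$ and $PFH^{\sharp}_{i+g}(T_{\vf'})$; and whose vertical maps are induced by $\Phi^{\sharp}_*$ together with a sutured decomposition identifying the left vertical map, as in the $i=-g+1$ case of Theorem 1.1. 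The Five Lemma would then reduce the statement on the open book $(K, S, \vf)$ to the same statement on one whose monodromy has a strictly shorter Dehn twist factorisation. The base case $\vf = \mathrm{id}$ should be a direct computation: the binding is the unlink in $\#^{2g} (S^1 \times S^2)$, whose knot Floer homology in each Alexander degree is classical, while the periodic Floer complex of $T_{\mathrm{id}}$ is essentially combinatorial.

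The main obstacle is the bottom row, namely a Seidel-type exact triangle for periodic Floer homology that is compatible with the Alexander filtration. For $i = -g+1$ the relevant orbit sets contain only a single simple non-elliptic fixed point and the argument reduces to the Seidel exact triangle for surface symplectomorphisms proved in the body of the paper. For larger $i$, however, the generators are multi-orbit sets in which multiply covered elliptic orbits may occur, so one must control Morse--Bott degenerations near $\partial T_{\vf}$, bubbling along covers of the distinguished orbit $e$, and the contribution of index-one multisections connecting orbit sets whose winding numbers around $L$ differ. Setting up the holomorphic-curve count in the mapping cone of the Dehn twist so that it matches, grading by grading, the cobordism maps of the Ozsv\'{a}th--Szab\'{o} triangle is what I expect to require the bulk of the work.
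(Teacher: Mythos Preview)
The statement you are trying to prove is labelled \textbf{Conjecture} in the paper, and the paper does \emph{not} supply a proof. The authors explain that it arises by combining the identification $PFH^\sharp_*(T_\vf)\cong ECH^\sharp_*(T_\vf)$ and the relation to sutured contact homology with \emph{another open conjecture} (\cite[Conjecture~1.5]{CGHH}); they then go on to prove only the special case $i=-g+1$, where $PFC^\sharp_{1}(T_\vf)=CF^\sharp(\vf)$ and the problem reduces to fixed point Floer homology of a single surface symplectomorphism. So there is no ``paper's own proof'' for your attempt to be compared against.

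Your outline is a reasonable research programme, and in fact the first step---that the open--close map $\Phi$ of \cite{CGH3} respects the Alexander filtration and hence induces graded maps $\Phi^\sharp_i\colon \widehat{CFK}(S,\bm{a},\vf(\bm{a}),z,i)\to PFC^\sharp_{i+g}(T_\vf)$---is exactly Theorem~\ref{Theorem: Phi is filtered} of the paper. But you have correctly located the genuine gap yourself: the inductive step requires a Seidel-type exact triangle for $PFH^\sharp_d$ with $d>1$, where the generators are orbit sets rather than single periodic points. No such triangle is established in the paper (or, to my knowledge, in the literature), and the issues you list---multiply covered elliptic orbits, ECH index versus Fredholm index, the behaviour of multisections near the vanishing cycle---are precisely why this is hard. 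The paper's entire Section~4 works only because for $d=1$ every orbit set is a single simple orbit and the analysis collapses to ordinary Floer theory on $S$; none of the arguments there (in particular Lemmas~\ref{creation}--\ref{caldo} and the energy-filtration trick) survive the passage to multi-orbits without substantial new input. Until that triangle is available, your proposal is a strategy, not a proof.
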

Note that, dropping the requirement that $K$ is fibered and denoting by $N$ the 
complement of a tubular neighbourhood of $K$, one can still define
$ECH^\sharp_i(N)$ and in that case it is expected that $\widehat{HFK}(\overline{Y}, 
\overline{K}, i)$ is isomorphic to $ECH^\sharp_{i+g}(N)$.
\begin{Rmk} \label{Remark: PFH_1 = HF}
 By Hutchins and Sullivan \cite{HS}, there is an identification of chain complexes 
 \[(PFC_1^\sharp(T_{\vf}),\partial^\sharp) = (CF^\sharp(\vf),\partial^\sharp)\]
 inducing a canonical isomorphism between $PFH_1^\sharp(T_{\vf})$ and $HF^\sharp(\vf)$.
\end{Rmk}

\section{The chain map from HFK to PFK} \label{Section: The isomo from 
HF to ECH}

The aim of this section is to define chain maps
\begin{equation}
 \Phi_i^\sharp \colon \widehat{CFK}(S, \bm{a}, \vf(\bm{a}),z,i) \longrightarrow 
PFC^\sharp_{i+g}(T_{\vf}) 
\end{equation}
for any $i \in \{-g, \ldots, g \}$. In particular, for $i = -g+1$, by Remark \ref{Remark: PFH_1 
= HF}, we get a chain map 
\begin{equation*}
 \Phi^\sharp \colon  \widehat{HFK}(S, \bm{a}, \vf(\bm{a}),z,i) \longrightarrow 
CF^{\sharp}(\vf).
\end{equation*}
In the next section we will prove that $\Phi^\sharp$ induces an isomorphism in homology.
In order to define the maps $\Phi_i^\sharp$ we will first review the chain map
\begin{equation*}
 \Phi \colon \widehat{CF}(S, \bm{a}, \vf(\bm{a}))
\longrightarrow PFC_{2g}(T_{\vf})
\end{equation*}
defined in \cite{CGH3} by Colin, Ghiggini and Honda. 

We will then show that $\Phi$ respects the Alexander filtrations induced by $K$ on both 
sides and define the maps $\Phi_i^\sharp$ as the maps induced by $\Phi$ between 
the homogeneous summands of the graded complexes, taking into account the identification 
of $\bigoplus \limits_{i=-g}^g PFC^\sharp_{i+g}(T_{\vf})$ with the graded complex of 
$PFC_2g(T_\vf)$.

\subsection{Review of the chain map $\Phi$} 
\label{Subsection: Review of the chain map Phi}
In this section we recall the definition of the chain map
$$\Phi \colon \widehat{CF}(S, \mathbf{a}, \vf(\mathbf{a})) \to PFC_{2g}(T_\vf)$$
introduced in \cite{CGH3}.
We refer the reader to sections 5 and 6 of \cite{CGH3} for details. Moreover, only for this
section, we will regard the mapping torus of $(S,\vf)$ as
$$T_{\vf} = \frac{S \times [0,2]}{(x,2)\sim (\vf(x),0)},$$
and analogously for the mapping torus $\widehat{T}_\vf$ of $(\widehat{S}, \vf)$.
We denote $\pi \colon \R \times \widehat{T}_\vf \to S^1 \cong [0,2]/ 0 \sim 2$
the fibration  naturally extending to the 
$\mathbb{R}$-component the fibration $\widehat{T}_{\vf} \rightarrow S^1$ with fibre 
$\widehat{S}$ defined by $(x,t) \mapsto t$. 

Define now the subset $B^c_+ \coloneqq [2,\infty) \times (1,2)$ of $\mathbb{R} 
\times\frac{[0,2]}{0 \sim 2}$ with all the corners
smoothed and let $B_+ = (\mathbb{R} \times S^1) \setminus B_+^c$.
\begin{figure} 
  \begin{center}
   \includegraphics[scale = 1]{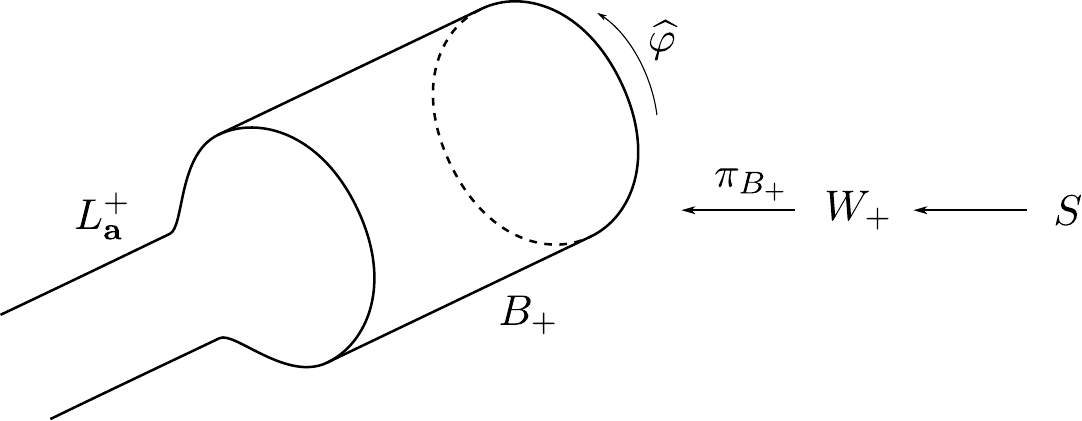}
  \end{center}
  \caption{The fibration $W_+$ over $B_+$ with fiber $\widehat{S}$ and Lagrangian 
boundary condition $L^+_{\bm{a}}$.}
  \label{Figure: B_+}
\end{figure}
Note that $B_+$ is biholomorphic to $\mathbb{D}^2 \setminus \{0, 1 \}$. Then we define 
the cobordism
\begin{equation*}
  W_+ = \pi^{-1}(B_+)
\end{equation*}
and a map $\pi_{B_+} \colon W_+ \to B_+$ by restricting $\pi$ to $W_+$. 
By construction $\pi_{B_+} \colon W_+ \to B_+$ is a fibration with fibre $\widehat{S}$. 
We will continue to indicate by $\pi_{\mathbb{R}}$ the restriction to $W_+$ of the projection 
$\R \times \widehat{T}_\vf \to \R$.

Consider the Morse-Bott perturbed stable Hamiltonian structure 
$(\widehat{\omega}_v,dt)$ on $\widehat{T}_{\vf}$ with Reeb vector field 
$\widehat{R}$ defined in Section \ref{Subsection: Review of symplectic 
Floer homology}. 
The $2$--form $\Omega = ds \wedge dt + \widehat{\omega}_v$ on $\R \times 
\widehat{T}_\vf$ is symplectic  and induces, by restriction, a symplectic form $\Omega_+$ 
on $W_+$. Endowed $W_+$ with this symplectic form, $\pi_{B_+} \colon W_+ \to B_+$  can 
be seen as a symplectic fibration.
The symplectic connection on $W_+$ given by the $\Omega_+$--orthogonal of the tangent 
space of the fibers is spanned by $\partial_s$ and $\widehat{R}$.

Fix a basis of arcs $\bm{a}=\{a_1,\ldots,a_{2g}\}$ in $S$ as in Section \ref{Subsubsection: 
The knots filtrations} and extend each $a_i$ to a segment $\widehat{a}_i$ in $\widehat{S}$
straight until it meets $\partial \widehat{S}$. If $f \in \mathcal{C}^{\infty}([1,4])$ is the function defined in last section, we assume that $|f(4)|$ is small enough to ensure that $\widehat{a}_i \cap \widehat{\vf}(\widehat{a}_j) \cap \{y \in (2,4]\} = \emptyset$ for any $i,j$. The proof of the following lemma is an 
immediate application of the implicit function theorem.
\begin{Lemma}\label{lemma: hat or not hat}
If the Morse-Bott perturbation is sufficiently small, then the intersection points between the 
arcs $\widehat{a}_i$ and $\wf(\widehat{a}_i)$ are in natural bijection with the intersection 
points between $a_i$ and $\vf(a_j)$ for all $i$ and $j$. Moreover this bijection induces an 
isomorphism between the chain complex $\widehat{CF}(S, \mathbf{a}, \vf(\mathbf{a}))$
and the chain complex $\widehat{CF}(\widehat{S}, \widehat{\mathbf{a}}, 
\wf(\widehat{\mathbf{a}}))$.
\end{Lemma}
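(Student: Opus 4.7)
The plan is to deduce everything from the implicit function theorem together with the fact that the Morse--Bott perturbation is supported far from the part of the diagram where the generators live. I would separate the statement into two assertions: first the bijection on intersection points, then the identification of differentials.

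For the bijection, the arcs $\widehat{a}_i$ agree with $a_i$ on $S$ and are straight radial segments $\{c_i,c_i'\}\times [2,4]$ in the collar $\widehat{S}\setminus S = [2,4]\times S^1$. Since $\widehat{\vf}$ acts on this collar by the rotation $(y,\vartheta)\mapsto(y,\vartheta+f(y))$ with $|f(4)|<\pi/g$ chosen small, the hypothesis $\widehat{a}_i\cap\widehat{\vf}(\widehat{a}_j)\cap\{y\in(2,4]\}=\emptyset$ holds, and moreover the two families of arcs are separated by a positive distance in that region. As $\wf$ is obtained from $\widehat{\vf}$ by a $C^\infty$-small Hamiltonian perturbation with support inside $\{y\in(1,3)\}$, the separation in $\{y\in[3,4]\}$ is preserved and, in $\{y\in(2,3)\}$, the smallness of the perturbation together with the explicit rotational form of $\widehat{\vf}$ and $\widehat{a}_i$ prevents the creation of intersection points there. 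Inside $S$, the map $\wf|_S$ is a $C^\infty$-small Hamiltonian perturbation of $\vf$, so the IFT applied at each of the finitely many transverse intersection points of $a_i$ with $\vf(a_j)$ produces a unique nearby transverse intersection of $\widehat{a}_i$ with $\wf(\widehat{a}_j)$, and no new ones appear by a standard compactness argument combined with smallness. This gives the claimed bijection of intersection points, and therefore of generators, as both complexes are built out of $2g$-tuples indexed by the same combinatorial data.

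For the chain isomorphism, I would argue that the differential on $\widehat{CF}(\widehat{S},\widehat{\mathbf{a}},\wf(\widehat{\mathbf{a}}))$ counts the same holomorphic multisections that already appear in $\widehat{CF}(S,\mathbf{a},\vf(\mathbf{a}))$. The asymptotic data and Lagrangian boundary intersections all lie in $S$. Choosing an admissible almost complex structure $J$ on $\widehat{\Sigma}$ which, on the extension region, is the product of the standard complex structure on the annulus $[2,4]\times S^1$ with the $[0,1]$-direction, makes $y$ into a $J$-plurisubharmonic function there, while the Lagrangians $L_{\widehat{\alpha}_i}$ and $L_{\widehat{\beta}_i}$ lie in fibres of $y$-constant level sets (up to the small Morse--Bott tilt, which can be absorbed by enlarging the plurisubharmonic region). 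The maximum principle then forbids any holomorphic multisection with asymptotics and boundary in $S$ from having an interior point with $y>2$, so the moduli spaces used in the two differentials coincide under the bijection of Step~1.

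The main obstacle is the last step: formally verifying that the maximum principle applies after the Morse--Bott perturbation modifies $J$ slightly inside $\{y\in(1,3)\}$. One either chooses the perturbation and $J$ so that $y$ remains plurisubharmonic (possible because the perturbation can be made $C^\infty$-small and rotationally invariant in the $\vartheta$-direction near $y=2$), or else invokes Floer-theoretic invariance under the Hamiltonian isotopy from $\widehat{\vf}$ to $\wf$ and under the enlargement of $S$ to $\widehat{S}$, both of which are standard in this setup. Either way, once the moduli spaces are matched, the resulting bijection on generators is tautologically a chain isomorphism, which is the assertion of the lemma.
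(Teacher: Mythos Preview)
Your argument is correct in spirit and considerably more detailed than the paper's, which consists of a single sentence preceding the statement: ``The proof of the following lemma is an immediate application of the implicit function theorem.'' For the bijection of generators your reasoning is exactly the intended one. For the chain-map part, the paper presumably also means the implicit function theorem, this time applied to the Cauchy--Riemann operator: since $\wf$ is a $C^\infty$-small Hamiltonian perturbation of $\widehat\vf$, each transversely cut out index-$1$ moduli space deforms bijectively, and Gromov compactness rules out new curves appearing. Your maximum-principle route is a legitimate alternative, and your fallback to Hamiltonian-isotopy invariance is precisely the Floer-theoretic packaging of that same IFT argument.

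One small technical slip worth noting: the Lagrangians $L_{\widehat a_i}$ and $L_{\wf(\widehat a_i)}$ do \emph{not} lie in $y$-level sets in the collar; in $\{y\in[2,4]\}$ the arcs $\widehat a_i$ are radial (constant $\vartheta$, varying $y$), so a naive maximum principle for $y$ does not immediately apply at the boundary. The correct version uses that the arcs are conical (invariant under the Liouville flow) in that region, so the standard convexity/integration-by-parts argument for exact Lagrangians in a Liouville collar shows curves with asymptotics in $\{y\le 2\}$ cannot enter $\{y>2\}$. This is routine to fix, and since you already offer the invariance argument as a fallback, the proposal stands.
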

Given a $k$-tuple of intersection points $\mathbf{x} = (x_1, \ldots, x_k)$ between $\bm{a}$ 
and $\vf(\bm{a})$, we denote $\widehat{\mathbf{x}}= (\widehat{x}_1, \ldots, 
\widehat{x}_k)$ the corresponding $k$-tuple of intersection points between 
$\widehat{\bm{a}}$ and $\wf(\widehat{\bm{a}})$.

Take a copy of $\widehat{\bm{a}}= \{ \widehat{a}_1,\ldots, \widehat{a}_{2g}\}$ in 
$\pi_{B_+}^{-1} (3,1)$ and call $L_{\bm{a}}^+$ the trace of the parallel
transport of $\widehat{\bm{a}}$ along $\partial B_+$ using the symplectic connection; then 
$L_{\bm{a}}^+$ is Lagrangian and
\begin{equation*}
 \begin{array}{lll}
  L^+_{\bm{a}} \cap \{s\geq 3,\ t=0\} & = & \{s\geq 3\} \times \{t=0\} \times 
\wf(\widehat{\bm{a}}); \\
  L^+_{\bm{a}} \cap \{s\geq 3,\ t=1\} & = & \{s\geq 3\} \times \{t=1\} \times 
\widehat{\bm{a}}. 
 \end{array}
\end{equation*}
Note that $L_{\bm{a}}^+$ has $2g$ connected components $L_{a_i}^+$, one for each 
component $\widehat{a}_i$ of $\widehat{\bm{a}}$.


The map $\Phi$ is given as a chain map 
\[ \Phi \colon \widehat{CF}(S, \bm{a}, \vf(\bm{a})) 
\longrightarrow PFC_{2g}(T_{\vf}) \]
defined by counting certain \emph{multisections} of the symplectic fibration 
$(W_+,\pi_{B_+})$ with Lagrangian boundary condition  $L^+_{\bm{a}}$. 

Let $J_+$ be a suitable almost complex structure on $W_+$ which is compatible with 
$\Omega_+$ and cylindrical for $s>3$ and $s<1$. (See \cite[section 5]{CGH3}, and in 
particular Remark 5.3.10 for the Morse-Bott perturbation.) Let $(F,j)$ be a compact (possibly
disconnected) Riemann surface with two sets of punctures $\mathbf{p}= \{p_1,\ldots,p_l\}$ 
in the interior and $\mathbf{q}= \{q_1,\ldots,q_k\}$ in the boundary of $F$ such that 
\begin{itemize}
\item[(i)] every connected component of $F$ contains at least an element of $\mathbf{p}$ 
and a connected component of $\partial F$, and 
\item[(ii)] every connected component of $\partial F$ contains at least an element of 
$\mathbf{q}$. 
\end{itemize}
We will set $\dot{F} = F \setminus \{\mathbf{p} \cup \mathbf{q}\}$.

\begin{Def} \label{Definition: Multisections of W_+}
Let $\widehat{\mathbf{x}} = \{\widehat{x}_1,\ldots, \widehat{x}_k\}$ be a $k$-tuple 
($k\leq 2g$) of $\widehat{\bm{a}} \cap \wf(\widehat{\bm{a}})$ and $\bs{\gamma} = 
\prod_j \gamma^{m_j} \in \mathcal{O}_k$. A degree $k$ multisection of $(W_+,J_+)$ from 
$\widehat{\mathbf{x}}$ to 
$\bs{\gamma}$ is  a holomorphic map 
 $$u\colon(\dot{F},j) \longrightarrow (W_+,J_+)$$
 where $(F,j)$ is a Riemann surface as above, and $u$ is such that
 \begin{enumerate}
  \item $u(\partial \dot{F}) \subset L_{\bm{a}}^+$ and maps each connected component 
of $\partial \dot{F}$ to a different $L_{a_i}^+$;
  \item $\lim_{w\rightarrow q_i} \pi_{\mathbb{R}}\circ u(w) = +\infty$ and $\lim_{w\rightarrow p_i} \
pi_{\mathbb{R}}\circ u(w) = -\infty$,
  \item near $q_i$, $u$ is asymptotic to a strip over $[0,1] \times \{\widehat{x}_i\}$;
  \item near each $p_i$, $u$ is asymptotic to a cylinder over a multiple of some $\gamma_j$ 
so that the total multiplicity of $\gamma_j$ over all the   $p_i$ is $m_i$.
 \end{enumerate}
 \end{Def}

In practice, holomorphic multisections in $W_+$ interpolate between multisections in $\R 
\times [0,1] \times \widehat{S}$ and $\R \times \widehat{T}_\vf$. Moreover, in 
\cite[Section 5]{CGH3} the authors define an \emph{$ECH$ index for holomorphic 
multisections of $W_+$} which interpolates between the ECH-type index for holomorphic 
curves in $\R \times [0,1] \times \widehat{S}$ and the $ECH$ index for holomorphic curves 
in $\R \times \widehat{T}_\phi$. As in \cite{CGH3}, we will refer to the image of the 
connected components of $\dot F$ as to the \emph{irreducible components of $u$}.
We say that $u$ is {\em irreducible} if $\dot F$ is connected.

For $\mathbf{x}=(x_1, \ldots, x_{2g})$ intersection points between $\bm{a}$ and
$\vf(\bm{a})$, and $\bs{\gamma} \in \mathcal{O}_{2g}$, we denote by
$$\mathcal{M}_0(\widehat{\mathbf{x}}, \bs{\gamma}, J_+)$$
the space of ECH index $0$, degree $2g$ multisections $(W_+,J_+)$ from 
$\widehat{\mathbf{x}}$ to $\bs{\gamma}$. 

Recall the chain complex $CF'(S, \bm{a}, \vf(\bm{a}))$ defined in Section 
\ref{Subsection: Heegaard Floer homology for open books} as a precursor 
of  $\widehat{CF}(S, \bm{a},\vf(\bm{a}))$.
We define a map $\Phi' \colon CF'(S, \bm{a},\vf(\bm{a})) \to PFC_{2g}(T_{\vf})$ as 
\begin{equation} \label{eqn: definition of Phi}
\Phi' (\mathbf{x})= \sum_{\bs{\gamma} \in \mathcal{O}} \#_2 
\mathcal{M}_0(\widehat{\mathbf{x}}, \bs{\gamma}, J_+) \bs{\gamma}.
\end{equation}



In the following theorem we summarize some of the results about $\Phi$ proved in 
\cite{CGH3}. 

\begin{Thm} \label{Theorem: Summary of results about the properties of Phi}
The following hold:
\begin{enumerate}
 \item $\Phi'$ is a chain map, and 
 \item $\Phi'$ maps the subspace $\mathcal{R}$ defined in Section \ref{Subsection: 
Heegaard Floer homology for open books} to zero.
\end{enumerate}
\label{Theorem: main results on Phi} 
\end{Thm}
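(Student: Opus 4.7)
The plan is to study the compactifications of appropriate moduli spaces of multisections of $(W_+, J_+)$ in the standard SFT-type manner, using the ECH index machinery and Morse--Bott compactness already developed in \cite{CGH1, CGH3}. For (1), I would consider the $1$-dimensional moduli space $\mathcal{M}_1(\widehat{\mathbf{x}}, \bs{\gamma}, J_+)$ of ECH-index $1$, degree-$2g$ multisections from $\widehat{\mathbf{x}}$ to $\bs{\gamma}$. By SFT compactness adapted to the Morse--Bott setting, the ends of this moduli space consist of broken configurations, and since the ECH-type index is additive under breaking and nonnegative on the nontrivial pieces, they split into exactly two types: a level at the positive end consisting of an index-$1$ multisection in $\R \times [0,1] \times \widehat{S}$ from $\widehat{\mathbf{x}}$ to some $\widehat{\mathbf{y}}$ glued to an index-$0$ multisection in $W_+$ from $\widehat{\mathbf{y}}$ to $\bs{\gamma}$; or an index-$0$ multisection in $W_+$ from $\widehat{\mathbf{x}}$ to some $\bs{\gamma}'$ glued to an index-$1$ cylinder in $\R \times \widehat{T}_\vf$ from $\bs{\gamma}'$ to $\bs{\gamma}$. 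Counting the total number of ends modulo $2$ yields $\partial^{PFH} \circ \Phi' = \Phi' \circ \partial^{HF}$, which is the chain map identity.

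For (2), I would observe that if $\mathbf{x} = \{x_1, \ldots, c_i, \ldots, x_{2g}\}$ and $\mathbf{x}' = \{x_1, \ldots, c_i', \ldots, x_{2g}\}$, then $\widehat{\mathbf{x}}$ and $\widehat{\mathbf{x}'}$ differ only at the boundary-adjacent positions. Any index-$0$, degree-$2g$ multisection $u$ from $\widehat{\mathbf{x}}$ must contain an irreducible component carrying a positive end at the chord over $\widehat{c}_i$. Using the observation recalled just above \eqref{eq: quotient}, that a connected degree-$k$ multisection with $n_w = 0$ and positive end at a chord over $c_i$ or $c_i'$ is forced to be a trivial strip, together with the explicit product structure of $J_+$ near $\partial \widehat{S}$ in $W_+$, this component extends uniquely to a boundary-parallel piece of $u$ whose asymptotic orbit in $T_\vf$ is empty and whose moduli space is a single point. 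Swapping this piece with its $c_i'$-analogue then sets up a canonical bijection $\mathcal{M}_0(\widehat{\mathbf{x}}, \bs{\gamma}, J_+) \leftrightarrow \mathcal{M}_0(\widehat{\mathbf{x}'}, \bs{\gamma}, J_+)$ for every $\bs{\gamma}$, giving $\Phi'(\mathbf{x}) = \Phi'(\mathbf{x}')$ and hence $\Phi'(\mathcal{R}) = 0$.

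The main obstacle will be the analytic package behind compactness and gluing in this Morse--Bott SFT-type setting. In particular I would need to rule out exotic degenerations involving branched multiple covers of the trivial cylinder over the elliptic orbit $e$, as well as configurations in which holomorphic pieces escape into the collar $\{y \in (2,4]\}$ of $\widehat S$; this is where the additivity and nonnegativity properties of the ECH index for multisections of $W_+$ developed in \cite[Section 5]{CGH3}, combined with the constraint $n_w(u) = 0$ and the dynamical control of $\widehat R$ in that collar, do the essential work. Transversality for the distinguished trivial strips near $\partial \widehat{S}$ has to be checked separately, but is automatic from the explicit product form of the Lagrangian boundary condition and the almost complex structure in that region.
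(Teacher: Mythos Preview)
Your approach to (1) is the standard one and matches what is carried out in \cite[Section~6]{CGH3}; the paper here simply quotes that reference (together with Lemma~\ref{lemma: hat or not hat}) rather than reproving it.

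For (2), however, there is a genuine gap in your argument. You invoke the observation recorded just above \eqref{eq: quotient} to conclude that the irreducible component of $u$ with positive end at $\widehat{c}_i$ is a ``trivial strip'' whose asymptotic orbit in $T_\vf$ is empty. But that observation is a statement about multisections of $\R \times [0,1] \times \Sigma$, not of $W_+$. In $W_+$ there are no trivial strips: by Definition~\ref{Definition: Multisections of W_+}(i), every connected component of the domain must carry an interior puncture, hence a negative end asymptotic to a genuine periodic orbit of $\widehat{R}$. So no component can have ``empty'' asymptotic orbit, and the swap you describe does not make sense as stated.

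The correct mechanism, which is the content of \cite[Lemma~6.2.3]{CGH3} (invoked later in this paper in the proof of Lemma~\ref{tilde or not tilde: iota}), is that an irreducible component of an index-$0$ multisection of $(W_+,J_+)$ with a positive end at $\widehat{c}_i$ (or $\widehat{c}_i'$) is necessarily a degree-$1$ section with negative end at the elliptic orbit $e$, and that there is a unique such section. The bijection $\mathcal{M}_0(\widehat{\mathbf{x}},\bs{\gamma},J_+)\leftrightarrow \mathcal{M}_0(\widehat{\mathbf{x}'},\bs{\gamma},J_+)$ is then obtained by swapping the $c_i\to e$ section for the $c_i'\to e$ section; in particular $\bs{\gamma}$ must contain $e$ whenever either moduli space is nonempty. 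Your overall strategy (produce a bijection by isolating the boundary-adjacent component) is right, but the identification of that component and its negative asymptotics needs to be corrected as above.
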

The first statement is \cite[Proposition 6.2.2]{CGH3} together with Lemma \ref{lemma: 
hat or not hat}. The second statement is \cite[Proposition 6.2.2]{CGH3}. 
\begin{Cor}
The map $\Phi'$ induces a well defined chain map
$$\Phi \colon \widehat{CF}(S, \bm{a},\vf(\bm{a})) \to PFC_{2g}(T_{\vf}).$$
\end{Cor}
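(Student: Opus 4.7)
The plan is to derive this corollary as an immediate formal consequence of the preceding Theorem~\ref{Theorem: main results on Phi}. First, I would recall that by definition $\widehat{CF}(S, \bm{a}, \vf(\bm{a})) = CF'(S, \bm{a}, \vf(\bm{a})) / \mathcal{R}$, and that $\mathcal{R}$ is a subcomplex of $CF'(S, \bm{a}, \vf(\bm{a}))$ as noted in Section~\ref{Subsection: Heegaard Floer homology for open books}. Let $q \colon CF'(S, \bm{a}, \vf(\bm{a})) \to \widehat{CF}(S, \bm{a}, \vf(\bm{a}))$ denote the quotient map, and let $\overline{\partial}$ denote the induced differential on the quotient.

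Next, using part~(2) of Theorem~\ref{Theorem: main results on Phi}, which says that $\Phi'(\mathcal{R}) = 0$, I would invoke the universal property of the quotient to obtain a unique linear map
$$\Phi \colon \widehat{CF}(S, \bm{a}, \vf(\bm{a})) \to PFC_{2g}(T_{\vf})$$
satisfying $\Phi \circ q = \Phi'$. This discharges the well-definedness claim and produces $\Phi$ as a genuine map on the quotient.

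To finish, I would verify the chain-map property by a short diagram chase: for any $\mathbf{x} \in CF'(S, \bm{a}, \vf(\bm{a}))$,
$$\Phi\bigl(\overline{\partial}\, q(\mathbf{x})\bigr) = \Phi\bigl(q(\partial \mathbf{x})\bigr) = \Phi'(\partial \mathbf{x}) = \partial \Phi'(\mathbf{x}) = \partial \Phi\bigl(q(\mathbf{x})\bigr),$$
where the third equality uses part~(1) of Theorem~\ref{Theorem: main results on Phi}, and the other equalities use the definition of $\overline{\partial}$ and of $\Phi$. Since $q$ is surjective, this yields $\Phi \circ \overline{\partial} = \partial \circ \Phi$ on all of $\widehat{CF}(S, \bm{a}, \vf(\bm{a}))$. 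There is no real obstacle here: the substantive analytic content (existence and chain-map property of $\Phi'$, together with its vanishing on $\mathcal{R}$) is already packaged in the preceding theorem, so the corollary is purely formal.
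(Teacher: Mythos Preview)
Your proposal is correct and matches the paper's approach: the paper states this corollary without proof, treating it as an immediate formal consequence of the preceding theorem, which is exactly the argument you have written out.
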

By \cite[Theorem 1.0.1]{CGH4}, the map $\Phi$ induces an isomorphism in homology. We 
will not need this result.


\subsection{$\Phi$ respects the filtrations} \label{Subsection: Phi is filtered}
The aim of this section is prove the following theorem.
\begin{Thm} \label{Theorem: Phi is filtered}
 $\Phi$ is filtered with respect to the Alexander filtrations induced by $\overline{K}$ on  
$\widehat{CF}(S, \bm{a},\vf(\bm{a}))$ and by $K$ on 
$PFC_{2g}(T_{\vf})$.
\end{Thm}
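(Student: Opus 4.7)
The plan is to produce a $J_+$-holomorphic 2-chain $Z \subset W_+$ whose algebraic intersection with every multisection $u \in \mathcal{M}_0(\widehat{\mathbf{x}}, \bs{\gamma}, J_+)$ equals the Alexander drop $\mathcal{A}(\mathbf{x}) - \mathcal{A}(\bs{\gamma})$, and then to conclude by positivity of intersection between $J_+$-holomorphic objects. Combining the two concrete formulas $\mathcal{A}(\mathbf{x}) = -g + \#(\mathbf{x} \cap (S_0 \setminus A))$ and $\mathcal{A}(\bs{\gamma}) = g - k(\bs{\gamma})$, where $k(\bs{\gamma})$ denotes the multiplicity of the elliptic boundary orbit $e$ in $\bs{\gamma}$, the theorem is equivalent to the inequality $k(\bs{\gamma}) \geq \#(\mathbf{x} \cap A)$. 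Geometrically this says that every component of $\mathbf{x}$ lying in the boundary collar $A$ of $S$ must be connected, through $u$, to an end asymptotic to $e$: this is the cobordism version of the winding argument used in the proof of Lemma~\ref{Alexander is a filtration}, and of the basepoint argument underlying the Alexander filtration on the HFK side in Section~\ref{Subsubsection: The knots filtrations}.

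I would construct $Z$ by splicing the two reference surfaces that compute the Alexander grading on either side. On the product end $\{s \geq 3\}$, where $W_+$ is cylindrical and isometric to $[3,\infty) \times [0,1] \times \widehat{S}$, set $Z \cap \{s \geq 3\} = [3,\infty) \times [0,1] \times \{z\}$, the basepoint strip used in Section~\ref{Subsubsection: The knots filtrations}. On the cylindrical end $\{s \leq 0\}$, where $W_+$ is isometric to $(-\infty, 0] \times \widehat{T}_{\vf}$, set $Z \cap \{s \leq 0\} = (-\infty, 0] \times e$, the trivial cylinder over the elliptic Reeb orbit. Both pieces are $J_+$-holomorphic by the admissibility of $J_+$ at the ends: the basepoint strip because $J_+$ is cylindrical and preserves the product structure near $s = +\infty$, and the orbit cylinder because $J_+$ sends $\partial_s$ to $\widehat R$ near $s = -\infty$. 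Since $z = (1, 2\varepsilon) \in A$ and $e \subset \partial T_{\vf}$ both lie in the boundary-collar region where $\vf$ is an explicit rotation, the two end-pieces can be joined across the compact region $0 \leq s \leq 3$ by a smooth 2-chain contained in $\pi_{B_+}^{-1}$ of the collar, and a generic small perturbation of this middle piece makes it transverse to the image of any given $u$ without altering the ends.

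The intersection formula $n_Z(u) = \mathcal{A}(\mathbf{x}) - \mathcal{A}(\bs{\gamma})$ should then follow from an end-by-end computation. The positive-end contribution is determined by the asymptotic models of $u$ near each $c_i, c_i' \in A$, which wind once around the basepoint strip and contribute $+1$ apiece, for a total of $\#(\mathbf{x} \cap A)$; the interior components $x_i \in S_0 \setminus A$ contribute $0$ since their strips are disjoint from $\{z\}$. The negative-end contribution reproduces, by the asymptotic writhe computation near a Morse--Bott elliptic orbit in the spirit of the proof of Lemma~\ref{Alexander is a filtration}, the total multiplicity $k(\bs{\gamma})$ of $e$ in $\bs{\gamma}$. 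The middle piece contributes zero algebraically by generic transversality, hence $n_Z(u) = k(\bs{\gamma}) - \#(\mathbf{x} \cap A) = \mathcal{A}(\mathbf{x}) - \mathcal{A}(\bs{\gamma})$; positivity of intersection closes the argument.

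The main obstacle is the construction and control of the interpolating middle piece of $Z$. Parallel transport of $\{z\}$ around the base loop of $\pi_{B_+}$ moves $z$ to $\vf(z) = (1, 5\varepsilon) \neq z$, so no global holomorphic section of $\pi_{B_+}$ through $z$ exists, and $Z$ must be built as a genuine 2-chain with boundary (or as a multisection in the collar region, matching the rotational dynamics of $\vf|_A$). One must also choose $J_+$ so that no component of $u$ coincides with a component of $Z$ and every isolated intersection is positively counted; this should be possible by a careful adaptation of the complex structure in the collar, in the spirit of the constructions of Sections~5 and~6 of \cite{CGH3}.
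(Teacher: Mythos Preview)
Your plan has a genuine gap at exactly the point you flag as the ``main obstacle,'' and the gap is fatal rather than technical.

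First, the end-by-end intersection count is not correct. On the positive end your $Z$ is the strip $[3,\infty)\times[0,1]\times\{z\}$, while $u$ is asymptotic to strips over the points $\widehat{x}_i$. Since every $\widehat{x}_i$ (whether in $A$ or not) is distinct from $z$, these ends are disjoint: there is no ``winding around the basepoint strip'' and no contribution $\#(\mathbf{x}\cap A)$ from the positive end. Likewise on the negative end: the components of $u$ asymptotic to covers of $e$ are not transverse to $\R\times e$, they coincide with it; and the components asymptotic to orbits $\gamma\neq e$ are disjoint from it. So neither end produces the numbers you claim; all of the algebraic intersection $n_Z(u)$ is carried by the compact middle region. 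Second, and more seriously, the middle piece of $Z$ is not $J_+$-holomorphic, and ``generic transversality'' does not force its algebraic intersection with $u$ to be zero or even nonnegative. Positivity of intersection is a statement about two $J$-holomorphic objects; once $Z$ is merely a smooth $2$-chain, its signed intersection with $u$ can be any integer. Thus the final inequality $n_Z(u)\geq 0$ has no justification.

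The paper's proof avoids both issues by replacing the point $z$ with the invariant circle $\mathcal C=\partial\mathcal D$ bounding a disc containing both $z$ and $p_e$. The reference surface is then $\R\times\{t_0\}\times\mathcal C\subset W_+$, and the identity $\mathcal A(\mathbf{x})-\mathcal A(\bs{\gamma})=\langle u,\R\times\{t_0\}\times\mathcal C\rangle$ is established homologically by capping off in $\check W_+$ with copies of $\widehat S\setminus\mathcal D$ at $\pm\infty$ to form a nullhomologous closed surface. Crucially, positivity is \emph{not} deduced from holomorphicity of $\R\times\{t_0\}\times\mathcal C$ (it is not $J_+$-holomorphic). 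Instead the image of $u$ is projected to $\widehat T_\vf$ and intersected with the invariant torus $\mathcal T$; the resulting $1$-cycle $\mathbf c_u$ on $\mathcal T$ has homology class $k\mu$, and one shows $k\geq 0$ by intersecting $\mathbf c_u$ with a long trajectory of $\widehat R$ and invoking positivity of intersection between the projected holomorphic curve and the Reeb direction. This is a genuinely different mechanism from the one you propose, and it is what makes the argument go through.
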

Before proving the theorem we introduce some notation. Let ${\mathcal D} \subset \widehat{S}$ be the disc which is invariant under the monodromy $\widehat{\vf}$ and passes through the base point $z$. Then ${\mathcal D}$  contains the fixes point $p_e$ and,
if $\varepsilon$ in the definition of $\vf$ is small enough, $\widehat{\bm{a}} \cap \widehat{\varphi}(\widehat{\bm{a}}) \cap {\mathcal D}=  \widehat{\bm{a}} \cap \widehat{\varphi}(\widehat{\bm{a}}) \cap \{ y \in [1,3] \}$; i.e.\ ${\mathcal D}$ contains all the intersection points in Figure \ref{fig:trivial region}. Let ${\mathcal C}= \partial \mathcal D$ with the induced boundary orientation. 
Since ${\mathcal C}$ is invariant under the monodromy, it gives rise to a torus ${\mathcal T}
\subset \widehat{T}_{\vf}$ which is foliated by trajectories of the flow of $\widehat{R}$. Fianlly we denote $\Sigma = (\R \times {\mathcal T}) \cap W_+ \cong B_+ \times {\mathcal C}$. If we project $L_{\bm{a}} \cap \Sigma$ to ${\mathcal T}$ we obtain $2g$ segments
$\bm{\sigma}=\{ \sigma_1, \ldots, \sigma_{2g} \}$ which are tangent to $\widehat{R}$.

\begin{Lemma} \label{Lemma: Filtration of Phi looking to the intersection with G_z} 
 Let $u\colon (\dot{F},j)\rightarrow (W_+, J_+)$ be a multisection of $(W_+, J_+)$ from 
$\widehat{\mathbf{x}}$ to $\bs{\gamma}$. Then
 \[\mathcal{A}(\mathbf{x}) - \mathcal{A}(\bs{\gamma}) = \langle u(\dot{F}),\R \times 
\{t_0\} \times {\mathcal C} \rangle\]
 for any $t_0 \in [0,1]$.
\end{Lemma}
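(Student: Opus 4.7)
The plan is to represent the cycle $\R \times \{t_0\} \times {\mathcal C}$ by an exact Poincar\'e-dual $2$-form on $W_+$ and then to reduce the pairing with $u(\dot F)$ to asymptotic boundary terms using Stokes' theorem.

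First I would pick a smooth function $H \colon \widehat{S} \to \R$ that equals $0$ on ${\mathcal D}$ and $1$ on $\widehat{S} \setminus {\mathcal D}$, with the transition concentrated in a small collar of ${\mathcal C}$. Because ${\mathcal D}$ is $\wf$-invariant, $H$ descends to a function on $\widehat{T}_\vf$ and pulls back to $W_+$. Choose also a bump function $\rho \colon S^1 \to \R$ with $\mathrm{supp}(\rho)$ a small interval around $t_0$ inside $[0,1)$ and $\int \rho\, dt = 1$, and consider the closed $2$-form $\omega := \rho(t)\, dt \wedge dH$ on $W_+$. By the standard local computation $\omega$ is Poincar\'e dual to $\R \times \{t_0\} \times {\mathcal C}$, so
\[ \langle u(\dot F), \R \times \{t_0\} \times {\mathcal C}\rangle = \int_{\dot F} u^*\omega. \]
The key observation is that $\omega = -d(\rho(t) H\, dt)$ is exact, so Stokes' theorem (on a compactification of $\dot F$ adding the asymptotic ends) converts the integral into a sum of contributions from the three types of boundary of $\dot F$.

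Next I would compute each contribution. The Lagrangian boundary $\partial \dot F \cap u^{-1}(L^+_{\bm a})$ projects to $\partial B_+$, whose $t$-coordinate lies in $[1, 2]$; since $\rho$ vanishes there, this contribution is zero. At the positive end $s = +\infty$, $u$ is asymptotic to the $2g$ strips $\R \times [0,1] \times \widehat{x}_i$; the $i$-th strip contributes $\int_0^1 \rho(t)\, H(\widehat{x}_i)\, dt = H(\widehat{x}_i)$, so their sum is $\sum_i H(\widehat{x}_i) = 2g - \#(\widehat{\mathbf{x}} \cap {\mathcal D})$. At the negative end $s = -\infty$, $u$ is asymptotic to covers of cylinders over the simple orbits $\gamma_j$ appearing in $\bs{\gamma}$; because ${\mathcal D}$ is $\wf$-invariant, $H$ is constant along every Reeb orbit, and a $k_j$-fold end over $\gamma_j$ of period $p_j = \langle \gamma_j, S_0\rangle$ contributes $k_j p_j H(\gamma_j)$. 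Choosing ${\mathcal D}$ small enough that $p_e$ is its only Reeb fixed point of period at most $2g$ (so that the hyperbolic orbit $h$ and the interior fixed points of $\wf$ all lie outside ${\mathcal D}$), the total negative-end contribution is $\sum_{j} k_j p_j = 2g - k$, where $k$ is the multiplicity of $e$ in $\bs{\gamma}$.

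Putting these together with the opposite signs carried by the two types of ends through Stokes' identity, and using the identifications $\mathcal{A}(\mathbf{x}) = g - \#(\widehat{\mathbf{x}} \cap {\mathcal D})$ (from \eqref{Equation: Alexander degree in terms of components on S_0} and Lemma~\ref{lemma: hat or not hat}, since ${\mathcal D}$ captures exactly the intersection points in the collar region) and $\mathcal{A}(\bs{\gamma}) = g - k$ (Definition~\ref{Definition: Alexander degree in PFH}), one obtains
\[ \langle u(\dot F), \R \times \{t_0\} \times {\mathcal C}\rangle \;=\; k - \#(\widehat{\mathbf{x}} \cap {\mathcal D}) \;=\; \mathcal{A}(\mathbf{x}) - \mathcal{A}(\bs{\gamma}), \]
and the independence from $t_0 \in [0, 1]$ is manifest since $\omega$ is closed and the homology class of $\R \times \{t\} \times {\mathcal C}$ does not vary with $t$ in that range. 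The main obstacle will be the careful sign bookkeeping that distinguishes positive boundary-puncture strips at $s = +\infty$ from negative interior-puncture cylinders at $s = -\infty$, so that their Stokes contributions combine into $k - \#(\widehat{\mathbf{x}} \cap {\mathcal D})$ rather than its negative; a secondary technical point is to verify that ${\mathcal D}$ can be chosen small enough to exclude $p_h$ and every interior fixed point while still containing $p_e$, $z$, and all the boundary intersection points of Figure~\ref{fig:trivial region}.
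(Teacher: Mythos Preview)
Your argument is correct and is the de Rham/Poincar\'e-dual formulation of the paper's proof. The paper works on the intersection-theoretic side: it closes off $\R \times \{t_0\} \times {\mathcal C}$ to a null-homologous surface
\[
M = M_{-\infty}\cup\big([-\infty,+\infty]\times\{t_0\}\times({\mathcal C}\sqcup\partial\widehat S)\big)\cup \overline{M_{+\infty}}
\]
inside the compactification $\check W_+$, with $M_{\pm\infty}=\{\pm\infty\}\times\{t_0\}\times(\widehat S\setminus{\mathcal D})$, and uses $\langle\check u(\check F),M\rangle=0$ to express the desired intersection number as the difference of the asymptotic intersections with $M_{+\infty}$ and $M_{-\infty}$, which are exactly $\mathcal A(\mathbf x)+g$ and $\mathcal A(\bs\gamma)+g$. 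Your exact $2$-form $\omega=-d(\rho H\,dt)$ is precisely the Poincar\'e dual of this null-homologous cycle, and your Stokes computation recovers the same two asymptotic terms; the vanishing on the Lagrangian boundary corresponds to the paper's observation that $u(\dot F)$ misses $B_+\times\partial\widehat S$.

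One small correction of phrasing: you should not speak of ``choosing ${\mathcal D}$ small enough''. The disc ${\mathcal D}$ is fixed before the lemma as the $\wf$-invariant disc through $z$, and the lemma is stated for $\mathcal C=\partial{\mathcal D}$. What you actually need is the \emph{property} that $e$ is the only periodic orbit of period at most $2g$ meeting ${\mathcal D}$; this holds by the geometry of the Morse--Bott perturbation (the new invariant tori bound solid tori with core $e$, cf.\ Figure~\ref{Figure: Modifica MB}), and the paper uses the same fact. With that adjustment your proof goes through, and the sign bookkeeping you flag is indeed the only place requiring care.
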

\begin{proof}
We recall that all intersection points from Figure \ref{fig:trivial region} are contained in 
${\mathcal D}$ and therefore, by the reinterpretation of the Alexander grading given in Equation \eqref{Equation: Alexander degree in terms of components on S_0}, we have
$${\mathcal A} (\mathbf{x})= -g + \langle [0,1] \times \widehat{\mathbf{x}},  \{ t_0 \} \times (\widehat{S} \setminus {\mathcal D}) \rangle$$
for any $t_0 \in [0,1]$. Also, since $e$ is the only periodic orbit of $\widehat{R}$ with period at most $2g$ intersecting $\{t_0 \} \times {\mathcal D}$, we have
$${\mathcal A}(\bm{\gamma})= -g+\langle \bm{\gamma},  \{ t_0 \} \times (\widehat{S} \setminus {\mathcal D}) \rangle.$$

Let $\check{u} \colon \check{F} \to \check{W}_+$ be the continuous extension of the holomorphic multisection $u \colon \dot F \to W_+$ where $\check F$ is obtained by performing a real blow up of $\dot F$ at the punctures and $\check{W}_+$ is obtained, roughly speaking, by adding $\{ - \infty \} \times \widehat{T}_\vf$ and 
$\{ + \infty \} \times [0,1] \times \widehat{S}$ to $W_+$ (see \cite[Section 5.4.2]{CGH3}). 

Consider the two surfaces
\[M_{+\infty} = \{+\infty\} \times \{t_0\} \times (\widehat{S} \setminus {\mathcal D}) \ \mbox{ and }\ M_{-\infty} = \{-\infty\} \times \{t_0\} \times (\widehat{S} \setminus {\mathcal D}),\]
and define the closed surface
\begin{align*} 
  M  \coloneqq M_{-\infty} \cup \big([-\infty,+\infty] \times \{t_0\} \times ({\mathcal C} \sqcup \partial \widehat{S}) \big) \cup \overline{M}_{+\infty}.
\end{align*}
Since $0 = [M] \in H_2(\check{W}_+;\Z)$, we have:
\begin{equation*}
\begin{split}
  0 & =  \langle \check{u}(\check{F}) , M \rangle  = \\
    & =  - \langle \check{u}(\check{F}) , M_{+\infty} \rangle + \langle \check{u}(\check{F}),  
\R \times \{t_0\} \times ({\mathcal C} \sqcup \partial \widehat{S}) \rangle + \langle 
\check{u}(\dot{F}), M_{-\infty} \rangle = \\
    & =  -(\mathcal{A}(\mathbf{x}) + g) + \langle u(\dot{F}), \R \times \{t_0\} \times 
{\mathcal C} \rangle + (\mathcal{A}(\bs{\gamma}) + g), 
\end{split}
\end{equation*}
where we used the fact that $u(\dot{F})$ is disjoint from $B_+ \times \partial \widehat{S} 
\subset \partial W_+$ and has no ends intersecting ${\mathcal C}$. 
\end{proof}
\begin{proof}[Proof of Theorem \ref{Theorem: Phi is filtered}.]
Let $u \colon \dot F \to W_+$ be a multisection of $(W_+, J_+)$ from $\mathbf{x}$ to 
$\bm{\gamma}$. By Lemma \ref{Lemma: Filtration of Phi looking to the intersection with 
G_z} it remains to prove that
\begin{equation} \label{Equation: Intersection between u and lambda is positive}
 \langle {u}(\dot{F}), \R \times \{t_0\} \times {\mathcal C} \rangle \geq 0.
\end{equation}
We can assume without loss of generality that the intersection of the projection of $u(\dot F)$
to $\widehat{T}_\vf$ with ${\mathcal T}$ consists of a finite collections of closed curves and 
arcs with endpoints in $\bm{\sigma}$ because, by construction, a neighbourhood of 
${\mathcal T}$ in $\widehat{T}_\vf$ is foliated by invariant tori for the flow of 
$\widehat{R}$ and replacing ${\mathcal T}$ with a nearby one does not change the 
arguments.

Let $\mathbf{c}_u$ be the the intersection of the projection of $u(\dot F)$ to 
$\widehat{T}_\vf$ with ${\mathcal T}$ closed by adding segments in $\bm{\sigma}$. We orient $\mathbf{c}_u$ by requiring that, at the intersection points between the projection of $u(\dot F)$ with ${\mathcal T}$, the coorientation of ${\mathcal T}$ followed by the orientation of $\mathbf{c}_u$ gives the orientation of the projection of $u(\dot F)$. Then 
one can verify that 
$$\langle {u}(\dot{F}), \R \times \{t_0\} \times {\mathcal C} \rangle_{W_+}=
\langle \mathbf{c}_u, \{t_0\} \times {\mathcal C} \rangle_{\mathcal T}.$$

The homology group $H_1({\mathcal T}; \Z)$ is freely generated by classes $\lambda$, 
represented by $\{t_0 \} \times {\mathcal C}$, and $\mu$, represeted by a curve isotopic to 
$e$ and intersecting each $\widehat{S}_t$ in $\widehat{S}_t \setminus S_t$, such that 
$\langle \mu, \lambda \rangle =1$. Then 
$[\mathbf{c}_u]= k \mu$ because $\mathbf{c}_u$ is disjoint from $\widehat{S}_t \setminus 
S_t$, and therefore $\langle \mathbf{c}_u, \mu \rangle =0$.

We form a curve $\mathfrak{t} \subset {\mathcal T}$ by connecting the endpoints of a 
sufficiently long trajectory of the flow of $\widehat{R}$ with a short segment disjoint from 
$\mathbf{c}_u$. Then, by the positivity of intersection between the projection of $u(\dot F)$
and $\widehat{R}$, we have $\langle \mathbf{c}_u,   \mathbf{t} \rangle \ge 0$. Since
$\mathbf{t}= l \lambda + m \mu$ with $l,m >0$, we obtain $\langle \mathbf{c}_u,
 \mathbf{t} \rangle = kl$, from which we deduce that $k \ge 0$.
\end{proof}
We observe that the chain map $\Phi^\sharp$ can be definied explicitly as
$$\Phi^\sharp (\mathbf{x}) = \sum \limits_{\gamma \in \mathcal{P} \setminus  \{e \}}
\#_2 {\mathcal M}_0(\widehat{\mathbf{x}}, \gamma e^{2g-1}, J_+) \gamma$$
for all $\mathbf{x}$ with $\mathcal{A}(\mathbf{x})=1-g$.

\begin{Lemma}
  Let $\mathbf{a}$ and $\mathbf{b}$ be bases of arcs for $S$. Then there is a commutative
  diagram
\begin{equation*}
 \begin{tikzpicture}[->,auto,node distance=1.5cm,>=latex',baseline=(current  bounding  box.center)]
 \matrix (m) [matrix of nodes, row sep=2cm,column sep=2cm]
{$\widehat{HFK}(S, \bm{a}, \varphi(\bm{a});-g+1)$ & $HF^\sharp(\varphi)$ \\
$\widehat{HFK}(S, \bm{b}, \varphi(\bm{b});-g+1)$ & $HF^\sharp(\varphi)$ \\ };
  \path[->]
  (m-1-1) edge node[above] {$\Phi^\sharp_*$} (m-1-2)
      edge node[left] { $\mathfrak{s}_*$} (m-2-1);
 \path[->]
  (m-2-1) edge node[below] {$\Phi^\sharp_*$} (m-2-2);
 \path[->]
  (m-1-2) edge node[right] {$=$} (m-2-2);
 \end{tikzpicture}
\end{equation*}
where $\mathfrak{s}_*$ is the isomorphism of knot Floer homology for changing of basis.
\end{Lemma}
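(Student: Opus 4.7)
The plan is to reduce the lemma to a sequence of elementary moves between the bases $\bm{a}$ and $\bm{b}$. Any two bases of arcs on $S$ are related by a finite sequence of moves, each being either (i) an ambient isotopy, relative to $\partial S$, of a single arc, or (ii) an arc slide of one arc over another. Under these moves, $\mathfrak{s}_*$ is defined as a composition of continuation maps (case (i)) and holomorphic triangle maps (case (ii)) in the standard Heegaard Floer framework, and is compatible with the Alexander grading by construction. Since the right-hand column of the diagram is the identity (both copies of $HF^\sharp(\varphi)$ are the same symplectic invariant of $\varphi$, independent of any arc basis), the task is to show that $\Phi^\sharp_{\bm{b}} \circ \mathfrak{s}_* \simeq \Phi^\sharp_{\bm{a}}$ at the chain level for each elementary move.

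For case (i), an isotopy of $\bm{a}$ through bases of arcs lifts to a Hamiltonian isotopy of the Lagrangian boundary condition $L^+_{\bm{a}}$ in $W_+$, fixed outside a compact set. An $s$-dependent choice of Lagrangian boundary condition and almost complex structure gives a parametric moduli space whose index-$0$ elements define a chain map equal to $\Phi^\sharp_{\bm{a}}$ on one end and to $\Phi^\sharp_{\bm{b}} \circ \mathfrak{s}_*$ on the other, with a chain homotopy supplied by the boundary of the index-$1$ component. This is the same mechanism as the proof of invariance of $\Phi$ with respect to the auxiliary data in \cite[Section 6]{CGH3}, transplanted to a varying Lagrangian boundary condition. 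For case (ii), I would introduce a third basis $\bm{c}$ which is a small Hamiltonian perturbation of $\bm{b}$ and replace $W_+$ by a fibration whose base is a pair of pants, with cylindrical ends carrying the Lagrangian boundary conditions $L^+_{\bm{a}}$, $L^+_{\bm{b}}$ and $L^+_{\bm{c}}$ respectively. Counting degree-$2g$, ECH-index-$0$ multisections with asymptotics at intersection points on the two incoming ends and at an orbit set on the outgoing end produces a composed map. Stretching the neck between the $\bm{a}$- and $\bm{b}$-ends identifies this count, up to chain homotopy, with $\Phi^\sharp_{\bm{b}} \circ \mathfrak{s}_*$, with $\mathfrak{s}_*$ realised in the limit as the triangle count in $\R\times [0,1]\times S$ on $\bm{a}$, $\bm{b}$, $\bm{c}$; collapsing the pair of pants via the small-triangle model (so that $\bm{c}$ is pushed back onto $\bm{b}$ through a nearest-point count which is the identity) identifies the same composed map with $\Phi^\sharp_{\bm{a}}$.

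The principal obstacle will be the compactness and transversality analysis of these parametric and pair-of-pants moduli spaces in the Morse--Bott setting near $\partial T_\vf$, together with a filtered version of Theorem \ref{Theorem: Phi is filtered}: one needs the entire chain homotopy to preserve the Alexander filtration so that the induced maps between the $\mathcal{A}= -g+1$ summands are well defined and land in $HF^\sharp(\varphi) \cong PFH^\sharp_1(T_\vf)$ (Remark \ref{Remark: PFH_1 = HF}). The necessary compactness is supplied by exactness of $\widehat{\omega}_v$ (Remark \ref{remark: exactness of omega_v}) as in the definition of $\Phi^\sharp$, and the filtration argument runs in parallel to the proof of Theorem \ref{Theorem: Phi is filtered}, with the curve $\mathbf{c}_u$ now living in the more general domains produced by the degenerations. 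Granting these analytic inputs, the elementary-move reduction yields the commutativity of the diagram as stated.
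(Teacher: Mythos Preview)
Your reduction to arc slides and your treatment of isotopies via a parametric continuation argument are both fine and match the paper's strategy. The difficulty is in your arc-slide case, where the geometry you describe is confused and does not produce the required identification.

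The paper also argues by a one-parameter degeneration, but the setup is more specific. It defines $\mathfrak{s}$ by counting degree-$2g$ multisections of a trivial fibration over a disc with four boundary punctures, with Lagrangian conditions $\bm{a},\bm{b},\varphi(\bm{b}),\varphi(\bm{a})$ on the four boundary arcs and fixed asymptotics $\Theta^+$ and $\varphi(\Theta^-)$ at two opposite punctures. The chain homotopy $\mathfrak{h}$ counts index $-1$ multisections in a family of fibrations with monodromy $\varphi$ over $B_{\mathfrak{h},z}=D^2\setminus\{0,1,z,\bar z\}$ (one interior puncture, three boundary punctures), with boundary Lagrangians $\bm{a}$, the parallel transport of $\bm{b}$, and $\varphi(\bm{a})$, and with the same $\Theta^\pm$-type asymptotics at $z,\bar z$. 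As $z\to 1$ one obtains $\Phi^\sharp_{\bm{b}}\circ\mathfrak{s}$; as $z\to -1$ the $\bm{b}$-arc bubbles off as a bigon between $\bm{a}$ and $\bm{b}$ with asymptotics $\Theta^+,\Theta^-$, whose count is identically $1$ for every matching condition, leaving precisely $\Phi^\sharp_{\bm{a}}$.

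Your construction does not reproduce this mechanism. A pair of pants (a three-holed sphere) has no boundary and so cannot carry Lagrangian boundary conditions; if instead you mean a disc with boundary punctures and one interior puncture, the asymptotics you list (two incoming intersection-point ends and one outgoing orbit end) leave one boundary puncture and one Lagrangian unaccounted for. More seriously, taking $\bm{c}$ to be a small perturbation of $\bm{b}$ and then ``collapsing $\bm{c}$ onto $\bm{b}$'' does not remove $\bm{b}$ from the boundary condition, so there is no degeneration in your family whose limit is $\Phi^\sharp_{\bm{a}}$, which uses only the Lagrangian $L^+_{\bm{a}}$. The ingredient you are missing is the fixed $\Theta^{\pm}$ asymptotics and the fact that the resulting $(\bm{a},\bm{b})$-bigon with ends $\Theta^+,\Theta^-$ has a unique small-area multisection for each matching point; this is what makes the $\bm{b}$-Lagrangian disappear in one limit and yields $\Phi^\sharp_{\bm{a}}$.
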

\begin{proof}
  Since two bases of arcs are always related by a sequence of arc slides, we assume without loss of generality that $\bm{b}$ is obtained from $\bm{a}$ by sliding $a_1$ over $a_2$. This means that $\bm{b}$ is as in the picture.
 \begin{figure} [ht!] 
  \begin{center}
   \includegraphics[scale = 0.4]{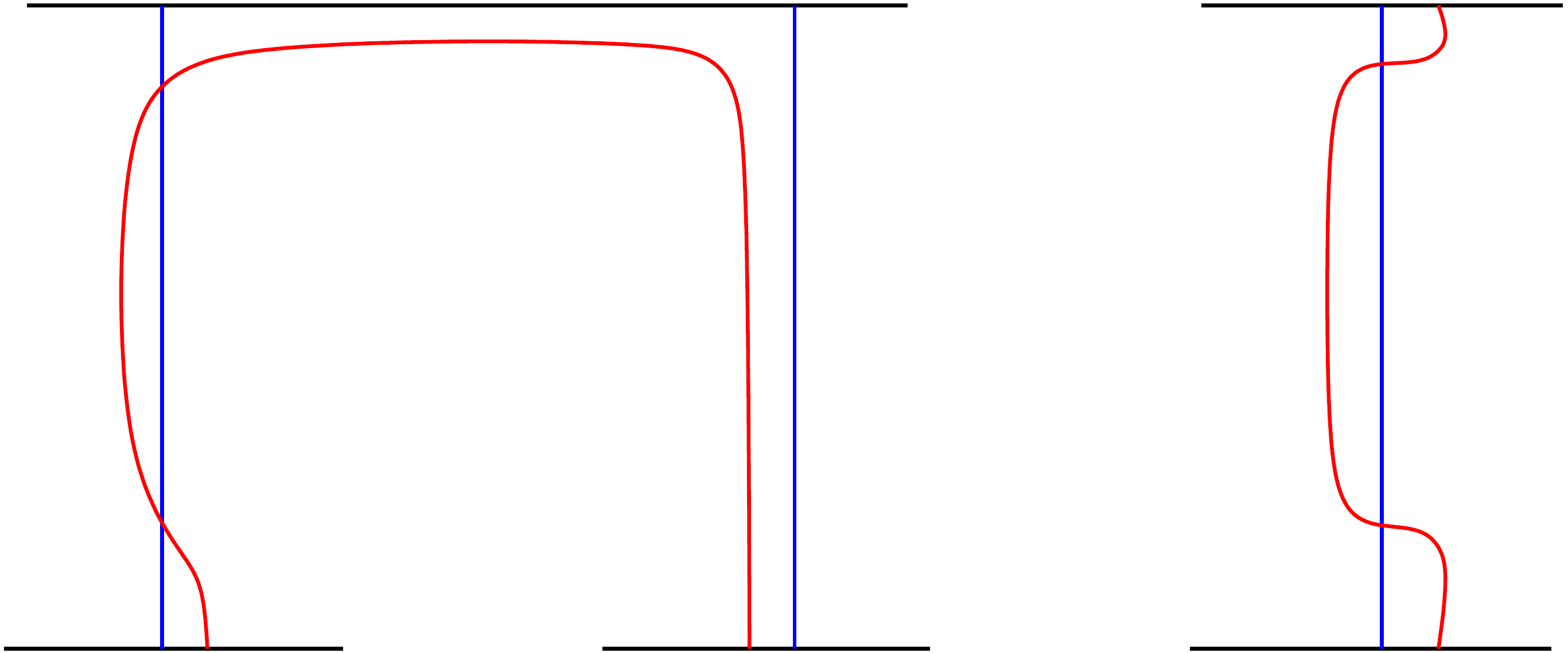}
  \end{center}
  \caption{The basis $\bm{a}$ is in blue and the basis $\bm{b}$ is in red. On the left: $b_1$ is obtained by sliding $a_1$ over $a_2$; on the right: $b_i$, for $i>1$, is obtained as a small deformation of $a_i$. We call $\theta_i^+$ the intersection point on the top and $\theta_i^-$ the intersection point on the bottom.}
 \end{figure}
It is important that we choose $\theta^\pm_i$ close enough to the boundary so that every intersection point on $a_i$, $b_i$, $\varphi(a_i)$ and $\varphi(b_i)$ is between $\theta_i^+$ and $\theta_i^-$ and not between $\theta_i^\pm$ and the boundary.
 
  We define $B_{\mathfrak{s}}= D^2 \setminus \{\pm 1, \pm i \}$ and $E_{\mathfrak{s}}=B_{\mathfrak{s}} \times S$. Let $\ell_1, \ldots, \ell_4$ be the connected components of $\partial B_{\mathfrak{s}}$ numbered in counterclockwise order starting from the arc between $1$ and $i$. On $\partial E_{\mathfrak{s}}$ we consider the Lagrangian submanifold $L= \ell_1 \times \bm{a} \sqcup \ell_2 \times \bm{\mathbf{b}} \sqcup \ell_3 \times \phi(\bm{b}) \sqcup \ell_4 \times \phi(\bm{a})$. 
  The map $\mathfrak{s} \colon \widehat{CFK}(S, \bm{a}, \varphi(\bm{a});-g+1) \to \widehat{CFK}(S, \bm{b}, \varphi(\bm{b});-g+1)$ is defined by counting index zero, degree 2g embedded multisections of $E_{\mathfrak{s}}$ with boundary on $L$ which are asymptotic to $\Theta^+=(\theta_1^+, \ldots, \theta_{2g}^+)$ at $i$ and to $\varphi(\Theta^-)=(\varphi(\theta_1^-), \ldots, \varphi(\theta_{2g}^-))$ at $-i$.

  This map induces an isomorphism in homology because for every $2g$-tuple of intersection points $\mathbf{y}_a$ between $\bm{a}$ and $\varphi(\bm{a})$ there is a unique closest $2g$-tuple of intersection points $\mathbf{y}_b$ and a unique small area, index zero multisection of $E_\mathfrak{s}$ which is asymptotic to $\mathbf{y}_a$, $\Theta^+$, $\mathbf{y}_b$ and $\varphi(\Theta^-)$. On the fibre this multisection projects to a $2g$-tuple of small fish-tail shaped quadrilaterals.

  Next, we define $B_{\mathfrak{h}, z}= D^2 \setminus \{0, 1, z, \bar{z} \}$ for $z \in \partial D^2$ with $\Im(z)>0$ and $E_{\mathfrak{h}, z}$ as the total space of a fibration over $B_{\mathfrak{h}, z}$ with fibre $S$ and monodromy $\varphi$.  Let $\ell_1, \ell_2, \ell_3$ the connected component of $\partial B_{\mathfrak{h}, z}$ numbered counterclockwise starting from $1$ and define the Lagrangian submanifold $L= L_1 \sqcup L_2 \sqcup L_3$ where $L_1 = \ell_1 \times \bm{a}$, $L_3= \ell_3 \times \varphi(\bm{a})$ and $L_2$ is the parallel transport of $\bm{b}$ over $\ell_3$.

  We define the chain homotopy $\mathfrak{h} \colon \widehat{CFK}(S, \bm{a}, \varphi(\bm{a});-g+1) \to CF^\sharp(\varphi)$ by counting index $-1$ embedded degree $2g$ multisections in the family of fibrations $E_{\mathfrak{h}, z} \to B_{\mathfrak{h}, z}$, for $z \in \partial D^2 \cap \{ \Im(z)>0 \}$, which have boundary on $L$ and are asymptotic to $\Theta^+$ at $z$ and $\varphi(\Theta^-)$ at $\bar{z}$.

  The moduli space of index zero multisections of $E_{\mathfrak{h}, z} \to B_{\mathfrak{h},z}$ (with varying $z$) is a one dimensional moduli space which has boundary degenerations which correspond to $\partial \circ \mathfrak{h} + \mathfrak{h} \circ \partial$, degenerations for $z \to 1$ which correspond to $\Phi^\sharp \circ \mathfrak{s}$ and degenerations for $z \to -1$ which, we claim, correspond to $\Phi^\sharp$.

  As $z \to -1$, the surface $B_{\mathfrak{h},z}$ degenerates towards a nodal surface $B_{\mathfrak{h},-1}= B_{\mathfrak{h},-1}' \cup B_{\mathfrak{h},-1}''$ where $B_{\mathfrak{h},-1}' = D^2\setminus \{ 0,1 \}$ and $B_{\mathfrak{h},-1}'' = D^2\setminus
  \{\pm i \}$. The two sides of the node are $-1 \in B_{\mathfrak{h},-1}'$ and $1 \in  B_{\mathfrak{h},-1}''$. The fibration also splits as $E_{\mathfrak{h},-1}= E_{\mathfrak{h},-1}' \cup E_{\mathfrak{h},-1}''$, where $B_{\mathfrak{h},-1}'$ is the total space of a fibration with fibre $S$ and monodromy $\varphi$ over $B_{\mathfrak{h},-1}'$ and $E_{\mathfrak{h},-1}''= B_{\mathfrak{h},-1}'' \times S$. On $\partial E_{\mathfrak{h},-1}'$ there is the Lagrangian submanifold $L'$ obtained by parallel transport of $\bm{a}$ over $\partial B_{\mathfrak{h},-1}'$. We denote $\ell_+= \partial D^2 \cap \{ \Re >0 \}$ and  $\ell_-= \partial D^2 \cap \{ \Re <0 \}$. On $\partial E_{\mathfrak{h},-1}''$ there is the Lagrangian submanifold $L''= \ell_+ \times \bm{a} \sqcup \ell_- \times \bm{b}$. The count of multisections of $E_{\mathfrak{h},-1}'$ with boundary on $L'$ gives $\Phi^\sharp \colon \widehat{CFK}(\varphi(\bm{a}), \bm{a}; 1-g) \to CF^\sharp(\varphi)$. Each multisection counted in $CF^\sharp(\varphi)$ intersect the the fibre over $-1$ into a $2g$-tuple $\mathfrak{z}$ of points in $L$ which are not too close to the boundary of the fibre. On the other hand, it is easy to check on the diagram $(S, \bm{a}, \bm{b})$ that for any $2g$-tuple of points $\mathfrak{z}$ on $\bm{a}$ which are not too close to the boundary there is a unique degree $2g$ multisection of   $E_{\mathfrak{h},-1}''$ with boundary in $L''$ and asymptotic to $\Theta^+$ at $i$ and $\Theta^-$ at $-i$. This proves the last claim.
  \end{proof}
\section{Proof of the isomorphism} \label{Section: Proof of the equivalence}
To prove Theorem \ref{Theorem: Main theorem in introduction} we will show 
that the chain map
\begin{equation} \label{Equation: the chain map from CFK to HF}
 \Phi^\sharp \colon \widehat{CFK}(S, \bm{a}, \vf(\bm{a});
-g+1) \longrightarrow CF^{\sharp}(\vf)
\end{equation}
given by Theorem \ref{Theorem: Phi is filtered} is an isomorphism. 

The section is organized as follows. We will first  recall some algebraic tools about exact 
sequences that will be used later. We will then prove the existence of the exact triangles in 
fixed point Floer homology (already known by Seidel, cf. \cite[Theorem 4.2]{Se4}) and in 
knot Floer homology (cf. \cite[Theorem~8.2]{OS3}). We will then prove the 
commutativity of the diagram \eqref{Diagram: Main diagram between HF and HF without 
maps} and, finally, finish the proof of the fact that $\Phi^\sharp_*$ is an isomorphism by 
proving the initial step of the induction.

\subsection{Some algebra}
In this subsection we recall some results that will be useful to show the existence of the two exact sequences \eqref{Triangle: L phi phi-tau without maps} and \eqref{Triangle: Exact triangle in HF without maps} and the commutativity of the diagram \eqref{Diagram: Main diagram between HF and HF without maps}. 

 \begin{Lemma} \label{Lemma: Algebraic lemma for the exactness}
  Let $(A, \partial_A)$, $(B, \partial_B)$ and $(C, \partial_C)$, be chain complexes, $f\colon A 
\rightarrow B$ and $g \colon B \rightarrow C$ chain maps and $H:A \rightarrow C$ a chain 
homotopy from $g \circ f$ to $0$. If
  \begin{equation} \label{Equation: Homology of the general cone is 0}
    H_*\left(A\oplus B \oplus C,\left(\begin{array}{ccc} 
   \partial_A     &     0     & 0\\
    f &     \partial_B     & 0\\
    H & g & \partial_C
  \end{array}\right)\right) = 0
  \end{equation}
 then there exists a linear map $d : H_*(C) \rightarrow H_*(A)$ such the triangle 
 \begin{equation*} 
 \begin{tikzpicture}[->,auto,node distance=1.5cm,>=latex',baseline=(current  bounding  box.center)]

  \node (1) {$H_*(A)$};
  \node (2) [right = of 1] {$H_*(B)$};
  \node (3) [right = of 2] {$H_*(C)$};

  \path[every node/.style={font=\sffamily\small}]
    (1) edge node[above] {$f_*$} (2)
    (2) edge node[above] {$g_*$} (3)
    (3) edge[bend left=30] node {$d$} (1);
 \end{tikzpicture}
 \end{equation*}
 is exact. 
 \end{Lemma}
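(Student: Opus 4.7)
The plan is to treat $D := (A \oplus B \oplus C, \mathbf{d})$ as an iterated mapping cone and deduce the triangle by diagram chasing from the hypothesis $H_*(D)=0$. Conceptually, there is a short exact sequence of complexes
\[ 0 \longrightarrow C \longrightarrow D \longrightarrow \mathrm{Cone}(f) \longrightarrow 0, \]
where $\mathrm{Cone}(f) = A \oplus B$ carries the differential $\begin{pmatrix} \partial_A & 0 \\ f & \partial_B \end{pmatrix}$, and the hypothesis means the associated long exact sequence produces an isomorphism $H_*(\mathrm{Cone}(f)) \cong H_{*-1}(C)$; splicing this with the tautological sequence $0 \to B \to \mathrm{Cone}(f) \to A \to 0$ yields the required triangle. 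I would, however, prefer to carry out the argument directly at the chain level, since this is what is needed to compare with Diagram \eqref{Diagram: Main diagram between HF and HF without maps} later on.

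First I would define $d$ explicitly. Given a cycle $c \in C$, observe that $(0,0,c)$ is a $\mathbf{d}$-cycle in $D$; since $H_*(D)=0$, pick $(a,b,c')$ with $\mathbf{d}(a,b,c') = (0,0,c)$, i.e.\ $\partial_A a = 0$, $f(a) = \partial_B b$ (over $\Z/2$), and $H(a) + g(b) + \partial_C(c') = c$. Set $d[c] := [a] \in H_*(A)$. Well-definedness follows by applying acyclicity to the difference of two such lifts, and vanishing on $\partial_C$-boundaries $c = \partial_C c''$ is immediate since $(0,0,c'')$ itself provides a lift with $a=0$.

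Next I would verify exactness at each of the three vertices; each step is a short diagram chase on the matrix $\mathbf{d}$. At $H_*(B)$: the identity $\partial_C H + H \partial_A = gf$ (which is exactly the condition $\mathbf{d}^2=0$ in the $(3,1)$-entry) gives $g_* f_* = 0$, while a cycle $b$ with $gb = \partial_C c'$ makes $(0,b,-c')$ a $\mathbf{d}$-cycle in $D$, whose nullhomology writes $b = f(\alpha) + \partial_B \beta$ for some $A$-cycle $\alpha$. At $H_*(C)$: for a $B$-cycle $b$ the element $(0,b,0)$ already satisfies $\mathbf{d}(0,b,0) = (0,0,gb)$, so $d\,g_* = 0$; conversely if $d[c]=0$ one subtracts a $\mathbf{d}$-boundary from the lift to make its $A$-component vanish, and reads the $B$-component as a $g_*$-preimage of $[c]$. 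At $H_*(A)$: the formula forces $f_* d = 0$; conversely, a cycle $a$ with $fa = \partial_B b$ gives $\mathbf{d}(a,-b,0) = (0,0,Ha-gb)$, so $Ha - gb$ is automatically a $C$-cycle and $d[Ha-gb] = [a]$.

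The argument is almost entirely bookkeeping. The only substantive point is noticing that the $(3,1)$-entry of $\mathbf{d}^2 = 0$ is precisely the chain-homotopy identity $\partial_C H + H \partial_A = gf$, which is used twice: once to kill $g_*f_*$ and once to show that the class $Ha - gb$ constructed in the last exactness check is genuinely a cycle. There is no real obstacle; the lemma is a purely algebraic statement, and all the analytical content will live in the subsequent verification that the knot Floer and fixed-point Floer complexes actually fit into such a general cone, with $\Phi^\sharp$ playing the role of $H$.
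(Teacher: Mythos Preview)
Your proof is correct, and the conceptual outline you give in the first paragraph is exactly the paper's argument: the paper observes that the long exact sequence of the cone $\mathrm{Cone}(f)$ gives the triangle with $H_*(\mathrm{Cone}(f))$ in the third slot, and then uses that the acyclicity hypothesis on the iterated cone is precisely the statement that $(H,g)\colon \mathrm{Cone}(f)\to C$ is a quasi-isomorphism, so one replaces $H_*(\mathrm{Cone}(f))$ by $H_*(C)$. Your explicit element-level construction of $d$ and the three diagram chases are a correct unpacking of this; the paper simply quotes the standard long exact sequence of a cone and the naturality of that sequence rather than doing the chases by hand.

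One small correction to your closing remark: in the later application (Diagram~\eqref{Diagram: Main diagram between HF and HF without maps}) the map $\Phi^\sharp$ does not play the role of $H$. The homotopies $H$ and $H'$ are the maps $\mathcal{H}$ and $\mathcal{H}'$ constructed from degenerations of Lefschetz fibrations (Lemmas~\ref{lambda composed iota is trivial} and~\ref{Lemma: l circ i chain homotopic to 0}); the open-close map $\Phi^\sharp$ is one of the vertical comparison maps $b$, $c$ in Lemma~\ref{Lemma: Algebraic lemma for the third square}. The commutativity of the third square is handled at the level of cones by that lemma, so the explicit chain-level formula for $d$ is not actually needed downstream.
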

 \begin{proof} The homology of the cone
  \[Cone(f) \coloneqq \left(A \oplus B,\left(\begin{array}{cc}
                                                   \partial   & 0\\
                                                    f    &     \partial  
                                                  \end{array}\right)\right)\]
  fits in a long exact sequence
 \begin{equation} \label{Triangle: Exact triangle with the general cone}
 \begin{tikzpicture}[->,auto,node distance=1.5cm,>=latex',baseline=(current  bounding  box.center)]

  \node (1) {$H_*(A)$};
  \node (2) [right = of 1] {$H_*(B)$};
  \node (3) [right = of 2] {$H_*(Cone(f))$};

  \path[every node/.style={font=\sffamily\small}]
    (1) edge node[above] {$f_*$} (2)
    (2) edge node[above] {$(i_2)_*$} (3)
    (3) edge[bend left=20] node {$(p_1)_*$} (1);
 \end{tikzpicture}
 \end{equation}
 where $i_2$ and $p_1$ are the inclusion and, respectively, the projection of the corresponding summand. Moreover condition \eqref{Equation: Homology of the general cone is 0} implies that
 \begin{equation}\label{Equation: Cone of iota cong to CF of phi circ tau}
Cone(f) \xrightarrow{(H,g)} C
\end{equation}
induces an isomorphism in homology. Putting everything together, we obtain a commutative 
diagram
\begin{equation}
\begin{tikzpicture}[->,auto,node distance=1.5cm,>=latex',baseline=(current  bounding  box.center)]
\matrix (m) [matrix of nodes, row sep=1cm,column sep=1.1cm]
{ $ $ &      $ $       & $H_*(Cone(f))$ &  $ $ & $ $ \\
  \phantom{'}$\ldots $ & $H_*(B)$ & $H_*(C)$ & $H_*(A)$ & $ \ldots $ \phantom{'}\\ };
 \path[->]
  (m-1-3) edge node[anchor=center,rotate=-90,yshift=1.8ex]{$\cong$} (m-2-3)
          edge[bend left=20] node[above] {$\quad (p_1)_*$} (m-2-4);
 \path[->]
  (m-2-1)[xshift=2ex] edge  node[below] {$f_*$} (m-2-2);
 \path[->]
  (m-2-2) edge node[below] {$g_*$} (m-2-3)
          edge[bend left=20] node[above] {$(i_2)_*\quad $} (m-1-3);
 \path[->]
  (m-2-3) edge node[below] {$d$} (m-2-4);
  \path[->]
  (m-2-4) edge node[below] {$f_*$} (m-2-5);
\end{tikzpicture}
\end{equation}
and the exactness of \eqref{Triangle: Exact triangle with the general cone} implies that also the bottom line is exact.
\end{proof}




\begin{Lemma}\label{Lemma: Algebraic lemma for the third square}
Let $A$, $B$, $C$, $A'$, $B'$ and $C'$ be chain complexes fitting in the diagram
\begin{equation} \label{Diagram: Big diagram in the algebraic lemma}
\begin{tikzpicture}[->,auto,node distance=1.5cm,>=latex',baseline=(current  bounding  box.center)]
\matrix (m) [matrix of nodes, row sep=2cm,column sep=4cm]
{  $A'$ & $B'$ & $C'$ \\
    $A$ & $B$ &  $C$ \\};
 \path[->]
  (m-1-1) edge node[above] {$f'$} (m-1-2)
          edge node[right] {$a$} (m-2-1)
          edge node[below] {$R$} (m-2-2)
          edge[bend left=20] node[above] {$H'$} (m-1-3)
          edge[dashed] node[above,yshift=2ex,xshift=-7ex] {$T$} (m-2-3);
 \path[->]
  (m-1-2) edge node[right,yshift=3.1ex] {$b$} (m-2-2)
          edge node[above] {$g'$} (m-1-3)
          edge node[above] {$S$} (m-2-3);
 \path[->]
  (m-1-3) edge node[right] {$c$} (m-2-3);
 \path[->]
  (m-2-1) edge node[below] {$f$} (m-2-2)
  edge[bend right=20] node[below] {$H$} (m-2-3);
 \path[->]
  (m-2-2) edge node[below] {$g$} (m-2-3);
 \end{tikzpicture}
\end{equation}
where:
\begin{enumerate}[leftmargin=*]
 \item $H$ and $H'$ are chain homotopies from $g \circ f$ and, respectively, $g' \circ f'$ to $0$; 
 \item $R$ is a chain homotopy from $f \circ a$ to $b \circ f'$;
 \item $S$ is a chain homotopy from $g \circ b$ to $c \circ g'$;
 \item $T$ is a map such that $(c\circ H' +S \circ f') + (H\circ a + g \circ R) = T \circ \partial' + 
\partial \circ T$;
 \item the homologies of the two iterated cones
 \[\left(A\oplus B \oplus C,\left(\begin{array}{ccc} 
   \partial     &     0     & 0\\
    f &     \partial     & 0\\
    H & g & \partial
  \end{array}\right)\right)\quad \mbox{and}\quad 
  \left(A'\oplus B' \oplus C', \left(\begin{array}{ccc} 
   \partial'     &     0     & 0\\
    f' &     \partial'     & 0\\
    H' & g' & \partial'
  \end{array}\right)\right)\]
 are trivial. 
\end{enumerate}
Then there exist linear maps $d$ and $d'$ such that the following diagram 
\begin{equation}\label{commutativity}
\begin{tikzpicture}[->,auto,node distance=1.5cm ,>=latex',baseline=(current  bounding  box.center)]
\matrix (m) [matrix of nodes, row sep=1cm,column sep=1.5cm]
{ \phantom{f}$\ldots$ & $H_*(A')$ & $H_*(B')$ &  $H_*(C')$  & $\ldots$\phantom{f}\\
  \phantom{f}$\ldots$ & $H_*(A)$ & $H_*(B)$ & $H_*(C)$ & $\ldots$\phantom{f}\\ };
 \path[->]
  (m-1-1) edge node[above] {$d'$}   (m-1-2);
 \path[->]
  (m-1-2) edge node[above] {$f'_*$} (m-1-3)
          edge node[right] {$a_*$} (m-2-2);
 \path[->]
  (m-1-3) edge node[right] {$b_*$} (m-2-3)
          edge node[above] {$g'_*$} (m-1-4);
 \path[->]
  (m-1-4) edge node[right] {$c_*$} (m-2-4)
          edge node[above] {$d'$}   (m-1-5);
 \path[->]
  (m-2-1) edge node[below] {$d$} node[above,yshift=2.75ex] {\huge$\circlearrowleft$}  (m-2-2);
 \path[->]
  (m-2-2) edge node[below] {$f_*$} node[above,yshift=2.75ex] {\huge$\circlearrowleft$} (m-2-3);
 \path[->]
  (m-2-3) edge node[below] {$g_*$} node[above,yshift=2.75ex] {\huge$\circlearrowleft$} (m-2-4);
 \path[->]
  (m-2-4) edge node[below] {$d$} node[above,yshift=2.75ex] {\huge$\circlearrowleft$}  (m-2-5);
 \end{tikzpicture}
\end{equation}
commutes and the two rows are long exact sequences.
\end{Lemma}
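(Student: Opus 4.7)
The plan is to apply Lemma \ref{Lemma: Algebraic lemma for the exactness} separately to the triples $(A,B,C,f,g,H)$ and $(A',B',C',f',g',H')$. Hypothesis (5) is exactly the input required by that lemma, so each row of diagram \eqref{commutativity} becomes a long exact sequence, with the connecting maps $d$ and $d'$ thereby defined. The commutativity of the two squares involving $f_*, f'_*$ and $g_*, g'_*$ is then immediate from hypotheses (2) and (3): the chain homotopies $R$ and $S$ realise $b \circ f' \simeq f \circ a$ and $c \circ g' \simeq g \circ b$, and these homotopies descend to equalities in homology. Only the square involving the connecting maps $d$ and $d'$ requires a nontrivial argument; this will be the main technical point.

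For that square the key observation is that the full set of vertical data assembles into a chain map between the two iterated cones of hypothesis (5):
\[
\Psi \;\coloneqq\; \begin{pmatrix} a & 0 & 0 \\ R & b & 0 \\ T & S & c \end{pmatrix}.
\]
Indeed, expanding $\partial_{\mathrm{cone}} \circ \Psi = \Psi \circ \partial'_{\mathrm{cone}}$ entry by entry recovers exactly the five hypotheses: the diagonal entries say that $a, b, c$ are chain maps, the entries $(2,1)$ and $(3,2)$ are the chain homotopy equations for $R$ and $S$, and the entry $(3,1)$ is precisely condition (4) on the higher homotopy $T$. Thus hypothesis (4) is exactly the datum needed to upgrade the partial diagram to a genuine chain map between the two acyclic iterated cones.

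Recall from the proof of Lemma \ref{Lemma: Algebraic lemma for the exactness} that $d$ is the composite $p_1 \circ (H,g)_*^{-1}$, where $(H,g)\colon \mathrm{Cone}(f) \to C$ is the quasi-isomorphism produced by the acyclicity of the iterated cone. Given a cycle $c' \in C'$, choose a cycle $(a',b') \in \mathrm{Cone}(f')$ with $H'a' + g'b' = c' + \partial \alpha$ for some $\alpha \in C'$, so that $d'[c'] = [a']$. A short check using that $a,b$ are chain maps and that $R$ is the chain homotopy of (2) shows that $(aa',\,Ra'+bb')$ is a cycle in $\mathrm{Cone}(f)$; then using hypotheses (3) and (4) together with $\partial' b' = f' a'$ one computes
\[
(H,g)\bigl(aa',\,Ra'+bb'\bigr) \;=\; cc' \;+\; \partial\bigl(Ta' + Sb' + c\alpha\bigr).
\]
Hence $(aa', Ra'+bb')$ lifts $cc'$ through $(H,g)$, and projecting gives $d[cc'] = [aa'] = a_* d'[c']$. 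The main obstacle is precisely this last calculation: condition (4) is inserted to make the a priori non-exact discrepancy $cH'a' + Sf'a'$ into a genuine boundary, which is exactly what naturality of the connecting homomorphism demands at the chain level. Without the higher homotopy $T$ only the first two squares would commute.
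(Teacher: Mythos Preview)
Your proof is correct and follows essentially the same route as the paper: exactness and the definition of $d,d'$ come from Lemma~\ref{Lemma: Algebraic lemma for the exactness}, the first two squares from the homotopies $R$ and $S$, and the third square from the fact that $\left(\begin{smallmatrix} a & 0 \\ R & b \end{smallmatrix}\right)\colon \mathrm{Cone}(f')\to \mathrm{Cone}(f)$ is a chain map intertwining $(H',g')$ and $(H,g)$ up to the homotopy $(T,S)$. The paper phrases this last step abstractly---naturality of the mapping-cone long exact sequence together with the observation that $(T,S)$ is a chain homotopy between $c\circ(H',g')$ and $(H,g)\circ\left(\begin{smallmatrix} a & 0 \\ R & b \end{smallmatrix}\right)$---while you unwind the same identity explicitly at the level of cycles; your $3\times 3$ matrix $\Psi$ is a nice packaging but is not actually used in your final computation.
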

\begin{proof}
 Observe first that the commutativity of the two squares in Diagram \eqref{commutativity} comes from the assumptions (2) and (3). The existence of the linear maps $d$ and $d'$ and the exactness of the rows follows from  Lemma \ref{Lemma: Algebraic lemma for the exactness}. It remains to show the commutativity of the third square.
 Consider the quasi-isomorphisms
 \[(H,g)\colon  Cone(f) \longrightarrow C \quad \mbox{and}\quad (H',g')\colon Cone(f') 
\longrightarrow C' \]
 and the  chain map
 \[ \left(\begin{array}{cc}
           a & 0 \\
           R & b
          \end{array}
 \right) \colon Cone(f') \longrightarrow Cone(f).\]

Naturality of mapping cones implies that the following diagram of long exact 
sequences 
\begin{equation*}
\begin{tikzpicture}[->,auto,node distance=1.5cm ,>=latex',baseline=(current  bounding  box.center)]
\matrix (m) [matrix of nodes, row sep=2cm,column sep=2cm]
{ \phantom{f}$\ldots$ & $H_*(A')$ & $H_*(B')$ &  $H_*(Cone(f'))$  & $\ldots$\phantom{f}\\
  \phantom{f}$\ldots$ & $H_*(A)$ & $H_*(B)$ & $H_*(Cone(f))$ & $\ldots$\phantom{f}\\ };
 \path[->]
  (m-1-1) edge node[above] {$(p_1')_*$}   (m-1-2);
 \path[->]
  (m-1-2) edge node[above] {$f'_*$} (m-1-3)
          edge node[right] {$a_*$} (m-2-2);
 \path[->]
  (m-1-3) edge node[right] {$b_*$} (m-2-3)
          edge node[above] {$(i_2')_*$} (m-1-4);
 \path[->]
  (m-1-4) edge node[right] {$\left(\begin{array}{cc}
           a & 0 \\
           R & b
          \end{array} \right)_*$} (m-2-4)
          edge node[above] {$(p_1')_*$}   (m-1-5);
 \path[->]
  (m-2-1) edge node[below] {$(p_1)_*$}  (m-2-2); 
 \path[->]
  (m-2-2) edge node[below] {$f_*$} (m-2-3);
 \path[->]
  (m-2-3) edge node[below] {$(i_2)_*$} (m-2-4);
 \path[->]
  (m-2-4) edge node[below] {$(p_1)_*$}  (m-2-5);
\end{tikzpicture}
\end{equation*}
commutes. Moreover the diagram
 \begin{equation*}
 \begin{tikzpicture}[->,auto,node distance=1.5cm,>=latex',baseline=(current  bounding  box.center)]
 \matrix (m) [matrix of nodes, row sep=2cm,column sep=2cm]
{$H_*(Cone(f'))$ & $H_*(C',\partial')$ \\
$H_*(Cone(f))$ & $H_*(C,\partial)$ \\ };
  \path[->]
  (m-1-1) edge node[above] {$(H',g')_*$} (m-1-2)
      edge node[left] { $\left(\begin{array}{cc} a & 0 \\ R & b \end{array} \right)_*$} (m-2-1);
 \path[->]
  (m-2-1) edge node[below] {$(H,g)_*$} (m-2-2);
 \path[->]
  (m-1-2) edge node[right] {$c_*$} (m-2-2);
 \end{tikzpicture}
 \end{equation*}
 commutes because  $(T, S) \colon Cone(f') \longrightarrow C$ is a chain homotopy between 
$c \circ (H',g')$ and $ (H,g) \circ \left(\begin{array}{cc} a & 0 \\ R & b \end{array} \right)$. 
Hence the lemma follows.
\end{proof}

\begin{Lemma}\label{Come mettere a posto le cose}
Suppose we have a diagram as in Lemma \ref{Lemma: Algebraic lemma for the third square}.
We assume moreover that $A=A_+ \oplus A_-$ and decompose $a=(a_+, a_-)$, $f=f_++ f_-$ 
and $H=H_+ + H_-$. If there is a map $\alpha_- \colon A' \to A_-$ such that $a_-= \partial \circ
 \alpha_- + \alpha_- \circ \partial$, then there exist maps $\widetilde{R}$ and $\widetilde{T}$
 such that the diagram 
\begin{equation} \label{Diagram: modification of big diagram}
\begin{tikzpicture}[->,auto,node distance=1.5cm,>=latex',baseline=(current  bounding  box.center)]
\matrix (m) [matrix of nodes, row sep=2cm,column sep=4cm]
{  $A'$ & $B'$ & $C'$ \\
    $A_+$ & $B$ &  $C$ \\};
 \path[->]
  (m-1-1) edge node[above] {$f'$} (m-1-2)
          edge node[right] {$a_+$} (m-2-1)
          edge node[below] {$\widetilde{R}$} (m-2-2)
          edge[bend left=20] node[above] {$H'$} (m-1-3)
          edge[dashed] node[above,yshift=2ex,xshift=-7ex] {$\widetilde{T}$} (m-2-3);
 \path[->]
  (m-1-2) edge node[right,yshift=3.1ex] {$b$} (m-2-2)
          edge node[above] {$g'$} (m-1-3)
          edge node[above] {$S$} (m-2-3);
 \path[->]
  (m-1-3) edge node[right] {$c$} (m-2-3);
 \path[->]
  (m-2-1) edge node[below] {$f_+$} (m-2-2)
  edge[bend right=20] node[below] {$H_+$} (m-2-3);
 \path[->]
  (m-2-2) edge node[below] {$g$} (m-2-3);
 \end{tikzpicture}
\end{equation}
satisfies the properties of Diagram \eqref{Diagram: Big diagram in the algebraic lemma}.
\end{Lemma}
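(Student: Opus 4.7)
The plan is to obtain $\widetilde R$ and $\widetilde T$ by adding to $R$ and $T$ correction terms built from the null-homotopy $\alpha_-$ of $a_-$, and then to verify the four structural identities by direct chain-level manipulation over $\Z/2\Z$. Since the maps $f'$, $g'$, $H'$, $b$, $c$, $g$ and $S$ are unchanged, property (1) for $H'$ and property (3) are automatic. The block decomposition $A = A_+ \oplus A_-$, together with the fact that $f$ and $H$ respect it by definition, gives the restricted identity $\partial H_+ + H_+ \partial = g \circ f_+$, which is property (1) for $H_+$.

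For property (2), I would first observe that the ``extra term'' $f_- \circ a_-$ which obstructs using $R$ directly as a chain homotopy from $f_+ \circ a_+$ to $b \circ f'$ is itself a coboundary: since $f_-$ is a chain map, the identity $a_- = \partial \alpha_- + \alpha_- \partial$ yields
\[
f_- \circ a_- = \partial \circ (f_- \circ \alpha_-) + (f_- \circ \alpha_-) \circ \partial.
\]
Hence setting $\widetilde R := R + f_- \circ \alpha_-$ produces a chain homotopy from $f_+ \circ a_+$ to $b \circ f'$, verifying property (2) in the modified diagram.

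For property (4), I would compute the defect between the desired identity
\[
(c \circ H' + S \circ f') + (H_+ \circ a_+ + g \circ \widetilde R) \;=\; \widetilde T \circ \partial' + \partial \circ \widetilde T
\]
and the identity which $T$ satisfies in the original diagram. A short expansion, using $H = H_+ + H_-$, $a = a_+ + a_-$, and the definition of $\widetilde R$, shows that this defect equals precisely $H_- \circ a_- + g \circ f_- \circ \alpha_-$. Combining the chain-homotopy identity $\partial H_- + H_- \partial = g \circ f_-$ (property (1) restricted to the $A_-$ summand) with $a_- = \partial \alpha_- + \alpha_- \partial$, a one-line computation in characteristic two yields
\[
\partial \circ (H_- \circ \alpha_-) + (H_- \circ \alpha_-) \circ \partial' = H_- \circ a_- + g \circ f_- \circ \alpha_-,
\]
the two copies of $H_- \circ \partial \circ \alpha_-$ that appear canceling over $\Z/2\Z$. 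Therefore $\widetilde T := T + H_- \circ \alpha_-$ satisfies the required identity.

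The only real obstacle is the bookkeeping in the last computation, where the two chain-homotopy relations (for $H_-$ and for $\alpha_-$) must be correctly combined so that the mod-$2$ cancellation yields exactly the defect. Once this is in place, all four properties hold for the diagram \eqref{Diagram: modification of big diagram} with the prescribed $\widetilde R$ and $\widetilde T$.
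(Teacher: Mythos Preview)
Your proof is correct and follows essentially the same approach as the paper: you define $\widetilde R = R + f_-\circ\alpha_-$ and $\widetilde T = T + H_-\circ\alpha_-$ exactly as the paper does, and the verifications of properties (1)--(4) are the same computations, only organised slightly differently (you isolate the ``defect'' and show it is a coboundary, whereas the paper expands $\partial\widetilde R + \widetilde R\partial'$ and $\partial\widetilde T + \widetilde T\partial'$ directly). Neither you nor the paper addresses property (5) for the new bottom row; in the application this acyclicity is established separately.
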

\begin{proof}
It is evident, by restriction, that $H_+$ is a homotopy between $g \circ f_+$ and $0$.
We define $\widetilde{R}=R+ f_- \circ \alpha_-$ and $\widetilde{T}= T+H_- \circ \alpha_-$.
Then we compute 
\begin{align*}
\partial \circ \widetilde{R} + \widetilde{R} \circ \partial' & = \partial \circ R + R \circ \partial' 
+ \partial \circ f_- \circ \alpha_- + f_- \circ \alpha_- \circ \partial \\ & = \partial \circ R + R \circ 
\partial' + f_- \circ (\partial \circ \alpha_- + \alpha_- \circ \partial') \\
&= f \circ a + b \circ f' + f_- \circ a_- = f_+ \circ a_+ + b \circ f'
\end{align*}
and
\begin{align*}
\partial \circ \widetilde{T} - \widetilde{T} \circ \partial & = \partial \circ T - T \circ \partial
+ \partial \circ H_- \circ \alpha_- + H_- \circ \alpha_- \circ \partial' \\
&= \partial \circ T - T \circ \partial + (\partial \circ H_- + H_- \circ \partial) \circ \alpha_- +
H_- \circ (\partial \circ \alpha_- + \alpha_- \circ \partial')  \\
& = c \circ H' + S \circ f' + g \circ R + H \circ a + g \circ f_- \circ \alpha_- + H_- \circ a_- \\
& =  c \circ H' + S \circ f' + g \circ \widetilde{R} + H_+ \circ a_+.
\end{align*}
\end{proof}

\subsection{The exact triangle in fixed point Floer cohomology} 
\label{Subsection: The exact triangle in symplectic homology}
In \cite{Se4} Seidel sketches the existence of an exact triangle that encodes the behaviour of fixed point Floer cohomology (in the $\pm$ versions) under the composition by a Dehn twist. In this section we prove the exactness of an analogous exact triangle for $HF^{\sharp}$. The proof is inspired by that for Seidel's exact triangle in Lagrangian Floer cohomology from \cite{Se5} and works also for the $\pm$ versions.

Fix a compact Liouville surface $(S,\lambda)$ with genus $g\geq 1$ and boundary $\partial S \cong S^1$ and an exact symplectomorphism $\vf: S \rightarrow S$. We denote $\omega= d \lambda$. If $L \subset S\setminus \partial S$ is an exact Lagrangian submanifold, i.e. a closed essential embedded curve such that $[\lambda|_L]=0 \in H^1_{dR}(L)$, we denote by $\tau_L \colon S \rightarrow S$ the positive Dehn twist along $L$, which is an exact symplectomorphism. If $L'$ is another exact Lagrangian submanifold, we denote by $HF(L,L')$ the \emph{Lagrangian Floer cohomology of $(L,L')$}.

Suppose (without loss of generality) that $L$ and $L'$ intersect transversely. The Floer chain complex $CF(L,L')$ is the $\Z/2\Z$-vector space freely generated by the intersection point $x \in L \cap L'$. To define the boundary, we consider the symplectic fibration 
\[(\R \times [0,1] \times S,ds\wedge dt + \omega) \stackrel{\pi}{\longrightarrow} (\R \times [0,1],ds\wedge dt)\]
endowed with a compatible almost complex structure $J$ such that $J(\partial_s) = \partial_t$ and $J(TS) = TS$. Given $x_+,x_- \in L\cap L'$, let $\mathcal{M}(x_+,x_-,J)$ be the moduli space of $J$-holomorphic sections $u\colon \R\times [0,1] \rightarrow \R \times [0,1] \times S$ of $\pi$ such that
\begin{enumerate}
\item $\lim_{s \rightarrow \pm\infty} \pi \circ u(s,t) = x_\pm$,
\item $u(s, 0) \in L$ for all $s \in \R$, and
\item $u(s,1) \in L'$ for all $s \in \R$.
\end{enumerate}

As in the fixed point case, to each $J$--holomorphic section $u$ is associated a Fredholm operator $D_u$ of index $\mathrm{ind}(u)$. Call $\mathcal{M}_k(x,y,J)$ the subset of maps in $\mathcal{M}(x,y,J)$ with $\mathrm{ind}(u)=k$. Define then a differential on $CF(L,L')$ by
\[\partial x_+ = \sum_{y\in L\cap L'} \#_2 (\overline{\mathcal{M}}_1(x_+,x_-,J)) x_- \]
where again $\overline{\mathcal{M}}_1(x_+,x_-,J)$ denotes the quotient of $\mathcal{M}_1(x_+,x_-,J)$ by the $\R$-action given by translations in the $\R$-direction. The resulting homology 
\[HF(L,L') \coloneqq H_*(CF(L,L'),\partial)\]
is the Lagrangian Floer cohomology of $L$ and $L'$.
The edfinition of Lagrangian Floer cohomology using sections is (tautologically) equivalent to the usual definition using time-dependent almost complex structures. 

\begin{Thm} \label{Theorem: Exact triangle in SH} There is an exact triangle
\begin{equation} \label{Triangle: L phi phi-tau}
\begin{tikzpicture}[->,auto,node distance=1.5cm,>=latex',baseline=(current  bounding  box.center)] 

  \node (1) {$HF(\vf(L),L)$};
  \node (2) [right = of 1] {$HF^{\sharp}(\vf)$};
  \node (3) [right = of 2] {$HF^{\sharp}(\vf \circ \tau_L^{-1})$.};

  \path[every node/.style={font=\sffamily\small}]
    (1) edge node[above] {$\iota_*$} (2)
    (2) edge node[above] {$\lambda_*$} (3)
    (3) edge[bend left=20] node {$\delta$} (1);
\end{tikzpicture}
\end{equation}
\end{Thm}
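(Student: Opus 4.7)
The plan is to adapt Seidel's Lefschetz--fibration proof of the exact triangle for Lagrangian Floer cohomology \cite{Se5} to the fixed--point setting, and then verify exactness of the resulting triangle via the algebraic criterion of Lemma \ref{Lemma: Algebraic lemma for the exactness}. Concretely, I would construct chain--level maps $\iota \colon CF(\vf(L),L) \to CF^\sharp(\vf)$, $\lambda \colon CF^\sharp(\vf) \to CF^\sharp(\vf \circ \tau_L^{-1})$ and a chain homotopy $H \colon CF(\vf(L),L) \to CF^\sharp(\vf \circ \tau_L^{-1})$ witnessing $\lambda \circ \iota \simeq 0$, and then verify that the iterated cone built from $(\iota, \lambda, H)$ has vanishing homology. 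The connecting map $\delta$ is then produced automatically by Lemma \ref{Lemma: Algebraic lemma for the exactness}.

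The map $\lambda$ is defined by counting index--zero holomorphic sections of a symplectic Lefschetz fibration $\pi\colon E \to \R \times S^1$ whose generic fiber is $(\widehat{S},\omega)$, with exactly one interior critical value whose vanishing cycle is $L$. The monodromies at $s \to +\infty$ and $s \to -\infty$ are arranged to be $\vf$ and $\vf \circ \tau_L^{-1}$ respectively, so that the monodromy of a small loop around the critical value is a single $\tau_L^{-1}$. Sections asymptotic to fixed points at the two ends are counted with respect to a compatible, $\R$--invariant almost complex structure that is cylindrical near each end. The map $\iota$ arises similarly from a Lefschetz fibration over a half--infinite cylinder terminating in a single critical fiber, exploiting the Lefschetz thimble correspondence to translate the asymptotic behavior at the critical fiber into intersection points of $\vf(L)$ with $L$. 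The chain homotopy $H$ is obtained by sliding the critical value of a composite fibration through a one--parameter family, so that the boundary of the corresponding parameterized moduli space gives the identity $\lambda \circ \iota = H \circ \partial + \partial \circ H$.

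The hard part, and the main obstacle, is verifying the acyclicity hypothesis \eqref{Equation: Homology of the general cone is 0} of Lemma \ref{Lemma: Algebraic lemma for the exactness} for the iterated cone
$$\left( CF(\vf(L),L) \oplus CF^\sharp(\vf) \oplus CF^\sharp(\vf \circ \tau_L^{-1}),\, D \right),$$
where $D$ combines the internal differentials with $\iota$, $\lambda$ and $H$. Following Seidel, this is proved by a \emph{cylindrical limit} argument: one considers an auxiliary one--parameter family of Lefschetz fibrations obtained by stretching a neck (equivalently, by degenerating the underlying Riemann surface to a nodal configuration) so that the critical value is pushed towards one of the cylindrical ends. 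In the limit, the fibration splits into a trivial cylinder with monodromy $\vf \circ \tau_L^{-1}$ and a local thimble model near the critical point. The count of sections of the parameterized moduli space, combined with standard gluing, produces a chain--level quasi--isomorphism between the iterated cone and $CF^\sharp(\vf \circ \tau_L^{-1})$ which is identified with the projection from the cone; acyclicity of the cone then follows.

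The remaining technical subtleties are specific to the sharp version and to the non--compactness of the fiber $\widehat{S}$. First, compactness of all moduli spaces requires exactness of $\widehat{\omega}_v$ (see Remark \ref{remark: exactness of omega_v}) and the extension of this exactness to the total spaces of the Lefschetz fibrations; this must be checked case by case to rule out disc bubbling and ensure SFT--style compactness. Second, one must show that no relevant moduli space contains a cover of the trivial cylinder on the boundary elliptic orbit $e$, so that the maps really descend to the sharp quotient. This follows from a winding--number obstruction analogous to the one in the proof of Lemma \ref{Alexander is a filtration}: any non--trivial holomorphic end at $e$ must wrap non--trivially around the longitude of $\partial \widehat{T}_\vf$, which is topologically forbidden inside $\widehat{T}_\vf$.
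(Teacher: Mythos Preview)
Your overall architecture is right: build $\iota$, $\lambda$, a nullhomotopy $H$ of $\lambda\circ\iota$, and then invoke Lemma~\ref{Lemma: Algebraic lemma for the exactness} once the iterated cone is shown to be acyclic. But two of your three main ingredients are misdescribed, and the third (acyclicity) is where the real content lies, and your outline does not match what actually works.

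First, your description of $\iota$ is off. In the paper $\iota$ is \emph{not} defined via a Lefschetz fibration with a critical fibre and a thimble correspondence. It counts sections of a \emph{smooth} symplectic fibration $E_\iota^\vf\to B_\iota=\D\setminus\{0,1\}$ with monodromy $\vf$, equipped with a Lagrangian boundary condition $Q_L$ obtained by parallel transport of $L$; the asymptotics at the strip-like end $1$ are intersection points in $\vf(L)\cap L$, and at the cylindrical end $0$ they are fixed points of $\vf$. No vanishing cycle enters here. (Relatedly, the monodromy around the unique critical value in the fibration defining $\lambda$ is $\tau_L$, not $\tau_L^{-1}$.)

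Second, and more seriously, your ``cylindrical limit/neck-stretching'' sketch for acyclicity of the iterated cone is not the argument, and I do not see how to make it one. Pushing the critical value in the $\lambda$-fibration towards an end tells you something about $\lambda$ alone; it does not produce a quasi-isomorphism $\mathrm{Cone}(\iota)\to CF^\sharp(\vf\circ\tau_L^{-1})$. The paper instead runs the energy-filtration argument of \cite[Lemma~2.31]{Se5}: one shows that the low-energy part $\partial^C_\delta$ of the cone differential is
\[
\partial^C_\delta=\begin{pmatrix}0&0&0\\ \iota_0&0&0\\ 0&\lambda_0&0\end{pmatrix},
\]
with $\iota_0$ and $\lambda_0$ bijections on generators, so $(C,\partial^C_\delta)$ is acyclic and hence so is $(C,\partial^C)$. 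Getting $\lambda_0$ is easy (horizontal sections of $E_\lambda^\vf$ correspond to fixed points of $\vf\circ\tau_L^{-1}$). Getting $\iota_0$ is the substantive step and requires: (i) a geometric normal form (Lemma~\ref{creation}) producing, for each $x\in L\cap\vf(L)$, a nearby hyperbolic fixed point $p_x$ of $\vf$ of the same sign; (ii) an action/energy argument (Lemma~\ref{afa}) confining low-energy sections to $\mathcal{M}(x,\gamma_x)$; (iii) an index computation (Lemma~\ref{caldo}) showing the horizontal section over $x$ has index $0$ when the signs match; and (iv) a parametric deformation argument to handle the generic case where $x$ is not literally a fixed point. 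None of this is a neck-stretch, and your proposal contains no substitute for it.

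Your remarks on the sharp-specific issues (exactness of $\widehat\omega_v$, ruling out the elliptic orbit $e$ via the winding-number obstruction) are correct and are indeed what one checks, but they are peripheral compared with the missing low-energy analysis above.
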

In comparing this result with Theorem 4.2 of \cite{Se4}, the reader should keep in mind 
the different conventions. 

Before giving the proof of the Theorem we describe the maps $\iota$ and $\lambda$, which are defined in terms of \emph{Gromov invariants associated to exact Lefschetz fibrations} over surfaces. We refer the reader to Seidel's book \cite{Se1} for the general theory of these invariants and to \cite{Se4} and \cite{Se5} for further details about the construction of $\iota$ and $\lambda$ and other closely related maps.

Let $B$ be an oriented surface (possibly with boundary) with a set $\Gamma$ of marked points and consider a Lefschetz fibration $E \stackrel{\pi}{\twoheadrightarrow} B\setminus \Gamma$ with smooth fiber a surface $S$ and without critical points above $\partial B$. Fix a symplectic form $\Omega$ on $E$ of the form $\Omega = \theta + \pi^*\kappa$ with $\kappa$ a symplectic form on $B$ and $\theta$ an exact two-form on $E$ which is positive on each fiber. Assume that $\Omega$ is exact and let $\Upsilon$ be a primitive 
of $\Omega$ which restricts to a Liouville form on every fibre.

If $B$ is without boundary, one can endow $B$ and $E$ with suitable almost complex structures and define the \emph{Gromov invariant} $G(E,\pi)$ by counting (Fredholm) index--$0$ pseudo-holomorphic sections of $\pi$. If $B$ has non-empty boundary we fix an \emph{exact Lagrangian boundary condition} on $\partial E$: this consists in a Lagrangian submanifold $Q \subset \pi^{-1}(\partial B)$  such that $\pi|_Q \colon Q \to \partial B$ is a localy trivial fibration and $[\Upsilon|_{Q}] = 0 \in H_{dR}^1(Q)$. Observe that there is a canonical symplectic connection on $E$ given by the $\Omega$-orthogonal of the tangent space of the fibers, and an exact Lagrangian submanifold $L$ on a fiber above $\partial B \setminus \Gamma$ induces, by parallel transport, an exact Lagrangian boundary condition on the corresponding connected component of $\partial B \setminus \Gamma$. 

To this data it is possible to associate a \emph{relative Gromov invariant} $G_{rel}(E,\pi,Q)$, which counts index--$0$ pseudo-holomorphic sections of $\pi$ with boundary in $Q$. 

\begin{Rmk} In the literature (and in particular in Seidel's work) the Gromov invariants $G(E,\pi)$ and $G_{rel}(E,\pi,Q)$ are usually denoted by $\Phi(E,\pi)$ and $\Phi_{rel}(E,\pi,Q)$ respectively. We prefer to change the notation here to avoid possible confusions with the map $\Phi$ from Heegaard Floer homology to periodic homology.
\end{Rmk}

\begin{enumerate}[leftmargin=*]
 \item Let $\D$ be the unit disc in $\C$. To define $\iota \colon CF(\vf(L),L) \longrightarrow CF^{\sharp}(\vf)$, consider the surface $B_{\iota}= \D \setminus \{0,1\}$ and the symplectic fibration $\pi_{\iota} \colon E_{\iota}^{\vf} \to B_{\iota}$ with fiber $S$ and monodromy $\vf$. On $B_\iota$ we fix a positive strip-like and at $1$ and a negative cylindrical end at $0$, i.e.\ holomorphic identifications of a punctured neighbourhood of $1$ in $B_\iota$ with $(0, + \infty) \times [0,1]$ and of a punctured neighbourhood of $0$ with $(- \infty, 0) \times S^1$. We fix also a trivialisation of $\pi_{\iota}$ over the strip-like end, which means an identification of the preimage of the strip-like end with $S \times \R \times [0,1]$ equipped with a product symplectic form.  We equip the boundary of $E_{\iota}^{\vf}$ with an exact Lagrangian boundary condition $Q_L$ which restricts to $(L \times (0, + \infty) \times \{ 1 \}) \cup (\vf(L) \times (0, + \infty) \times \{ 0 \})$ on the strip-like end. Such exact Lagrangian boundary condition is obtained as the trace of the parallel transport of $L \times \{ c \} \times \{ 1 \}$ for some $c>0$.

Let ${\mathcal M}(x, \gamma)$ be the moduli space of $J$-holomorphic sections $u \colon
B_\iota \to E_\iota^\vf$ with boundary on $Q_L$, positively asymptotic to $x \in \vf(L) \cap L$ and
negatively asymptotic to a periodic orbit $\gamma$ corresponding to a fixed point of $\vf$.
We denote by  ${\mathcal M}_k(x, \gamma)$ the subset of ${\mathcal M}(x, \gamma)$ consisting of sectons of Fredholm index $k$. For a generic choice of almost complex structre,
${\mathcal M}_0(x, \gamma)$ is a transversely cut out compact manifold of dimension zero, and we define 
$$\iota(x)= \sum_{\gamma \ne e} \# {\mathcal M}_0(x, \gamma) \gamma.$$ 

\begin{figure} [ht!] 
  \begin{center}
   \includegraphics[scale = 1.0]{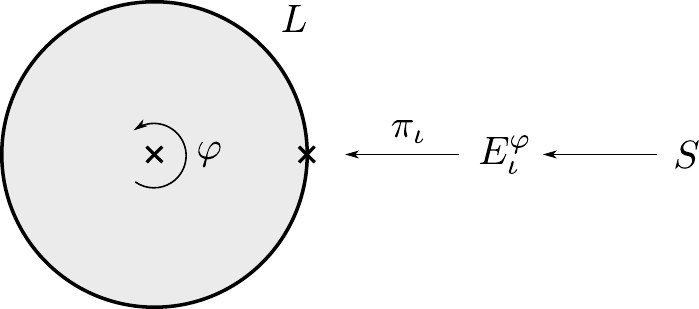}
  \end{center}
  \caption{The fibration $(E_{\iota}^{\vf},\pi_{\iota})$ with Lagrangian boundary condition $Q_L$.}
\end{figure}

\item To define $\lambda \colon CF^{\sharp}(\vf) \longrightarrow CF^{\sharp}(\vf \circ \tau_L^{-1})$, consider a Lefschetz fibration $(E_{\lambda}^{\vf},\pi_{\lambda})$ over $B_{\lambda} \coloneqq \C^*$  with fiber $S$ and only one critical point $\mathfrak{x}$ with associated vanishing cycle $L$ and such that the symplectic monodromy around  $0$ is $\vf \circ \tau_L^{-1}$. Then the symplectic monodromy around $\mathfrak{z}= \pi_{\lambda}(\mathfrak{x})$ is isotopic to $\tau_L$ and around a circle of radius bigger than $|\mathfrak{z}|$ it is $\vf$.

Let ${\mathcal M}(\gamma_+, \gamma_-)$ the moduli space of $J$-holomorphic sections
$u \colon B_\lambda \to E_\lambda^\vf$ whigh are positively asymptotic at $\infty$ to a periodic orbit $\gamma_+$ corresponding to a fixed point of $\phi$ and negatively asymptotic at $0$ to a periodic orbit $\gamma_-$ corresponding to a fixed point of $\vf \circ \tau_L^{-1}$. 
We denote by  ${\mathcal M}_k(\gamma_+, \gamma_-)$ the subset of   ${\mathcal M}(\gamma_+, \gamma_-)$ consisting of sections of Fredholm index $k$.  For a generic choice of almost complex structure, ${\mathcal M}_0(\gamma_+, \gamma_-)$ is a transversely cut out compact manifold of dimension zero, and for $\gamma_+ \ne e$ we define 
$$\lambda(\gamma_+)= \sum_{\gamma_- \ne e} \# {\mathcal M}_0(\gamma_+, \gamma_-) \gamma_-.$$
\begin{figure} [ht!] 
  \begin{center}
  \includegraphics[scale = 1.0]{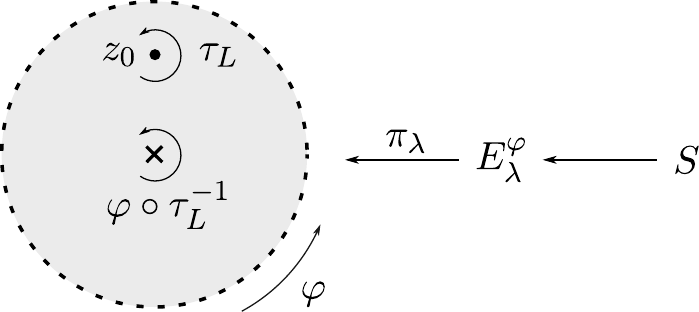}
  \end{center}
  \caption{The fibration $(E_{\lambda}^{\vf},\pi_{\lambda})$. Here and in the future, the points marked with a cross are removed and those marked with a point represent critical values of the corresponding Lefschetz fibration. The point $z_0$ should be labelled $\mathfrak{z}$ instead.}
\end{figure}

\begin{Rmk}
 Observe that $B_{\lambda}\cong S^2 \setminus \{N,S\}$, where punctured neighborhoods of $N$ and $S$ (the north and the south pole respectively) are identified with the cylindrical--like ends $0$ and $\infty$ respectively. In the rest of the paper we will often and implicitly consider $E^{\vf}_{\lambda}$ as a fibration over $S^2 \setminus \{N,S\}$.
\end{Rmk}

\item  We want describe also another Gromov invariant that will be useful later. Consider the Lefschetz fibration $\overline{\pi}_L \colon \overline{E}_L \to \overline{B}= \D \setminus \{-1\}$ with a unique singular fiber over $0$ with vanishing cycle $L$. If $L'$ is another exact Lagrangian submanifold $S$, we can define an exact Lagrangian boundary condition $Q_{L'}$ in $\partial \overline{E}_L$ by parallel transport of $L'$ such that the count of index zero 
$J$-holomorphic sections $u \colon \overline{B} \to \overline{E}_L$ defines an element $\overline{\mathcal{G}}_L(L') \in CF(L', \tau_L (L'))$. 
\end{enumerate}

\begin{Lemma}
The maps $\iota$ and $\lambda$ are chain maps and $\overline{\mathcal{G}}_L(L')$ defines an element in $HF(L', \tau_L (L'))$.
\end{Lemma}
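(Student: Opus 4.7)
My plan is to derive each of the three assertions from the standard Gromov--Floer boundary analysis of the corresponding one-dimensional moduli space of sections of an exact Lefschetz fibration. For each map the defining count takes place in an index $0$ moduli space which, for generic compatible $J$, is a transversely cut out compact $0$-manifold: transversality is achieved by Seidel's recipe of perturbing $J$ in the interior of the base, and compactness follows from exactness. The form $\Omega$ is exact, ruling out sphere bubbles; the Lagrangian boundary conditions $Q_L$ and $Q_{L'}$ are exact, ruling out disc bubbles; and the prescribed cylindrical or strip-like ends give the usual energy bound and asymptotic control.

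To show that $\iota$ is a chain map I would study the Gromov--Floer compactification of $\mathcal{M}_1(x,\gamma)$ for $x \in \vf(L) \cap L$ and $\gamma \in \Fix(\wf) \setminus \{p_e\}$. Its boundary decomposes into breakings at each of the two punctures of $B_\iota$. Breakings at the strip-like end over $1$ produce an index $1$ Floer strip for $(\vf(L), L)$ from $x$ to some $x'$ concatenated with an index $0$ section from $x'$ to $\gamma$, contributing the coefficient of $\gamma$ in $\iota(\partial^{HF(\vf(L),L)} x)$. Breakings at the cylindrical end over $0$ produce an index $0$ section from $x$ to an intermediate orbit $\gamma'$ concatenated with an index $1$ cylinder in $\R \times \widehat{T}_\vf$ from $\gamma'$ to $\gamma$, contributing the coefficient of $\gamma$ in $\partial^\sharp \iota(x)$. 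The intermediate orbit $\gamma'$ cannot be $e$: such a cylinder would have positive end at $e$ and hence be trivial, forcing $\gamma = e$. Summing boundary points modulo $2$ gives $\partial^\sharp \iota + \iota \partial = 0$.

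The argument for $\lambda$ is parallel, with $\mathcal{M}_1(\gamma_+, \gamma_-)$ and $\gamma_\pm$ not equal to the respective elliptic orbits. Breakings at $\infty$ give $\lambda \partial^\sharp$ and breakings at $0$ give $\partial^\sharp \lambda$, but one must additionally rule out broken configurations in which the intermediate orbit is an elliptic orbit $e$; here the direct triviality argument fails, since the cylinder that breaks off has $e$ at its \emph{negative} end. The replacement is a locality argument: because $\tau_L$ is supported in a neighborhood of $L$ disjoint from $\partial S$, the two mapping tori agree over a collar of $\partial T_\vf$ and $E_\lambda^\vf$ is a product fibration there, with the elliptic orbits of $\vf$ and of $\vf \circ \tau_L^{-1}$ living in this product region. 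An asymptotic analysis in the spirit of \cite[Lemma 5.3.2]{CGH1} then shows that any section end asymptotic to an elliptic $e$-orbit remains in this collar and can only exit at the corresponding elliptic orbit at the other end. Hence the only breakings with an $e$-intermediate pair $e$ with $e$, and these are killed once we project to the $\sharp$ quotient on the target.

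Finally, that $\overline{\mathcal{G}}_L(L')$ is a cycle follows from the same kind of analysis applied to $\mathcal{M}_1(p)$ with $p \in L' \cap \tau_L(L')$. The base $\overline{B} = \D \setminus \{-1\}$ has only one puncture and a single interior critical value at $0$, so every boundary point of $\overline{\mathcal{M}_1(p)}$ is a breaking at the strip-like end over $-1$: an index $0$ section paired with an index $1$ Floer strip for $(L', \tau_L(L'))$, and the total count is the coefficient of $p$ in $\partial \overline{\mathcal{G}}_L(L')$, which therefore vanishes. The hardest step in the whole lemma, I expect, is precisely the locality argument used for $\lambda$: translating the winding-number dichotomy for cylindrical ends at $e$ into a statement about sections of a nontrivial Lefschetz fibration requires some care, but once it is in place everything else is standard boundary bookkeeping.
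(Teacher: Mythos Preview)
Your proposal is correct and follows essentially the same strategy as the paper's own proof, which is stated in a single sentence: ``The lemma follows from standard degeneration arguments for one-dimensional moduli spaces and an argument similar to the proof of Theorem~\ref{Theorem: Phi is filtered}.'' Your treatment of $\iota$ and of $\overline{\mathcal{G}}_L(L')$ is exactly the standard degeneration analysis the paper has in mind, and your identification of the only delicate point --- ruling out the elliptic orbit $e$ as an intermediate asymptote in the breakings for $\lambda$ --- is precisely what the reference to Theorem~\ref{Theorem: Phi is filtered} is meant to handle.

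The only place where your write-up differs in emphasis from the paper is in how that blocking of $e$ is carried out. You phrase it as a ``locality'' or trapping argument (the section end asymptotic to $e$ stays in the collar), invoking \cite[Lemma~5.3.2]{CGH1} directly. The paper instead points to the positivity-of-intersection template from the proof of Theorem~\ref{Theorem: Phi is filtered}: one traces the invariant circle $\mathcal{C}$ around $p_e$ through the fibration (well-defined since $\tau_L$ is supported away from the collar, so $\mathcal{C}$ is monodromy-invariant in $E_\lambda^\vf$) and shows, via the same winding/positivity argument, that the multiplicity of $e$ cannot decrease from the positive to the negative end. These are two packagings of the same underlying fact, and both ultimately rest on \cite[Lemma~5.3.2]{CGH1} together with the negative Morse--Bott nature of $\partial T_\vf$. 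Your phrase ``remains in this collar'' is slightly loose --- the section can certainly leave the collar and come back --- so if you want to make this airtight you should recast it as the intersection-number inequality rather than a literal confinement statement; but the substance is right.
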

\begin{proof}
The lemma follows from standard degeneration arguments for one-dimensional moduli spaces and an argument similar to the proof of Theorem \ref{Theorem: Phi is filtered}.
\end{proof}

\begin{Ex} \label{Example: G of tau L with condition L is 0} If $L' = L$ then 
 \[0 = \overline{\mathcal{G}}_L(L) \in HF(L, \tau_L (L)) \cong HF(L,L).\] 
 See \cite[ Example 3.1]{Se3}.
\end{Ex}


\begin{Lemma} \label{lambda composed iota is trivial} 
There is a chain homotopy 
\[\mathcal{H} \colon CF(\vf(L), L) \to HF^\sharp(\vf \circ \tau_L^{-1}) \] from $\lambda \circ \iota$ to the $0$-map. 
\end{Lemma}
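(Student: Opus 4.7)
My plan is to realize $\lambda \circ \iota$ as the Gromov count of sections of a single Lefschetz fibration obtained by gluing the bases of $\pi_\iota$ and $\pi_\lambda$, and then to let the critical value slide to the Lagrangian boundary; the resulting degeneration will be controlled by Example \ref{Example: G of tau L with condition L is 0}.

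First, I would glue $(E_\iota^\vf, \pi_\iota)$ and $(E_\lambda^\vf, \pi_\lambda)$ along the cylindrical end at $0 \in B_\iota$ and the cylindrical end at $\infty \in B_\lambda$ (viewing $B_\lambda \cong S^2 \setminus \{N, S\}$ as in the earlier remark). The result is a single exact Lefschetz fibration $(E, \pi)$ over $B := \D \setminus \{0, 1\}$ whose strip-like end at $1$ is asymptotic to intersection points of $\vf(L) \cap L$, whose cylindrical end at $0$ is asymptotic to periodic orbits of $\vf \circ \tau_L^{-1}$, which carries the Lagrangian boundary condition $Q_L$ on $\partial \D$, and which has a single critical value $\mathfrak{z}$ in the interior of $B$ with vanishing cycle $L$. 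By a standard neck-stretching argument (cf. \cite{Se1}), for a suitable choice of compatible almost complex structure on $E$ the count of Fredholm index $0$ sections of $(E, \pi)$ with the prescribed asymptotics equals $\lambda \circ \iota$.

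Second, I would introduce a one-parameter family $(E_r, \pi_r)_{r \in [0, 1]}$ over $B$ in which the critical value $\mathfrak{z}_r$ travels along a generic path from $\mathfrak{z}_0 = \mathfrak{z}$ to a chosen point $\mathfrak{z}_1 \in \partial \D$, keeping the vanishing cycle fixed equal to $L$ and the boundary condition fixed. For a generic path and a generic family of admissible almost complex structures, the parametric moduli space
\[\mathcal{M}_{-1}^{[0,1]}(x, \gamma) := \bigsqcup_{r \in [0,1]} \mathcal{M}_{-1}^{(r)}(x, \gamma)\]
of Fredholm index $-1$ sections is a transversely cut out $0$-manifold, finite by exactness and the usual energy/compactness arguments, and I define
\[\mathcal{H}(x) := \sum_{\gamma \neq e} \#\, \mathcal{M}_{-1}^{[0,1]}(x, \gamma)\, \gamma.\]
The standard analysis of the ends of the corresponding one-dimensional parametric moduli space (SFT-style breakings at the strip-like and cylindrical ends, together with contributions at $r = 0$ and $r = 1$) gives
\[\partial^\sharp \circ \mathcal{H} + \mathcal{H} \circ \partial \;=\; \lambda \circ \iota \;+\; \mathcal{E},\]
where $\mathcal{E}$ is the count of index $0$ sections of $(E_1, \pi_1)$.

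Finally, I would analyze the degeneration at $r = 1$. When $\mathfrak{z}_r$ reaches $\partial \D$, a standard Gromov-compactness argument shows that sections split into two pieces: a disk-component near $\mathfrak{z}_1$ in which the critical fiber meets the Lagrangian boundary, contributing the count $\overline{\mathcal{G}}_L(L) \in CF(L, \tau_L(L))$ (since the vanishing cycle and the boundary condition are both $L$), and a complementary smooth section over $B$ whose Lefschetz-free fibration has total monodromy $\vf$, whose count is precisely $\iota$ applied to the disk-contribution. Thus $\mathcal{E}$ factors through $\overline{\mathcal{G}}_L(L)$. Since $\overline{\mathcal{G}}_L(L)$ vanishes in $HF(L, \tau_L(L)) \cong HF(L, L)$ by Example \ref{Example: G of tau L with condition L is 0}, we can choose a primitive $\eta \in CF(L, \tau_L(L))$ with $d\eta = \overline{\mathcal{G}}_L(L)$ and absorb the corresponding chain-level term into $\mathcal{H}$; the modified $\mathcal{H}$ is then the desired chain homotopy between $\lambda \circ \iota$ and $0$. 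The main obstacle I anticipate is the rigorous identification of the degeneration at $r = 1$ with $\overline{\mathcal{G}}_L(L)$, since one has to rule out bubbling of sections into the bulk of $\widehat{S}$ and confirm that the only relevant boundary-pinching degeneration is the one indicated; the Morse–Bott and exactness assumptions should make this manageable via the techniques of \cite{Se5}.
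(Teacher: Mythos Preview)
Your approach is essentially the paper's: both set up a one-parameter family of Lefschetz fibrations over $B_\iota = \D \setminus \{0,1\}$ with a single critical value that slides from the interior puncture $0$ (where the base degenerates to $B_\iota \sqcup B_\lambda$, yielding $\lambda \circ \iota$) toward $\partial \D$ (where a disc bubble $\overline{E}_L$ with boundary condition $L$ breaks off, and Example~\ref{Example: G of tau L with condition L is 0} kills the contribution). Two small corrections: the complementary piece at the boundary degeneration carries monodromy $\vf \circ \tau_L^{-1}$ around $0$ (the paper identifies it with $E_\iota^{\vf\circ\tau_L^{-1}}$, not with $\iota$ for $\vf$), and your primitive-$\eta$ correction is unnecessary because $\overline{\mathcal{G}}_L(L)$ vanishes already at the chain level for a suitable model of $\tau_L$, which is what the paper uses.
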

\begin{proof}
For every $t \in (0,1)$, fix the marked point $\mathfrak{z}_t=(\frac{\sqrt{2}t}{2},\frac{\sqrt{2}t}{2})  \in B_\iota$ and consider the 
family of Lefschetz fibrations $(E_t^{\mathcal H}, \pi_t^{\mathcal H})$ over $B_\iota$ with monodromy $\wf \circ \tau_L^{-1}$ around $0$ and a unique critical point over $\mathfrak{z}_t$ with vanishing cycle $L$. On the preimages of $\partial B_\iota$ we consider the Lagrangian submanifolds $Q_t^{\mathcal H}$ obtained by parallel transport of $L$. The homotopy ${\mathcal H}$ is defined by counting pairs $(t_0, u)$, where $t_0 \in (0,1)$ and $u$ is an index $-1$ $J$-holomorphic section of the Lefshetz fibration $(E_{t_0}^{\mathcal H}, \pi_{t_0}^{\mathcal H})$ with boundary on $Q_{t_0}^{\mathcal H}$ for a generic almost complex structure $J$.

The family of Riemann surfaces with marked points $(B_\iota, \mathfrak{z}_t)$ can be compactified by adding two-level buildings 
\[ B_{-}^{\mathcal H}=  B_{\iota} \sqcup  (S^2 \setminus (\{\mathrm{N}\}, \{\mathrm{S}\}))\]
with a marked point $\mathfrak{z}_- \in S^2 \setminus (\{\mathrm{N}\}, \{\mathrm{S}\})$ and
\[ B_+^{\mathcal H} = \left(B_{\iota} \setminus \{(\frac{\sqrt{2}}{2},\frac{\sqrt{2}}{2})\} \right) \sqcup \left(\D\setminus\{(-\frac{\sqrt{2}}{2},-\frac{\sqrt{2}}{2})\}\right)\] 
with the marked point $\mathfrak{z}_+ =0 \in \D \setminus \left ( \{(-\frac{\sqrt{2}}{2},-\frac{\sqrt{2}}{2})\} \right )$. To the degenerations of $(B_\iota, \mathfrak{z}_t)$ correspond, by pull back, degenerations of $E_t^{\mathcal H}$. 


 \begin{figure} [ht!] 
  \begin{center}
   \includegraphics[scale = 1]{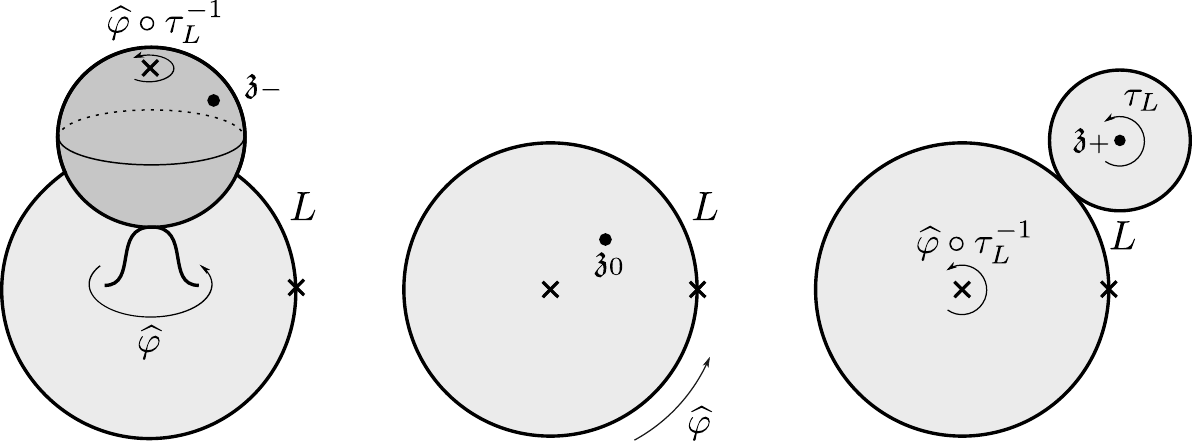}
  \end{center}
  \caption{The homotopy from $B^{\mathcal{H}}_-$ (left) to $B^{\mathcal{H}}_+$ (right). $(B^{\mathcal{H}}_\iota, \mathfrak{z}_0)$ is pictured in the middle. 
  }
  \label{Figure: Degeneration of H}
 \end{figure}
 
 
Thus $(E_-^{\mathcal H}, \pi_-^{\mathcal H}) = (E^\vf_\iota \sqcup E^\vf_\lambda, \pi_\iota \sqcup \pi_\lambda)$, and therefore  counting $J$-holomorphic sections of  $(E_-^{\mathcal H}, \pi_-^{\mathcal H})$ gives $\lambda \circ \iota$. On the other hand, $(E_+^{\mathcal H}, \pi_+^{\mathcal H}) \cong (\overline{E}_L \sqcup E_{\iota}^{\vf\circ \tau_L^{{-1}}}, \overline{\pi}_L \sqcup \pi_\iota)$. Since the Lagrangian boundary condition on the component $\overline{E}_L$ is induced by $L$, Example \ref{Example: G of tau L with condition L is 0} implies the relative Gromov invariant associated to $(E_+^{\mathcal H}, \pi_+^{\mathcal H})$ is trivial, and therefore the total count of $J$-holomorphic sections of $(E_+^{\mathcal H}, \pi_+^{\mathcal H})$ is zero.
\end{proof}



We assume, without loss of generality, that $L$ is disjoint from all fixed point of $\vf \circ 
\tau_L^{-1}$ and intersects $\vf(L)$ transversely. We fix an orientation on $L$ and orient 
$\vf(L)$ accordingly. We say that an intersection point $x \in L \cap \vf(L)$ is positive if the 
orientation of $T_xL$, followed by the orientation of $T_x\vf(L)$, gives the orientation of 
$T_xS$. We say that a fixed point of $\vf$ is {\em positive hyperbolic} if the eigenvalues of 
$d_x \vf$ are  real and positive, and {\em negative hyperbolic} if they are real and negative. 

\begin{Lemma}\label{creation}
We can choose a Hamiltonian isotopy representative of $\vf$ so that for 
every intersection 
point $x \in L \cap \vf(\tau_L^{-1}(x))$ there is a neighbourhood ${\mathcal U}_x$ of $x$ in 
$S$ such that:
\begin{enumerate}
\item no fixed point of $\vf \circ \tau_L^{-1}$ is contained in ${\mathcal U}_x$,
\item exactly one fixed point of $\vf$ is contained in ${\mathcal U}_x$, and it is hyperbolic 
with the same sign as $x$, and
\item if $x$ and $x'$ are distinct intersection points, then ${\mathcal U}_x$ and 
${\mathcal U}_{x'}$ are disjoint.
\end{enumerate} 
\end{Lemma}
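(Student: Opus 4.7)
The plan is to produce the required representative by a compactly supported Hamiltonian isotopy of $\vf$ in a Weinstein tubular neighbourhood $N$ of $L$, and then to verify the three conditions by a local computation near each transverse intersection of $L$ with $\vf(L)$.

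First I fix a Weinstein identification $N \cong L \times (-\delta, \delta)$ with coordinates $(\theta, y)$ and $\omega = d\theta \wedge dy$, and choose the representative $\tau_L(\theta, y) = (\theta + g(y), y)$ of the positive Dehn twist with $g \colon (-\delta, \delta) \to [0, 2\pi]$ strictly decreasing, equal to $2\pi$ near $y = -\delta$ and to $0$ near $y = +\delta$; with this choice $\tau_L$ is fixed-point free in the interior of $N$. By a small generic Hamiltonian isotopy of $\vf$ I may further assume that $\vf(L)$ meets $L$ transversely in finitely many points, that $\vf$ has no fixed points on $L$, and that for every $x \in L \cap \vf(L)$ the preimage $y_x := \vf^{-1}(x) \in L$ is distinct from $x$.

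For each such $x$ I pick disks $\mathcal{U}_x \ni x$ and $\mathcal{V}_x \ni y_x$ in $N$, small and with the pairs $(\mathcal{U}_x, \mathcal{V}_x)$ mutually disjoint as $x$ varies, and build a Hamiltonian $H_x$ compactly supported in $\mathcal{U}_x \cup \mathcal{V}_x$ whose time-one flow $\psi_x$ sends $x$ to $y_x$. Then $\vf \circ \psi_x(x) = \vf(y_x) = x$, so $x$ becomes a fixed point of $\vf \circ \psi_x$ with linearisation $d(\vf \circ \psi_x)_x = d\vf_{y_x} \circ d\psi_x|_x$. The freedom in $H_x$ lets $d\psi_x|_x$ range over $\mathrm{Sp}(2,\R)$, so $d(\vf \circ \psi_x)_x$ can be prescribed to be any symplectic matrix, in particular hyperbolic of either sign; a careful choice matches this sign to the intersection sign of $x$ through the Lefschetz index $\mathrm{sign}(\det(\mathrm{Id} - d(\vf \circ \psi_x)_x))$. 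Replacing $\vf$ by $\vf \circ \prod_x \psi_x$ and shrinking $\mathcal{U}_x$ if necessary gives properties (2) and (3).

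It remains to verify (1). A fixed point $p = (\theta, y) \in \mathcal{U}_x$ of $\vf \circ \tau_L^{-1}$ (with $\vf$ now denoting the modified map) must satisfy $\vf(\theta - g(y), y) = (\theta, y)$. For $\mathcal{U}_x$ small, $g(y)$ stays close to $g(0) \in (0, 2\pi)$, so $(\theta - g(y), y)$ lies in a prescribed compact region $R_x \subset N$ disjoint from every $\mathcal{V}_{x'}$; there the modified $\vf$ agrees with the original one, and the equation reduces to the original $\vf$ mapping $R_x$ into $\mathcal{U}_x$, a condition that I eliminate by a final generic Hamiltonian perturbation of $\vf$ with support outside $\bigsqcup_x (\mathcal{U}_x \cup \mathcal{V}_x)$. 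The main obstacle is the sign-matching step: it requires a careful linear-algebra bookkeeping in the Weinstein coordinates to show that the element of $\mathrm{Sp}(2,\R)$ chosen for $d\psi_x|_x$ can be arranged so that $d\vf_{y_x} \circ d\psi_x|_x$ has the correct hyperbolic type, with the intersection sign at $x$ translating correctly into the sign of the Lefschetz index, while simultaneously being compatible with the disjointness and smallness of the supports required for (1).
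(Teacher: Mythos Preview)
Your construction has a fatal gap at the very first step: a Hamiltonian diffeomorphism compactly supported in a disjoint union $\mathcal{U}_x \sqcup \mathcal{V}_x$ of small disks necessarily preserves each component, so it cannot send $x \in \mathcal{U}_x$ to $y_x \in \mathcal{V}_x$. Thus the map $\psi_x$ you need does not exist. If instead you let the support be a tube in $N$ along $L$ joining $x$ to $y_x$, these tubes will in general overlap for different intersection points (the $y_x$'s can land anywhere on $L$), so the perturbations interfere and you lose control both of the intersection set $L \cap \vf'(L)$ and of the fixed points created. You also identify the sign-matching as ``the main obstacle'', but it is secondary: the issue is not the linear algebra of prescribing $d\psi_x|_x$, it is that $\psi_x$ itself cannot be built as described.

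The paper's argument avoids all of this by not perturbing $\vf$ directly at all. Writing $\psi = \vf \circ \tau_L^{-1}$ and $\vf = \psi \circ \tau_L$, it keeps $\psi$ fixed (arranged to have no fixed points in a Weinstein neighbourhood of $L$) and exploits the explicit shear form $\tau_L(\theta,t) = (\theta + f(t), t)$ of the Dehn twist. Near each $x \in L \cap \psi(L)$, the fixed-point equation $\psi(\tau_L(\theta_0,t_0)) = (\theta_0,t_0)$ reduces to a scalar equation of the type $f(t_0) = y - x \pm 2t_0$, which has a unique solution by the intermediate value theorem because $f$ runs from $0$ to $2\pi$. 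The linearisation is then the explicit $2\times 2$ matrix $\bigl(\begin{smallmatrix}0 & \mp 1 \\ \pm 1 & \pm f'(t_0)\end{smallmatrix}\bigr)$, whose trace has absolute value $f'(t_0) > 2$, giving a positive or negative hyperbolic fixed point according to the intersection sign of $x$. The ``Hamiltonian isotopy of $\vf$'' in the statement is realised simply by the freedom in choosing the profile $f$ of the Dehn twist; no ad hoc local displacement is needed.
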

\begin{proof}
For the sake of the proof, we denote $\vf \circ \tau_L^{-1}= \psi$. We also denote $L'= \psi(L)$.
Let $x \in L \cap L'$ be an intersection point and denote $y=\psi^{-1}(x)$. We fix Weinstein 
tubular neighbourhoods ${\mathcal N}$  of $L$ and ${\mathcal N}'$ of $L'$ such that 
${\mathcal N}' = \psi({\mathcal N})$. We choose ${\mathcal N}$ small enough that  no fixed
point of $\psi$ is contained in ${\mathcal N}$ or ${\mathcal N}$.

We fix open connected neighbourhoods ${\mathcal V}$ of $y$ and ${\mathcal V}'$ of $x$ in 
$L$ and choose symplectic identifications ${\mathcal N} \cong L \times (- \varepsilon, 
\varepsilon)$ and ${\mathcal N}'\cong L' \times (-\varepsilon, \varepsilon)$ such that
$\psi({\mathcal V} \times (-\varepsilon, \varepsilon))= {\mathcal V}' \times (- \varepsilon, 
\varepsilon)$.  We define ${\mathcal U}_x={\mathcal V}' \times (- \varepsilon, \varepsilon)$. We parametrise $L$ as $[0, 2 \pi]/ 0 \sim 2 \pi$ so that $0 < x <
y< 2 \pi$, ${\mathcal V} = (y - \varepsilon, y+ \varepsilon)$, 
and  ${\mathcal V}' = (x - \varepsilon, x + \varepsilon)$.

We have two possibilities for $\psi|_{{\mathcal V} \times (- \varepsilon, \varepsilon)}$. If $\theta$ is 
the coordinate on $L$ and $t$ is the coordinate on $(- \varepsilon, \varepsilon)$, they are:
\begin{enumerate}
\item $\psi(\theta, t)= (x- t, \theta-y)$, and \label{case 1}
\item $\psi(\theta, t)=(x+t, y- \theta)$. \label{case 2}
\end{enumerate}
The first case corresponds to a positive intersection point between $L$ and $\psi(L)$ and
the second case corresponds to a negative intersection point.

We represent the positive Dehn twist around $L$ by the map
\begin{equation} \label{Dehn twist}
\tau_L(\theta, t)= (\theta + f(t), t),
\end{equation}
where of course $\theta + f(t)$ is to be understood modulo $2 \pi$. If we want the proof to 
work for all intersection points between $L$ and $L'$ at the same time, we we need that the 
parametrisations of $L$ associated to different intersection points differ only by a 
translation, so that the Dehn twists is represented by Equation \eqref{Dehn twist} in the 
neighbourhood of every intersection point.

First we consider case \eqref{case 1}. We are looking for $(\theta_0, t_0) \in (x - \varepsilon, 
x + \varepsilon) \times (- \varepsilon, \varepsilon)$ such that $\tau_L(\theta_0, t_0) \in 
(y - \varepsilon, y + \varepsilon) \times (- \varepsilon, \varepsilon)$ and 
$\psi(\tau_L(\theta_0, t_0))= (\theta_0, t_0)$. We have $\psi(\tau_L(\theta_0, t_0))= (x-t_0, 
\theta_0 + f(t_0)-y)$ and therefore we have to solve the system
$$\begin{cases}
\theta_0 = x-t_0, \\ t_0= \theta_0 + f(t_0)- y \\ | \theta_0 - y + f(t_0)| < \varepsilon
\end{cases}$$

The last inequality is automatic from the second equation, provided that the solution satisfies
$t_0 \in (- \varepsilon, \varepsilon)$.  

From the two equations we obtain 
$$f(t_0)= y-x+2t_0.$$
We define the function $g(t)= y-x+2t$ and observe that, if we choose $\varepsilon$ small 
enough,
\begin{itemize}
\item $f(- \varepsilon) = 0$ and $g(- \varepsilon) = y-x-2\varepsilon >0$,
\item $f(\varepsilon)= 2 \pi$ and $g(\varepsilon) = y-x + 2 \varepsilon < 2 \pi$.
\end{itemize}
Then the graphs of $f$ and $g$ must cross, and since we can arrange $f$ to be linear on the 
preimage of $(y-x-2\varepsilon, y-x + 2 \varepsilon)$, we can assume that they cross at a 
single point $t_0$. We set $\theta_0= x-t_0$, and therefore $(\theta_0, t_0)$ is the unique 
fixed point of $\psi \circ \tau_L$ in ${\mathcal U}' \times (- \varepsilon, \varepsilon)$. 
The linearisation of $\psi \circ \tau_l$ at $(\theta_0, t_0)$ is $\left ( \begin{matrix} 0 & -1 \\
1 & f'(t_0) \end{matrix} \right )$. The determinant is $1$ and the trace is $f'(t_0)>2$, and 
therefore the eigenvalues are positive real. Then $(\theta_0, t_0)$ is a positive hyperbolic 
fixed point.

Now we consider case \eqref{case 2}. Since $\psi(\tau_L(\theta, t))=(x+t, y-\theta-f(t))$, we 
need to solve the system
$$\begin{cases}
\theta=x+t \\ t=y- \theta - f(t) \\ |x- \theta- f(t)| < \varepsilon.
\end{cases}$$
As in case \eqref{case 1}, solving this system is equivalent to solving the equation 
$f(t)=y-x-2t$. We define $g(t)= y-x-2t$ and observe that
\begin{itemize}
\item $f(- \varepsilon)=0$ and $g(- \varepsilon)>0$,
\item $f(\varepsilon)= 2 \pi$ and $g(\varepsilon) < 2 \pi$.
\end{itemize}
This gives a fixed point $(\theta_0, t_0)$ as before. The linarisation of $\psi \circ \tau_L$ at 
$(\theta_0, t_0)$ is $\left ( \begin{matrix} 0 & 1 \\ -1 & -f(t_0) \end{matrix} \right )$. This is 
the negative of the matrix form the previous case, and therefore its eigenvalues are negative 
real. Then  $(\theta_0, t_0)$  is a negative hyperbolic fixed point.
\end{proof}

The next step of the proof of theorem \ref{Theorem: Exact triangle in SH} involves a quite standard argument in Floer homologies (see for example \cite{Se5} and \cite[Section 9]{OS2}). Consider the \emph{iterated cone} $(C,\partial^C)$ where $C \coloneqq \left(CF(\vf(L), L) \oplus CF^{\sharp}(\vf) \oplus CF^{\sharp}(\vf \circ \tau_L^{-1})\right)$ and 
 \begin{equation*}
  \partial^C \coloneqq \left(\begin{array}{ccc} 
   \partial^{CF(\vf(L), L)}     &     0     & 0\\
   \iota &     \partial^{CF^{\sharp}(\vf)}     & 0\\
    \mathcal{H}     & \lambda & \partial^{CF^{\sharp}(\vf \circ \tau_L^{-1})}
  \end{array}\right).
 \end{equation*}
 If we show that $H_*(C,\partial^C)=\{0\}$ then Theorem \ref{Theorem: Exact triangle in SH} comes then from Lemma \ref{Lemma: Algebraic lemma for the exactness}.


In the symplectic fibrations $W_* \xrightarrow{\pi_*} B_*$ we fix a fibrewise symplectic form $\Omega_*$, which induces a splitting $TW_*= T^vW_* \oplus T^hW_*$. The fibrations $W_* \xrightarrow{\pi_*} B_*$ is {\em non-negatively curved} if $\Omega_*|_{T^hW_*} \ge 0$ with respect to the orientation of $T^hW_*$ induced by the orientation of $B_*$. The {\em energy} of a section $u \colon B_* \to W_*$ is $E(u)= \int_{S_*} u^*\Omega_*$. 
A section $u \colon B_* \to W_*$ is called {\em horizontal} if $du(TB_*)=T^hW_*$. An almost complex structure $J$ is {\em horizontal} if it preserves both distributions.  Horizontal sections are always $J$-holomorphic for a horizontal almost complex structure $J$. If $W_* \xrightarrow{\pi_*} B_*$ is non-negatively curved and $u$ is a $J$-holomorphic section, then $E(u) \ge 0$, and if $E(u)=0$ implies that $u$ is horizontal. The  symplectic fibrations
$E^\vf_\iota \to B_\iota$ and $E^\vf_\lambda \to B_\lambda$ are non-negatively curved,  and a section $u$ is horizontal if and only if $E(u)=0$. This is immediate for  $E^\vf_\iota \to B_\iota$, and follows from \cite[Subsection 3.3]{Se5} for  $E^\vf_\lambda \to B_\lambda$. By \cite[Lemma 2.27]{Se5} (see also \cite[Lemma 3.2]{Se-lectures}), a horizontal section of Fredholm index zero is regular (i.e.\ the linearised Cauchy-Riemann operator is surjective) if and only if the corresponding fixed point of $\vf$ is nondegenerate.

 Given $\delta \geq 0$, we denote by
 \begin{equation*}
  \partial_{\delta}^C = \left(\begin{array}{ccc} 
   \partial_{\delta}^{CF(\vf(L), L)}     &     0     & 0\\
   \iota_{\delta} &     \partial_{\delta}^{CF^{\sharp}(\vf)}     & 0\\
    \mathcal{H}_{\delta}     & \lambda_{\delta} & \partial_{\delta}^{CF^{\sharp}(\vf \circ \tau_L^{-1})}
  \end{array}\right).
 \end{equation*}
the component of $\partial^C$ that counts only those holomorphic sections $u$ counted by $\partial^C$ that have energy $E(u)<\delta$.

 If we show that there exists $\delta$ such that $\partial^C - \partial^C_\delta$ counts only holomorphic sections $u$ with energy $E(u) \ge 2 \delta$ and  $H_*(C,\partial_{\delta}^C)=\{0\}$, then by Lemma 2.31 of \cite{Se5} it follows that $H_*(C,\partial^C)=\{0\}$. To show this, we analyse the low-energy contributions to the components of $\partial^C$.

If $\delta$ is sufficiently small, $\partial_{\delta}^{CF(\vf(L), L)}=\partial_{\delta}^{CF^{\sharp}(\vf)}
= \partial_{\delta}^{CF^{\sharp}(\vf \circ \tau_L^{-1})}$ and $\mathcal{H}_{\delta}=0$ because no horizontal section contributes to that maps. For $\delta$ small enough, $\lambda_\delta$ counts horizontal sections of $W^\vf_\lambda \to B_\lambda$, which correspond to the fixed points of $\vf \circ \tau_L^{-1}$, and therefore $\lambda_\delta= \lambda_0$. 

The map $\iota_\delta$ is more involved, because it doesn't necessarily count horizontal sections. In fact horizontal sections of $W^\vf_\iota \to B_\iota$ with boundary on $Q_L$ correspond to fixed points of $\vf$ which are also intersection points between $L$ and $\vf(L)$.
If $x \in L \cap \vf(L)$, we denote by $\gamma_x$ the orbit corresponding to the fixed point $p_x$ of $\vf$ close to $x$ which was constructed in Lemma \ref{creation}. 
\begin{Lemma}\label{afa}
If $\delta$ and ${\mathcal N}$ are small enough and $u \colon B_\iota \to E^\vf_\iota$ is a $J$-holomorphic section with boundary on $Q_L$ and energy $E(u)< \delta$, then $u \in {\mathcal M}(x, \gamma_x)$ for some $x \in L \cap \vf(L)$.  
\end{Lemma}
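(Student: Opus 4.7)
The plan is to argue by contradiction via Gromov compactness in the non-negatively curved fibration $\pi_\iota\colon E^\vf_\iota \to B_\iota$. If the conclusion fails, there are sequences $\delta_n \to 0$ and $J$-holomorphic sections $u_n\colon B_\iota \to E^\vf_\iota$ with boundary on $Q_L$, $E(u_n) < \delta_n$, and $u_n$ not belonging to any ${\mathcal M}(x, \gamma_x)$. Since both $L \cap \vf(L)$ and the fixed point set of $\vf$ are finite, after passing to a subsequence I may assume that the positive asymptote of every $u_n$ is a fixed point $x_\infty \in L \cap \vf(L)$ and that its negative asymptote is a fixed periodic orbit $\gamma_\infty \ne \gamma_{x_\infty}$.

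First I would extract a Gromov limit. Exactness of $\Omega$ and of $\Upsilon|_{Q_L}$ rules out sphere and disk bubbles, and the vanishing limiting energy, combined with the positive action gap between distinct fixed points and between distinct chords of $(L, \vf(L))$, forbids $\R$-breaking at either asymptotic end. Since the fibration is non-negatively curved and the limit $u_\infty$ has zero energy, it must be horizontal; by the description of horizontal sections recalled just before the lemma, such a section corresponds to a point $p \in L$ fixed by $\vf$, with $p = x_\infty$ and $\gamma_p = \gamma_\infty$. To identify $p$ with $p_{x_\infty}$, I would compare $E^\vf_\iota$ with an auxiliary fibration $E^{\vf_0}_\iota$ whose monodromy $\vf_0$ has $L$ as a Morse--Bott circle of fixed points and is joined to $\vf$ by the inverse of the Hamiltonian isotopy of Lemma \ref{creation}, supported in ${\mathcal N}$. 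The horizontal sections of $E^{\vf_0}_\iota$ with boundary on $Q_L$ are parametrised by $L \cap \vf_0(L) = L \cap \vf(L)$, and the regularity criterion of \cite[Lemma 2.27]{Se5}, combined with the explicit hyperbolic local model from the proof of Lemma \ref{creation}, yields, via the implicit function theorem applied along the isotopy, a unique low-energy $J$-holomorphic section of $E^\vf_\iota$ for each $x$, asymptotic to $x$ at $1$ and to $\gamma_{p_x}$ at $0$. By local uniqueness, $u_n$ must coincide with the deformation associated to $x_\infty$ for $n$ large, giving $\gamma_\infty = \gamma_{p_{x_\infty}} = \gamma_{x_\infty}$ and contradicting the choice of the sequence.

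The hard part will be the adiabatic argument in the second paragraph: establishing that, as the Hamiltonian perturbation of Lemma \ref{creation} is turned on, each Morse--Bott flat section of $E^{\vf_0}_\iota$ splits into exactly one isolated non-degenerate section of $E^\vf_\iota$ with the predicted asymptotics, and conversely that every low-energy section arises this way. This will require a uniform control of the linearised Cauchy--Riemann operator in the direction normal to the Morse--Bott family, in the spirit of the Morse--Bott gluing scheme of Bourgeois, combined with the explicit hyperbolic normal form for $\vf$ around each $p_x$ provided by Lemma \ref{creation} to pin down the asymptotic orbit at the cylindrical puncture.
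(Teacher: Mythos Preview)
Your approach via Gromov compactness and Morse--Bott gluing is much heavier than what the paper does, and the ``hard part'' you flag is in fact entirely avoidable. The paper's proof is a two-line action argument: by exactness of $\Omega$ and of $Q_L$, the energy of any $J$-holomorphic section in ${\mathcal M}(x,\gamma)$ equals the action difference $\mathfrak a(x)-\mathfrak a(\gamma)$, a number that depends only on the asymptotics. One then (i) perturbs $\vf$ by a Hamiltonian isotopy so that all actions of intersection points in $L\cap\vf(L)$ and of periodic orbits are pairwise distinct, and (ii) shrinks ${\mathcal N}$ so that $|\mathfrak a(x)-\mathfrak a(\gamma_x)|$ is small for every $x$; this last estimate is obtained by observing that the energy of a section from $x$ to $\gamma_x$ equals the area of an explicit small triangle in $S$ with vertices $x,\vf^{-1}(x),p_x$, contained in ${\mathcal N}$. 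Choosing $\delta$ between these two scales gives the lemma.

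Your own argument actually contains this proof without your noticing: once you pass to a subsequence with fixed asymptotics $(x_\infty,\gamma_\infty)$, exactness forces $E(u_n)=\mathfrak a(x_\infty)-\mathfrak a(\gamma_\infty)$ to be \emph{constant} in $n$, so $E(u_n)<\delta_n\to 0$ already yields $\mathfrak a(x_\infty)=\mathfrak a(\gamma_\infty)$ and hence, after arranging (i), the desired contradiction. No Gromov limit needs to be taken, no horizontal section exists to converge to (after Lemma~\ref{creation} the fixed points $p_x$ generically do \emph{not} lie on $L$, so $E^\vf_\iota$ typically has no horizontal sections with boundary on $Q_L$ at all), and no adiabatic comparison with an auxiliary $\vf_0$ is required. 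Note also that your contradiction setup fixes ${\mathcal N}$ while only letting $\delta_n\to 0$; but shrinking ${\mathcal N}$ changes $\vf$ itself and hence the whole fibration, so a compactness argument along varying ${\mathcal N}$ would be delicate --- another reason to prefer the direct action computation.
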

\begin{proof}
The proof is based on the two following two facts:

\begin{itemize}
\item[(i)] up to Hamiltonian isotopy, all orbits of $\vf$ and all intersection points in $L \cap \vf(L)$ have distinct action, and
\item[(ii)] for every intersection point $x \in L \cap \vf(L)$, the action of $\gamma_x$ can be made arbitrarily close to the action of $x$ by a suitable choice of the parameters in the definition of the Dehn twist.
\end{itemize}
(i) If $p$ is a fixed point of $p$ and $H \colon S \to \R$ is a function with $d_pH=0$ and Hamiltonian flow $\vf^H_t$, then $p$ is a fixed point of $\vf_t^H \circ \vf$. If
$\mathfrak{a}_\vf(x)$ and $\mathfrak{a}_{vf_t^H \circ \vf}(p)$ denote the action of $p$ as a fixed point of $\vf$ and of $vf_t^H \circ \vf$ respectvely, then
$\mathfrak{a}_{vf_t^H \circ \vf}(p)=\mathfrak{a}_\vf(x)-tH(p)$.

(ii) By exactness, it is enough to compute the energy of one (smooth) section between $x$ and 
$\gamma_x$. If we trivialise the fibration $W_\iota^\vf \to B_\iota=D^2 \setminus \{0,1\}$ over 
$D^2 \setminus [0,1]$ and project to the fibre $S$, we obtain a correspondence between smooth sections from $x$ to $\gamma_x$ and boundary on $Q_L$ with maps from a triangle
to $S$ with vertices on $x$, $\phi^{-1}(x)$  and $p_x$ (in counterclockwise order) such that the edge between  $x$ and $\phi^{-1}(x)$ is in $L$, and the edge between $x$ and $p_x$ is 
the image under $\vf$ of the edge between $\phi^{-1}(x)$ and $p_x$. Moreover, the energy of a section is equal to the area covered by the triangle. 

We can find a triangle as above which is embedded and contained in ${\mathcal N}$ if the edge $x$ and $p_x$ is contained in ${\mathcal N} \cap \vf({\mathcal N})$. Then the energy of
any section $u \in {\mathcal M}(x, \gamma_x)$ is not larger than the area of ${\mathcal N}$, which can be made arbitrarily small by reducing $\varepsilon$.
\end{proof}
\begin{Lemma}\label{caldo}
If $x \in L \cap \vf(L)$ is also a hyperbolic fixed point of $\vf$ and the signs as intersection point and as fixed point are the same, then then the horizontal section $u_x$ over $x$ has Fredholm index $\operatorname{ind}(u_x)=0$.
\end{Lemma}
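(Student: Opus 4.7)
The plan is to compute $\operatorname{ind}(u_x)$ as a Maslov-type index. Since $u_x$ is a horizontal section over the fixed point $p_x=x$, the linearised Cauchy--Riemann operator $D_{u_x}$ reduces to a standard Cauchy--Riemann operator on the complex line bundle $E \coloneqq u_x^{\ast} T^v E_\iota^{\vf}$ over $B_\iota = \mathbb{D} \setminus \{0,1\}$, with boundary condition given by the linearised parallel transport of $T_xL$ along $\partial B_\iota$, with interior-puncture asymptotic at $0$ governed by $d_x\vf$, and with boundary-puncture asymptotic at $1$ governed by the pair $(T_xL, T_x\vf(L))$. Because both asymptotics are non-degenerate (hyperbolicity at the interior puncture, transversality at the boundary puncture), $D_{u_x}$ is Fredholm and its index is computable by a Riemann--Roch type formula.

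The first step is to choose Darboux coordinates $(p,q)$ near $x$ in which $\omega = dp\wedge dq$ and $L = \{q=0\}$, and to write $d_x\vf$ as a hyperbolic symplectic matrix $A$. In the local model for the Dehn twist used in Lemma \ref{creation}, the sign of the eigenvalues of $A$ agrees with the intersection sign between $L$ and $\vf(L) = A\cdot L$ at $x$: positive eigenvalues correspond to $A(T_xL)$ being rotated out of $T_xL$ in the direction compatible with the orientation of $S$. Next I would trivialise $E$ over $B_\iota$. On the cylindrical end at $0$ I would take a $d_x\vf$-equivariant trivialisation $\tau$, which exists precisely because the eigenvalues of $A$ are real, and extend it continuously across $B_\iota$. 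Relative to $\tau$ the asymptotic Conley--Zehnder index at the puncture $0$ vanishes in both the positive and negative hyperbolic cases.

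The second step is to apply the standard Fredholm index formula (see Seidel, \emph{Fukaya categories and Picard--Lefschetz theory}, or Schwarz's thesis)
\[
  \operatorname{ind}(D_{u_x}) \;=\; \chi(\widehat{B}_\iota) \;+\; \mu_\tau(Q_L) \;-\; \mu^\tau_{CZ}(A) \;-\; \mu^\tau\bigl(T_xL, T_x\vf(L)\bigr),
\]
where $\widehat{B}_\iota$ denotes $B_\iota$ completed with its cylindrical and strip-like ends, so that $\chi(\widehat{B}_\iota) = 0$. The third term vanishes by the choice of trivialisation. The second and fourth terms are a Maslov count of the rotation of $T_xL$ under parallel transport around $\partial\mathbb{D}$ and of the relative position of $T_xL,T_x\vf(L)$ at the strip-like end, respectively; in the trivialisation $\tau$, both are governed by the same sign, namely the sign of the eigenvalues of $A$ measured against the orientation of $L$, and they cancel exactly when the sign-matching hypothesis holds. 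Working the explicit $\R^2$ model of Lemma \ref{creation} for the two cases (positive intersection with positive hyperbolic, and negative intersection with negative hyperbolic) yields in both situations $\mu_\tau(Q_L) = \mu^\tau(T_xL, T_x\vf(L))$, hence $\operatorname{ind}(D_{u_x})=0$.

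The main obstacle is sign bookkeeping: the formula above is sensitive to the conventions for Maslov, Conley--Zehnder, and Viterbo indices, and one must verify that the chosen conventions make the two ``sign'' interpretations of the hypothesis (intersection sign of $T_xL$ versus $T_x\vf(L)$, and real sign of the eigenvalues of $d_x\vf$) appear in the formula with the same sign, so that their matching causes cancellation rather than reinforcement. As a sanity check bypassing the index formula, one can note that under a small compactly supported Hamiltonian perturbation separating the fixed point $p_x$ from the intersection point $x$ as in Lemma \ref{afa}, the horizontal section $u_x$ persists as the unique low-energy holomorphic section from $x$ to $\gamma_x$; regularity (established by the explicit linear model above, in which $D_{u_x}$ has trivial kernel and cokernel) then forces $\operatorname{ind}(u_x)=\dim\mathcal{M}_{loc}(x,\gamma_x)=0$.
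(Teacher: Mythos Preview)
Your approach is essentially the same as the paper's: both reduce the index to a Maslov/Conley--Zehnder computation in a trivialisation aligned with the hyperbolic eigendirections of $d_x\vf$, and both use the sign-matching hypothesis to force the boundary Lagrangian loop to be homotopic to the eigendirection subbundle, giving cancellation. The paper packages this more compactly as $\operatorname{ind}(u_x)=\mu(x)-\mu(\gamma_x)$ (via Seidel's Proposition~11.13 and index additivity) and then argues directly that the closed-up loop $T^vQ_L$ is homotopic to the stable/unstable subbundle $\Xi_\pm$---the shortest clockwise path from $T_x\vf(L)$ to $T_xL$ avoids $\xi_\pm$ precisely under the sign hypothesis---which is the same geometric content as your cancellation $\mu_\tau(Q_L)=\mu^\tau(T_xL,T_x\vf(L))$ but bypasses the need to track all four terms in your longer Riemann--Roch formula.
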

\begin{proof}
By Proposition 11.13 in \cite{Se1}, Theorem 9 in \cite{Dr} and the additivity of the index, we have $\operatorname{ind}(u_x)=\mu(x)- \mu(\gamma_x)$, where $\mu(x)$ is the Conley-Zehnder index of the intersection point $x$, $\mu(\gamma_x)$  is the Conley-Zehnder index of the orbit $\gamma_x$ corresponding to $x$, and they are computed with respect to trivialisations which extend to a trivialisation of $u_x^*T^vW_\iota^\vf$.

Let $\xi_-$ and $\xi_+$ the stable and unstable directions of $\vf$ at $x$, respectively. They give sub-bundles $\Xi_\pm$ of $u_x^*T^vW_\iota^\vf$. The Conley-Zehnder index  $\mu(\gamma_x)$ is equal to the Maslov index of $\Xi_\pm$. The sub-bundle $T^vQ_L=TQ_L \cap T^vW_\iota^\vf \subset T^vW_\iota^\vf|_{\partial B_\iota}$ is always transverse to $\Xi_\pm$. On the strip-like end we close it by the shortest clockwise path from $\vf(L)$ to $L$. This path does not intersect $\xi_\pm$ because the sign of $x$ as intersection point is the same as fixed point. The Conley Zehnder index $\mu(x)$ is equal to the Maslov index of the closure of $T^vQ_L$, which is homotopic to $\Xi_\pm$. Then $\mu(x)=\mu(\gamma_x)$.
\end{proof}
\begin{Prop}
If $\delta$ is small enough, $\iota_\delta(x)=\gamma_x$.
\end{Prop}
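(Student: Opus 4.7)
By Lemma \ref{afa}, once $\delta$ is small enough, every section counted by $\iota_\delta(x)$ is positively asymptotic to $x$ and negatively asymptotic to $\gamma_x$, so $\iota_\delta(x) = c_x \gamma_x$ for a single coefficient $c_x \in \Z/2\Z$. The task is therefore to prove $c_x = 1$.

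The plan is to first treat the degenerate \emph{coincident case} in which $x \in L \cap \vf(L)$ is itself the hyperbolic fixed point $p_x$ of $\vf$ (the situation of Lemma \ref{caldo}), and then to recover the general case of Lemma \ref{creation} by a parametric cobordism argument. In the coincident case, the constant horizontal section $u_x$ over $x$ is a $J$-holomorphic section with boundary on $Q_L$, positively asymptotic to $x$ and negatively asymptotic to $\gamma_x$, and it has energy $0$ by exactness. Lemma \ref{caldo} gives $\operatorname{ind}(u_x) = 0$, and the regularity criterion recalled from \cite[Lemma 2.27]{Se5} together with non-degeneracy of the hyperbolic fixed point $p_x = x$ ensures that the linearized Cauchy-Riemann operator at $u_x$ is surjective. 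The implicit function theorem then implies that $u_x$ is isolated in the moduli space of $J$-holomorphic sections, and a Gromov compactness argument excludes further contributions for energy below a sufficiently small threshold: any sequence $u_n$ of $J$-holomorphic sections in $\mathcal{M}_0(x, \gamma_x)$ with $E(u_n) \to 0$ must, by non-negative curvature of $E_\iota^\vf \to B_\iota$, converge (after passing to a subsequence) to a horizontal section with the prescribed asymptotics, which must be $u_x$ by uniqueness of the fixed point within $\mathcal{U}_x$, contradicting any $C^0$-distance from $u_x$. Hence $c_x = 1$ in the coincident case.

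To reduce the general case of Lemma \ref{creation}, where $p_x$ is close to but distinct from $x$, to the coincident case, the plan is to interpolate between the two by a one-parameter family of Hamiltonian isotopies $\vf_s$ for $s \in [0,1]$, with $\vf_0$ coincident and $\vf_1 = \vf$, supported in the neighborhood $\mathcal{U}_x$ of Lemma \ref{creation}. Along this family the fixed point $p_x^s$ stays non-degenerate and hyperbolic, and by the estimate in point (ii) of the proof of Lemma \ref{afa} one can arrange the action difference $\mathfrak{a}(x) - \mathfrak{a}(\gamma_x^s)$ to remain below $\delta/2$ uniformly in $s$. Standard continuation/cobordism arguments for index-$0$ moduli spaces, together with Gromov compactness restricted to the low-energy window, show that the parity of $\#\mathcal{M}_0^{<\delta}(x, \gamma_x^s)$ is constant in $s$; at $s=0$ it equals $1$ by the previous paragraph, hence equals $1$ at $s=1$ as well.

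The main obstacle will be the uniform energy control along the isotopy: one has to guarantee simultaneously that the only sections appearing along $\vf_s$ have energy $< \delta$ (so that Lemma \ref{afa} continues to eliminate spurious asymptotic limits) and that no bubbling or splitting into broken trajectories can develop within this energy window. Both issues are handled by taking $\delta$ strictly smaller than the action gap to the next candidate orbit or intersection point (guaranteed by the genericity of the Hamiltonian representative, as in point (i) of the proof of Lemma \ref{afa}), and by choosing $\mathcal{U}_x$ small enough that near-horizontal low-energy sections are confined to a $C^0$-neighborhood of the horizontal section $u_x^s$, where the implicit function theorem yields uniqueness.
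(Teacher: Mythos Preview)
Your proof is correct and follows essentially the same two-step strategy as the paper: first the coincident case $x=p_x$ via the horizontal section $u_x$ (index zero by Lemma~\ref{caldo}, regular by \cite[Lemma 2.27]{Se5}), then a parametric deformation to the general case with breaking excluded by action reasons. One simplification worth noting: in the coincident case you do not need the Gromov compactness/limiting argument, since by exactness every section in ${\mathcal M}(x,\gamma_x)$ has energy equal to the action difference, which is zero, and hence is horizontal and equal to $u_x$; the paper uses exactly this shortcut. For the deformation, the paper implements your ``Hamiltonian isotopies supported in ${\mathcal U}_x$'' concretely by reparametrising $L$ (which changes $\tau_L$ and hence $\vf=\psi\circ\tau_L$ while keeping $\vf(L)=\psi(L)$ and the intersection point $x$ fixed), which is a clean way to ensure that $x$ remains an intersection point throughout the family.
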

\begin{proof}
By Lemma \ref{afa}, $\iota_\delta(x)= \# {\mathcal M}_0(x, \gamma_x) \gamma_x$. For the moment, let us assume that $x$ is also a fixed point of $\vf$. Then ${\mathcal M}(x, \gamma_x)$ contains only the horizontal section $u_x$ because $E(u_x)=0$ and by exactness all holomorphic sections in ${\mathcal M}(x, \gamma_x)$ have the same energy. By Lemma \ref{caldo} $\operatorname{ind}(u_x)=0$ and moreover $u_x$ is regular (i.e.\ its linearised Cauchy-Riemann operator is surjective) by Lemma 2.27 of \cite{Se5}.  Then $\# {\mathcal M}_0(x, \gamma_x) =1$ when $x$ is a fixed point of $\phi$, which happens when $L$ is parametrised so that $x$ and $\phi(\tau_L^{-1}(x))$ are antipodal. There are obstructions to obtain this for every intersection point in $L \cap \vf(L)$, and therefore we use a deformation argument as in the proof of Proposition 3.4 in \cite{Se5}. For every point $x \in L \cap \vf(L)$
we define a family of symplectomorphisms $\vf_t$ by changing the parametrisation of $L$ such that $\vf_1=\vf$ and $\vf_0(x)=x$. To this family of symplectomorphisms we associate a 
family of symplectic fibrations $W_\iota^{\vf_i} \to B_\iota$. Let $\gamma_x^{(i)}$ be the periodic orbit corresponding the fixed point of $\vf_i$ close to $x$ constructed in Lemma \ref{creation}; in particular $\gamma_x^{(1)}=\gamma_x$. We define the parametric moduli space ${\mathcal M}^{par}(x, \gamma_x)$ consisting of pairs $(u,t)$ where $t \in [0,1]$ and $u \colon B_\iota \to W_\iota^{\vf_i}$ asymptotic to $x$ and $\gamma_x^{(i)}$. For a generic choice of almost complex structures 
${\mathcal M}^{par}(x, \gamma_x)$ is a $1$-dimensional manifold. Moreover a sequence $(u_n, t_n) \in {\mathcal M}^{par}(x, \gamma_x)$ with $t_n \to t_\infty \in [0,1]$ cannot break into a multi-level building by action reasons. Then $\partial {\mathcal M}^{par}(x, \gamma_x)= {\mathcal M}(x, \gamma_x^{(1)})- {\mathcal M}(x, \gamma_x^{(0)})$ and therefore $\#{\mathcal M}(x, \gamma_x)=1$.
\end{proof}
We have obtained that $\partial^C_\delta = \left (  \begin{matrix} 0 & 0& 0 \\
\iota_0 & 0 & 0 \\ 0 & \lambda_0 & \end{matrix} \right ),$ and therefore $H_*(C, \partial^C_\delta)=0$. From this it follows that $H_*(C, \partial^C)$, and therefore we have proved Theorem \ref{Theorem: Exact triangle in SH}.
\subsection{The exact triangle in Heegaard Floer homology} \label{Subsection: The surgery triangle}
The aim of this subsection is to prove the exact sequence \ref{Triangle: Exact triangle in HF without maps}. Although we have no claim of originality on that exact sequence, we need to recast its proof in the language of symplectic fibrations.
\begin{Prop} \label{Prop: Exact triangle in HFK}
 Given a genus $g$ open book decomposition $(K,S,\vf)$ of a $3$--manifold $Y$ and a closed exact Lagrangian $L \subset S$, there exists an exact triangle

 \begin{equation} \label{Triangle: Exact triangle in HF}
 \begin{tikzpicture}[->,auto,node distance=0.8cm,>=latex',baseline=(current  bounding  box.center)]

  \node (1) {$\widehat{HFK}(\overline{Y_0}, \overline{K_0}; -g+1)$};
  \node (2) [right = of 1] {$\widehat{HFK}(\overline{Y},\overline{K};-g+1)$};
  \node (3) [right = of 2] {$\widehat{HFK}(\overline{Y_+},\overline{K_+};-g+1),$};

  \path[every node/.style={font=\sffamily\small}]
    (1) edge node[above] {$i_*$} (2)
    (2) edge node[above] {$l_*$} (3)
    (3) edge[bend left=20] node {$d$} (1);
 \end{tikzpicture}
 \end{equation}
\end{Prop}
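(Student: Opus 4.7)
The plan is to mirror the proof of Theorem~\ref{Theorem: Exact triangle in SH} (Seidel's triangle for $HF^\sharp$) in the Heegaard Floer setting, working with degree $2g$ multisections of symplectic Lefschetz fibrations whose boundary condition is built from the basis of arcs $\bm{a}$ adapted to $K$ rather than from a single exact Lagrangian. After fixing such a basis disjoint from $L$, we replace the pair of Lagrangians $(L, \vf(L))$ used in Section~\ref{Subsection: The exact triangle in symplectic homology} with the Lagrangian $Q_{\bm{a}}$ obtained by parallel transport of $\bm{a}$; generators of the three groups in the triangle are then tuples of intersection points in Alexander degree $-g+1$, using the computation of $\widehat{HFK}$ via open books (Section~\ref{Subsubsection: The knots filtrations}). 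For the zero surgery, we first identify $\widehat{HFK}(\overline{Y_0}, \overline{K_0}; -g+1) \cong HF(\vf(L), L)$, which follows from Juh\'asz's surface decomposition formula applied to a Seifert surface meeting $L$ once: this identification is the one already used in the introduction.

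\textbf{Construction of the maps.} The map $l$ is defined as the count of index $0$, degree $2g$, embedded multisections of the Lefschetz fibration $E^\vf_{\lambda} \to B_\lambda$, with boundary on $Q_{\bm{a}}$, asymptotic to an Alexander-degree $-g+1$ generator of $\widehat{CFK}(S, \bm{a}, \vf(\bm{a}))$ at $\infty$ and to a generator of $\widehat{CFK}(S, \bm{a}, \vf \circ \tau_L^{-1}(\bm{a});-g+1)$ at $0$. The map $i$ is defined analogously using a fibration over $B_\iota = \D\setminus\{0,1\}$ whose boundary condition is $Q_{\bm{a}}$ along most of $\partial B_\iota$, interpolating near the strip-like end at $1$ with the Lagrangian arcs associated to $(L, \vf(L))$ that compute $HF(\vf(L), L)$. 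The null-homotopy $\mathcal{H}$ of $l \circ i$ arises from a one-parameter family of Lefschetz fibrations exactly as in Lemma~\ref{lambda composed iota is trivial}. The fact that these counts define filtered chain maps, preserving the Alexander degree $-g+1$, follows from the intersection-theoretic argument of Theorem~\ref{Theorem: Phi is filtered}, applied to the relevant torus $\R \times \{t_0\} \times {\mathcal C}$ in each fibration.

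\textbf{Acyclicity of the iterated cone.} To conclude by Lemma~\ref{Lemma: Algebraic lemma for the exactness}, we must show that the iterated cone built from $i, l, \mathcal{H}$ has trivial homology. Following Section~\ref{Subsection: The exact triangle in symplectic homology} verbatim, split the cone differential into a low-energy part $\partial^C_\delta$ and a remainder counting sections of energy $\ge 2\delta$; by Lemma~2.31 of~\cite{Se5} it suffices that $\partial^C_\delta$ be acyclic. By exactness, for $\delta$ small $\mathcal{H}_\delta = 0$, $l_\delta$ counts only horizontal sections, and $i_\delta$ counts only sections of vanishing energy supported in a small Weinstein neighbourhood of $L$. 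An analogue of Lemma~\ref{afa}--Lemma~\ref{caldo} identifies $l_\delta$ with the nearest-point bijection sending a generator $\mathbf{x}$ of $\widehat{CFK}(S, \bm{a}, \vf(\bm{a}); -g+1)$ whose tuple avoids a neighbourhood of $L$ to the corresponding generator of $\widehat{CFK}(S, \bm{a}, \vf \circ \tau_L^{-1}(\bm{a}); -g+1)$, while $i_\delta$ identifies $HF(\vf(L), L)$ with the complementary subspace generated by tuples containing an intersection point created by the Dehn twist near $L$. The cone of $i_\delta \oplus l_\delta$ is then visibly acyclic.

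\textbf{Main obstacle.} The delicate step is the low-energy analysis identifying $i_\delta$ and $l_\delta$ with nearest-point bijections, because unlike in the pure Lagrangian Floer case one must simultaneously track the Alexander filtration, rule out degenerations that strip off trivial discs at the chords $c_i, c_i'$ near $\partial S$, and verify that horizontal multisections of Fredholm index zero corresponding to those bijections are regular. Once this compatibility between the open-book Heegaard diagram and the symplectic fibration is pinned down, the rest of the argument is a direct transcription of Section~\ref{Subsection: The exact triangle in symplectic homology}.
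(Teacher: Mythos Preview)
Your overall strategy---build Heegaard--Floer analogues of $\iota$, $\lambda$, $\mathcal H$, then show the iterated cone is acyclic via a low-energy argument and Lemma~2.31 of \cite{Se5}---is exactly the paper's strategy, and your final paragraph correctly anticipates that $i_0$ and $l_0$ are nearest-point maps whose cone is visibly acyclic. The problem is that the specific fibrations and boundary conditions you propose do not make sense.

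First, you cannot choose a basis of arcs $\bm a$ disjoint from $L$: a basis cuts $S$ into a disc, so every essential simple closed curve meets some $a_i$. The paper instead fixes a basis \emph{compatible with} $L$ (Definition~\ref{Definition: Basis of arcs compatible with a H curve}), in which $a_1$ meets $L$ once and $a_i\cap L=\emptyset$ for $i>1$, and then works with three related collections $\bm a$, $\bm a'=\tau_L^{-1}(\bm a)$, and $\widetilde{\bm a}$ with $\widetilde a_1=L$. The zero-surgered pair $(\overline{Y_0},\overline{K_0})$ is represented directly by the diagram $(S,\widetilde{\bm a},\vf(\bm a'),z)$; the identification with $HF(\vf(L),L)$ via Juh\'asz is \emph{not} used in this proof (it is established later, in Section~4.4, through the auxiliary maps $\Upsilon$ and $j_\pm$). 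Second, the bases $B_\lambda=\C^*$ and $B_\iota=\D\setminus\{0,1\}$ have cylindrical ends at $0$ (and $\infty$), so sections there are asymptotic to periodic orbits, not to tuples of intersection points; there is no Lagrangian boundary condition $Q_{\bm a}$ on $B_\lambda$ at all since $\partial B_\lambda=\emptyset$. The paper uses different bases: the map $i$ counts holomorphic triangles in a \emph{trivial} fibration over $B_i=\D\setminus\{1,-1,i\}$ with boundary conditions $\widetilde{\bm a},\,\bm a',\,\vf(\bm a')$ and a fixed top generator $\bm\Theta_{\bm a',\widetilde{\bm a}}$ at the third puncture, while $l$ counts multisections of a Lefschetz fibration over $B_l=\D\setminus\{1,-1\}$ (two \emph{boundary} punctures and one interior critical value with vanishing cycle $L$), with boundary conditions $\bm a'$ and $\vf(\bm a')$. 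With these bases in place, the null-homotopy $\mathcal H'$ and the low-energy analysis proceed exactly as you outline.
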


\subsubsection{Heegaard diagrams} \label{Subsection: Heegaard Diagrams}

If $(K, S, \vf)$ is an open book decomposition for $Y$, then $(K_+, S, \vf \circ \tau_L^{-1})$ is an open book decomposition for $Y_+$.
\begin{Def} \label{Definition: Basis of arcs compatible with a H curve} If $L \subset S$ is a 
nonseparating curve, a basis of arcs $\bm{a}=\{a_1,\ldots,a_{2g}\}$ for $S$ is compatible with $L$ if:
\begin{enumerate}
 \item $a_1$ intersects transversely $L$ in a single point;
 \item $a_i \cap L =\emptyset$ for $i >1$;
 \item $a_2$ is homotopic to $\gamma * L * \gamma^{-1}$, where $\gamma$ is a connected component of $a_1 \setminus L$.
\end{enumerate}
\end{Def}
\begin{figure} 
   \begin{center}
\includegraphics[scale = 1]{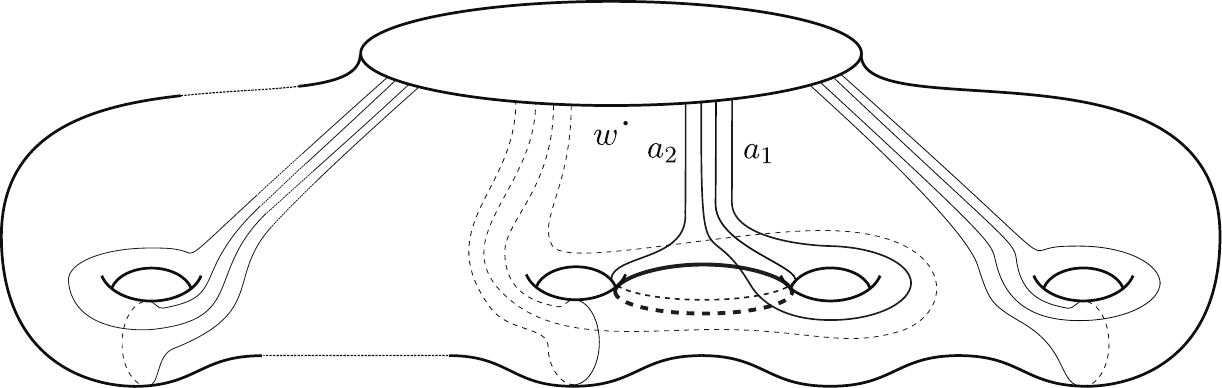}
\label{fig: compatible basis with L}
\caption{An example of basis of arcs compatible with $L$.}
\end{center}
\end{figure}
It is easy to check that any nonseparating curve $L \subset S$ admits a compatible basis of arcs for $S$. Fix $L$ and a compatible basis of arcs $\bm{a}$. 

Let $\tau_L$ be the composition of a Dehn twist along $L$ with support in a thin neighborhood $\mathcal{N}(L)$ of $L$ and of a small Hamiltonian diffeomorphism $\mathfrak{h}$ of $S$. Consider the set of arcs $\bm{a'} \coloneqq \{a'_1,\ldots,a'_{2g}\}$ where $a'_i = \tau^{-1}_L(a_i)$  for every $i$. Obviously $\bm{a'}$ is a basis of arcs for $S$ and, assuming that $\mathcal{N}(L)$ is thin enough, $a'_i$ is Hamiltonian isotopic to $a_i$ for $i>1$. We choose $\mathfrak{h}$ so that $\tau^{-1}_L|_{\partial S}$ is a small positive rotation and $a'_i \cap a_i$ consists of a single point for $i>1$ and in two points, contained in $\mathcal{N}(L)$, for $i=1$ (see Figure \ref{Figure: Curve schematizzate}). We define also a set of arcs $\widetilde{\bm{a}} \coloneqq \{\widetilde{a}_1,\ldots,\widetilde{a}_{2g}\}$ where $\widetilde{a}_1\coloneqq L$ and, for $i =2,\ldots,2g$, $\widetilde{a}_i$ is the result of a perturbation of $a'_i$ under a small Hamiltonian isotopy that rotates $\partial S$ by a small positive angle and such that $\widetilde{a}_i \cap a'_i$ consists of a single point $\Theta_{a'_i,\widetilde{a}_i}$. We also define $\Theta_{a'_1,\widetilde{a}_1}$ to be the only intersection point of $a'_1$ with $\widetilde{a}_1$.

\begin{figure} 
   \begin{center}
\includegraphics[scale = 1]{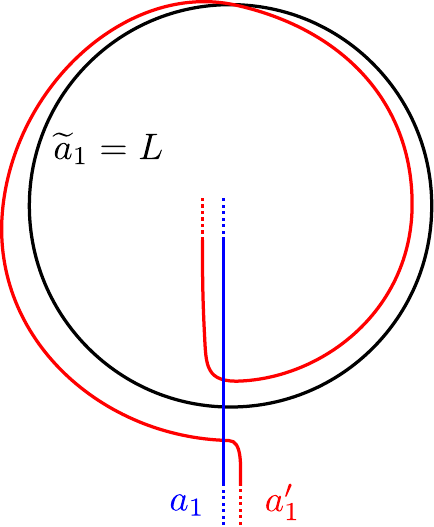}
\caption{An abstract representation of the curves $a_1$ and $a'_1$ near $\widetilde{a}_1=L$. In the rest of the section we will assume that the intersections among the curves are as in the picture.}
\label{Figure: Curve schematizzate}
\end{center}
\end{figure}

With slight abuse of notation we will call $\vf(\bm{a'})$ the set of curves image of $\bm{a'}$ under $\vf \circ \mathfrak{h}^{-1}$, so tha $a_i \cap \vf(a'_i) \cap \partial S$ consists of two points  for each $i \in \{1,\ldots,2g\}$,

We have then three diagrams $(S, \bm{a'}, \vf(\bm{a'}), z)$, $(S, \bm{a}, \vf(\bm{a'}), z)$ and $(S, \widetilde{\bm{a}}, \vf(\bm{a'}), z)$. The first diagram is diffeomorphic to  $(S, \bm{a}, \vf(\bm{a}), z)$, and therefore represents $(\overline{Y}, \overline{K})$. The second diagram is diffeomorphic to $(S, \bm{a}, (\vf \circ \tau_{L}^{-1})(\bm{a}), z)$, and therefore represents $(\overline{Y_+}, \overline{K_+})$.
The third diagram represents $(\overline{Y_0},\overline{K_0})$. 

\begin{Rmk} \label{Remark: The generators in HFK and their components in A}
 If $A$ is the neighborhood of $\partial S$ defined in Subsection \ref{Subsubsection: The knots filtrations}, we can assume that $L \cup \vf(L) \subset S \setminus A$. It follows that the only component of any generator $\mathbf{x}$ of $\widehat{CFK}(S, \widetilde{\bm{a}}, \vf(\bm{a'}), z, -g+1)$ that lies in $S \setminus A$ has to belong to $\widetilde{a}_1 =L$. Similarly, if $\mathbf{x}$ is a generator of $\widehat{CFK}(S, \widetilde{\bm{a}}, \vf(\widetilde{\bm{a}}), z, -g+1)$, the only component of $\mathbf{x}$ lying in $S \setminus A$ has to belong to $\widetilde{a}_1 \cap \vf(\widetilde{a}_1)$.
\end{Rmk}

\subsubsection{The chain map i}

 Let $(E_i,\pi_i)$ be the trivial symplectic fibration over
 \[B_i \coloneqq \D \setminus \{1,-1,i\}\]
 with fiber $S$. 
 \begin{figure} [ht!] 
  \begin{center}
   \includegraphics[scale = 1]{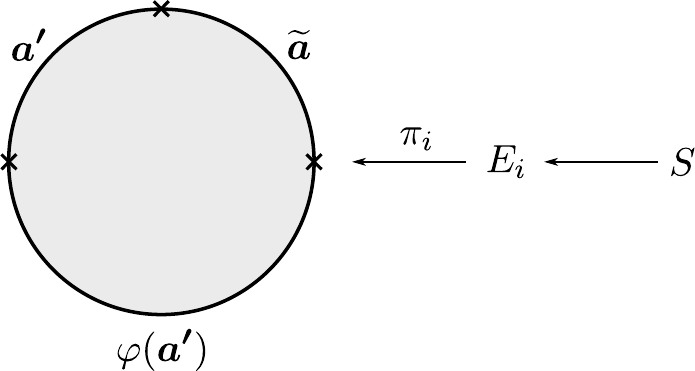}
  \end{center}
  \caption{The fibration $(E_{i},\pi_i)$ with its Lagrangian boundary condition.}
  \label{Figure: The fibration i}
\end{figure}
 Identify $\partial \D$ with $[0,2\pi]/(0 \sim 2\pi)$ in the standard way. Endow $(E_i,\pi_i)$ with the Lagrangian boundary condition $Q_i$ given by the symplectic parallel transport along $(0,\frac{\pi}{2})$, $(\frac{\pi}{2},\pi)$, $(\pi,2\pi)$ of copies of $\widetilde{\bm{a}}$, $\bm{a'}$ and $\vf(\bm{a'})$ respectively (cf. Figure \ref{Figure: The fibration i}).
 The chain map 
\[ i \colon \widehat{CFK}(S,\widetilde{\bm{a}},\vf(\bm{a'}), z, -g+1)  \longrightarrow  \widehat{CFK}(S,\bm{a'},\vf(\bm{a'}), z, -g+1) \]
is defined on the generators $\mathbf{x}$ by
 \[  i(\mathbf{x}) =  \sum_{\mathbf{y}}\#_2\mathcal{M}_0^i(\mathbf{x},\mathbf{y}) \mathbf{y} \]
 where the sum is taken over all the generators $\mathbf{y}$ of $\widehat{CF}(S,\bm{a}',\vf(\bm{a}')$, and $\mathcal{M}_0^i(\mathbf{x},\mathbf{y})$ is the moduli space of index $0$, embedded, degree $2g$ holomorphic multisections $u$ of $(E_{i},\pi_i)$ with Lagrangian boundary condition $Q_i$, asymptotic to $\mathbf{x}$ at $1$, to $\bm{\Theta}_{\bm{a}',\widetilde{\bm{a}}} = \{ \Theta_{a_1', \widetilde{a}_1'}, \ldots, \Theta_{a_{2g}', \widetilde{a}_{2g}'} \}$ at $i$ and to $\mathbf{y}$ at $-1$, and such that $\Imm(u) \cap (B_i \times \{z \}) = \emptyset$.

Observe that the triviality of the intersection with $B_i \times \{ z \}$ implies that $\mathcal{A}(\mathbf{y}) = -g+1$, and therefore $\mathbf{y}$ is a generator of $\widehat{CFK}(S,\bm{a'},\vf(\bm{a'}), z, -g+1)$, if $\mathcal{M}_0^i(\mathbf{x},\mathbf{y}) \neq \emptyset$.

\subsubsection{The chain map l}
 Let $(E_l,\pi_l)$ be a Lefschetz fibration over 
 \[B_l \coloneqq \D \setminus \{1,-1\}\]
 with fiber $S$ and only one critical point $x_0$ over $\mathfrak{z} = 0$ with vanishing cycle $L$.
 \begin{figure} [ht!] 
  \begin{center}
   \includegraphics[scale = 1]{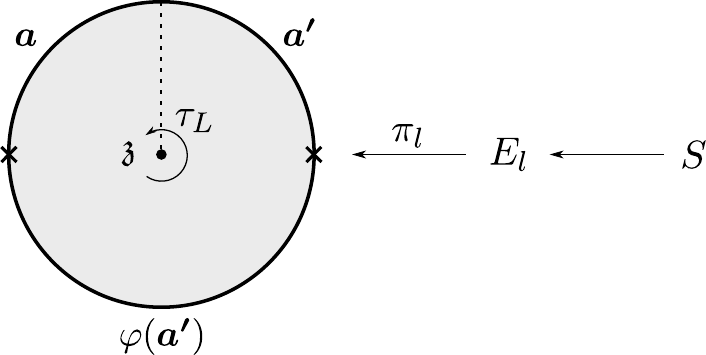}
  \end{center}
  \caption{The fibration $(E_{l},\pi_l)$ with its Lagrangian boundary condition. Crossing the dotted ray applies $\tau_L$ to the fibers.}
 \end{figure}
 We trivialize $\pi_l$ over the ray $\{Re(\zeta) = 0,Im(\zeta)>0\}$ so that we can assume that the monodromy $\tau_L$ around $\mathfrak{z}$ acts on the fibers when crossing that ray from the first standard quadrant of $\C$ to the second. As at the beginning of this section, $\tau_L$ denotes the composition of the positive Dehn twist with support in a thin neighborhood of $L$ with a small Hamiltonian perturbation that maps $a'_i$ to $a_i$ for every $i>1$.
 
 Endow $(E_l,\pi_l)$ with the Lagrangian boundary condition $Q_l$ given by parallel transport of a copy of $\vf(\bm{a}')$ along $(\pi,2\pi)$ and of a copy of $\bm{a'}$ along $(0,\pi)$. By our choice for the trivialization, $Q_l$ can be identified with a copy of $\bm{a'}$ inside each fiber over $(0,\frac{\pi}{2})$ and a copy of $\bm{a}$ inside each fiber over $(\frac{\pi}{2},\pi)$. 
 
 The chain map 
 \[ l \colon \widehat{CFK}(S,\bm{a'},\vf(\bm{a'}), z, -g+1) \longrightarrow  \widehat{CFK}(S,\bm{a},\vf(\bm{a'}), z, -g+1)\]
 is defined on the generators by
 \[ l(\mathbf{x})= \sum_{\mathbf{y}}\#_2\mathcal{M}_0^l(\mathbf{x},\mathbf{y})  \mathbf{y}, \]
 where the sum is taken over all the generators $\mathbf{y}$ of $\widehat{CF}(S,\bm{a},\vf(\bm{a'}))$ and $\mathcal{M}_0^l(\mathbf{x},\mathbf{y})$ is the moduli space of embedded index $0$ degree $2g$ holomorphic multisections $u$ of $(E_{l},\pi_l)$ with Lagrangian boundary condition $Q_l$, asymptotic to $\mathbf{x}$
at $1$ and to $\mathbf{y}$ at $-1$, and such that $\Imm(u) \cap (B_l \times \{z\}) = \emptyset$. The triviality of the intersection with $B_l \times \{ z \}$ implies that $\mathcal{A}(\mathbf{y}) = -g+1$, and therefore $\mathbf{y}$ is a generator of $\widehat{CFK}(S,\bm{a},\vf(\bm{a'}), z, -g+1)$, if $\mathcal{M}_0^i(\mathbf{x},\mathbf{y}) \neq \emptyset$.

\subsubsection{Sketch of the proof of the exact triangle}

The exactness of the triangle in \eqref{Triangle: Exact triangle in HF} can be proved using the same argument as in the proof of the exact surgery triangle in Heegaard Floer homology. Here we recast the proof in the language of symplectic fibrations.
\begin{Lemma} \label{Lemma: l circ i chain homotopic to 0}
There is a homotopy 
\[\mathcal{H}' \colon \widehat{CFK}(S,\widetilde{\bm{a}},\vf(\bm{a'}), z, -g+1)  \to \widehat{CFK}(S, \bm{a}, \vf(\bm{a}'), z, -g+1)\]
between $l \circ i$ and zero.
\end{Lemma}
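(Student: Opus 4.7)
The plan is to mimic closely the proof of Lemma \ref{lambda composed iota is trivial} in the present Heegaard--Floer setting. I will construct an explicit chain homotopy $\mathcal{H}'$ by counting Fredholm index $-1$ pseudoholomorphic multisections in a one-parameter family of Lefschetz fibrations $(E_t^{\mathcal{H}'}, \pi_t^{\mathcal{H}'})$ over $B_i = \D \setminus \{1, -1, i\}$, parametrised by $t \in (0, 1)$. Each such fibration will carry a single interior critical point at $\mathfrak{z}_t$ with vanishing cycle $L$, and its Lagrangian boundary condition will be obtained by parallel transport of $\widetilde{\bm{a}}$, $\bm{a}'$, $\vf(\bm{a}')$ along the three boundary arcs of $B_i$, with the branch cut for the Lefschetz monodromy $\tau_L$ chosen compatibly. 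The smooth path $t \mapsto \mathfrak{z}_t$ is chosen so that $\mathfrak{z}_t$ approaches the marked point $-1$ as $t \to 0$, and approaches a non-marked boundary point $p^*$ on the arc $(0, \pi/2)$ (which carries Lagrangian $\widetilde{\bm{a}}$) as $t \to 1$. The homotopy $\mathcal{H}'$ is then defined on generators $\mathbf{x}'$ by counting pairs $(t, u)$, with $t \in (0, 1)$ and $u$ an embedded degree $2g$ multisection of $(E_t^{\mathcal{H}'}, \pi_t^{\mathcal{H}'})$ of Fredholm index $-1$, disjoint from the basepoint section $B_i \times \{z\}$, with boundary on the prescribed Lagrangian and asymptotic to $\mathbf{x}'$ at $1$, $\bm{\Theta}_{\bm{a}', \widetilde{\bm{a}}}$ at $i$, and some $\mathbf{y}''$ at $-1$.

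The identification $\partial \circ \mathcal{H}' + \mathcal{H}' \circ \partial = l \circ i$ will come from the standard compactness analysis of the resulting one-dimensional parametric moduli space. Interior breakings at fixed $t$ produce $\partial \mathcal{H}' + \mathcal{H}' \partial$, and the two endpoints of the parameter interval contribute the additional strata that need to be identified. As $t \to 0$, the critical point $\mathfrak{z}_t$ collides with the marked point $-1$ and a small disk containing both pinches off as a two-level building; the bubble is then a Lefschetz fibration over a disk with two boundary marked points (the old $-1$ and a new neck) and one interior critical point with vanishing cycle $L$, which after proper identification of the branch cut and Lagrangian data is precisely $(E_l, \pi_l)$, while the main level is the fibration $(E_i, \pi_i)$. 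Summing over the intermediate generator $\mathbf{y} \in \bm{a}' \cap \vf(\bm{a}')$ at the neck, this contribution equals $l \circ i$. As $t \to 1$, $\mathfrak{z}_t$ collides with the non-marked boundary point $p^*$ on the $\widetilde{\bm{a}}$-arc; the bubble this time is $(\overline{E}_L, \overline{\pi}_L)$ with Lagrangian boundary condition induced by parallel transport of $\widetilde{\bm{a}}$, and since $u$ is an embedded degree $2g$ multisection the bubble decomposes into $2g$ irreducible components, one with boundary on each $\widetilde{a}_j$. The count factors as the product $\prod_{j=1}^{2g} \overline{\mathcal{G}}_L(\widetilde{a}_j)$, which vanishes because the first factor is $\overline{\mathcal{G}}_L(\widetilde{a}_1) = \overline{\mathcal{G}}_L(L) = 0$ by Example \ref{Example: G of tau L with condition L is 0}. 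Combining the three contributions yields the asserted chain homotopy.

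The hard part will be the rigorous degeneration analysis at the two endpoints, and in particular the careful tracking of how the branch cut accommodating the Lefschetz monodromy $\tau_L$ reorganises between the main level and the bubble, so that the $t \to 0$ bubble really reconstructs $(E_l, \pi_l)$ with its standard Lagrangian $Q_l$ and asymptotic $\mathbf{y}'' \in \bm{a} \cap \vf(\bm{a}')$ at the old $-1$, and the $t \to 1$ bubble is genuinely $(\overline{E}_L, \overline{\pi}_L)$ with Lagrangian condition $\widetilde{\bm{a}}$ around the vanishing cycle $L$. Compatibility of the whole construction with the Alexander filtration --- equivalently, preservation of the disjointness of $u$ from the basepoint section $B_i \times \{z\}$ through the degenerations, which is what encodes the $-g+1$ condition --- will then follow from positivity of intersection, arguing exactly as in the proof of Theorem \ref{Theorem: Phi is filtered}.
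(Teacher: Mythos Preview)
Your proposal is correct and follows essentially the same approach as the paper: a one-parameter family of Lefschetz fibrations over $B_i$ with a single critical point sliding along a path from the puncture $-1$ (yielding the $l \circ i$ degeneration via $B_l \sqcup B_i$) to a boundary point on the $\widetilde{\bm{a}}$-arc (yielding the vanishing contribution via $\overline{\mathcal{G}}_L(L)=0$). Your decomposition of the $t\to 1$ bubble into $2g$ irreducible components and the explicit identification of the vanishing factor $\overline{\mathcal{G}}_L(\widetilde{a}_1)=\overline{\mathcal{G}}_L(L)$ is a helpful elaboration of what the paper leaves implicit in its citation of Example~\ref{Example: G of tau L with condition L is 0}.
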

\begin{proof}
Consider the path $\{\mathfrak{z}_t\}_{t \in (-1,1)}$ in $B_i$ defined by
\[\mathfrak{z}_t = \left\{\begin{array}{ll}
                           (t,0) & \mbox{for } t \in (-1,0]\\
                           (\frac{\sqrt{2}t}{2},\frac{\sqrt{2}t}{2}) & \mbox{for } t \in [0,1)
                          \end{array}
 \right.\]
and smoothed near $0$.
Let $(E_t^{\mathcal{H}'}, \pi_t^{\mathcal{H}'})$ be the Lefschetz fibration over $B_i$ with a unique critical point over $\mathfrak{z}_t$ with associated vanishing cycle $L$. The one-parameter family of punctured Riemann surfaces $B^{{\mathcal H}'}_t= (B_i, \mathfrak{z}_t)$ can be compactified by adding $(B_+, \mathfrak{z}_+)$, where $B_+= (B_i \setminus \{(\frac{\sqrt{2}}{2},\frac{\sqrt{2}}{2})\}) \sqcup (\mathbb{D} \setminus \{(-\frac{\sqrt{2}}{2},-\frac{\sqrt{2}}{2})\})\}$ and $\mathfrak{z}_+ = (0,0) \in int(\mathbb{D} \setminus \{(-\frac{\sqrt{2}}{2},-\frac{\sqrt{2}}{2})\})$, as $t \to + 1$ and $(B_-, \mathfrak{z}_-)$, where $B_-= B_l \sqcup B_i$ and $\mathfrak{z}_- = (0,0) \in int(B_l)$, as $t \to -1$.

We endow these Lefschetz fibrations with Lagrangian boundary conditions $Q_t^{\mathcal{H}'}$ induced by the one represented in the picture in the middle of Figure \ref{Figure: Degenerazione di H primo} (we assume that the the fibrations are trivialised in the complement of the dotted half-line so that  $\tau_L$ acts when one crosses it.) 
\begin{figure} [ht!] 
  \begin{center}
   \includegraphics[scale = 1]{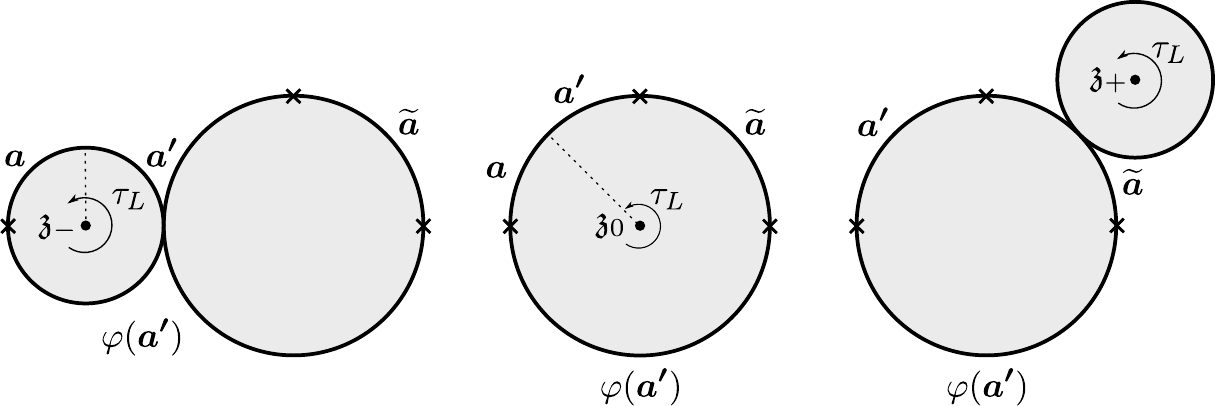}
  \end{center}
  \caption{The homotopy from $B^{\mathcal{H}'}_{-1}$ (left) to $B^{\mathcal{H}'}_{-1}$ (right), with $B^{\mathcal{H}'}_{0}$ pictured in the middle. Again the crosses denote punctures and the marked points the critical values of the Lefschetz fibrations. Each boundary component is labeled with the set of curves in the corresponding Lagrangian boundary condition $Q^{\mathcal{H}'}_t$.}
 \label{Figure: Degenerazione di H primo}
 \end{figure}
Then we define $\mathcal{H}'$ by counting pairs $(t, u)$ where $t \in (-1,1)$ and $u$ is a 
pseudoholomorphic multisection of degree $2g$ of $(E_t^{\mathcal{H}'}, \pi_t^{\mathcal{H}'})$ of index $-1$ with boundary on $Q_t^{\mathcal{H}'}$.

The degenerations of $(B_i, \mathfrak{z}_t)$ correspond, by pull back, to degenerations $(E_\pm^{\mathcal{H}'}, \pi_\pm^{\mathcal{H}'})$ of $(E_t^{\mathcal{H}'}, \pi_t^{\mathcal{H}'})$. 
The relative Gromov invariant of $(E_-^{\mathcal{H}'}, \pi_-^{\mathcal{H}'})$ is $l \circ i$, while the relative Gromov invariant of $(E_+^{\mathcal{H}'}, \pi_+^{\mathcal{H}'})$ is trivial by Example \ref{Example: G of tau L with condition L is 0}.
\end{proof}

 \begin{proof}[Proof of Proposition \ref{Prop: Exact triangle in HFK}]
 Since the end of the proof goes pretty much as in the proofs of Theorem \ref{Theorem: Exact triangle in SH} and of the exact triangle in Lagrangian Floer homology (\cite[Section 3]{Se5}), we will leave some details to the reader. The key point is to study the small energy components of the maps $i$ and $l$.

 If $a'_1$ is close enough to $a_1 \cup \widetilde{a}_1$ and, for $i>1$, $a_i'$ and $a_i$ are close enough to $\widetilde{a}_i$, for any $j \in \{1,\ldots,2g\}$ we have evident bijections
 \begin{equation}
 \begin{array}{ccccc}
  (a'_1 \cap \vf(a'_j)) & \stackrel{f}{\longrightarrow} & (\widetilde{a}_1 \cap \vf(a'_j)) & \sqcup & (a_1 \cap \vf(a'_j)) \\
       x_1'             & \longmapsto                  & \widetilde{x}_1 & \mbox{or} & x_1                       
 \end{array}
 \end{equation}
 (where $\widetilde{x}_1 \in (\widetilde{a}_1 \cap \vf(a'_j))$ and $x_1 \in (a_1 \cap \vf(a'_j))$) and
 \begin{equation}
 \begin{array}{ccccc}
  (\widetilde{a}_i \cap \vf(a'_j)) & \longrightarrow & (a'_i \cap \vf(a'_j)) & \longrightarrow & (a_i \cap \vf(a'_j))\\
     \widetilde{x}_i               & \longmapsto     & x'_i                  &   \longmapsto   &   x_i
 \end{array}
 \end{equation}
 where the image of any point is the closest among all the elements of the corresponding codomain. These induce an injection
 \begin{equation*}  \begin{array}{ccc}
   i_0 \colon  \widehat{CFK}(S,\widetilde{\bm{a}},\vf(\bm{a'}),z,-g+1) & \hooklongrightarrow & \widehat{CFK}(S,\bm{a'},\vf(\bm{a'}),z, -g+1)\\
                (\widetilde{x}_1,\widetilde{x}_2,\ldots,\widetilde{x}_{2g})  & \longmapsto   &  (x'_1,x'_2,\ldots,x'_{2g}). 
  \end{array}
 \end{equation*}
 and a quotient 
 \begin{equation*}
 \begin{array}{ccc}
  l_0 \colon  \widehat{CFK}(S,\bm{a'},\vf(\bm{a'});-g+1) & \xtwoheadrightarrow{} & \widehat{CFK}(S,\bm{a},\vf(\bm{a'});-g+1)\\
                (x'_1,x'_2,\ldots,x'_{2g})               & \longmapsto           &\left\{\begin{array}{cl}
                                                                                           (x_1,x_2,\ldots,x_{2g}) & \mbox{if } f(x'_1) \in (a_1 \cap \vf(a'_j))\\
                                                                                           0 & \mbox{if } f(x'_1) \in (\widetilde{a}_1 \cap \vf(a'_j)).
                                                                                          \end{array}\right.

 \end{array}
 \end{equation*}
 Reasoning as in \cite[Subsection 3.2]{Se5}, one can check that $i_0$ can be expressed by counting holomorphic degree $2g$ multisections of $(E_i,\pi_i,Q_i)$ and that, taking $\widetilde{a}_i$ close enough to $a'_i$ outside a small neighborhood of $a_1$, the energy of these multisections can be made arbitrarily small. We have then a decomposition $i= i_0 + i'$ where $i'$ counts higher energy sections.
 
 Similarly, if $a_i$ is close enough to $a_i'$ outside a small neighborhood of $\widetilde{a}_1$, $l_0$ coincides with the lower energy component of $l$, giving a decomposition $l=l_0+l'$ where $l'$ counts higher energy sections (cf. Section 3.3 of \cite{Se5} and, in particular, Lemma 3.8 for a description of the lower energy sections that appear in the definition of $l$).
 Since $l_0 \circ i_0 =0$, again Lemma 2.31 of \cite{Se5} and Lemma \ref{Lemma: Algebraic lemma for the exactness} above imply Proposition \ref{Prop: Exact triangle in HFK}.
\end{proof}

\subsection{Comparing the double cones}

Applying the main result of Section \ref{Section: The isomo from HF to ECH} we obtain a chain map $\Phi^\sharp $ that allows us to compare two of the three terms of the two exact triangles in Heegaard Floer and symplectic homologies as in Diagram \eqref{Diagram: Main diagram between HF and HF without maps}. To proceed with our strategy for the proof of Theorem \ref{Theorem: Main theorem in introduction}, we first need to define a chain map inducing the isomorphism on the first column of \eqref{Diagram: Main diagram between HF and HF without maps} that behaves well with respect to the Lefschetz fibrations framework. The main difficulty is that we defined the chain maps inducing the exact sequence 
\eqref{Triangle: Exact triangle in HF} by counting degree $2g$ holomorphic multi-sections, and those inducing the exact sequence \eqref{Triangle: L phi phi-tau} by counting holomorphic sections. As a first stem, me need to put both exact sequences on equal footing.

\subsubsection{Seidel's exact sequence revisited}
Let us consider the diagram $(S,\widetilde{\bm{a}},\vf(\widetilde{\bm{a}}),z)$. 
Although it is not a diagram of the type considered in Section \ref{Subsection: Heegaard Floer homology for open books}, we can still define chain complex $\widehat{CFK}(S, \widetilde{\bm{a}}, \vf(\widetilde{\bm{a}}), z, 1-g)$ in the same way. Note, however, that we should be careful to choose for the definition a Liouville form on $S$ for which $L$ is an exact Lagrangian submanifold; this is always possible, as long as $L$ is nonseparating.
 
If $A$ is as in Section \ref{Subsubsection: The knots filtrations} then $(\widetilde{a}_2 \cap \vf(\widetilde{a}_2)) \cap A = \{c_2,c_2',d\}$ where $c_2,c_2' \subset \partial S$. Up to moving the base point $w$ and, possibly, changing the compatible basis of arcs, we can assume that
\begin{equation} \label{Condition: a_2 is the first at the left}
\begin{split}
 &a_2 \cap A \mbox{ is the first arc that one encounters when moving from }w \mbox{ in the} \\ &\mbox{direction of } \partial S.
\end{split}
\end{equation}
This assumption  implies that $\widetilde{a}_2 \cap A$ intersects $\vf(\widetilde{a}_i)$ if and only if $i=2$. Moreover, if $\mathbf{x}=(x_1, \ldots, x_{2g}) \in \widehat{CFK}(S,\widetilde{\bm{a}},\vf(\widetilde{\bm{a}}),z,-g+1)$, then $x_1 \in \widetilde{a}_1 \cap \vf(\widetilde{a}_1)$ because only one intersection point is outside of $A$. Thus we have a splitting of vector spaces
\begin{equation} \label{Equation: Split of CFK in terms of C and overline C}
 \widehat{CFK}(S,\widetilde{\bm{a}},\vf(\widetilde{\bm{a}}),z,-g+1) = \widehat{CFK}_-(S,\widetilde{\bm{a}},\vf(\widetilde{\bm{a}}),z,-g+1) \oplus \widehat{CFK}_+(S,\widetilde{\bm{a}},\vf(\widetilde{\bm{a}}),z,-g+1) 
\end{equation}
where $\widehat{CFK}_-(S,\widetilde{\bm{a}},\vf(\widetilde{\bm{a}}),z,-g+1)$ is the subspace generated by the $2g$-tuples of intersection points $(x_1,x_2\ldots,x_{2g})$ with  $x_2 \in \{ c_2, c_2' \}$  and $\widehat{CFK}_+(S,\widetilde{\bm{a}},\vf(\widetilde{\bm{a}}),z,-g+1)$ is the subspace generated by the $2g$-tuples of intersection points $(x_1,x_2\ldots,x_{2g})$ with $x_2 = d$, both with the usual identifications $c_i \sim c_i'$. 

\begin{Lemma} \label{Lemma: The subcomplexes C and overline C}
 The following hold:
 \begin{enumerate}
  \item  $\widehat{CFK}_-(S,\widetilde{\bm{a}},\vf(\widetilde{\bm{a}}),z,-g+1)$ and  $\widehat{CFK}_+(S,\widetilde{\bm{a}},\vf(\widetilde{\bm{a}}),z,-g+1)$ are subcomplexes of $\widehat{CFK}(S,\widetilde{\bm{a}},\vf(\widetilde{\bm{a}}),z,-g+1)$, and
  \item the chain maps
  \begin{equation*}
   j_- \colon CF(\vf(\widetilde{a}_1),\widetilde{a}_1) \to \widehat{CFK}_-(S,\widetilde{\bm{a}},\vf(\widetilde{\bm{a}}),z,-g+1), \quad j_-(x)=(x, c_2, c_3, \ldots, c_{2g}) 
  \end{equation*}
   and
  \begin{equation*}
   j_+ \colon CF(\vf(\widetilde{a}_1),\widetilde{a}_1) \to \widehat{CFK}_+(S,\widetilde{\bm{a}},\vf(\widetilde{\bm{a}}),z,-g+1), \quad j_+(x)=(x, d, c_3, \ldots, c_{2g})
  \end{equation*}
  induce isomorphisms in homology.
 
   \end{enumerate}
 \end{Lemma}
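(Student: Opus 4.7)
The proof breaks into two parts.

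\emph{Part (1): subcomplex property.} The plan is to exploit the trivial-strip principle recalled in Section~\ref{Subsection: Heegaard Floer homology for open books}: any connected holomorphic multisection of $\R \times [0,1] \times S$ with $n_w = 0$ and an end above a chord over $c_i$ or $c_i'$ must be a trivial strip. By Remark~\ref{Remark: The generators in HFK and their components in A}, a generator $\mathbf{x}$ of $\widehat{CFK}(S, \widetilde{\bm{a}}, \vf(\widetilde{\bm{a}}), z, -g+1)$ satisfies $x_1 \in L \cap \vf(L)$, $x_i \in \{c_i, c_i'\}$ for $i \geq 3$, and $x_2 \in \{c_2, c_2', d\}$. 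Since the differential $\partial_K$ on the Alexander-graded piece counts multisections with $n_w = n_z = 0$, every connected component meeting some $c_i$ or $c_i'$ is a trivial strip, hence preserves that coordinate. Thus if $x_2 \in \{c_2, c_2'\}$ then $y_2 \in \{c_2, c_2'\}$, and if $x_2 = d$ then the $\widetilde{a}_2$-component cannot have an end at $c_2$ or $c_2'$, forcing $y_2 = d$. Both splittings are therefore $\partial_K$-invariant.

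\emph{Part (2): $j_\pm$ are quasi-isomorphisms.} Both maps are tautologically bijective on generators (after identifying $c_i \sim c_i'$ via $\mathcal{R}$), so it suffices to show they intertwine the differentials. The same decomposition from Part~(1) shows that every contributing multisection splits as a disjoint union of trivial strips at the unchanged $c_i$-coordinates (and at $d$, in the $+$ case), together with a single non-trivial holomorphic strip in $\R \times [0,1] \times S$ with boundary on $\R \times \{1\} \times L$ and $\R \times \{0\} \times \vf(L)$, connecting $x_1$ to $y_1$. Such strips are exactly those counted by the Lagrangian Floer differential of $CF(\vf(L), L)$. Hence under $j_\pm$ the induced differentials on $\widehat{CFK}_\pm$ coincide with the Floer differential on $CF(\vf(L), L)$, yielding isomorphisms of chain complexes and therefore of homology.

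\emph{Main obstacle.} The delicate point is the interior intersection $d$ in the $+$ case: the trivial-strip principle from \cite{CGH3} is proved for the boundary chords $c_i, c_i'$, and does not literally apply at an interior puncture. To handle this I plan to proceed in one of two ways: either (a) a local area/index computation in the thin region of $A$ containing $\{c_2, c_2', d\}$, exploiting condition~\eqref{Condition: a_2 is the first at the left} together with the product form of the diagram in $A$ to show that no embedded, $ECH$-index-$1$ multisection component with $n_w = n_z = 0$ can connect $d$ to $c_2$ or $c_2'$ and that the only such component with both ends at $d$ is the trivial strip; or (b) a homological argument identifying the splitting \eqref{Equation: Split of CFK in terms of C and overline C} with a decomposition into two distinct $Spin^c$-structures on the zero-surgered manifold $\overline{Y_0}$, which is preserved by $\partial_K$ for general reasons. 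The second route bypasses the local analysis entirely and reduces Part~(1) to the standard principle at $c_i, c_i'$; Part~(2) would then still require the above decomposition of the differential, but only at coordinates where the trivial-strip principle is already established.
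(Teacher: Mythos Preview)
Your proposal has two genuine gaps, one in each part.

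\textbf{Part (1), the $+$ subcomplex.} Your plan (a) would fail: there \emph{are} embedded index-$1$ multisections with $n_w=n_z=0$ connecting $(x_1,d,\ldots)$ to $(x_1,c_2,\ldots)$ and to $(x_1,c_2',\ldots)$. Concretely, these are two annuli in $\R\times[0,1]\times S$ whose projections to $S$ are the two thin regions between $\widetilde{a}_1\cup\widetilde{a}_2$ and $\vf(\widetilde{a}_1)\cup\vf(\widetilde{a}_2)$ near $L$ (this uses condition~(3) in Definition~\ref{Definition: Basis of arcs compatible with a H curve}). The point is not that such curves are absent, but that there are exactly two of them and they cancel modulo $2$ after the identification $c_2\sim c_2'$ built into the quotient by $\mathcal{R}$. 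Condition~\eqref{Condition: a_2 is the first at the left} is what guarantees these are the \emph{only} curves leaving $\widehat{CFK}_+$. Your plan (b) is in the right spirit---the splitting does ultimately reflect the two torsion $Spin^c$ structures on the $S^2\times S^1$ summand of $\overline{Y}_0\#(S^2\times S^1)$---but making this precise requires identifying the diagram with that manifold and still leaves you needing the cancellation argument to see the splitting at the chain level.

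\textbf{Part (2).} Your claim that $j_\pm$ are bijective on generators is incorrect. For $i\geq 3$ the coordinates $x_i$ range over all intersection points of $\widetilde{a}_i$ with $\vf(\widetilde{a}_{\sigma(i)})$ in $A$, and by the spiralling of $\vf$ near $\partial S$ there are many such points beyond $c_i,c_i'$ (and permutations $\sigma\neq\mathrm{id}$ contribute as well). So $j_\pm$ are not chain isomorphisms. The paper instead observes that, apart from the two cancelling annuli above, every contributing multisection has its nontrivial component either entirely in $S\setminus A$ (a strip on $L,\vf(L)$) or entirely in $A$; this yields a tensor decomposition
\[
\widehat{CFK}_\pm \;\cong\; CF(\vf(L),L)\ \otimes\ \widehat{CFK}\bigl(S,\{\widetilde{a}_3,\ldots,\widetilde{a}_{2g}\},\{\vf(\widetilde{a}_3),\ldots,\vf(\widetilde{a}_{2g})\},z\bigr)_{\text{bottom}}.
\]
The second factor is the bottom Alexander degree of a fibred knot, hence has one-dimensional homology generated by $[(c_3,\ldots,c_{2g})]$, and K\"unneth finishes the argument. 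You need this global input; the differential on the $A$-side is not identically zero.
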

 \begin{proof}
 The fact that  $\widehat{CFK}_-(S,\widetilde{\bm{a}},\vf(\widetilde{\bm{a}}),z,-g+1)$ is a subcomplex of $\widehat{CFK}(S,\widetilde{\bm{a}},\vf(\widetilde{\bm{a}}), \newline z,-g+1)$ is a direct consequence of the fact that if $u$ is an irreducible component of a holomorphic curve counted in the definition of the Heegaard Floer differential that has a positive end at some $c_i$ or $c'_i$ then $u$ is a trivial strip. To see that  $\widehat{CFK}_+(S,\widetilde{\bm{a}},\vf(\widetilde{\bm{a}}),z,-g+1)$ is also a subcomplex, we observe first that if $\mathbf{x}=(x_1,d,x_3,\ldots,x_{2g})$ is a generator of $\widehat{CFK}_+(S,\widetilde{\bm{a}},\vf(\widetilde{\bm{a}}),z,-g+1)$, then there are two index $1$ holomorphic curves in $\R \times [0,1] \times S$ with
  positive ends at $(\widetilde{x}_1,d)$ and negative ends at $(x_1,c_2)$ or $(x_1,c'_2)$. These two curves project over $S$ to the two shaded annuli in Figure \ref{Figure: The two disks for the tilde curves} (cf. \cite[Lemma 9.4]{OS1} for a description of similar holomorphic curves). Moreover it is not difficult to see from Condition \eqref{Condition: a_2 is the first at the left} that these are the only two holomorphic curves that appear in the
expression for $\partial\mathbf{x}$ that have negative limit not contained in  $\widehat{CFK}_+(S,\widetilde{\bm{a}},\vf(\widetilde{\bm{a}}),z,-g+1)$.
  The identification $c_2 \sim c'_2$ and the fact that we work with $\Z / 2$--coefficients imply then that $\partial \mathbf{x} \in \widehat{CFK}_+(S,\widetilde{\bm{a}},\vf(\widetilde{\bm{a}}),z,-g+1)$. 
  
  \begin{figure} [ht!] 
  \begin{center}
   \includegraphics[scale = 1]{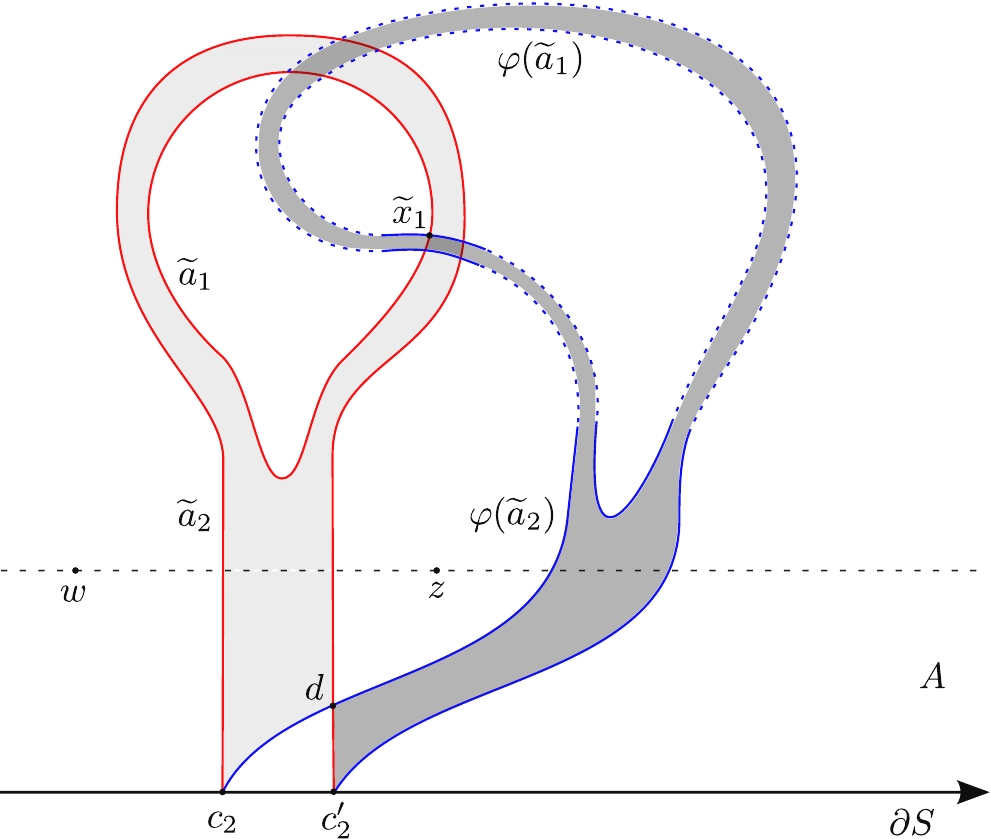}
  \end{center}
  \caption{The projections to $S$ of the two index $1$ holomorphic curves from $(x_1,d)$ to $(x_1,c_2)$ or $(x_1,c'_2)$. For simplicity we avoided to draw the curves $\widetilde{a}_i$ and $\vf(\widetilde{a}_i)$ for $i>2$.}
 \label{Figure: The two disks for the tilde curves}
 \end{figure}
  
A part from the pair of canceling holomorphic curves described above, the projection to $S$ of the other holomorphic curves that appear in the definition of the differential $\partial$ of $\widehat{CFK}(S,\widetilde{\bm{a}},\vf(\widetilde{\bm{a}}),z,-g+1)$ do not cross $\partial A$. Then we have then a splitting of the differential of  $\widehat{CFK}(S,\widetilde{\bm{a}},\vf(\widetilde{\bm{a}}),z,-g+1)$ as $\partial = \partial_0 + \partial_1$, where 
  \begin{itemize}
  \item $\partial_0$ is defined counting pseudo-holomorphic multisections of the form $u_0 \sqcup u'_0$ where $u_0$ is an index $1$ pseudo-holomorphic section counted in the definition of the differential of $CF(\vf(\widetilde{a}_1),\widetilde{a}_1)$ and $u'_0$ is a $2g-1$-tuple of trivial sections over the components of the generators contained in $A$;
  \item $\partial_1$ is defined counting pseudo-holomorphic multisections of the form $u_1 \sqcup u'_1$ where $u_1$ is an index $1$ degree $2g-1$ pseudo-holomorphic multisection that projects to $A$ and $u'_1$ is a trivial section over a point in $\widetilde{a}_1 \cap \vf(\widetilde{a}_1)$. 
 \end{itemize}
  
Since the point in $\widetilde{a}_2 \cap \vf(\widetilde{a}_2)$ cannot indteract with anything else (besides the two cancelling curves described above), the splitting of the differential gives isomorphisms of chain complexes
\begin{align*}
& \widehat{CFK}_\pm(S,\widetilde{\bm{a}},\vf(\widetilde{\bm{a}}),z,-g+1) \cong \\
& CF(\vf(L), L) \otimes \widehat{CFK}(S, \{\widetilde{a}_3, \ldots, \widetilde{a}_{2g} \},  \{\vf(\widetilde{a}_3), \ldots, \vf(\widetilde{a}_{2g}) \}, z, 1-g).
\end{align*}

Now, observe that the $\widehat{CFK}(S, \{\widetilde{a}_3, \ldots, \widetilde{a}_{2g} \},  \{\vf(\widetilde{a}_3), \ldots, \vf(\widetilde{a}_{2g}) \}, z, 1-g)$ is isomorphic to the knot Floer complex of a fibred knot in the bottom Alexander degree, and therefore its homology is generated by the class of $(c_3, \ldots, c_{2g})$. Then the chain maps $j_\pm$ induce isomorphisms in homology by  K\"unneth's formula.

\end{proof}

We define a map 
$$\widetilde{\iota} \colon \widehat{CFK}(S, \widetilde{\bm{a}}, \vf(\widetilde{\bm{a}}), z, 1-g) \to CF^\sharp(\vf)$$
by counting embedded holomorphic multisections of $E_\iota^\vf \to B_\iota$ with boundary on the Lagrangian boundary condition $Q_{\widetilde{\bm{a}}}$ obtained by parallel transport of $\widetilde{\bm{a}}$ and which are asymptotic to a generator $\mathbf{x}$ of $\widehat{CFK}(S, \widetilde{\bm{a}}, \vf(\widetilde{\bm{a}}), z, 1-g)$ at the boundary puncture and to the multiorbit $\gamma e^{2g-1}$ for a generator $\gamma$ of $CF^\sharp(\vf)$
at the interior puncture. The reason why $\widetilde{\iota}$ is a chain map is that it is defined in essentially the same way of $\Phi^{\sharp}$, and therefore the arguments given in Section \ref{Subsection: Phi is filtered} apply also here. Let
$$\widetilde{\iota}_\pm \colon \widehat{CFK}_\pm(S, \widetilde{\bm{a}}, \vf(\widetilde{\bm{a}}), z, 1-g) \to CF^\sharp(\vf)$$
be the restrictions of $\widetilde{\iota}$. 
\begin{Lemma}\label{tilde or not tilde: iota}
The diagram 
\begin{equation} 
  \begin{tikzpicture}[->,auto,node distance=1.5cm,>=latex',baseline=(current  bounding  box.center)]
  \matrix (m) [matrix of nodes, row sep=1.5cm,column sep=1cm] 
  {$\widehat{CFK}_-(S, \widetilde{\bm{a}}, \vf(\widetilde{\bm{a}}), z, 1-g)$ & $CF^{\sharp}(\vf)$\\
  $CF(\vf(L),L)$ & \\};
  \path[->]
   (m-1-1) edge node[above] {$\widetilde{\iota}_-$} (m-1-2);
  \path[right hook-latex]
   (m-2-1) edge node[left] {$j_-$} (m-1-1);
  \path[->]
   (m-2-1) edge node[below,yshift=1ex,xshift=5ex]{$\iota$} (m-1-2);
  \end{tikzpicture}
 \end{equation}
commutes.
\end{Lemma}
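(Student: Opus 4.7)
The plan is to establish the diagram's commutativity by a direct moduli space comparison. Since both $\widetilde{\iota}_- \circ j_-$ and $\iota$ are $\Z/2\Z$-linear maps between chain complexes generated by intersection points, it suffices to show $\widetilde{\iota}_-(j_-(x)) = \iota(x)$ for each generator $x \in \vf(L)\cap L$. Recall that $j_-(x) = (x, c_2, c_3, \ldots, c_{2g})$ has one component in the interior of $S$ and $2g-1$ components on $\partial S$, while $\widetilde{\iota}_-(j_-(x))$ is defined by counting embedded index-zero multisections $u$ of $E_\iota^\vf \to B_\iota$ with boundary on $Q_{\widetilde{\bm a}}$, asymptotic to $j_-(x)$ at the boundary puncture $1$ and to $\gamma e^{2g-1}$ at the interior puncture $0$, for $\gamma \in \mathcal{P} \setminus \{e\}$.

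The central step is a structural lemma: every such multisection $u$ decomposes as the disjoint union of an irreducible degree-one component $u_0$, with boundary on $Q_L \subset Q_{\widetilde{\bm a}}$, asymptotic to $x$ at the boundary puncture and to $\gamma$ at the interior puncture; and $2g-1$ trivial sections $u_2, \ldots, u_{2g}$, each projecting constantly to the fibre point $c_i$, with boundary on the corresponding component of $Q_{\widetilde{\bm a}}$ and asymptote $e$ at the interior puncture. This is the analogue in the fibration $(E_\iota^\vf, \pi_\iota)$ of the trivial-strip lemma for $W_+$ recalled before the definition of $CF'(S, \bm a, \vf(\bm a))$. The proof is local in the standard neighborhood $A$ of $\partial S$ from Figure \ref{fig:trivial region}: the arcs $\widetilde{a}_i$ with $i \geq 2$ are disjoint from $L$ and from each other, and any connected component of $u$ with a positive end at $c_i$ (or $c_i'$) must remain in a neighborhood of $c_i$ by exactness of $\Omega$ combined with positivity of intersection with $\{w\} \times B_\iota$. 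The Morse--Bott description of the orbits of $\widehat{R}$ through the region near $\partial T_\vf$, together with the constraint that the total multiplicity of $e$ equals $2g-1$ and $\gamma \neq e$, then forces each such component to be a trivial section projecting to $c_i$ and asymptotic to the elliptic orbit $e$.

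Given the structural lemma, the assignment $u \mapsto u_0$ is a bijection between the index-zero moduli space $\mathcal{M}_0(j_-(x), \gamma e^{2g-1})$ defining $\widetilde{\iota}_-(j_-(x))$ and the index-zero moduli space $\mathcal{M}_0(x, \gamma)$ defining $\iota(x)$: Fredholm indices add under disjoint union, and the trivial components $u_i$ are automatically regular with Fredholm index zero for a generic admissible almost complex structure, so $\mathrm{ind}(u) = \mathrm{ind}(u_0)$. The inverse adjoins to each section counted by $\iota(x)$ the unique trivial section over each $c_i$. Equality of modulo-$2$ counts yields $\widetilde{\iota}_-(j_-(x)) = \iota(x)$, proving commutativity. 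I expect the main obstacle to be the structural lemma itself: the interior end of $B_\iota$ is cylindrical rather than strip-like, so the original trivial-strip argument must be transferred carefully; nevertheless, the key inputs (Morse--Bott list of short orbits, vanishing of $n_w$, positivity of intersection, and the form of the diagram in $A$) all remain available.
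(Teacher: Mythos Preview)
Your approach is essentially the same as the paper's: the paper's one-line proof cites Lemma~6.2.3 of \cite{CGH3} to assert that any irreducible component of a multisection of $(E_\iota^\vf,\pi_\iota)$ with a positive end at some $c_i$ is a holomorphic section with negative end at $e$, which is exactly your structural lemma. One minor imprecision: these components are not literally ``trivial sections projecting constantly to $c_i$'', since $c_i \in \partial S$ and the orbit $e$ do not coincide after the Morse--Bott perturbation; they are small-energy degree-one sections from $c_i$ to $e$, and the relevant input from \cite{CGH3} is that their count (mod~$2$) is~$1$, which is what makes your bijection and the product-of-counts argument go through.
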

\begin{proof}
As in Lemma 6.2.3 of \cite{CGH3}, if $u$ is an irreducible component of a multisection of $(E_{\iota}^{\vf},\pi_{\iota})$ with positive end to some $c_i$, then $u$ is a holomorphic section with negative end at $e$.
\end{proof}
We define also a map $\widetilde{\lambda} \colon CF^\sharp(\vf) \to CF^\sharp(\vf \circ \tau_L^{-1})$ by counting holomorphic multisections of degree $2g$ of $E_\lambda^\vf \to B_\lambda$ which are asymptotic to $\gamma_+e^{2g-1}$ and $\gamma_-e^{2g-1}$ for 
$\gamma_+$ and $\gamma_-$ generators of $CF^\sharp(\vf)$ and $CF^\sharp(\vf \circ \tau_L^{-1})$ respectively.
\begin{Lemma} \label{tilde or not tilde: lambda} $\widetilde{\lambda}=\lambda$.
\end{Lemma}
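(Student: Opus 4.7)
The plan is to show that the moduli space defining $\widetilde{\lambda}$ is in natural bijection with the moduli space defining $\lambda$, by peeling off trivial cylinders over $e$. Fix generators $\gamma_+$ of $CF^\sharp(\vf)$ and $\gamma_-$ of $CF^\sharp(\vf\circ\tau_L^{-1})$, both distinct from $e$. Let $u\colon (\dot F, j)\to \R\times\widehat{T}_\vf$ (restricted to $E_\lambda^\vf$) be a $J$-holomorphic multisection of degree $2g$ asymptotic to $\gamma_+ e^{2g-1}$ at $\infty$ and to $\gamma_- e^{2g-1}$ at $0$, of the type counted by $\widetilde{\lambda}$. Decompose $\dot F=\bigsqcup_i \dot F_i$ into connected components.

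First, I would argue that any component $\dot F_i$ whose positive puncture is asymptotic to a (possibly multiply covered) end at $e$ must be a cover of the trivial cylinder over $e$. This is exactly the topological argument from the proof of Lemma~\ref{Alexander is a filtration}: since $\partial T_\vf$ is a negative Morse-Bott torus, a non-trivial end at $e$ on the positive side would need to approach $e$ with nonzero winding number around the longitude induced by $\partial T_\vf$, which is impossible for a map with values in $\widehat T_\vf$. Counting multiplicities, the components that are covers of the trivial cylinder over $e$ account for a total positive multiplicity at $e$ equal to $2g-1$; by conservation of the total multiplicity (the negative ends also contribute $2g-1$ copies of $e$), these trivial components exhaust \emph{all} appearances of $e$ on both sides. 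Therefore the remaining part of $u$ is a single honest section $v$ of $E_\lambda^\vf\to B_\lambda$ asymptotic to $\gamma_+$ at $\infty$ and to $\gamma_-$ at $0$, i.e.\ precisely an element of the moduli space counted by $\lambda$.

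Conversely, given a section $v$ counted by $\lambda(\gamma_+)$, the disjoint union $v\sqcup \bigsqcup^{2g-1} (\text{trivial cylinder over }e)$ is a $J$-holomorphic multisection of degree $2g$ of the required asymptotic type. The trivial cylinders over $e$ are regular and have Fredholm index zero (since $e$ is non-degenerate after the Morse-Bott perturbation), so this correspondence preserves the (Fredholm and ECH-type) index; in particular it identifies the index-$0$ components of the two moduli spaces. This gives a bijection between the sets counted by $\widetilde{\lambda}(\gamma_+)$ and by $\lambda(\gamma_+)$, and hence the two maps agree coefficient by coefficient.

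The only step that requires some care is the first one, namely confirming that the Morse-Bott argument of Lemma~\ref{Alexander is a filtration} applies unchanged here despite the presence of the Lefschetz critical point of $E_\lambda^\vf$: this is fine because the critical point lies in the interior of the total space and is not related to the boundary Morse-Bott torus, so the topological obstruction against nontrivial positive ends at $e$ is entirely local to a neighbourhood of $\partial T_\vf$ and is unaffected.
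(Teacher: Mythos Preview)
Your proof is correct and follows essentially the same approach as the paper: the paper's one-line proof simply cites Lemma~5.3.2 of \cite{CGH1} (the same ingredient underlying Lemma~\ref{Alexander is a filtration}) to conclude that any multisection counted by $\widetilde{\lambda}$ splits as a genuine section from $\gamma_+$ to $\gamma_-$ together with $2g-1$ trivial cylinders over $e$. Your version unpacks this into the explicit bijection of moduli spaces and adds the (correct) observation that the Lefschetz singularity is irrelevant to the local argument near $\partial T_\vf$, which is a reasonable point to make explicit.
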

\begin{proof}
By lemma 5.3.2 of \cite{CGH1}, the holomorphic curves used to define $\widetilde{\lambda}$  consist of the union of a holomorphic section of $E_\lambda^\vf \to B_\lambda$ between $\gamma_+$ and $\gamma_-$ with $2g-1$ copies of the trivial cylinder over $e$.
\end{proof}
With obvious modifications to Lemma \ref{lambda composed iota is trivial} one can define a chain homotopy 
$$\widetilde{\mathcal H} \colon \widehat{CFK}(S, \widetilde{\bm{a}}, \vf(\widetilde{\bm{a}}), z, 1-g) \to CF^\sharp(\vf \circ \tau_L^{-1})$$ 
between $\widetilde{\lambda} \circ \widetilde{\iota}$ and the zero map. Let $\widetilde{\mathcal H}_\pm$ denote the restriction of $\widetilde{\mathcal H}$ to the subcomplexes  $\widehat{CFK}_{\pm}(S, \widetilde{\bm{a}}, \vf(\widetilde{\bm{a}}), z, 1-g)$. The proof of the next lemma is the same as the proof of Lemma \ref{tilde or not tilde: iota}.
\begin{Lemma}\label{tilde or not tilde: H}
The diagram
\begin{equation} 
  \begin{tikzpicture}[->,auto,node distance=1.5cm,>=latex',baseline=(current  bounding  box.center)]
  \matrix (m) [matrix of nodes, row sep=1.5cm,column sep=1cm] 
  {$\widehat{CFK}_-(S, \widetilde{\bm{a}}, \vf(\widetilde{\bm{a}}), z, 1-g)$ & $CF^{\sharp}(\vf\circ \tau_L^{-1})$\\
  $CF(\vf(L),L)$ & \\};
  \path[->]
   (m-1-1) edge node[above] {$\widetilde{\mathcal H}_-$} (m-1-2);
  \path[right hook-latex]
   (m-2-1) edge node[left] {$j_-$} (m-1-1);
  \path[->]
   (m-2-1) edge node[below,yshift=1ex,xshift=5ex]{${\mathcal H}$} (m-1-2);
  \end{tikzpicture}
 \end{equation}
commutes.
\end{Lemma}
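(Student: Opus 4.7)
The plan is to mimic the proof of the previous lemma (\emph{tilde or not tilde: iota}) essentially verbatim, replacing the fibration $(E_\iota^\vf,\pi_\iota)$ by the one-parameter family $(E_t^{\mathcal H},\pi_t^{\mathcal H})$ used to define $\widetilde{\mathcal H}$ in the analogue of Lemma \emph{(lambda composed iota is trivial)} for the enlarged diagram. Specifically, $\widetilde{\mathcal H}_-(j_-(x))$ is computed by counting pairs $(t_0,u)$ where $t_0\in(0,1)$ and $u$ is an index $-1$ degree-$2g$ pseudoholomorphic multisection of $(E_{t_0}^{\mathcal H},\pi_{t_0}^{\mathcal H})$ with boundary on the Lagrangian boundary condition $Q_{\widetilde{\bm a},t_0}^{\mathcal H}$ obtained by parallel transport of $\widetilde{\bm a}$, asymptotic at the boundary puncture to $j_-(x)=(x,c_2,c_3,\ldots,c_{2g})$ and at the interior puncture to a multiorbit of the form $\gamma_- e^{2g-1}$ with $\gamma_-\neq e$.

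First I would decompose any such $u$ into its irreducible components. By the analogue of Lemma 6.2.3 of \cite{CGH3}, any irreducible component of $u$ with a positive end at $c_i$ or $c_i'$ for $i\ge 2$ must be a trivial section over that chord, with the unique admissible negative asymptotic being the elliptic orbit $e$ at $\partial T_\vf$. This follows, as in the cited lemma, from the positivity of intersection with the sections $\mathbb R\times \{t\}\times \partial \widehat S$ combined with the fact that no periodic orbit other than $e$ crosses the region near $\partial S$ where $c_i$ and $c_i'$ live. Consequently, $u$ splits as a disjoint union $u=u_1\sqcup u_2\sqcup\cdots\sqcup u_{2g}$, where $u_i$ for $i\ge 2$ is the trivial section over $c_i$ (or $c_i'$) contributing $e$ at the interior puncture, and $u_1$ is an irreducible section of $(E_{t_0}^{\mathcal H},\pi_{t_0}^{\mathcal H})$ restricted to the $\widetilde a_1=L$ component of the Lagrangian boundary condition, asymptotic to $x$ and to $\gamma_-$.

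The moduli space of such $u_1$, fibered over $t_0\in(0,1)$, is by definition the moduli space counted by $\mathcal H(x)$. The trivial factors $u_2,\ldots,u_{2g}$ contribute the factor $e^{2g-1}$ that is stripped off in the identification $PFC^\sharp_1\cong CF^\sharp$ of Remark \ref{Remark: PFH_1 = HF}. After identifying $c_i\sim c_i'$ as in the definition of the quotient complex, we obtain $\widetilde{\mathcal H}_-(j_-(x))=\mathcal H(x)$, proving the commutativity of the triangle.

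The main obstacle I anticipate is verifying that the degeneration/parametric compactness arguments used to define $\widetilde{\mathcal H}$ are compatible with the splitting above: one has to rule out bubbling of the trivial components in the parametric family and check the gluing/transversality for multisections of the disconnected form $u_1\sqcup(\text{trivial sections})$. These are by now standard in the framework of \cite{CGH3}, and essentially the same verifications made for $\widetilde{\iota}$ carry over with only notational changes because the family $(E_t^{\mathcal H},\pi_t^{\mathcal H})$ restricts, near each $c_i$, to the same local model as $(E_\iota^\vf,\pi_\iota)$.
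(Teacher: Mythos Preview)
Your proposal is correct and follows essentially the same approach as the paper: the paper's proof is literally ``The proof is the same as the proof of Lemma~\ref{tilde or not tilde: iota},'' and you have spelled out exactly what that means---that irreducible components with positive end at some $c_i$ are sections with negative end at $e$ (Lemma~6.2.3 of \cite{CGH3}), forcing the multisection to split as a degree-one piece counted by $\mathcal H$ together with $2g-1$ sections to $e$. Your added remarks on parametric compactness are reasonable caution but not needed beyond what is already implicit in the construction of $\widetilde{\mathcal H}$.
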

We can then form a double cone of the maps $\widetilde{\iota}_-$ and $\widetilde{\lambda}$, which is 
$$\widehat{CFK}_-(S, \widetilde{\bm{a}}, \vf(\widetilde{\bm{a}}), z, 1-g) \oplus CF^\sharp(\vf) \oplus CF^\sharp(\vf \circ \tau_L^{-1})$$
with differental
$$\left ( \begin{matrix}
\partial & 0 & 0 \\ \widetilde{\iota}_- & \partial & 0 \\
\widetilde{\mathcal H}_- & \widetilde{\lambda} & \partial 
\end{matrix} \right ).$$
\begin{Lemma}
The double cone of the maps $\widetilde{\iota}_-$ and $\widetilde{\lambda}$ is acyclic.
\end{Lemma}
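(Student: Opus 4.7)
The plan is to reduce this to the acyclicity of the double cone already established in the proof of Theorem \ref{Theorem: Exact triangle in SH}, by comparing the two double cones via the map $j_-$.

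First I would set up the comparison chain map. Let $(C, \partial^C)$ denote the double cone of $\iota$ and $\lambda$ used in the proof of Theorem~\ref{Theorem: Exact triangle in SH}, and let $(\widetilde{C}, \partial^{\widetilde{C}})$ denote the double cone of $\widetilde{\iota}_-$ and $\widetilde{\lambda}$. Consider the map
\[
F = (j_-, \mathrm{id}_{CF^\sharp(\vf)}, \mathrm{id}_{CF^\sharp(\vf \circ \tau_L^{-1})}) \colon C \longrightarrow \widetilde{C}.
\]
The fact that $F$ is a chain map amounts to the relations $\widetilde{\iota}_- \circ j_- = \iota$, $\widetilde{\lambda} = \widetilde{\lambda}$ (trivially) and $\widetilde{\mathcal{H}}_- \circ j_- = \mathcal{H}$, which are exactly Lemmas \ref{tilde or not tilde: iota}, \ref{tilde or not tilde: lambda} and \ref{tilde or not tilde: H}. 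Since $C$ is acyclic by (the proof of) Theorem~\ref{Theorem: Exact triangle in SH}, it will suffice to show that $F$ is a quasi-isomorphism.

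To show that $F$ is a quasi-isomorphism, I would use the obvious three-step filtration by the columns of the double cones, where $\mathrm{Fil}^0$ is the third summand, $\mathrm{Fil}^1$ adds the second summand and $\mathrm{Fil}^2$ is the whole complex. The map $F$ preserves this filtration and on the associated graded pieces it restricts to $j_-$, $\mathrm{id}_{CF^\sharp(\vf)}$ and $\mathrm{id}_{CF^\sharp(\vf \circ \tau_L^{-1})}$. By Lemma~\ref{Lemma: The subcomplexes C and overline C}, $j_-$ is a quasi-isomorphism, so $F$ is a quasi-isomorphism on the associated graded. A standard induction on the filtration, applying the Five Lemma to the long exact sequences of the successive short exact sequences of subcomplexes, then shows that $F$ itself is a quasi-isomorphism.

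Combining the two steps, since $H_*(C, \partial^C) = 0$ and $F$ induces an isomorphism in homology, we conclude $H_*(\widetilde{C}, \partial^{\widetilde{C}}) = 0$, as desired. The argument is essentially formal once the commutativity lemmas are in place; the only non-trivial input, besides the acyclicity of $C$, is the fact that the inclusions $j_\pm$ induce isomorphisms in homology, which is precisely the content of Lemma~\ref{Lemma: The subcomplexes C and overline C}.
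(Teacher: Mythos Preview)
Your argument is correct and is essentially identical to the paper's: both construct the chain map $F=(j_-,\mathrm{id},\mathrm{id})$ from the double cone of $(\iota,\lambda)$ into that of $(\widetilde{\iota}_-,\widetilde{\lambda})$ using Lemmas~\ref{tilde or not tilde: iota}, \ref{tilde or not tilde: lambda} and \ref{tilde or not tilde: H}, and then use the column filtration together with Lemma~\ref{Lemma: The subcomplexes C and overline C} to conclude that $F$ is a quasi-isomorphism. One small slip: the relation you need in the middle is $\widetilde{\lambda}=\lambda$ (from Lemma~\ref{tilde or not tilde: lambda}), not the trivial $\widetilde{\lambda}=\widetilde{\lambda}$.
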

\begin{proof}
By Lemmas \ref{tilde or not tilde: iota}, \ref{tilde or not tilde: lambda} and \ref{tilde or not tilde: H} the double cone of $\iota$ and $\lambda$ defined in Section \ref{Subsection: The exact triangle in symplectic homology} is a subcomplex of the double cone of $\widetilde{\iota}_-$ and $\widetilde{\lambda}$. Double cones are naurally filtered complexes because the differential is a lower triangular matrix. Moreover the inclusion induces an isomorphisms on the homology of the associated graded complexes by Lemma \ref{Lemma: The subcomplexes C and overline C}. This implies that the inclusion induces an 
isomorphisms between the homologies of the double cones by a standard algebraic trick. The double cone of $\iota$ and $\lambda$ is acyclic, and therefore the double cone of $\widetilde{\iota}_-$ and $\widetilde{\lambda}$ is also acyclic.
\end{proof}
\subsubsection{The first square}
Let now $(E_{\Upsilon},\pi_{\Upsilon})$ be the trivial symplectic fibration with basis $B_{\Upsilon} = \D \setminus \{1,-i,-1\}$ and fiber $S$. Endow $(E_{\Upsilon},\pi_{\Upsilon})$ with the Lagrangian boundary condition $Q_{\Upsilon}$ given by the symplectic parallel transport over $(0,\pi)$, $(\pi,\frac{3\pi}{2})$ and, respectively, $(\frac{3\pi}{2},2\pi)$ of copies of $\widetilde{\bm{a}}$, $\vf(\widetilde{\bm{a}})$ and, respectively, $\vf(\bm{a'})$.
 \begin{figure} [ht!] 
  \begin{center}
   \includegraphics[scale = 1]{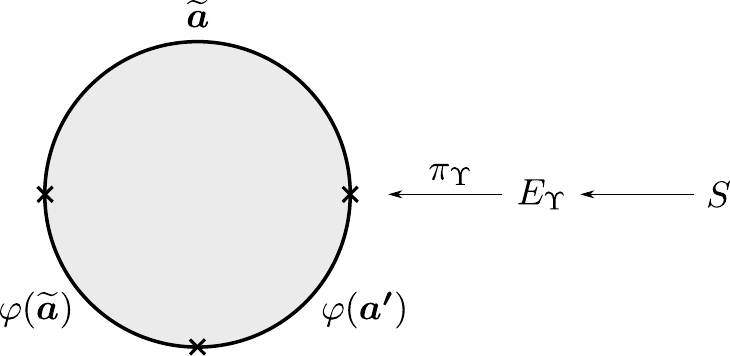}
  \end{center}
  \caption{The fibration $(E_{\Upsilon},\pi_{\Upsilon})$ with its Lagrangian boundary condition.}
  \label{Figure: The fibration Xi}
\end{figure}

Let
\[\Upsilon \colon \widehat{CFK}(S,\widetilde{\bm{a}},\vf(\bm{a'}),z,-g+1) \longrightarrow  \widehat{CFK}(S,\widetilde{\bm{a}},\vf(\widetilde{\bm{a}}),z,-g+1)\]
be the chain map defined on the generators $\mathbf{x}$ by
 \[  \Upsilon(\mathbf{x}) =  \sum_{\mathbf{y}}\#_2\mathcal{M}_0^{\Upsilon}(\mathbf{x},\mathbf{y}) \mathbf{y} \]
 where the sum is taken over all the generators $\mathbf{y}$ of $\widehat{CFK}(S,\widetilde{\bm{a}},\vf(\widetilde{\bm{a}}),z, -g+1)$ and $\mathcal{M}_0^{\Upsilon}(\mathbf{x},\mathbf{y})$ is the moduli space of index $0$, embedded degree $2g$ holomorphic multisections $u$ of $(E_{\Upsilon},\pi_{\Upsilon})$ with Lagrangian boundary condition $Q_{\Upsilon}$, asymptotic to $\mathbf{x}$ at $1$, to $\vf(\bm{\Theta}_{a',\widetilde{a}})$ at $-i$ and to $\mathbf{y}$ at $-1$ and such that $\Imm(u) \cap (B_{\Upsilon} \times \{z \}) = \emptyset$.
We denote by $\Upsilon_\pm \colon  \widehat{CFK}(S, \widetilde{\bm{a}}, \vf(\bm{a}'), z, -g+1) \to \widehat{CFK}_\pm (S, \widetilde{\bm{a}}, \vf(\widetilde{\bm{a}}) ,z, -g+1)$ the components of $\Upsilon$.

\begin{Lemma} \label{The map Upsilon}
In homology $\Upsilon_-$ induces an isomorphism and $\Upsilon_+$ the trivia map.
\end{Lemma}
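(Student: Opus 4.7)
My plan is to adapt the low-energy decomposition used in the proofs of Theorem \ref{Theorem: Exact triangle in SH} and Proposition \ref{Prop: Exact triangle in HFK}, which follows Seidel's scheme from \cite[Subsection 3.3]{Se5}. Choosing the perturbations so that, for each $i>1$, $\widetilde{a}_i$ lies in an arbitrarily small neighbourhood of $a'_i$, I can split $\Upsilon = \Upsilon^{(0)} + \Upsilon^{(1)}$, where $\Upsilon^{(0)}$ counts only those multisections of $(E_\Upsilon,\pi_\Upsilon)$ of energy below a threshold $\delta > 0$ and $\Upsilon^{(1)}$ collects contributions of energy at least $2\delta$. By \cite[Lemma 2.31]{Se5} the induced maps on homology agree, so it suffices to compute $\Upsilon^{(0)}_-$ and $\Upsilon^{(0)}_+$.

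A low-energy multisection splits combinatorially into one degree-one section with boundary on $\widetilde{a}_1 = L$, $\vf(\widetilde{a}_1)=\vf(L)$ and $\vf(a'_{\sigma(1)})$, together with $2g-1$ small fishtail strips near $\partial S$ (one for each $i>1$), where $\sigma$ is the permutation matching the arcs in the source generator. For $i>1$, each fishtail has vertices at $x_i \in \widetilde{a}_i \cap \vf(a'_{\sigma(i)})$, $\vf(\Theta_{a'_{\sigma(i)}, \widetilde{a}_{\sigma(i)}})$, and $y_i \in \widetilde{a}_i \cap \vf(\widetilde{a}_{\sigma(i)})$, and the analysis in Section \ref{Subsection: The surgery triangle} shows that these fishtails exist uniquely in the small-area regime and only connect the natural boundary identifications $c_i \leftrightarrow c_i$, $c'_i \leftrightarrow c'_i$. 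In particular, the interior intersection $d \in \widetilde{a}_2 \cap \vf(\widetilde{a}_2)$ cannot appear in the image, because any section terminating at $d$ sweeps out a region whose area is bounded below independently of the perturbation, as visible from Condition \eqref{Condition: a_2 is the first at the left} and Figure \ref{Figure: The two disks for the tilde curves}. This yields $\Upsilon^{(0)}_+ = 0$ immediately.

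To analyse $\Upsilon^{(0)}_-$ I will read it as a triangle count for the Heegaard triple $(S;\widetilde{\bm{a}},\vf(\widetilde{\bm{a}}),\vf(\bm{a}'))$, with the second input fixed to $\vf(\bm{\Theta}_{a',\widetilde{a}})$. The boundary components of the image generator are already determined by those of the source via the identifications above, so only the first-component degree-one section carries new information. On homology, its count implements multiplication by the cycle representative of the continuation element realising the Hamiltonian isotopy $\bm{a}' \rightsquigarrow \widetilde{\bm{a}}$ for the indices $i>1$, combined with the identification $\widetilde{a}_1 = L$ in the first component. This reproduces the standard sutured-decomposition isomorphism $\widehat{HFK}(\overline{Y_0},\overline{K_0};-g+1) \cong HF(\vf(L), L)$ alluded to in the introduction, and composing with the quasi-isomorphism $j_-$ of Lemma \ref{Lemma: The subcomplexes C and overline C} shows that $\Upsilon_-$ induces an isomorphism in homology.

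The main obstacle will be the rigorous identification of the first-component count with this isomorphism. Because the arc $a'_1$ and the loop $L = \widetilde{a}_1$ are topologically different, the relevant low-energy sections with boundary on $L$, $\vf(L)$ and $\vf(a'_{\sigma(1)})$ are not thin fishtails but honest holomorphic triangles, whose existence, uniqueness, and regularity need to be checked by direct inspection of the diagram, or by a careful neck-stretching argument along $L$ combined with the associativity of the triangle product. Once this last step is carried out, both claims of the lemma follow at once from the algebraic structure of the low-energy decomposition.
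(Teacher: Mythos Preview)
Your approach is genuinely different from the paper's, and the obstacle you flag at the end is real and, as stated, unresolved. The paper does not attempt any low-energy analysis of $\Upsilon$. Instead it passes to the doubled surface $\overline{\Sigma}$, identifies $(\overline{\Sigma},\widetilde{\bm{\alpha}},\bm{\beta}')$ and $(\overline{\Sigma},\widetilde{\bm{\alpha}},\widetilde{\bm{\beta}})$ as Heegaard diagrams for $\overline{Y}_0$ and $\overline{Y}_0\#(S^2\times S^1)$ respectively, and recognises the triple $(\overline{\Sigma},\widetilde{\bm{\alpha}},\bm{\beta}',\widetilde{\bm{\beta}})$ as the triple diagram for the $2$--handle cobordism attached along an unknot unlinked from $\overline{K}_0$. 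This reduces $\Upsilon$ to the standard cobordism map $F(\mathbf{x})=\mathbf{x}\otimes\theta_{bot}$, and then a short twisted-coefficients argument identifies $\widehat{HFK}_-$ with $\widehat{HFK}(\overline{Y}_0,\overline{K}_0,1-g)\otimes\langle\theta_{bot}\rangle$. Both claims of the lemma follow at once from $F(\mathbf{x})=\mathbf{x}\otimes\theta_{bot}$.

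Your energy-filtration scheme has a structural problem beyond the one you mention. Seidel's Lemma 2.31 needs the lowest-order contribution to have energy that can be made arbitrarily small relative to everything else. Here the degree-one component of any contributing multisection has boundary on $L$, $\vf(L)$ and $\vf(a'_1)$, three curves that are \emph{not} close to one another and do not move under the perturbation sending $\widetilde{a}_i\to a'_i$ for $i>1$. Consequently every multisection in $\mathcal{M}_0^{\Upsilon}$ has energy bounded below by a fixed positive constant independent of the perturbation, so you cannot choose $\delta$ small; and if you instead set $\delta$ just above the minimal first-component triangle area, there is no reason the differentials on source and target, or the remaining $\Upsilon$-contributions, lie above $2\delta$. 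The same issue undermines your argument for $\Upsilon^{(0)}_+=0$: the point $d$ lies in $A$, and triangles reaching it compete on the same energy scale as the ``honest'' first-component triangles, so there is no a priori separation. In short, the low-energy filtration that worked for $i$, $l$, $\iota$ and $\lambda$ (where the relevant curves \emph{were} small perturbations of one another) does not apply to $\Upsilon$, and the paper's global cobordism-map argument is what circumvents this.
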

\begin{proof}
Let $\overline{\Sigma} = S \cup_{\partial} \overline{S}$ be the double of $S$. We complete $\widetilde{\bm{a}}$, $\vf(\bm{a}')$ and $\vf(\widetilde{\bm{a}})$ in $S$ to collections of curves $\widetilde{\bm{\alpha}}$, $\bm{\beta}'$ and $\widetilde{\bm{\beta}}$ as explained in Subsection \ref{Subsection: Heegaard Floer homology for open books}, with the caveat that $\widetilde{\alpha}_1= \widetilde{a}_1=L$ and $\widetilde{\beta}_1= \vf(\widetilde{a}_1)$, and only the arcs $\widetilde{\alpha}_i$ and $\vf(\widetilde{a}_i)$
for $i=2, \ldots, 2g$ are completed in $\overline{S}$.

$(\overline{\Sigma}, \widetilde{\bm{\alpha}}, \bm{\beta}')$ is a Heegaard diagram for 
$\overline{Y}_0$: in fact we can slide $\widetilde{\alpha}_2$ over $\widetilde{\alpha}_1$ and isotope the resulting curve so that it intersects $\beta_1'$ in only one point, and intersects no other $\beta'$-curve. Then we can destabilise the diagram and remove those two curves.
In the resulting diagram $\widetilde{\beta}_2'$ becomes isotopic to $\vf(L)$. One can  check that this is a Heegaard diagram for $Y_0$ compatible with a broken fibration over $S^1$ with one critical point and vanishing cycle $L$ (cf. Lekili \cite{Lek} for a similar construction).

$(\overline{\Sigma}, \widetilde{\bm{\alpha}}, \widetilde{\bm{\beta}})$ is a Heegaard diagram for $\overline{Y}_0 \# (S^2 \times S^1)$, where the connected sum is performed away from the knot $\overline{K}_0$. In fact one can handleslide $\widetilde{\alpha}_2$ over $\widetilde{\alpha}_1$ and $\widetilde{\beta}_2$ over $\widetilde{\beta}_1$ and, by condition (3) of Definition \ref{Definition: Basis of arcs compatible with a H curve}, the resulting curves are both isotopic to copies of $L$ in $\overline{S}$. They give a copy of $S^2 \times S^1$ and, after removing it, we remain with the same Heegaard diagram we obtained above after the stabilisation. Finally, $(\overline{\Sigma}, \bm{\beta}', \widetilde{\bm{\beta}})$ is a Heegaard diagram for $(S^2 \times S^1)^{\# (2g-1)}$.

Then $(\overline{\Sigma}, \widetilde{\bm{\alpha}}, \bm{\beta}', \widetilde{\bm{\beta}})$ is the triple Heegaard diagram for a cobordism from $\overline{Y}_0$ to $\overline{Y}_0 \# (S^2 \times S^1)$ obtained by a single $2$-handle addition because $\beta_i'$ is isotopic to $\widetilde{\beta}_i$ for $i=2, \ldots, 2g$. Moreover, the knot to which the handle is attached is an unknot which is geometrically unlinked with $\overline{K}_0$. In fact, the longitude is $\widetilde{\beta}_1$, which, in the Heegaard diagram for $\overline{Y}_0$ obtained after destabilising $(\overline{\Sigma}, \widetilde{\bm{\alpha}}, \bm{\beta}')$, is isotopic to $\beta_2'$, which bounds in $\overline{Y}_0$ a disc disjoint from $\overline{K}_0$.

The same arguments which prove Theorem \ref{Theorem: HF on a page} also show that the map $\Upsilon$ coincides with the map $F \colon \widehat{HFK}(\overline{Y}_0, \overline{K}_0, 1-g) \to  \widehat{HFK}(\overline{Y}_0 \# (S^2 \times S^1), \overline{K}_0, 1-g) \cong \widehat{HFK}(\overline{Y}_0, \overline{K}_0, 1-g) \otimes \widehat{HF}(S^2 \times S^1)$ defined by the doubly pointed triple Heegaard diagram $(\overline{\Sigma}, \widetilde{\bm{\alpha}}, \bm{\beta}', \overline{\bm{\beta}}, w,z)$. The group $\widehat{HF}(S^2 \times S^1)$ is generated by homogeneous elements $\theta_{top}$ and
$\theta_{bot}$, and and a direct computation (on a simpler triple Heegaard diagram for the same cobordism, using the invariance of triangle maps) shows that $F(\mathbf{x})=\mathbf{x} \otimes \theta_{bot}$.

To prove the lemma it remains only to identify $\widehat{HFK}_-(S, \widetilde{\bm{a}}, \vf(\widetilde{\bm{a}}), z, 1-g)$ with $\widehat{HFK}(\overline{Y}_0, \overline{K}_0, 1-g)
\otimes \langle \theta_{bot} \rangle$. For that we use Heegaard Floer homology with twisted coefficients. We recall that $\widehat{\underline{HF}}(S^2 \times S^1) \cong \Z /2\Z$, the universal coefficient map $\widehat{\underline{HF}}(S^2 \times S^1) \to \widehat{HF}(S^2 \times S^1)$ is injective and its image is $\langle \theta_{bot} \rangle$. Then, if $\Z/2\Z[t, t^{-1}] \cong \Z/2\Z[H^1(S^2 \times S^1)]$ is endowed with the $H^1(\overline{Y}_0 \# (S^2 \times S^1))$--module structure induced by the map $H^1(\overline{Y}_0 \# (S^2 \times S^1)) \to H^1(S^2 \times S^1)$, K\"unneth's formula gives that
$$\underline{\widehat{HFK}}(\overline{Y}_0 \# (S^2 \times S^1)), \overline{K}_0, 1-g; \Z/2\Z[t, t^{-1}] ) \cong \widehat{HFK}(\overline{Y}_0, \overline{K}_0, 1-g) \otimes \underline{HF}(S^2 \times S^1)$$
and the image of the universal coefficient map 
$$\underline{\widehat{HFK}}(\overline{Y}_0 \# (S^2 \times S^1)), \overline{K}_0, 1-g; \Z/2\Z[t, t^{-1}] ) \to \widehat{HFK}(\overline{Y}_0 \# (S^2 \times S^1)), \overline{K}_0, 1-g)$$
is $\widehat{HFK}(\overline{Y}_0, \overline{K}_0, 1-g) \otimes \langle \theta_{bot} \rangle$.
Now it is easy to see that for this twisted coefficient system, all holomorphic curves contributing to the differential of  $\underline{\widehat{CFK}}(S, \widetilde{\bm{a}},  \vf(\widetilde{\bm{a}}), z, 1-g; \Z/2\Z[t, t^-1])$ are counted with coefficient $1$, except for the two curves in Figure \ref{Figure: The two disks for the tilde curves}, one of which is counted with coefficient $1$ and the other with coefficient $t$. Then $\underline{\widehat{HFK}}(S, \widetilde{\bm{a}},  \vf(\widetilde{\bm{a}}), z, 1-g; \Z/2\Z[t, t^-1]) \cong HFK_-\widehat{HFK}(S, \widetilde{\bm{a}},  \vf(\widetilde{\bm{a}}), z, 1-g)$. This concludes the proof.
\end{proof}

\begin{Lemma}\label{Lemma: First square}
There exists a chain homotopy 
$$\mathcal{R} \colon \widehat{CFK}(S,\widetilde{\bm{a}},\vf(\bm{a'});-g+1) \longrightarrow CF^{\sharp}(\vf)$$ 
between $\widetilde{\iota} \circ \Upsilon$ and $\Phi^\sharp \circ i$ that makes the following diagram commute in homology:
\begin{equation} \label{Diagram: First main square}
\begin{tikzpicture}[->,auto,node distance=1.5cm,>=latex',baseline=(current  bounding  box.center)]
\matrix (m) [matrix of nodes, row sep=1cm,column sep=2cm]
{ $\widehat{CFK}(S,\widetilde{\bm{a}},\vf(\bm{a'}), z, -g+1)$ & $\widehat{CFK}(S,\bm{a'},\vf(\bm{a'}), z, -g+1)$  \\
  $\widehat{CFK}(S, \widetilde{\bm{a}},\vf(\widetilde{\bm{a}}), z, -g+1)$ & $CF^{\sharp}(\vf)$ \\ };
 \path[->]
  (m-1-1) edge node[right] {$\Upsilon$} (m-2-1)
          edge node[above] {$i$} (m-1-2)
          edge[dashed] node[above] {$\mathcal{R}$} (m-2-2);
 \path[->]
  (m-1-2) edge node[right] {$\Phi^\sharp$} (m-2-2);
 \path[->]
  (m-2-1) edge node[below] {$\widetilde{\iota}$} (m-2-2);
\end{tikzpicture}
\end{equation}
\end{Lemma}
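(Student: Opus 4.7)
The plan is to define $\mathcal{R}$ as a parametric Gromov count interpolating between the two compositions, following the strategy used in Lemma \ref{lambda composed iota is trivial} and Lemma \ref{Lemma: l circ i chain homotopic to 0}, and in Seidel's work on exact triangles \cite[Section 3]{Se5}. Both $\widetilde{\iota}\circ\Upsilon$ and $\Phi^\sharp\circ i$ can be expressed as counts of index-$0$ holomorphic multisections of nodal symplectic fibrations obtained by gluing two smooth pieces along a boundary node. The underlying smooth model in both cases is a fibration over a disk with two boundary punctures (asymptotic to $\mathbf{x}$ and to a perturbation tuple) and one interior puncture (asymptotic to a Reeb multi-orbit of the form $\gamma e^{2g-1}$), with monodromy $\vf$ concentrated at the interior puncture. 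Thus the two nodal configurations live as boundary points of a common moduli space of smooth symplectic fibrations, and can be joined by an explicit smooth path.

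More precisely, I would build a one-parameter family $(B^\mathcal{R}_t, E^\mathcal{R}_t, \pi^\mathcal{R}_t, Q^\mathcal{R}_t)_{t\in(-1,1)}$ of punctured disks carrying symplectic fibrations with monodromy $\vf$ around the interior puncture, equipped with Lagrangian boundary conditions, arranged so that as $t\to -1$ the family degenerates via strip-pinching to $B_\Upsilon\sqcup_{-1=+1}B_\iota$ with the Lagrangian data defining $\Upsilon$ and $\widetilde{\iota}$, and as $t\to +1$ it degenerates to $B_i\sqcup_{-1=+1}B_+$ with the data defining $i$ and $\Phi^\sharp$. Concretely, the family is constructed by sliding the boundary puncture that carries the perturbation tuple along $\partial\mathbb{D}$ along a prescribed path, and simultaneously performing a Hamiltonian isotopy of the Lagrangian boundary condition interpolating between $\widetilde{\bm{a}}$-type arcs (at $t\to -1$) and $\bm{a}'$-type arcs (at $t\to +1$). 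When the moving puncture sweeps across the monodromy cut joining the interior puncture to $\partial\mathbb{D}$, its asymptotic tuple is reinterpreted from $\bm{\Theta}_{\bm{a}',\widetilde{\bm{a}}}$ to $\vf(\bm{\Theta}_{\bm{a}',\widetilde{\bm{a}}})$, reconciling the different perturbation tuples that appear in the definitions of $i$ and $\Upsilon$.

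Given such a family, define
\[
\mathcal{R}(\mathbf{x}) \;=\; \sum_\gamma \#_2\, \mathcal{M}^\mathcal{R}(\mathbf{x},\gamma e^{2g-1})\,\gamma,
\]
where $\mathcal{M}^\mathcal{R}(\mathbf{x},\gamma e^{2g-1})$ is the parametric moduli space of pairs $(t,u)$ with $u$ an embedded degree-$2g$ holomorphic multisection of $(E^\mathcal{R}_t,\pi^\mathcal{R}_t)$ of Fredholm index $-1$, with boundary on $Q^\mathcal{R}_t$, the prescribed asymptotics at the boundary and interior punctures, and image disjoint from $B^\mathcal{R}_t\times\{z\}$. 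The filtration argument from Theorem \ref{Theorem: Phi is filtered} ensures that $\mathcal{R}$ respects the Alexander grading and therefore lands in the summand generated by multi-orbits of the form $\gamma e^{2g-1}$, i.e.\ in $CF^\sharp(\vf)$. By standard SFT compactness, $\overline{\mathcal{M}}^\mathcal{R}$ is a compact $1$-manifold whose boundary consists of the two endpoint degenerations (whose $\#_2$-count recovers $\widetilde{\iota}\circ\Upsilon$ and $\Phi^\sharp\circ i$ respectively), together with interior breakings into two index-$0$ curves (whose $\#_2$-count recovers $\partial\circ\mathcal{R}+\mathcal{R}\circ\partial$). The vanishing of the total $\Z/2$-count of the boundary yields the chain homotopy equation $\widetilde{\iota}\circ\Upsilon+\Phi^\sharp\circ i=\partial\circ\mathcal{R}+\mathcal{R}\circ\partial$, establishing the commutativity of \eqref{Diagram: First main square} up to chain homotopy.

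The main technical obstacle lies in the coherent construction of the interpolating family $(B^\mathcal{R}_t, E^\mathcal{R}_t, Q^\mathcal{R}_t)$. One must simultaneously coordinate the position of the monodromy cut, the trajectory of the moving boundary puncture, and the Hamiltonian isotopy of the Lagrangian boundary condition so that both endpoint degenerations recover the full data of the composite maps $\widetilde{\iota}\circ\Upsilon$ and $\Phi^\sharp\circ i$, including all the Morse--Bott and small Hamiltonian perturbations required near $\partial S$, while the parametric moduli space remains transversely cut out and no spurious boundary contributions arise from the moving puncture crossing the monodromy cut or approaching the base point $z$.
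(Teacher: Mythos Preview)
Your overall strategy is exactly the one the paper uses: build a one-parameter family of symplectic fibrations over a disk with two boundary punctures and one interior puncture (carrying the monodromy $\wf$), count index $-1$ degree-$2g$ multisections, and identify the endpoint degenerations with $\widetilde{\iota}\circ\Upsilon$ and $\Phi^\sharp\circ i$. The filtration argument, the appeal to SFT compactness, and the boundary analysis are all as in the paper.

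However, your specific construction of the interpolating family has a genuine gap. You propose to slide the boundary $\Theta$-puncture along $\partial\mathbb D$ while simultaneously performing a Hamiltonian isotopy of the Lagrangian boundary condition from $\widetilde{\bm a}$-type to $\bm a'$-type. This cannot work: the first curve $\widetilde a_1 = L$ is a \emph{closed} curve, while $a_1' = \tau_L^{-1}(a_1)$ is a properly embedded \emph{arc}. They are not even isotopic, let alone Hamiltonian isotopic, so there is no continuous family of Lagrangian boundary conditions joining $\widetilde{\bm a}$ to $\bm a'$. The very feature that makes $(S,\widetilde{\bm a},\vf(\bm a'))$ a diagram for $\overline{Y_0}$ rather than $\overline Y$ is that $\widetilde a_1$ is closed, and this obstructs the interpolation you suggest.

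The paper avoids this entirely by a different parametrization. The two boundary punctures (at $1$, asymptotic to $\mathbf x$, and at $i$, asymptotic to $\bm\Theta_{\bm a',\widetilde{\bm a}}$) are held \emph{fixed}, and so is the Lagrangian boundary condition: $\widetilde{\bm a}$ on the short arc from $1$ to $i$, and $\bm a'$ (becoming $\vf(\bm a')$ across the monodromy cut) on the long arc from $i$ back to $1$. What moves is the \emph{interior} puncture $\mathfrak w_t = t e^{i\pi/4}$ carrying the monodromy $\wf$, which travels along the diameter. As $t\to +1$ the interior puncture approaches the boundary through the $\widetilde{\bm a}$-arc, so the bubbled-off disk inherits the $\widetilde{\bm a}$ boundary condition and yields $(E_\Upsilon,\pi_\Upsilon)\sqcup (E_\iota^\vf,\pi_\iota)$, i.e.\ $\widetilde\iota\circ\Upsilon$. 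As $t\to -1$ it approaches through the $\bm a'$-arc, and the bubble is $(W_+,\pi_{B_+})$ with Lagrangian $L^+_{\bm a'}$, giving $(E_i,\pi_i)\sqcup(W_+,\pi_{B_+})$, i.e.\ $\Phi^\sharp\circ i$. The apparent change of boundary condition on the bubble is thus an artefact of which fixed boundary arc the interior puncture exits through, not of any isotopy of Lagrangians; the reinterpretation of the $\Theta$-asymptotics you noticed is simply a change of trivialization across the monodromy cut. With this parametrization your last paragraph's ``technical obstacle'' evaporates.
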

\begin{proof}
Let $\{\mathfrak{w}_t\}_{t \in (-1,1)}$ be the path in  $\D \setminus \{1,i\}$ defined by
$\mathfrak{w}_t = t e^{i \pi/4}$
for $t \in (-1,1)$. Consider the one-parameter family of punctured Riemann surfaces $B_t^{\mathcal{R}} \coloneqq \D \setminus \{1, i, \mathfrak{w}_t\}$  and let $\pi_t^{\mathcal{R}} \colon E_t^{\mathcal{R}} \to B_t^{\mathcal{R}} $ be the symplectic fibration over $B_t^{\mathcal{R}}$ with fiber $\widehat{S}$ and monodromy $\widehat{\vf}$ around  $\mathfrak{w}_t$. We endow these  fibrations with Lagrangian boundary conditions $Q_t^{\mathcal{R}}$ induced by the one represented in the picture in the middle of Figure \ref{Figure: The homotopy R}.
The homotopy $\mathcal{R}$ is defined by counting pairs $(t,u)$ where $t \in (-1,1)$ and $u$ is a degree $2g$ multisection of $\pi_t^{\mathcal{R}} \colon E_t^{\mathcal{R}} \to B_t^{\mathcal{R}} $ of index $-1$ with boundary on $Q_t^{\mathcal{R}}$  which are asymptotic to 
 a generator of $\widehat{CFK}(S,\widetilde{\bm{a}},\vf(\bm{a'});-g+1)$ at $1$, to a generator of $CF^\sharp(\vf)$ at $\mathfrak{w}_t$  and to $\bm{\Theta}_{\bm{a'},\widetilde{\bm{a}}}$ or $\vf(\bm{\Theta}_{\bm{a'},\widetilde{\bm{a}}})$ at $i$, depending on the choice of a trivialization of the fibration that determines the action of $\wf$ (in the picture in the middle of Figure \ref{Figure: The homotopy R} we represented, in the usual way, the trivialization that induces limits at $\bm{\Theta}_{\bm{a'},\widetilde{\bm{a}}}$).

\begin{figure} [ht!] 
  \begin{center}
  \includegraphics[scale = 1]{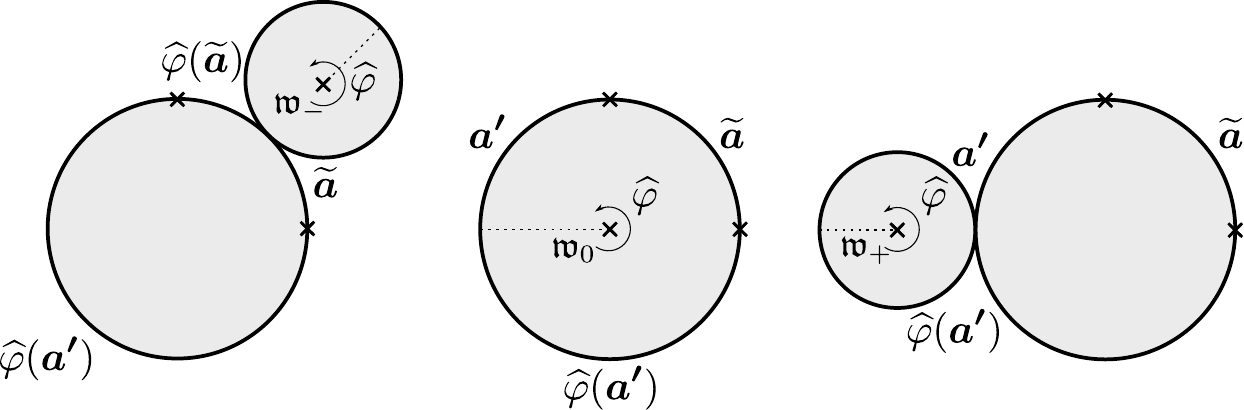}
  \end{center}
  \caption{The homotopy from $B^{\mathcal{R}}_+$ (left) to $B^{\mathcal{R}}_-$ (right), with $B^{\mathcal{R}}_0$ pictured in the middle.  Each boundary component is labeled with the set of curves in the corresponding Lagrangian boundary condition $Q^{\mathcal{R}}_t$.}
  \label{Figure: The homotopy R}
\end{figure}

The family $B_t^{\mathcal{R}}$ can be compactified by adding two-level buildings
\[B_+^{\mathcal{R}} = ( \D \setminus \{ 1, i, e^{i \pi/4} \} ) \sqcup  (\D \setminus \{0, -e^{i \pi/4} \} )\] 
as $t \to +1$ and 
\[B_+^{\mathcal{R}} = (\D \setminus \{1, i, - e^{i \pi /4} \}) \sqcup (\D \setminus \{ e^{i \pi/4}, 0\})\] 
as $t \to -1$. These degenerations correspond, by pull back, to degenerations $\pi_\pm^{\mathcal{R}} \colon E_\pm^{\mathcal{R}} \to B_\pm^{\mathcal{R}}$ of $\pi_t^{\mathcal{R}} \colon E_t^{\mathcal{R}} \to B_t^{\mathcal{R}}$. Thus, after reparametrisation, $(E_+^{\mathcal{R}}, \pi_+^{\mathcal{R}}) = (E_{\Upsilon},\pi_{\Upsilon}) \sqcup (E^{\vf}_{\iota},\pi_{\iota})$ and the corresponding count of multisections gives $\widetilde{\iota} \circ \Upsilon$. Similarly, $(E_-^{\mathcal{R}}, \pi_-^{\mathcal{R}}) = (E_i,\pi_i) \sqcup (W_+,\pi_{B_+})$ and the corresponding count of multisections gives $\Phi^{\sharp} \circ i$.
\end{proof}

\subsubsection{The second square} \label{Subsubsection: Second square} 
\begin{Lemma}
 There exists a chain homotopy 
 \[\mathcal{S} \colon \widehat{CFK}(S,\bm{a'},\vf(\bm{a'}), z, -g+1) \longrightarrow CF^{\sharp}(\vf\circ \tau_L^{-1}) \]
from $\widetilde{\lambda} \circ \Phi^{\sharp}$ to $\Phi^{\sharp}\circ l$ that makes the following diagram commute in homology: 
\begin{equation} \label{Diagram: Second main square}
\begin{tikzpicture}[->,auto,node distance=1.5cm,>=latex',baseline=(current  bounding  box.center)]
\matrix (m) [matrix of nodes, row sep=1cm,column sep=2cm]
{ $\widehat{CFK}(S,\bm{a'},\vf(\bm{a'});-g+1)$ & $\widehat{CFK}(S,\bm{a},\vf(\bm{a'});-g+1)$  \\
  $CF^{\sharp}(\vf)$ & $CF^{\sharp}(\vf\circ \tau_L^{-1}).$ \\ };
 \path[->]
  (m-1-1) edge node[right] {$\Phi^{\sharp}$}   (m-2-1)
          edge node[above] {$l$} (m-1-2)
          edge[dashed] node[above] {$\mathcal{S}$} (m-2-2);
 \path[->]
  (m-1-2) edge node[right] {$\Phi^{\sharp}$} (m-2-2);
 \path[->]
  (m-2-1) edge node[below] {$\widetilde{\lambda}$} (m-2-2);
\end{tikzpicture}
\end{equation}
\end{Lemma}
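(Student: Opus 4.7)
The plan is to mimic the construction of $\mathcal{R}$ in Lemma~\ref{Lemma: First square}, with the moving interior puncture replaced by a moving Lefschetz critical point. The key observation is that both $\widetilde{\lambda}\circ\Phi^\sharp$ and $\Phi^\sharp\circ l$ count degree $2g$ holomorphic multisections of a Lefschetz fibration over a base biholomorphic to $B_+\cong \D\setminus\{0,1\}$, with fiber $\widehat{S}$, one boundary puncture at $1$ (for the chord input from $\widehat{CFK}(S,\bm{a'},\vf(\bm{a'});-g+1)$), one interior puncture at $0$ with total monodromy $\wf\circ\tau_L^{-1}$ (for the orbit output in $CF^\sharp(\vf\circ\tau_L^{-1})$), and a single Lefschetz critical point with vanishing cycle $L$; what differs between the two compositions is the location of this critical point.

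Concretely, fix a smooth embedded path $\{\mathfrak{z}_t\}_{t\in(-1,1)}$ in the interior of $B_+$ that starts close to the boundary puncture $1$ as $t\to -1$ and ends close to the interior puncture $0$ as $t\to +1$, and let $\pi_t^{\mathcal{S}}\colon E_t^{\mathcal{S}}\to B_+\setminus\{\mathfrak{z}_t\}$ be the Lefschetz fibration with fiber $\widehat{S}$, monodromy $\wf\circ\tau_L^{-1}$ around the interior puncture at $0$, and a unique critical point over $\mathfrak{z}_t$ with vanishing cycle $L$. Equip each $E_t^{\mathcal{S}}$ with the Lagrangian boundary condition $Q_t^{\mathcal{S}}$ obtained by symplectic parallel transport of $\bm{a'}$ around $\partial B_+$ starting from the $t=1$ side of the strip-like end at $1$; consistency at the other side of the strip-like end follows because the total enclosed monodromy $(\wf\circ\tau_L^{-1})\cdot\tau_L=\wf$ correctly sends $\bm{a'}$ to $\wf(\bm{a'})$. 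Define
\[\mathcal{S}(\mathbf{x})=\sum_{\gamma\ne e}\#_2\bigl\{(t,u)\ \big|\ t\in(-1,1),\ u\in\mathcal{M}_{-1}^{\mathcal{S}}(\mathbf{x},\gamma e^{2g-1},t)\bigr\}\,\gamma,\]
where $\mathcal{M}_{-1}^{\mathcal{S}}(\mathbf{x},\gamma e^{2g-1},t)$ is the moduli space of embedded index $-1$ degree $2g$ $J$-holomorphic multisections of $(E_t^{\mathcal{S}},\pi_t^{\mathcal{S}})$ with boundary on $Q_t^{\mathcal{S}}$, asymptotic to $\mathbf{x}$ at the boundary puncture and to $\gamma e^{2g-1}$ at the interior puncture, with empty intersection with the section $B_+\times\{z\}$.

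The analysis of the compactification of $\{(t,u)\}$ is the heart of the argument. As $t\to -1$, the critical point $\mathfrak{z}_t$ approaches $1$ and the base degenerates into a two-level building whose bubble is biholomorphic to $B_l$ (carrying the critical point, $1$ and the Lagrangian boundary data $\bm{a'}$ vs.\ $\vf(\bm{a'})$ with the jump by $\tau_L$ across the branch cut) and whose main component is $B_+$ with monodromy $\wf\circ\tau_L^{-1}$. The corresponding count of limit configurations is precisely $\Phi^\sharp\circ l$. Symmetrically, as $t\to +1$ the critical point approaches $0$ and the base degenerates into a main component $B_+$ (no critical point, monodromy $\wf$) and a bubble biholomorphic to the base $B_\lambda=\C^*$ of $\widetilde{\lambda}$ (carrying the critical point, with monodromy $\wf$ at $\infty$ and $\wf\circ\tau_L^{-1}$ at $0$). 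The limit count is $\widetilde{\lambda}\circ\Phi^\sharp$. Combining this with the usual bubbling analysis at interior points of $(-1,1)$, which produces the chain homotopy identity $\partial\circ\mathcal{S}+\mathcal{S}\circ\partial=\widetilde{\lambda}\circ\Phi^\sharp+\Phi^\sharp\circ l$, yields the lemma.

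The two main technical obstacles are: matching the Lagrangian boundary condition across the moving branch cut uniformly in $t$ (so that at the two endpoints of the parameter space the boundary conditions literally coincide with those used in the definitions of $l$, $\widetilde{\lambda}$ and $\Phi^\sharp$); and ensuring that, in the degeneration limits, no contribution from cylinders containing the elliptic orbit $e$ in positive asymptotics spoils the sharp truncation, so that the counts agree with $\Phi^\sharp\circ l$ and $\widetilde{\lambda}\circ\Phi^\sharp$ respectively. Both are handled by the same mechanism used in the construction of $\Phi^\sharp$ in Section~\ref{Section: The isomo from HF to ECH} and in the proof of Lemma~\ref{Lemma: First square}, namely the analysis of multisections with positive ends at $e$ via Lemma~5.3.2 of \cite{CGH1} and positivity of intersection with the core disc ${\mathcal D}$.
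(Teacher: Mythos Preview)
Your proposal is correct and follows essentially the same approach as the paper's proof. The only cosmetic difference is that the paper keeps the Lefschetz critical value fixed at the origin and moves the interior puncture $\mathfrak{w}_t$ (with monodromy $\wf\circ\tau_L^{-1}$) along a ray toward it and toward the boundary, whereas you fix the interior puncture and move the critical value; since only the relative configuration of the marked points matters in the Deligne--Mumford degeneration, the two families have the same compactification and yield the same limits $\widetilde{\lambda}\circ\Phi^\sharp$ and $\Phi^\sharp\circ l$.
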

\begin{proof}
 Let $\{\mathfrak{w}_t\}_{t\in (-1,0)}$ be the path in $\D$ defined by $\mathfrak{w}_t = 
te^{i \pi/4}$.
 Consider the one-parameter family of punctured Riemann surfaces $B^{\mathcal{S}}_t = \D\setminus \{1, \mathfrak{w}_t\}$ endowed with the marked point $\mathfrak{z}=0$. Let $\pi_t^{\mathcal{S}} \colon E_t^{\mathcal{S}} \to E_t^{\mathcal{S}}$ be the Lefschetz fibration over $B_t^{\mathcal{S}}$ with regular fiber $\widehat{S}$, one critical point over $\mathfrak{z}$ with vanishing cycle $L$ and monodromy $\widehat{\vf}\circ \tau_L^{-1}$ around $\mathfrak{w}_t$. We endow these Lefschetz fibrations with the Lagrangian boundary conditions $Q_t^{\mathcal{S}}$ induced by the one represented in the picture in the middle of Figure \ref{Figure: The homotopy S}.
 
 The homotopy $\mathcal{S}$ is defined by counting pairs $(t,u)$ where $t \in (-1,0)$ and $u$ is a degree $2g$ multisection of $\pi_t^{\mathcal{S}} \colon E_t^{\mathcal{S}} \to B_t^{\mathcal{S}}$ of index $-1$, with boundary on $Q_t^{\mathcal{S}}$,  and which are asymptotic to a generator of  $\widehat{CFK}(S,\bm{a'},\vf(\bm{a'});-g+1)$ at $1$ and to a generator of $CF^\sharp(\vf \circ \tau_L^{-1})$ at $\mathfrak{w}_t$.

 \begin{figure} [ht!] 
  \begin{center}
  \includegraphics[scale = 1]{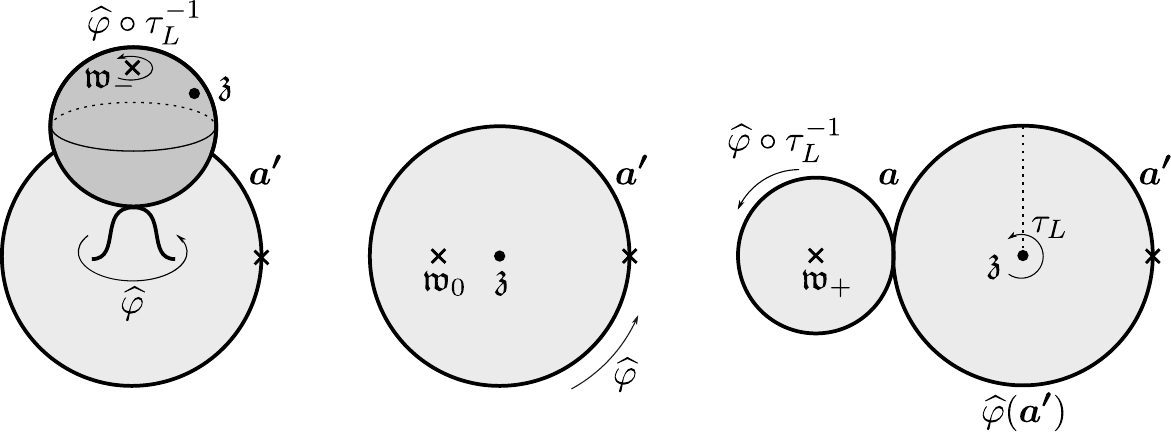}
  \end{center}
  \caption{The homotopy from $B^{\mathcal{S}}_{0}$ (left) to $B^{\mathcal{S}}_-$ (right), with $B^{\mathcal{S}}_{-\frac{1}{2}}$ pictured in the middle.  Each boundary component is labeled with the set of curves in the corresponding Lagrangian boundary condition $Q^{\mathcal{S}}_t$.}
  \label{Figure: The homotopy S}
\end{figure}
 
 The family $B_t^{\mathcal{S}}$ can be compactified by adding two-level buildings
 \[B_0^{\mathcal{S}} =  B_{\iota} \sqcup (S^2 \setminus \{\mathrm{N},\mathrm{S}\})\]
 with $\mathfrak{z} \in S^2 \setminus \{\mathrm{N},\mathrm{S} \}$ as $t \to 0$, and (after reparametrisation)
 \[B_-^{\mathcal{S}} = B_l \sqcup B_{\iota}\] 
 with $\mathfrak{z} \in B_l$, as $t \to -1$. Again, these degenerations correspond, by pull-back, to degenerations $\pi_0^{\mathcal{S}} \colon E_0^{\mathcal{S}} \to B_0^{\mathcal{S}}$ and $\pi_-^{\mathcal{S}} \colon E_-^{\mathcal{S}} \to B_-^{\mathcal{S}}$ of $\pi_t^{\mathcal{S}} \colon E_t^{\mathcal{S}} \to B_t^{\mathcal{S}}$. It is not difficult to see that $(E_-^{\mathcal{S}}, \pi_-^{\mathcal{S}}) \cong (W_+,\pi_{B_+}) \sqcup (E^{\vf}_{\lambda},\pi_{\lambda})$ and the corresponding count of multisections gives $\widetilde{\lambda} \circ \Phi^{\sharp}$. Similarly, $(E_0^{\mathcal{S}}, \pi_0^{\mathcal{S}}) = (E_l,\pi_l) \sqcup (W_+,\pi_{B_+})$ and the corresponding count of multisections gives $\Phi^{\sharp} \circ l$.
\end{proof}

\subsubsection{The homotopy of homotopies}
\begin{Lemma}\label{lemma: homotopy of homotopies}
 There exists a homotopy 
 \[\mathcal{T} \colon \widehat{CFK}(S,\widetilde{\bm{a}},\vf(\bm{a'});-g+1) \longrightarrow CF^{\sharp}(\vf\circ \tau_L^{-1}) \]  
 from $(\mathcal{H}'\circ\Upsilon + \widetilde{\lambda} \circ \mathcal{R} )$ to $(\Phi^{\sharp} \circ \mathcal{H} + \mathcal{S} \circ i)$. 
\end{Lemma}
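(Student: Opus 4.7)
The construction of $\mathcal{T}$ will follow the standard "homotopy of homotopies" pattern in Floer theory: we construct a $2$-parameter family of Lefschetz fibrations over discs, equipped with appropriate Lagrangian boundary conditions, and count index $-2$ holomorphic multisections. The four sides of the parameter space should correspond precisely to the four compositions we wish to relate, after identifying $\mathcal{H}'$ (resp.\ $\mathcal{H}$) in the statement with its appropriate variant (typographically, $\widetilde{\mathcal{H}}$ for the left side and $\mathcal{H}'$ of Lemma~\ref{Lemma: l circ i chain homotopic to 0} for the right side, so that all four composites land in $CF^\sharp(\vf \circ \tau_L^{-1})$).

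Concretely, the plan is to consider discs endowed with: a fixed positive boundary puncture at $1$ (input from $\widehat{CFK}(S, \widetilde{\bm{a}}, \vf(\bm{a}'))$), a negative interior puncture (output to $CF^\sharp(\vf \circ \tau_L^{-1})$), one boundary puncture for $\bm{\Theta}_{\bm{a}',\widetilde{\bm{a}}}$ or $\vf(\bm{\Theta}_{\bm{a}',\widetilde{\bm{a}}})$, and one interior critical point with vanishing cycle $L$. The two parameters control the position of the critical point $\mathfrak{z}$ and of either the interior puncture $\mathfrak{w}$ or the boundary puncture. Over this $2$-parameter family one pulls back a Lefschetz fibration with fiber $\widehat{S}$ and equips it with a Lagrangian boundary condition obtained by symplectic parallel transport from $\widetilde{\bm{a}}$ and $\vf(\bm{a}')$, designed to restrict on each of the four boundary edges to the boundary condition defining the corresponding homotopy. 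The map $\mathcal{T}$ is then defined by counting (modulo $2$) pairs $(p, u)$ with $p$ in the interior of the parameter space and $u$ an embedded index $-2$ degree--$2g$ multisection satisfying the asymptotic conditions and having $\Imm(u) \cap (B \times \{z\}) = \emptyset$.

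The identity $\partial \circ \mathcal{T} + \mathcal{T} \circ \partial = (\widetilde{\mathcal{H}} \circ \Upsilon + \widetilde{\lambda} \circ \mathcal{R}) + (\Phi^\sharp \circ \mathcal{H}' + \mathcal{S} \circ i)$ follows by compactifying the $1$-dimensional moduli space of index $-1$ pairs $(p,u)$ and analysing its boundary. Compactness is guaranteed by exactness of $\widehat{\omega}_v$ and the base symplectic forms, exactly as in the constructions of $\mathcal{R}$, $\mathcal{S}$, $\mathcal{H}'$, $\widetilde{\mathcal{H}}$. The boundary decomposes into: strip/cylinder breaking at the asymptotic ends (contributing $\partial \circ \mathcal{T} + \mathcal{T} \circ \partial$), degeneration of $p$ toward one of the four edges of the parameter space (contributing the four composites, through the splitting of the Lefschetz fibration along a neck into two pieces that coincide—after reparametrisation—with the fibrations used in the corresponding two one-parameter constructions, exactly as in Lemmas~\ref{Lemma: First square} and the analogous analysis for $\mathcal{S}$), and corner degenerations where two simultaneous collisions occur and which contribute twice, hence cancel modulo~$2$.

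The main obstacle is the careful identification of each of the four boundary strata with the asserted composite: one must verify that along each edge the Lefschetz fibration degenerates into a nodal configuration $(E_1 \sqcup E_2,\pi_1 \sqcup \pi_2)$, and that the Lagrangian boundary condition and the various asymptotic/marked-point data on the two components match precisely those in the definitions of the two factors. For two edges this reduces, after gluing, to the neck analyses already carried out for $\mathcal{R}$ and $\mathcal{S}$ (the statements and proofs of Lemmas~\ref{Lemma: First square} and the analogue for $\mathcal{S}$); for the remaining two edges one must further combine the neck analysis with the argument that produced $\widetilde{\mathcal{H}}$ from $\mathcal{H}$ (via Lemmas~\ref{tilde or not tilde: iota}--\ref{tilde or not tilde: H}) and with the trivialisation of the relative Gromov invariant over $\overline{E}_L$ with boundary condition $L$ used in Lemma~\ref{Lemma: l circ i chain homotopic to 0} (Example~\ref{Example: G of tau L with condition L is 0}). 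Transversality for the index $-2$ moduli space over the generic parameter, and regularity of the $1$-parameter boundary, are achieved by perturbing the almost complex structure in families in the standard way.
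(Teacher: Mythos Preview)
Your overall strategy is correct and matches the paper's: build a two-parameter family of Lefschetz fibrations over punctured discs with fibre $\widehat{S}$, one critical value with vanishing cycle $L$, one interior output puncture, and Lagrangian boundary conditions obtained by parallel transport of $\widetilde{\bm{a}}$ and $\vf(\bm{a}')$; then count index $-2$ degree-$2g$ multisections. You also correctly diagnose the typos in the statement ($\mathcal{H}'\circ\Upsilon$ should read $\widetilde{\mathcal{H}}\circ\Upsilon$ and $\Phi^\sharp\circ\mathcal{H}$ should read $\Phi^\sharp\circ\mathcal{H}'$).

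Where your picture diverges from the paper is the \emph{shape of the compactified parameter space}. You describe a square whose four edges give the four composites, with corners cancelling mod~$2$. In the paper the parameter space is the open $2$-simplex $\Delta=\{0<t<s<1\}$, with $\mathfrak{z}_s=se^{i\pi/4}$ the critical value and $\mathfrak{w}_t=te^{i\pi/4}$ the interior puncture; its Deligne--Mumford compactification is a \emph{pentagon}, not a square. Four of the five edges yield the four composites $\mathcal{S}\circ i$, $\widetilde{\lambda}\circ\mathcal{R}$, $\widetilde{\mathcal{H}}\circ\Upsilon$ and $\Phi^\sharp\circ\mathcal{H}'$, while the fifth edge (where $\mathfrak{z}_s$ collides with the boundary arc labelled $\widetilde{\bm{a}}$, bubbling off a copy of $\overline{E}_L$ with boundary condition induced by $\widetilde{a}_1=L$) contributes zero by Example~\ref{Example: G of tau L with condition L is 0}. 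You do invoke $\overline{\mathcal{G}}_L(L)=0$, but you fold it into the identification of one of four edges rather than recognising it as a separate boundary stratum. That is the one structural point to correct: the vanishing edge is a codimension-one face of the moduli space in its own right, not part of another face, and without it the count of boundary components does not close up.
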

\begin{proof}
Let $\Delta = \{ (s,t) \in \R^2 : 0 < s <1, 0 < t <s \}$ be an open two-dimensional symplex. 
For $(s, t) \in \Delta$ we choose points $\mathfrak{z}_s = se^{i \pi/4}$ and $\mathfrak{w}_t = te^{i \pi/4}$ and define the two-parameter familty of punctured Riemann surfaces 
\[ B^{\mathcal{T}}_{s,t}= \D \setminus \{1, i, \mathfrak{w}_t\} \]
endowed with the marked point $\mathfrak{z}_s$.

 Let $\pi^{\mathcal T}_{s,t} \colon E_{s,t}^{\mathcal{T}} \to B^{\mathcal{T}}_{s,t}$ be the Lefschetz fibration with fibre $\widehat{S}$, one critical point over $\mathfrak{z}_s$ with fanishing cycle $\tau_L$ and monodromy $\wf \circ \tau_L^{-1}$ around $\mathfrak{w}_t$.
We endow $E_{s,t}^{\mathcal{T}}$ with the Lagrangian boundary condition $Q_{s,t}^{\mathcal{T}}$ induced by the labelling in Figure \ref{Figure: The homotopy T}.

The map ${\mathcal T}$ is defined by counting triples $(s,t,u)$ where $(s,t) \in \Delta$ and
$u$ is a degree $2g$ holomorphic multisection of $\pi_{s,t}^{\mathcal T} \colon E_{s,t}^{\mathcal T} \to B_{s,t}^{\mathcal T}$ of index $-2$, with boundary on $Q_{s,t}^{\mathcal{T}}$, and which are asymptotic to a generator of $\widehat{CFK}(S, \widetilde{\bm{a}}, \vf(\bm{a}'), z, 1-g)$ at $1$, to $\bm{\Theta}_{\bm{a'},\widetilde{\bm{a}}}$ at $i$ and to a generator of $CF^\sharp(\vf \circ \tau_L^{-1})$ at $\mathfrak{w}_t$.
\begin{figure} [ht!] 
  \begin{center}
  \includegraphics[scale = 1.12]{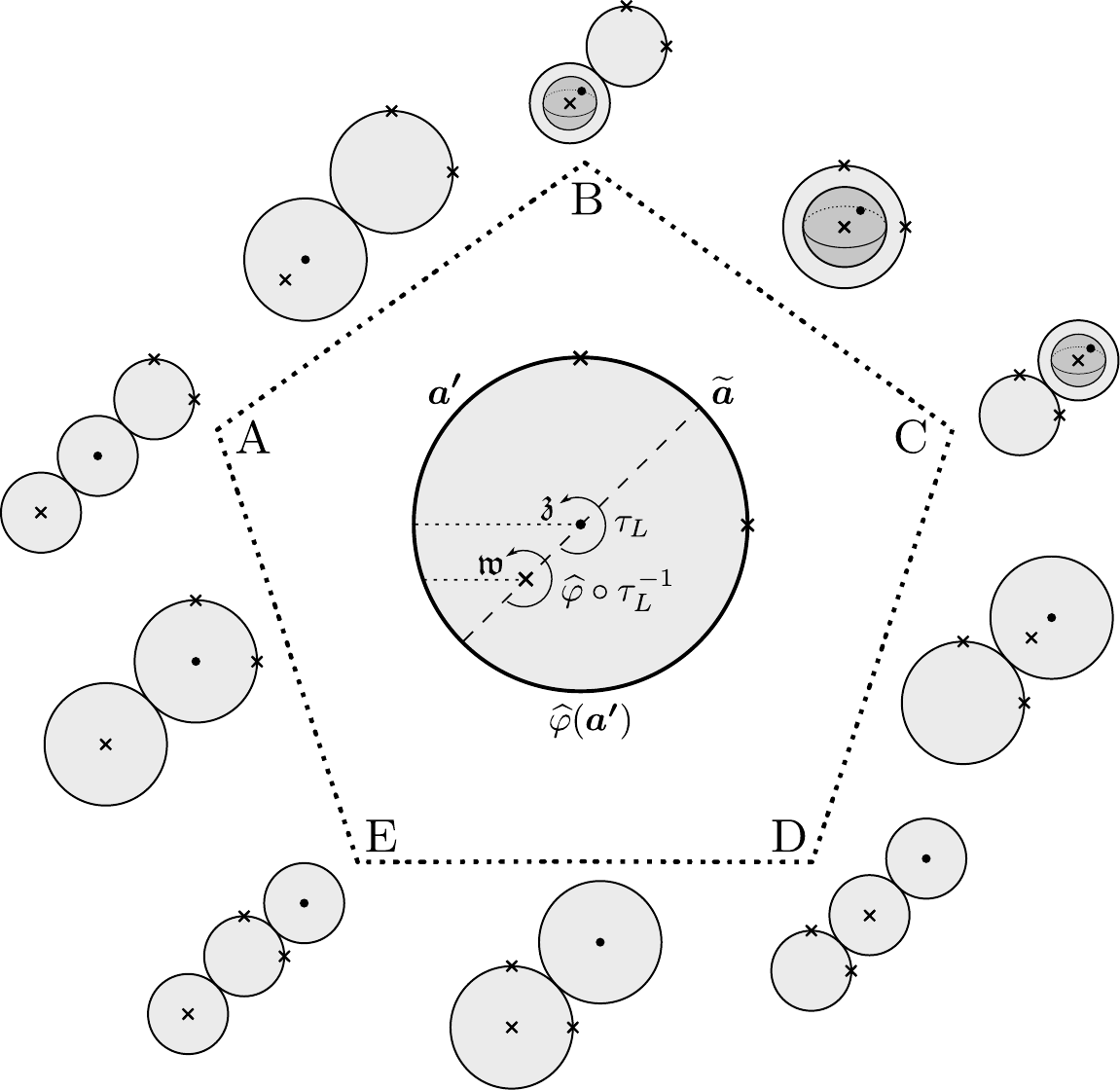}
  \end{center}
  \caption{The compactification of the family $\{ B_{s,t}^{\mathcal{T}} \}_{(s,t) \in \Delta}$. The points $\mathfrak{w}_t$ and $\mathfrak{z}_s$ are free to move over the leaning dashed line.} 
  \label{Figure: The homotopy T}
\end{figure}

The Deligne--Mumford style compactification of the family $\{ B_{s,t}^{\mathcal T} \}_{(s,t) \in \Delta}$ can be represented as a pentagon and the edges and the vertexes of its boundary are associated with the degenerations represented in Figure \ref{Figure: The homotopy T}:
\begin{itemize}
 \item[(AB)] These degenerations occur when $s, t \to -1$. The limit configurations, 
after reparametrisation, are  $B_t^{\mathcal S} \sqcup B_i$ 
and therefore yield the term ${\mathcal S} \circ i$. 
 \item[(BC)] These degenerations occur when $s-t \to 0$. The limit configurations are
$B_\lambda \sqcup B^{\mathcal R}_t$ and therefore they yield the term $\widetilde{\lambda} \circ {\mathcal R}$.
 \item[(CD)] These degenerations occur when $s,t \to 1$. The limit configuration, after reparametrisation, is $B^{\mathcal H}_t \sqcup B_{\Upsilon}$ and therefore yields the term $\widetilde{\mathcal H} \circ \Upsilon$.
 \item[(DE)] These degenerations occur when $s \to 1$ but $t$ remains in $(0,1)$. One of the components of the limit configurations is the base of the fibration of Example \ref{Example: G of tau L with condition L is 0}, and therefore the contribution of these degenerations is zero.
  \item[(EA)] These degenerations occur when $t \to -1$ and $s$ remains in $(0,1)$. The limit configurations, after reparametrisation, are $B_+ \sqcup B^{{\mathcal H}'}_t$ and therefore they yield the term $\Phi^\sharp \circ {\mathcal H}'$.
\end{itemize}
\end{proof}

From Lemma \ref{lemma: homotopy of homotopies}, Lemma \ref{Come mettere a posto le cose} and Lemma \ref{Lemma: Algebraic lemma for the third square} we get the main ingredient for the proof of Theorem \ref{Theorem: Main theorem in introduction}.
\begin{Thm}
 The diagram \eqref{Diagram: Main diagram between HF and HF without maps} commutes.
\end{Thm}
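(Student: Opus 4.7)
I would apply Lemma~\ref{Lemma: Algebraic lemma for the third square} to the double diagram formed by the chain-level versions of the two exact triangles in \eqref{Diagram: Main diagram between HF and HF without maps}. The top row is the surgery triangle of Proposition~\ref{Prop: Exact triangle in HFK} with $f'=i$, $g'=l$ and null-homotopy $H'=\mathcal{H}'$ from Lemma~\ref{Lemma: l circ i chain homotopic to 0}; its iterated cone is acyclic by exactness. After the reduction described below, the bottom row will be
\[\widehat{CFK}_-(S,\widetilde{\bm{a}},\vf(\widetilde{\bm{a}}),z,-g+1) \xrightarrow{\widetilde{\iota}_-} CF^\sharp(\vf) \xrightarrow{\widetilde{\lambda}} CF^\sharp(\vf\circ\tau_L^{-1}),\]
with null-homotopy $\widetilde{\mathcal{H}}_-$; its iterated cone is acyclic by the \emph{double cone} lemma proved just before~\S\ref{Subsubsection: Second square}. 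The vertical maps will be $\Upsilon_-$, $\Phi^\sharp$, $\Phi^\sharp$, equipped with homotopies built from $\mathcal{R}$ of Lemma~\ref{Lemma: First square}, $\mathcal{S}$ of the unnamed lemma in \S\ref{Subsubsection: Second square}, and $\mathcal{T}$ of Lemma~\ref{lemma: homotopy of homotopies}.

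The first step is to account for the fact that $\mathcal{R}$ and $\mathcal{T}$ are attached to $\Upsilon=\Upsilon_++\Upsilon_-$ rather than to $\Upsilon_-$ alone. By Lemma~\ref{The map Upsilon}, $\Upsilon_+$ induces the zero map in homology, and since the complexes involved are free over $\mathbb{Z}/2\mathbb{Z}$, there exists a chain-level primitive $\alpha$ with $\Upsilon_+=\partial\alpha+\alpha\partial$. I then invoke Lemma~\ref{Come mettere a posto le cose} (applied with the lemma's $A_-$ equal to $\widehat{CFK}_+$ and $A_+$ equal to $\widehat{CFK}_-$) to replace $\Upsilon$ by $\Upsilon_-$, producing adjusted homotopies $\widetilde{R}=\mathcal{R}+\widetilde{\iota}_+\circ\alpha$ and $\widetilde{T}=\mathcal{T}+\widetilde{\mathcal{H}}_+\circ\alpha$ that, together with $\mathcal{S}$, satisfy hypotheses~(1)--(4) of Lemma~\ref{Lemma: Algebraic lemma for the third square} for the reduced diagram.

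Lemma~\ref{Lemma: Algebraic lemma for the third square} then yields commutativity in homology of the reduced diagram, with $\widehat{HFK}_-(S,\widetilde{\bm{a}},\vf(\widetilde{\bm{a}}),-g+1)$ in the bottom-left corner and with connecting maps $d$, $d'$ that agree with those of the two exact triangles. To pass to \eqref{Diagram: Main diagram between HF and HF without maps} in its stated form, I post-compose the left column with $(j_-)_*^{-1}$, where $j_-\colon CF(\vf(L),L)\hookrightarrow \widehat{CFK}_-$ is the quasi-isomorphism of Lemma~\ref{Lemma: The subcomplexes C and overline C}. Lemmas~\ref{tilde or not tilde: iota}, \ref{tilde or not tilde: lambda} and~\ref{tilde or not tilde: H} ensure that $j_-$ intertwines $(\widetilde{\iota}_-,\widetilde{\lambda},\widetilde{\mathcal{H}}_-)$ with $(\iota,\lambda,\mathcal{H})$, so the connecting map $\delta$ of Seidel's triangle matches the connecting map produced by Lemma~\ref{Lemma: Algebraic lemma for the third square} under the identification by $j_-$.

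The main obstacle is the bookkeeping in the reduction: extracting the primitive $\alpha$ explicitly enough from the twisted-coefficient computation at the end of the proof of Lemma~\ref{The map Upsilon}, and verifying that the adjusted $\widetilde{R}$ and $\widetilde{T}$ satisfy hypothesis~(4) of Lemma~\ref{Lemma: Algebraic lemma for the third square}. The existence of $\alpha$ is abstract, so this verification is formal but requires care; everything else in the argument is purely algebraic manipulation.
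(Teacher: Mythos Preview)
Your proposal is correct and follows essentially the same route as the paper: assemble the data $(i,l,\mathcal{H}')$, $(\widetilde{\iota},\widetilde{\lambda},\widetilde{\mathcal{H}})$, $(\Upsilon,\Phi^\sharp,\Phi^\sharp)$, $(\mathcal{R},\mathcal{S},\mathcal{T})$ into a diagram of the shape of Lemma~\ref{Lemma: Algebraic lemma for the third square}, apply Lemma~\ref{Come mettere a posto le cose} to strip off the $\Upsilon_+$ component, and then invoke Lemma~\ref{Lemma: Algebraic lemma for the third square}. Your concern about the ``main obstacle'' is unwarranted: the verification of hypothesis~(4) for the adjusted $\widetilde{R},\widetilde{T}$ is precisely the content of the proof of Lemma~\ref{Come mettere a posto le cose} and requires nothing beyond the existence of $\alpha$, which, as you note, is automatic over a field once $(\Upsilon_+)_*=0$.
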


\subsection{The induction}
As recalled in Subsection \ref{Subsection: Heegaard Diagrams}, the Dehn-Lickorish-Humphries theorem states that, up to isotopy, any diffeomorphism $\vf$ of $S$ can be decomposed as
\[\vf = \tau_n \circ \dots \circ \tau_1 \]
for some $n\in \mathbb{N}$, where $\tau_i$ is a positive or negative Dehn twist along some nonseparating curve $L_i$. In this subsection we use Diagram \eqref{Diagram: Main diagram between HF and HF without maps} to finish the proof of Theorem \ref{Theorem: Main theorem in introduction}. We will proceed by induction on $n$.

\subsubsection{Initialization} \label{Subsubsection: Initialization}
The base case for the induction is when $\vf$ is Hamiltonian isotopic to the identity. We construct $\wf$ as the time-one flow of a Hamiltonian function $\widehat{H} \colon \widehat{S} \to \R$. We fix a function $H \colon S \to \R$ satisfying the following properties.
\begin{itemize}
\item In $A$ it depends only on the coordinate $y$, and moreover:
\begin{itemize}
\item[$\circ$] $\frac{\partial H}{\partial y}>0$ in $\{ y \in (2, 3] \}$,
\item[$\circ$] $\{y=0 \} = \partial S$ is a Morse-Bott circle of minima for $H$,
\item[$\circ$] $\frac{\partial H}{\partial y} \le 0$ and $\frac{\partial^2 H}{\partial y^2} \le 0$ in $\{ y \in [0, 2)  \}$,
\item[$\circ$] $\{ y=1 \}$ is a Morse-Bott circle of minima for $\frac{\partial H}{\partial y}$, and
\item[$\circ$] $\frac{\partial H}{\partial y}$ is small in absolute value near $y=0$.
\end{itemize}
\item In $\widehat{S} \setminus A$ it is a Morse function with a unique maximum $e_+$ and 
$2g$ saddles $h_1, \ldots, h_{2g}$, and its differential is small.
\end{itemize}
The conditions on $A$ are chosen so that the Hamiltonian flow of $H$ produces the finger move of Figure \ref{fig:trivial region}. The Function $\widehat{H}$ is obtained by perturbing the Morse-Bott circle of minima at $y=0$ into a minimum $e$ and a saddle $h$.

The chain complex $CF^\sharp(\vf)$ is generated by $e_+, h_1, \ldots, h_2g$ and holomorphic cylinders contributing to the differential of $CF^\sharp(\vf)$ correspond, by Morse-Bott theory, to negative gradient flow lines between generators. This is a fairly elementary instance of Morse-Bott perturbation, where the correspondence between holomorphic cylinders and Morse flow-lines can be worked out explicitly. In fact the projection of a holomorphic cylinder in $\R \times \widehat{T}_\vf$ to $\widehat{S}$ satisfied the Floer equation, and for a suitably small Hamiltonian function (and in absence of $J$-holomorphic spheres, which is the case for $\widehat{S}$) the solutions of the Floer equation are in bijection with the flow lines between critical points. Then the differential in $CF^\sharp(\vf)$ is
\begin{equation}\label{differential for phi=id}
\partial^\sharp e_+=h, \quad \partial^\sharp h = \partial^\sharp h_1 = \ldots = \partial^\sharp h_{2g}=0.
\end{equation}

Next we choose a convenient basis of arcs $a_1, \ldots, a_{2g}$ of $\widehat{S}$. First we extend $A=[0,3] \times S^1$ inside $\widehat{S}$ to an annulus $\widehat{A}= [-1, 3] \times S^1$ so that no critical point of $\widehat{H}$ is contained in $[-1, 0] \times S^1$.  We define the arcs such that 
\begin{itemize}
\item $a_i \cap (\widehat{S} \setminus \widehat{A})$ is the unstable manifold of the critical point $h_i$ for all $i=1, \ldots, 2g$, 
\item in $\widehat{A} \setminus A$ the arcs come close together, 
\item  $a_i \cap A = \{ \theta= \theta_i\}$ for some $\theta_i \in S^1$.
\end{itemize}
We also assume that the distance between the arcs in $A$ is smaller than the size of the finger move and that all intersection points between $\bm{a}$ and $\wf{\bm{a}}$ are contained in $A$ except for $h_1, \ldots, h_{2g}$.  


We denote by $\bm{h}_i$ the equivalence class of $(c_1, \ldots, c_{i-1}, h_i, c_{i+1}, \ldots, c_{2g})$ for $i=1, \ldots, 2g$, which are evidently generators of $\widehat{HFK}(S, \bm{a}, \vf(\bm{a}), z, 1-g)$, and by $\mathbf{H}$ the subspace they generate. We denote also by $\mathbf{C}_0$ the subspace of $\widehat{HFK}(S, \bm{a}, \vf(\bm{a}), z, 1-g)$ generated by all other generators.
\begin{Lemma}\label{convenient splitting}
$\mathbf{H}$ and $\mathbf{C}_0$ are subcomplexes of $\widehat{CFK}(S, \bm{a}, \vf(\bm{a}), z, 1-g)$. The differential on $\mathbf{H}$ is trivial and $\mathbf{C}_0$ is acyclic.
\end{Lemma}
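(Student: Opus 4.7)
The plan is to exploit the small-Hamiltonian structure of $\wf$ and the explicit choice of arcs to make the differential on $\widehat{CFK}(S, \bm{a}, \vf(\bm{a}), z, 1-g)$ combinatorially transparent.

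First I would enumerate the generators. A generator in Alexander grading $1-g$ has exactly one component outside $A$, which by construction must be the self-intersection $h_i \in a_i \cap \wf(a_i)$ for some $i$. The remaining $2g-1$ components lie in $A$ and are determined by a residual permutation $\sigma$ of $\{1,\ldots,2g\} \setminus \{i\}$ together with a point $x_j \in a_j \cap \wf(a_{\sigma(j)}) \cap A$ for each $j \neq i$. A direct analysis of the Hamiltonian flow $\wf(y,\theta) = (y, \theta - \partial H/\partial y)$ on $A$ shows that $a_j \cap \wf(a_j) \cap A = \{c_j, c_j'\}$, while for $j \neq k$ with $|\theta_j - \theta_k|$ small compared to the maximal finger displacement, $a_j \cap \wf(a_k) \cap A$ consists of exactly two points bounding a small embedded bigon in $A$. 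Accordingly, $\mathbf{H}$ collects the generators with $\sigma = \mathrm{id}$ and every $x_j = c_j$, while $\mathbf{C}_0$ collects the generators in which at least one slot uses a finger-move intersection (and hence has $\sigma(j) \neq j$).

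Next I would verify that $\partial \bm{h}_i = 0$, which simultaneously shows that $\mathbf{H}$ is a subcomplex. By the triviality of strips with positive end at a boundary intersection $c_j$ or $c_j'$ recalled at the end of Section~2.2, only the $h_i$-slot can contribute nontrivially to $\partial \bm{h}_i$, and this contribution is a strip in $a_i \cap \wf(a_i)$. Outside $A$ this intersection set is just $\{h_i\}$, so the only possible nontrivial negative end lies in $\{c_i,c_i'\}$; but then the Alexander grading computation gives $\mathcal{A}(\bm{h}_i) - \mathcal{A}(\mathbf{y}) = 1$, which forces $n_z(u) = 1$, so such a strip is not counted in $\widehat{CFK}$. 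Hence $\partial \bm{h}_i = 0$.

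To prove that $\mathbf{C}_0$ is a subcomplex and is acyclic, I would use that every multisection contributing to the differential decomposes, for $\widehat{H}$ sufficiently small, into $2g-1$ trivial strips and a single nontrivial strip concentrated at one slot. Such a nontrivial strip preserves the pair of Lagrangians in its slot, so it cannot convert a finger-move intersection in $a_j \cap \wf(a_{\sigma(j)})$ with $\sigma(j) \neq j$ into a boundary point $c_j \in a_j \cap \wf(a_j)$; this yields $\partial(\mathbf{C}_0) \subseteq \mathbf{C}_0$. The dominant nontrivial contribution at each finger slot is the small bigon strip exchanging the two finger-move intersections, so grouping generators of $\mathbf{C}_0$ by their finger content decomposes the complex, up to higher-energy corrections suppressed by the smallness of $\widehat{H}$, as a tensor product of two-term acyclic complexes of the form $p \to q$, one per non-fixed slot. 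This gives $H_*(\mathbf{C}_0) = 0$. The main obstacle is controlling the higher-energy multisections---those with non-strip components or with several nontrivial slots simultaneously---so that they do not disturb this decomposition; this requires an action/energy argument comparing the critical values of $\widehat{H}$ to the areas of the bigons in $A$, and is analogous to the classical Morse--Floer reduction for small Hamiltonian perturbations of nearly-coincident Lagrangians.
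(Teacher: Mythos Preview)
Your argument for $\partial \bm{h}_i=0$ and hence that $\mathbf{H}$ is a subcomplex with trivial differential is correct and matches the paper's.

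The gap is in your treatment of $\mathbf{C}_0$. The claim that every index-$1$ multisection contributing to the differential decomposes into $2g-1$ trivial strips and a single nontrivial bigon is false in this diagram: the portion of the diagram inside $\widehat{A}$ is nice in the sense of Sarkar--Wang, and index-$1$ curves there include embedded \emph{rectangles}, not just bigons. A rectangle changes the underlying permutation, so your ``slot preservation'' reasoning does not apply, and with it your argument that $\partial(\mathbf{C}_0)\subseteq\mathbf{C}_0$ collapses. The tensor-product picture for acyclicity suffers from the same defect: the small finger bigons are not the only low-energy contributions, since adjacent arcs being close together makes the rectangles small as well. The energy-filtration route you sketch does not separate these.

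The paper handles both points differently. For the subcomplex claim it shows first that no nontrivial component can have a \emph{negative} end at $h_i$, because the projected domain would have to reach $\partial\widehat{S}$; this forces any curve from $\mathbf{C}_0$ to $\mathbf{H}$ to carry a trivial strip over $h_i$ and a multisection entirely in $\widehat{A}$ ending at $c$-points. Then, using the nice-diagram description, it checks that bigons in $\widehat{A}$ must cross $z$ and that rectangles cannot have two diagonally opposite vertices among the $c_j,c_j'$. For acyclicity the paper avoids any direct combinatorial cancellation and instead invokes a dimension count: the diagram represents the fibred knot $B(0,0)^{\#g}\subset(S^2\times S^1)^{\#2g}$, and Ozsv\'ath--Szab\'o's computation together with the K\"unneth formula gives $\dim\widehat{HFK}(\,\cdot\,,1-g)=2g=\dim\mathbf{H}$, forcing $H_*(\mathbf{C}_0)=0$.
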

\begin{proof}
The intersection points $c_j$ and $c_j'$ cannot appear at the positive end of a nontrivial component of a holomorphic curve contributing to $\partial \bm{h}_i$. This implies that any such curve must consist of a nontrivial section with boundary on $a_i$ and $\vf(a_i)$, which however must pass through the point $z$. This shows that $\partial \bm{h}_i=0$, and therefore $\mathbf{H}$ is a subcomplex on which the differential is trivial.

To show that $\mathbf{C}_0$ is also a subcomplex, we observe that no holomorphic curve contributing to the differential in $\widehat{CFK}(S, \bm{a}, \vf(\bm{a}), z, 1-g)$ can have a nontrivial component with a negative end at $h_i$ because the projection of that component to $\widehat{S}$ should cover a region which intersect $\partial \widehat{S}$. Then the only possibility left is the union of a trivial section over $h_i$ with a multisection with negative ends at intersection points of the form $c_j$ or $c_j'$ and whose projection to $\widehat{S}$ is contained in $\widehat{A}$. One can see that such a curve is either the union of trivial sections, or must cross the basepoint $z$: the portion of the diagram in $\widehat{A}$ is nice (in the sense of Sarkar and Wang \cite{combinatorial HF}) and therefore the $J$-holomorphic curves contributing to the differential correspond to empty bigons and rectangles in the diagram. One can readily check that bigons must cross $z$ and rectangles cannot have two diagonally opposed vertices in $\{ c_1, c_1', \ldots, c_{2g}, c_{2g}')$. 

When $g=1$ the diagram $(S, \bm{a}, \vf(\bm{a}), z)$ describes the fibred knot in $(S^2 \times S^1) \#  (S^2 \times S^1)$ which is denoted by $B(0,0)$ in \cite[ Section 9]{OS3} so, in general, it describes $B(0,0)^{\# g} \subset  (S^2 \times S^1)^{\# 2g}$. By  Proposition 9.2 \cite[Proposition 9.2]{OS3} and the Künneth formula for knot Floer homology \cite[Theorem 7.1]{OS3} then $\dim \widehat{HFK}(S, \bm{a}, \vf(\bm{a}), z, 1-g)=2g= \dim \mathbf{H}$, from which we deduce that $\mathbf{C}_0$ is acyclic. 
\end{proof}

Let $\Phi^\sharp_0$ be the ``low energy part'' or $\Phi^\sharp$. The low energy 
$J$-holomorphic multisections contributing to $\Phi^\sharp$ which have $(c_1, \ldots, c_{i-1}, h_i, c_{i+1}, \ldots, c_{2g})$ at the positive end consist the union of the horizontal section over $h_i$ and $2g-1$ sections from $c_i$ to $e$. By construction $h_i$ is a positive intrsection point between $a_i$ and $\wf(a_i)$, and therefore the trivial section over $h_i$ has index zero by Lemma \ref{caldo} and is regular by Lemma 2.27 of \cite{Se5}. The low energy sections from $c_i$ to $e$ are obtained by Morse-Bott perturbation of horizontal sections over $c_i$ of the fibration with monodromy $\vf$.

Then $\Phi^\sharp_0(\mathbf{h}_i)=h_i$, which implies, by Gauss elimination, that the composition 
$$\mathbf{H} \to \widehat{CFK}(\bm{a}, \wf(\bm(a)), z, 1-g) \xrightarrow{\Phi} CF^\sharp \to \langle h_1, \ldots, h_{2g} \rangle$$
is an isomorphism, where the first map is the inclusion and the last map is the projection.  
The first and the last maps induce isomorphisms in homology by Lemma \ref{convenient splitting} and Equation \eqref{differential for phi=id}. This proves that $\Phi^\sharp$ is an isomorphism when $\vf$ is Hamiltonian isotopic to the identity.

\subsubsection{The inductive step}
Now we assume that $\vf= \tau_{L_1}^{s_1} \circ \ldots \circ \tau_{L_n}^{s_n}$ where $L_i$ is 
a nonseparating simple closed curve in $S$ and $s_i \in \{ +1, -1 \}$. Denote $\psi= \tau_{L_1}^{s_1} \circ \ldots \circ \tau_{L_{n-1}}^{s_{n-1}}$. The inductive hypothesis is that
$$\Psi^\sharp_* \colon \widehat{HFK}(S, \bm{a}, \psi(\bm{a}), z, 1-g) \to HF^\sharp (\psi)$$
is an isomorphism. If $s_n=-1$, then $\vf= \psi \circ \tau_L^{-1}$, and therefore we have the commutative diagram of exact sequences

 \begin{equation*}
 \begin{tikzpicture}[->,auto,node distance=1.5cm,>=latex',baseline=(current  bounding  box.center)]
 \matrix (m) [matrix of nodes, row sep=1cm,column sep=1cm] 
 {${HF}(\vf(L),L)$ & $\widehat{HFK}(S, \bm{a}, \psi(\bm{a}), a, 1-g)$ &  $\widehat{HFK}(S, \bm{a}, \psi(\bm{a}), a, 1-g)$ \\
  ${HF}(\vf(L),L)$ & $HF^\sharp(\vf) $ & $HF^\sharp(\psi).$ \\};
 \path[->]
  (m-1-1) edge node[above] {$i_*$} (m-1-2)
          edge node[right] {$(\Upsilon_-)_*$} (m-2-1);
 \path[->]
  (m-1-2) edge node[right] {$\Phi^\sharp_*$} (m-2-2)
          edge node[above] {$l_*$} (m-1-3);
 \path[->]
  (m-1-3) edge node[right] {$\Phi^\sharp_*$}  (m-2-3)
   edge[bend right=25] node[above] {$d$} (m-1-1);
 \path[->]
  (m-2-1) edge node[below] {$\iota_*$} (m-2-2);
 \path[->]
  (m-2-2) edge node[below] {$\lambda_*$} (m-2-3);
 \path[->]
  (m-2-3) edge[bend left=25] node[below] {$\delta$} (m-2-1);
 \end{tikzpicture}
\end{equation*}
The first vertical map in the diagram is an isomorphism by Lemma \ref{The map Upsilon} and the third vertical map is an isomorphism by the inductive hypothesis, and therefore by the five
lemma the middle vertical map is also an isomorphism. If $s_n=+1$, then $\psi= \vf \circ \tau_{L_n}^{-1}$ and the argument is similar.
\section{Applications}

Knot Floer homology is a powerful invariant of knots and it is an interesting question to understand what kind informations one can extract from it. From this point of view Theorem \ref{Theorem: Main theorem in introduction} has interesting consequences.

Let $\vf \colon S \rightarrow S$ be a diffeomorphism and let $\vf_c$ be the \emph{canonical Nielsen--Thurston representative} of the mapping class $[\vf]$. Recall that, by the Thurston's classification of surfaces homeomorphisms, $\vf_c$ may be \emph{pseudo-Anosov, periodic} or \emph{reducible} (see \cite{T}, \cite{FLP} and \cite{CC} for details). When, as in our case, $\partial S \cong S^1 $ it is convenient to see $\vf_c$ as a homeomorphism of a subsurface $S \setminus A$ of $S$, where $A$ is a collar of $\partial S$, such that $\vf_c$ is isotopic relative to $\partial (S \setminus A)$ to $\vf|_{S \setminus A}$.

The canonical representative $\vf_c$ of $[\vf]$ defines a \emph{canonical rotation angle $\vartheta_c \in [0,2\pi)$} that corresponds to the rotation defined by $\vf_c|_{\partial (S\setminus A)}$.
If $\vf|_{\partial S} = id$, the ``difference'' near the boundary between $\vf$ and $\vf_c$ is then encoded by the \emph{fractional Dehn twist coefficient} of $\vf$.
\begin{Def}
 Let $S$ be a surface with boundary $\partial S \cong S^1$ and $\vf \colon S \rightarrow S$ a surface diffeomorphism such that $\vf|_{\partial S} = id$. Fix a collar $A \cong (0,1] \times S^1$ of $\partial S = \{1\} \times S^1$ and perturb $\vf$ by an isotopy relative to $\partial S$ in a way that $\vf|_{(S \setminus A)} = \vf_c$. The fractional Dehn twist coefficient $t_c(\vf)$ of $\vf$ is the winding number of the arc $\vf(\{(1-y,1)|y\in [0,1]\})$.
\end{Def}

Observe in particular that
\begin{equation} \label{Equation: t_c function of theta_c and k_vf}
 2\pi t_c(\vf) = \vartheta_c + 2\pi k_{\vf}
\end{equation}
where $k_{\vf} \in \Z$ is the sum with signs of the full boundary parallel Dehn twists of $\vf$.

For the first application of Theorem \ref{Theorem: Main theorem in introduction} we are interested in fibered knots whose monodromy $\vf$ has canonical representative $\vf_c$ that is irreducible. 

\begin{Thm} \label{Theorem: HFK and the minimal number of fixed points ireducible}
 Let $(K,S,\vf)$ be an open book decomposition of $Y$ with $\vf_c$ irreducible.
 Denote $\mathcal{F}_{min}({[\vf]})$ the minimum number of fixed points that an area-preserving non-degenerate representative of $[\vf]$ may have. Then:
 \begin{equation*}
  \dim\left(\widehat{HFK}(Y,K;-g+1)\right) = \left\{ \begin{array}{ll}
                                                                             \mathcal{F}_{min}({[\vf]}) - 1 & \mbox{if } \vf_c = id \mbox{ and } t_c(\vf) = 0;\\
                                                                             \mathcal{F}_{min}({[\vf]}) + 1 & \mbox{otherwise}.
                                                                         \end{array} \right. 
 \end{equation*} 
\end{Thm}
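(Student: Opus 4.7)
The plan is to combine the isomorphism from Theorem \ref{Theorem: Main theorem in introduction} with a case-by-case dimension count on the generators of $CF^\sharp(\vf)$. Since $\widehat{HFK}(Y,K;-g+1)$ and $\widehat{HFK}(\overline{Y},\overline{K};-g+1)$ have the same dimension over $\Z/2\Z$, the main theorem reduces the claim to computing $\dim HF^\sharp(\vf)$. As $HF^\sharp$ is an invariant of the mapping class of $\vf$ in $MCG(S,\partial S)$, I may replace $\vf$ by any non-degenerate area-preserving representative $\vf'$ of $[\vf]$. The Morse--Bott perturbation of $\partial T_{\vf'}$ contributes exactly the pair $p_e, p_h$, so removing $p_e$ from the generators of $CF^\sharp(\vf')$ yields
\[ \dim CF^\sharp(\vf') = \#\mathrm{Fix}_{\mathrm{int}}(\vf') + 1, \]
and minimizing over $\vf'$ produces the universal upper bound $\dim HF^\sharp(\vf) \leq \mathcal{F}_{min}([\vf])+1$.

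For the matching lower bound I would split into cases according to the Nielsen--Thurston type of $\vf_c$, exhibiting in each case a representative whose Floer differential is as simple as possible. When $\vf_c$ is pseudo-Anosov, $\vf'$ should be taken to be a smooth area-preserving model of the Thurston--Bestvina--Handel standard form, so that the $\mathcal{F}_{min}([\vf])$ interior fixed points are all hyperbolic. The splitting of $CF^\sharp(\vf')$ along Nielsen classes, combined with action and index constraints in the spirit of Cotton-Clay, should force $\partial^\sharp\equiv 0$. When $\vf_c$ is periodic and non-trivial, an analogous rigidity argument using a standard periodic representative applies. When $\vf_c = \mathrm{id}$ but $t_c(\vf)\neq 0$, a representative acting as a small irrational rotation in a collar of $\partial S$ and as the identity elsewhere has no interior fixed points, giving $\mathcal{F}_{min}([\vf])=0$ and $CF^\sharp(\vf')=\langle p_h\rangle$. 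In all three subcases one obtains $\dim HF^\sharp(\vf)=\mathcal{F}_{min}([\vf])+1$.

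The remaining subcase $\vf_c = \mathrm{id}$, $t_c(\vf)=0$ is precisely where the $-1$ correction appears, because the Floer differential on a minimizing chain complex is necessarily non-trivial. Since $[\vf]$ is the identity in $MCG(S,\partial S)$, the explicit computation from Subsection \ref{Subsubsection: Initialization} applies verbatim: the Hamiltonian representative built from a Morse function on $S$ with one interior maximum $e_+$ and $2g$ saddles $h_1,\ldots,h_{2g}$ produces $CF^\sharp(\vf')=\langle e_+, h_1,\ldots,h_{2g}, p_h\rangle$ with the unique nonzero differential $\partial^\sharp e_+=p_h$, so $\dim HF^\sharp(\vf)=2g$. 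Morse inequalities (with $b_0(S,\partial S)=0$, $b_1(S,\partial S)=2g$, $b_2(S,\partial S)=1$) force at least $2g+1$ interior critical points for any Hamiltonian generating an area-preserving representative of $[\mathrm{id}]$, so $\mathcal{F}_{min}([\vf])=2g+1$ and the formula $\dim\widehat{HFK}(Y,K;-g+1)=\mathcal{F}_{min}([\vf])-1$ follows. The principal obstacle in executing this plan is the vanishing of $\partial^\sharp$ in the pseudo-Anosov and non-trivial periodic subcases; this is where the rigidity of the Nielsen--Thurston normal form, together with positivity of intersection in $\R\times\widehat{T}_{\vf'}$, must be exploited most carefully.
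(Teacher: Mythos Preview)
Your overall strategy---reduce to $HF^\sharp(\vf)$ via the main theorem, bound from above by minimizing the number of generators, and bound from below by exhibiting a representative with trivial $\partial^\sharp$---is close in spirit to the paper's, but it has a genuine gap that cannot be patched by ``more care.''

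The case $\vf_c = id$, $t_c(\vf)\neq 0$ is simply wrong as written. Since $\vartheta_c=0$ here, one has $t_c(\vf)=k_\vf\in\Z\setminus\{0\}$, so $[\vf]$ is the class of a nonzero power of a boundary-parallel Dehn twist. Your proposed representative (``small irrational rotation in a collar, identity elsewhere'') is not a representative of this class at all: being the identity on $S\setminus(\text{collar})$ makes every point there a fixed point (so the map is neither non-degenerate nor fixed-point-free), and a small irrational rotation in a collar with both ends mapped to the identity has winding number zero, not $k_\vf$. In fact one cannot achieve $\mathcal F_{min}=0$ here: any representative that realises the $k_\vf$ twists in an annulus and restricts to the identity on both boundary circles of that annulus has, by Poincar\'e--Birkhoff, at least two interior fixed points per intermediate circle of rotation number in $2\pi\Z$; and on the complement one still needs an area-preserving perturbation of the identity, which again forces fixed points. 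The paper handles this by explicitly tracking these Morse--Bott circles $C_i$ and showing that each contributes $+2$ to both $\dim HF^\sharp$ and $\mathcal F_{min}$.

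This points to the structural omission in your plan: you never address the boundary Dehn twist contribution $k_\vf$. The Gautschi/Cotton-Clay results you allude to (that the Nielsen--Thurston normal form realises $\mathcal F_{min}$ and has vanishing Floer differential) are proved under the hypothesis $k_\vf=0$; when $k_\vf\neq 0$ one must separately compare the twist contributions on both sides. The paper does this by writing $\vf=\tau_\vf\circ\vf_{sm}$ and, case by case, showing that (i) each $C_i$ lies in its own Nielsen class and contributes a pair of generators surviving in $HF^\sharp$, and (ii) $p_h$ can only be in the Nielsen class of fixed points with the same Lefschetz sign (positive hyperbolic), so it never cancels. Your sketch asserts $\partial^\sharp\equiv 0$ but omits exactly this Nielsen-class analysis of $p_h$ versus the interior generators, which is the heart of the matter. (Also, a minor correction: in Cotton-Clay's smooth pseudo-Anosov model not all interior fixed points are hyperbolic---rotated singularities give elliptic points---so ``all hyperbolic'' is not the right reason for the vanishing of $\partial^\sharp$; the actual reason is the separation of Nielsen classes together with equality of Lefschetz signs within a class.)
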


Before giving the proof we need to recall some of the ideas behind an analogous result for symplectic homology.
\begin{Lemma} \label{Lemma: F_min = dim HF}
If the canonical representative of $\vf$ is irreducible and $k_{\vf}=0$, then: 
 \begin{equation} \label{Equation: F_min = dim HF}
  \dim\left(HF(\vf)\right)=\mathcal{F}_{min}({[\vf]}).
\end{equation}
\end{Lemma}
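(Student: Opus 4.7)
The plan is to split the equality into two inequalities. The upper bound $\dim HF(\vf) \le \mathcal{F}_{min}([\vf])$ is essentially formal: for any non-degenerate area-preserving representative $\vf' \in [\vf]$, the Floer chain complex $CF(\vf')$ is freely generated over $\Z/2\Z$ by $\mathrm{Fix}(\vf')$, so by mapping-class invariance of fixed-point Floer homology
\[
\dim HF(\vf) \;=\; \dim H_*(CF(\vf'), \partial) \;\le\; \mathrm{rank}\, CF(\vf') \;=\; \#\mathrm{Fix}(\vf');
\]
minimising the right-hand side over $\vf'$ gives the claim.

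For the reverse inequality I would exhibit a representative $\vf^\star \in [\vf]$ whose Floer differential vanishes identically, so that $\mathcal{F}_{min}([\vf]) \le \#\mathrm{Fix}(\vf^\star) = \dim HF(\vf)$. Since $\vf_c$ is irreducible, Thurston's classification forces it to be either pseudo-Anosov or periodic, and the two cases are handled separately.

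In the pseudo-Anosov case I would take $\vf^\star$ to be an area-preserving smoothing of $\vf_c$ across the prong singularities of the invariant foliations; the interior fixed points are then hyperbolic and non-degenerate, and each one lies in its own Nielsen class. Any Floer cylinder between two fixed points lifts, in the mapping torus of the universal cover, to a cylinder connecting prescribed lifts of its ends, so no cylinder can connect fixed points in distinct Nielsen classes, which forces $\partial = 0$. The hypothesis $k_\vf = 0$ is used to guarantee that smoothing across $\partial S$ does not introduce extra boundary-parallel fixed points. In the periodic case I would conjugate $\vf_c$ to an isometry $\rho$ of a flat or hyperbolic metric and take $\vf^\star$ to be the time-one flow of a small, generic, $\rho$-invariant Hamiltonian perturbation; the fixed points then correspond to critical points of a Morse function on the quotient orbifold, and an equivariant Morse-Smale argument identifies $CF(\vf^\star)$ with a Morse complex whose differential can be arranged to vanish (picking a perfect Morse function on the orbifold quotient, using again that $k_\vf = 0$ to control the boundary contribution).

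The main obstacle will be the pseudo-Anosov case, specifically verifying that the Nielsen-class decomposition really does split the Floer differential. This requires controlling holomorphic cylinders via their lifts to the universal cover of the mapping torus and invoking exactness of $\widehat{\omega}_v$ (see Remark \ref{remark: exactness of omega_v}) to bound their energy class by class. I expect to follow Cotton-Clay's computation of symplectic Floer homology for area-preserving surface diffeomorphisms, where a version of this vanishing is established, rather than reconstruct the argument from scratch.
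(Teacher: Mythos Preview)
Your overall two-inequality strategy matches the paper's sketch (which in turn defers to Gautschi and Cotton-Clay), but there is a genuine gap in your pseudo-Anosov argument. You claim that after smoothing $\vf_c$, ``each [fixed point] lies in its own Nielsen class,'' and deduce $\partial=0$ from Nielsen-class separation alone. This is false: when an \emph{unrotated} $p$-pronged singularity of $\vf_c$ is smoothed, the perturbation produces $p-1$ positive hyperbolic fixed points, all in the \emph{same} Nielsen class (that of the original singular point). The same happens at an unrotated boundary puncture, which yields $p$ positive hyperbolic fixed points in one class. So Nielsen-class separation by itself does not kill the differential. The missing ingredient, which the paper records as (I.2), is that the Floer differential reverses Lefschetz sign: if $\langle \partial x,y\rangle\neq 0$ then $\varepsilon(x)\neq\varepsilon(y)$. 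Since the fixed points sharing a Nielsen class in the smoothed model are all positive hyperbolic (so $\varepsilon=-1$ for each), they still cannot be connected by an index-one cylinder, and $\partial=0$ follows. Your argument as written would fail precisely at these unrotated singularities.

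Your periodic-case plan also diverges from the paper's (i.e.\ Gautschi's) approach and is more fragile. There is no need for a Hamiltonian perturbation or an orbifold Morse function: when $\vf_c\neq id$ is periodic, $\vf_c$ itself is already symplectic, monotone, and non-degenerate, with isolated fixed points that lie in pairwise distinct Nielsen classes and all have Lefschetz sign $+1$. So $\partial=0$ is immediate for $\vf^\star=\vf_c$. By contrast, your route requires producing a \emph{perfect} Morse function on the quotient orbifold, which is not generally available, and the equivariant perturbation you describe would typically create more fixed points than $\vf_c$ has, spoiling the equality $\#\mathrm{Fix}(\vf^\star)=\mathcal{F}_{min}([\vf])$. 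Finally, a minor point on the upper bound: invoking $HF(\vf')$ for an arbitrary non-degenerate area-preserving $\vf'$ presupposes that the Floer homology is well defined for that representative; the paper handles this by citing Cotton-Clay's result that in the irreducible case every symplectic non-degenerate representative is weakly monotone.
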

This result was essentially proven by Gautschi (in the periodic case) and Cotton-Clay (in the pseudo-Anosov case). We refer the reader to \cite{Ga} and \cite{CC} for the details and for some definitions that we will not recall. Observe that we did not specify the version $\pm$ of the Floer homology: this is because the behavior of $\vf|_{\partial S}$ is assumed to coincide with the one prescribed by $t_c(\vf)$ (cf. Remark \ref{Remark: If vf_c trivial near the boundary then you can choose vf_sm near the boundary} below).

We also recall the following result (see Cotton-Clay \cite[Theorem 1.1]{CC2}).
\begin{Lemma}\label{Lemma: F_min = sum of indexes}
If the canonical representative of $\vf$ is irreducible and $k_{\vf}=0$, then: 
\begin{equation}\label{Equation: F_min = sum of indexes}
 \mathcal{F}_{min}({[\vf]}) = \sum_{\eta} |ind(\eta)|
\end{equation}
where the sum is over the set of \emph{Nielsen classes} of $\vf$ (i.e. the set of free homotopy classes of oriented paths $S^1 \hookrightarrow T_{\vf}$), $ind(\eta) \coloneqq \sum_{\{x \in \mathrm{Fix}(\vf)|\gamma_x \in \eta\}}\varepsilon(x)$ and $\varepsilon(x) \coloneqq \mathrm{sign}(\det(\mathbbm{1} - d_x\vf))$ denotes the \emph{Lefschetz sign} of $x$. 
\end{Lemma}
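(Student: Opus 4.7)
The plan is to establish $\mathcal{F}_{min}([\vf]) = \sum_\eta |ind(\eta)|$ by proving the two inequalities separately, exploiting the Nielsen--Thurston classification to reduce the realization problem to the periodic and pseudo-Anosov cases permitted by the irreducibility hypothesis.

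For the lower bound, I would first recall that the Nielsen class $\eta$ of a fixed point of a diffeomorphism $\psi \colon S \to S$ is a homotopy invariant of the pair $(S,\psi)$: two fixed points lie in the same class if and only if the corresponding sections of the mapping torus $T_\psi$ are freely homotopic. Hence the partition of $\mathrm{Fix}(\psi)$ into classes depends only on $[\psi]$. For any non-degenerate area-preserving representative $\psi \in [\vf]$ the Lefschetz sign $\varepsilon(x)$ is $\pm 1$ at each fixed point, and the sum of signs over a fixed class $\eta$ equals $ind_\psi(\eta)$. But $ind_\psi(\eta) = ind_\vf(\eta)$ by the homotopy invariance of the local index, so the number of fixed points of $\psi$ in class $\eta$ is at least $|ind(\eta)|$. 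Summing over $\eta$ gives the inequality $\mathcal{F}_{min}([\vf]) \geq \sum_\eta |ind(\eta)|$.

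For the upper bound I must construct, in each Nielsen--Thurston type, an area-preserving non-degenerate $\psi \in [\vf]$ realizing exactly $|ind(\eta)|$ fixed points per class. The hypothesis $k_\vf = 0$ together with Equation \eqref{Equation: t_c function of theta_c and k_vf} guarantees that the boundary behaviour of $\vf$ matches the canonical angle $\vartheta_c$, so in both cases a representative equal to $\vf_c$ on $S\setminus A$ is isotopic rel $\partial S$ to $\vf$. In the periodic case $\vf_c$ is an isometry of a hyperbolic or flat orbifold metric on $S\setminus A$; its interior fixed points are the cone points of this orbifold. They are already isolated and rotationally normal, and a $C^0$-small Hamiltonian perturbation, supported in a disjoint union of discs around them and chosen as a small twist, makes each one non-degenerate elliptic without creating or destroying Nielsen classes. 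One then verifies, using that on an orbifold all cone points in a single Nielsen class have identical rotation data and therefore identical Lefschetz sign, that exactly $|ind(\eta)|$ fixed points remain in each class.

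The pseudo-Anosov case is the main obstacle. Here $\vf_c$ is only piecewise smooth (singular along the prongs of its invariant foliations) and is not a diffeomorphism at the singularities, so it is not itself an admissible representative. The plan is to adapt the smoothing procedure of Gautschi and Cotton-Clay: in a small disc around each interior singularity and each regular Nielsen-distinguished periodic fixed point, one replaces the singular linear model of $\vf_c$ by an area-preserving non-degenerate germ isotopic to it. At a $p$-prong singularity this produces a single fixed point which is positive or negative hyperbolic with sign determined by the parity of $p$, while at each regular fixed point it produces one non-degenerate hyperbolic fixed point whose Lefschetz sign matches the local index of $\vf_c$ there. The hard part will be to verify that (i) the smoothing can be carried out so as to remain area-preserving and isotopic to $\vf_c$ relative to $\partial S$ — for which I would combine Moser's trick with the Liouville structure on $S$ and use $k_\vf = 0$ to keep the total boundary rotation unchanged — and (ii) distinct fixed points produced by the procedure that lie in the same Nielsen class have Lefschetz signs whose algebraic sum equals $ind(\eta)$ without further geometric cancellation, so that the Nielsen-theoretic lower bound $|ind(\eta)|$ is attained. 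Once these two points are established, summing over Nielsen classes yields $\mathcal{F}_{min}([\vf]) \leq \sum_\eta |ind(\eta)|$, which combined with the lower bound completes the proof.
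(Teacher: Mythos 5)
The paper does not give an original proof of this lemma: it is cited directly from Cotton--Clay \cite[Theorem~1.1]{CC2}, and the surrounding text only recalls the two key ingredients \ref{Key ingredient 1}--\ref{Key ingredient 2} and sketches the smoothing of singularities. Your proposal, a self-contained two-sided inequality argument, is therefore a more ambitious undertaking than what the paper records, and its overall structure (Nielsen lower bound via homotopy invariance of the local index, plus an explicit realizing representative in each Nielsen--Thurston type) is the right one.

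However, there is a genuine error in your description of the smoothing at pseudo-Anosov singularities, and it is fatal to the upper bound. You assert that ``at a $p$-prong singularity this produces a single fixed point which is positive or negative hyperbolic with sign determined by the parity of $p$.'' This cannot be correct: the Nielsen class $\eta$ of an \emph{unrotated} $p$-prong interior singularity has local Lefschetz index $1-p$, so $|ind(\eta)| = p-1 \geq 2$ once $p \geq 3$; by your own lower-bound argument any non-degenerate representative must have at least $p-1$ fixed points in that class, and in particular no smoothing to a single non-degenerate germ exists. What Cotton--Clay's procedure actually does, and what the paper records, is to distinguish rotated from unrotated singularities: a rotated $p$-prong singularity (Lefschetz index $+1$) is smoothed to a single elliptic fixed point of index $+1$, while an unrotated $p$-prong singularity (Lefschetz index $1-p$) is smoothed to $p-1$ positive hyperbolic fixed points each of index $-1$, and these $p-1$ points all lie in the common Nielsen class $\eta$. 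Only then is the count $\#\mathrm{Fix}(\vf_{sm}) = \sum_\eta |ind(\eta)|$ obtained. Your sketch, as written, conflates these two cases and never produces enough fixed points to saturate the lower bound; the ``sign determined by the parity of $p$'' heuristic is also incorrect, since the sign is governed by the rotated/unrotated dichotomy, not by parity. You should also note that boundary punctures require the slightly different modification (i')/(ii') of Jiang--Guo as adapted by Cotton--Clay, which the $k_\vf=0$ hypothesis makes compatible with the canonical rotation angle; your appeal to Moser's trick does not by itself guarantee the correct boundary behaviour.
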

We remark that lemmas \ref{Lemma: F_min = dim HF} and \ref{Lemma: F_min = sum of indexes} can be generalized to the irreducible case (see Theorem 4.16 of \cite{CC} and Theorem 1.1 of \cite{CC2}) but to prove Theorem \ref{Theorem: HFK and the minimal number of fixed points ireducible} we will only use the statements for irreducible maps plus some partial results of \cite{CC} about boundary parallel Dehn twists. In the rest of this section, if $C \subset S$ is a set of fix points all in the same Nielsen class, we will refer to this class as to the Nielsen class of $C$. 

The two main ingredients to show \eqref{Equation: F_min = dim HF} and \eqref{Equation: F_min = sum of indexes} are the following:
\begin{enumerate}[leftmargin=*,label=(I.\arabic*)]
 \item  \label{Key ingredient 1} the canonical representative of an irreducible diffeomorphism minimizes the number of fixed points in its mapping class;
 \item  \label{Key ingredient 2} if $\langle\partial x,y \rangle \neq 0$, then $x$ and $y$ are in the same Nielsen class and $\varepsilon(x) \neq \varepsilon(y)$.
\end{enumerate}
The first is a classical result in fixed point theory (see Jang--Guo \cite{JG}), while the second is an obvious consequence of the definitions and the fact that the Floer differential inverts Lefschetz signs (see for example \cite[§2]{Se2}).

If $[\vf] \neq [id]$ is periodic then in \cite[§3]{Ga} Gautschi shows that the canonical representative $\vf_c$ of $[\vf]$ is symplectic and \emph{monotone}: the monotonicity condition was introduced by Seidel in \cite{Se2} to ensure the compactness of the moduli spaces of holomorphic curves that come into play in the definition of $HF$ (even if we will not recall the definition, we underline that condition \eqref{exactness of phi} implies monotonicity). Moreover Gautschi shows that 
all the fixed points of $\vf_c$ are in different Nielsen classes and have Lefschetz signs $+1$ (see \cite[Proposition 9]{Ga}): \ref{Key ingredient 2} implies then that the Floer differential of $CF(\vf)$ vanishes and \ref{Key ingredient 1} gives equations \eqref{Equation: F_min = dim HF} and \eqref{Equation: F_min = sum of indexes}.

If $[\vf]$ is pseudo-Anosov the argument is a bit more complicate because in general $\vf_c$ is not smooth (and in particular not symplectic). On the other hand in \cite[§3]{CC} Cotton--Clay shows that if $\vf$ is symplectic and non-degenerate then is also \emph{weakly monotone}: the last condition was defined by Cotton Clay as a weaker version of Seidel's monotonicity, but which is still sufficient to ensure the good definition of $HF(\vf)$. In particular, if $\vf_{min} \in [\vf]$ is any symplectic non-degenerate representative with exactly $\mathcal{F}_{min}({[\vf]})$ fixed points, then $HF(\vf_{min})$ is well defined and, by invariance of $HF$:
\begin{equation}\label{Equation: dim of HF leq f_min}
  \dim\left(HF(\vf)\right) = \dim\left(HF(\vf_{min})\right) \leq \mathcal{F}_{min}({[\vf]}).
\end{equation}
His strategy to show \eqref{Equation: F_min = dim HF} consists then in smoothing the singularities of $\vf_c$ to obtain a special symplectic representative $\vf_{sm}$ of $[\vf]$ for which the differential of $CF(\vf_{sm})$ vanishes. This gives
\begin{equation}\label{Equation: dim of HF geq f_min}
 \dim\left(HF(\vf)\right) = \dim\left(HF(\vf_{sm})\right) = \#\mathrm{Fix}(\vf_{sm}) \geq \mathcal{F}_{min}({[\vf]}),
\end{equation}
which, in combination with \eqref{Equation: dim of HF leq f_min}, implies Equation \eqref{Equation: F_min = dim HF}.

The special symplectic representative $\vf_{sm}$ (which, by what we just said, has exactly $\mathcal{F}_{min}({[\vf]})$ fixed points) is obtained as follows. By \cite{BK} and \cite{JG}, it is known that any two fixed (smooth or singular) points of the (canonical) pseudo-Anosov homeomorphism $\vf_c$ are in different Nielsen classes. In order to get $\vf_{sm}$, in \cite[§3]{CC} Cotton--Clay perturbs $\vf_c$ in a neighborhood of all the singular fixed point. If $\overline{x}$ is one of these singularities and has $p$ \emph{prongs} (cf \cite[§3.2]{CC}), the perturbation for $\vf_{sm}$ produces either:
\begin{itemize}
 \item[(i)\phantom{i}] one elliptic fixed point if $\overline{x}$ is a \emph{rotated singularity}, i.e. if the prongs are cyclically permuted by a non trivial permutation;
 \item[(ii)] $p-1$ positive hyperbolic fixed points if $\overline{x}$ is an \emph{unrotated singularity}. i.e. if the permutation of the prongs is trivial.
\end{itemize}
It is important to underline the fact that all the fixed points of $\vf_{sm}$ described in (i) and (ii) are in the same Nielsen class of $\overline{x}$. 

To deal with the boundary (that we assume connected here) of the surface, Cotton--Clay applies in \cite[§4.2]{CC} the ideas of \cite[§2.1]{JG} and proceeds as follows. One starts with a closed surface $\overline{S}$ and a pseudo-Anosov homeomorphism $\overline{\vf}_c$ of $\overline{S}$ with a (hyperbolic) fixed point $\overline{x}$ such that $\overline{S} \setminus \{\overline{x}\} \cong \mathrm{int}(S)$ and $\overline{\vf}_c|_{\overline{S} \setminus \{\overline{x}\}}$ is isotopic to $\vf_c|_{\mathrm{int}(S)}$. In order to get $\vf_{sm}$ one smooths all the singular fixed points of $\overline{\vf}_c$ to obtain a symplectomorphism $\overline{\vf}_{sm} \sim \overline{\vf}_c$: the smoothing is as above for all the singular fixed points different from $\overline{x}$, while the latter is smoothed in a slightly different way that gives:
\begin{itemize}
 \item[(i')\phantom{i}] one elliptic fixed point $\overline{x}_e$ if $\overline{x}$ was rotated;
 \item[(ii')] one elliptic fixed point $\overline{x}_e$ plus $p$ positive hyperbolic fixed points if $\overline{x}$ was unrotated with $p$ prongs.
\end{itemize}
In either case one removes a $\overline{\vf}_{sm}$-invariant open neighborhood $\mathcal{N}(\overline{x}_e)$ of $\overline{x}_e$ (not containing other fixed points) and obtains the special symplectic representative $\vf_{sm}\coloneqq \overline{\vf}_{sm}|_{\overline{S} \setminus \mathcal{N}(\overline{x}_e)}$ that we were looking for. It is again important to remark that all the fixed points in both (i') and (ii') are in the same Nielsen class of $\overline{x}$, which in turn is different from the Nielsen class of any other fixed point of $\overline{\vf}_{sm}$.

Now, by the separation of the Nielsen classes of the fixed points of $\vf_c$, the construction above implies that two fixed points $x$ and $y$ of $\vf_{sm}$ are in the same Nielsen class if and only if they come from the perturbation of type (ii) (or (ii')) of one unrotated singularity (or, respectively, puncture): in this case $x$ and $y$ are both positive hyperbolic, so that $\varepsilon(x) = \varepsilon(y) = -1$. Then \ref{Key ingredient 2} implies that the differential of $CF(\vf_{sm})$ is $0$, so that
\[\dim\left(HF(\vf)\right) = \dim(HF(\vf_{sm})= \#\mathrm{Fix}(\vf_{sm}) \]
and
\[\mathcal{F}_{min}({[\vf]}) = \mathrm{Fix}(\vf_{sm}) = \sum_{\eta}\bigg|\sum_{\{x \in \mathrm{Fix}(\vf_{sm})|\gamma_x \in \eta\}}\varepsilon(x)\bigg| =  \sum_{\eta} |ind(\eta)| \]
giving Equation \eqref{Equation: F_min = sum of indexes}.

\begin{Rmk}
 For simplicity, in the rest of the paper we will talk about special symplectic representatives also referring to periodic diffeomorphisms by setting simply $\vf_{sm} \coloneqq \vf_c$. This makes sense since by Gautschi $\vf_c$ is symplectic, monotone and, if $\vf_c \neq id$, also non-degenerate.
\end{Rmk}

\begin{Rmk} \label{Remark: If vf_c trivial near the boundary then you can choose vf_sm near the boundary}
 As said before, $\vf_c$ uniquely defines a canonical rotation angle $\vartheta_c$ for $\partial S$. On the other hand we do not have a similar well defined rotation angle $\vartheta_{sm}$ for $\vf_{sm}$, since it depends on the choice of a Hamiltonian isotopy near $\partial S$. As remarked by Cotton Clay in Section 4.2 of \cite{CC}, if $\vartheta_c \neq 0$ (which occurs either when $\vf_c\neq id$ is periodic or when $\vf_c$ is pseudo-Anosov with a rotated puncture) then a natural choice is $\vartheta_{sm} = \vartheta_c$. On the other hand if $\vartheta_c = 0$, in order to ensure non-degeneracy we have to choose a (positive or negative) small non-trivial rotation angle $\vartheta_{sm}$ and consequently perturb $\vf_c$ to get a special representative $\vf_{sm}$ that satisfies $\vf_{sm}|_{\partial S}=\vartheta_{sm}$. We then follow Cotton-Clay's conventions and choose $-1 \ll \vartheta_{sm}< 0$ if $t_c(\vf)>0$ and $0<\vartheta_{sm}\ll 1$ if $t_c(\vf)\leq 0$. As we will see later, this choice minimizes the number of fixed points of $\vf_{sm}$ near the boundary. Observe that, once these conventions fixed, we do not need to specify a $\pm$--version for  $HF(\vf_{sm})$.
\end{Rmk}

Now we briefly recall what happens when $\vf_c$ is reducible. In this case there exists a $\vf_c$--invariant collection $C$ of essential circles such that $S\setminus C = \{A_1,\ldots,A_l\}$ where, if $k_i$ is the smallest integer for which $\vf^{k_i}_c$ maps $A_i$ to itself, then $\vf^{k_i}_c|_{A_i}$ is either periodic or pseudo-Anosov. We will refer to $\{A_1,\ldots,A_l\}$ as to \emph{the canonical decomposition of $S$ induced by $\vf_c$}, which is well defined only up to isotopy.
Cotton-Clay defines then component-wise the special symplectic representative $\vf_{sm}$ of $\vf$, in a way that, if $k_i=1$, then $\vf_{sm}|_{A_i}$ coincides with the special representative $(\vf_c|_{A_i})_{sm}$ of the corresponding type (periodic or pseudo-Anosov) as defined above. In particular $\vartheta_{sm}$ depends only on the component of $S\setminus C$ containing $\partial S$.

\begin{Rmk} \label{Remark: Monotonicity for reducible maps}
 The reason for which the argument to show \eqref{Equation: F_min = dim HF} can not be directly used when $\vf_c$ is reducible is that in this case a non-degenerate symplectic representative of $[\vf_c]$ is not necessarily weakly monotone: in particular we can not a priori ensure that there exists a representative $\vf_{min}$ realizing $\mathcal{F}_{min}([\vf])$ for which $HF(\vf_{min})$ is well defined. 
\end{Rmk}

\begin{Def} Given a surface diffeomorphism $\vf \colon S \rightarrow S$, a subset $F\subset S$ is called \emph{$\vf$--invariant} if $\vf(F) \subset F$.
 Two $\vf$--invariant sets $F_0$ and $F_1$ are \emph{$\vf$--related} if there exists a path $c\colon ([0,1],0,1) \rightarrow (S,F_0,F_1)$ such that $\vf \circ c \simeq c$ through maps $([0,1],0,1) \rightarrow (S,F_0,F_1)$.
\end{Def}
Observe that two fixed points of $\vf$ are in the same Nielsen class if and only if they are \emph{$\vf$--related}.


\begin{Lemma} \label{Lemma: No fixed point is related to the boundary}
  Let $x \in \Fix(\vf_{sm})$ and $A \subset S$ be the component of the canonical decomposition of $S$ induced by $\vf_c$ that contains $\partial S$. Assume that $k_{\vf}=0$. Then $x$ is $\vf_{sm}$--related to $\partial S$ if and only if either:
  \begin{enumerate}
   \item $\vf_c|_A = id$ and $x$ is in the same Nielsen class of $\Fix(\vf_{sm}|_A)$ or
   \item $\vf_c|_A$ is pseudo-Anosov and $x$ is one of the positive hyperbolic fixed points of case (ii').
  \end{enumerate}
\end{Lemma}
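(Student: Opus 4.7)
The plan is to prove both directions by first confining the fixed point $x$ to the component $A$, and then doing a case analysis according to the Nielsen--Thurston type of $\vf_c|_A$.

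For the direction ($\Leftarrow$), I would argue directly in each case. In case (1), where $\vf_c|_A=id$, Equation \eqref{Equation: t_c function of theta_c and k_vf} together with the assumption $k_\vf=0$ forces $\vartheta_c=t_c(\vf)=0$, so $\vf_{sm}|_A$ is obtained from the identity by a $\mathcal{C}^0$-small Hamiltonian perturbation plus a small boundary rotation. Then any two points of $\Fix(\vf_{sm}|_A)$, and in particular $\partial S$ and $x$, can be joined by a short embedded path $c$ whose image $\vf_{sm}\circ c$ is $\mathcal{C}^0$-close to $c$ and therefore homotopic to it rel endpoints. In case (2), the $p$ positive hyperbolic fixed points produced by smoothing an unrotated puncture $\overline{x}$ lie in a small neighbourhood of $\overline{x}$ in which $\vf_{sm}$ agrees with the explicit smoothing model of Cotton-Clay. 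By construction the fixed points of this local model are Nielsen--related to $\overline{x}$, and $\overline{x}$ in turn corresponds to $\partial S$ once $\mathcal{N}(\overline{x}_e)$ is removed, so the Nielsen relation to $\partial S$ follows.

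For the direction ($\Rightarrow$), the first step is to show that $x\in A$. Let $C$ be the system of reducing circles bounding the canonical decomposition. The circles of $C$ are $\vf_c$--invariant as a collection, and one can arrange $\vf_{sm}$ to preserve $C$ as well (up to ambient isotopy supported away from the fixed points). Given a path $c$ from $\partial S$ to $x$, one may assume $c$ is transverse to $C$ and then count intersections: each geometric intersection point of $c$ with a circle $\gamma\subset C$ gives rise to a corresponding intersection of $\vf_{sm}\circ c$ with $\vf_{sm}(\gamma)$. Since the $\vf_c$--action on a neighbourhood of $\gamma$ is (up to isotopy) of translation or periodic type in the annular direction, a homotopy between $c$ and $\vf_{sm}\circ c$ rel endpoints forces the algebraic and geometric intersection numbers of $c$ with each circle of $C$ to vanish, which in turn forces $x$ to lie in the same component of $S\setminus C$ as $\partial S$, namely $A$.

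The second step is to rule out the periodic non--identity and the pseudo--Anosov ``not from $\overline{x}$'' possibilities within $A$. If $\vf_c|_A$ is periodic and nontrivial, then by Gautschi's analysis $\vartheta_c\neq 0$, $\vf_{sm}|_A = \vf_c|_A$ has only interior fixed points, and any two fixed points (including $\partial S$ seen as a Morse--Bott circle) lie in distinct Nielsen classes; this contradicts $x$ being related to $\partial S$. If $\vf_c|_A$ is pseudo--Anosov, the results of Boju Jiang--Guo and Bestvina--Handel that distinct (smooth or singular) fixed points of a pseudo--Anosov map lie in distinct Nielsen classes imply that the only fixed points of $\vf_{sm}|_A$ which can be Nielsen--related to $\partial S$ are those created by smoothing the boundary puncture $\overline{x}$. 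In the rotated case (i') this produces only the elliptic point $\overline{x}_e$, which is removed together with $\mathcal{N}(\overline{x}_e)$, leaving no such fixed point; in the unrotated case (ii') the remaining fixed points are exactly the $p$ positive hyperbolic ones in the statement.

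The main obstacle will be the intersection--theoretic argument that confines $x$ to the component $A$: making precise the claim that $\vf_{sm}$ can be chosen to preserve $C$ and that a Nielsen--relation with $\partial S$ forces zero intersection with $C$ requires carefully distinguishing the behaviour of $\vf_c$ in an annular neighbourhood of each reducing circle (whether it twists, rotates or permutes the circles of $C$), and using that $k_\vf=0$ to exclude spurious boundary parallel twisting that could produce a nontrivial Nielsen class connecting different components.
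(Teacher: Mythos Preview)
There is a genuine gap in your ($\Rightarrow$) direction: the claim that $x$ must lie in the component $A$ is \emph{false}, and the paper's proof explicitly treats the case $x \notin A$. Concretely, suppose $\vf_c|_A = id$ and $A'$ is a pseudo-Anosov component sharing a boundary circle $C$ with $A$, with $C$ coming from an unrotated puncture of $\vf_c|_{A'}$. Then the $p$ positive hyperbolic fixed points created in $A'$ by the smoothing (ii') \emph{are} $\vf_{sm}$--related to $\partial S$: since $\vf_c|_A = id$, they are related to all of $A$, hence to $\partial S$. This is precisely what the paper extracts from Section~4.3 of \cite{CC}, and it falls under case~(1) of the statement (read as ``$x$ lies in the Nielsen class of the set $\Fix(\vf_{sm}|_A)$'', not ``$x\in\Fix(\vf_{sm}|_A)$''). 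Your intersection-counting argument cannot exclude this: if $\vf_{sm}$ is close to the identity on $A$, crossing $C$ once produces a path $c$ with $\vf_{sm}\circ c$ homotopic to $c$ rel endpoints even though $c\cap C\neq\emptyset$. More generally, the homotopy $c\simeq\vf_{sm}\circ c$ only tells you the intersection numbers of $c$ and $\vf_{sm}\circ c$ with $\gamma$ agree, which is automatic since $\vf_{sm}(\gamma)=\gamma$; it gives no vanishing.

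The paper's route is different in structure: rather than trying to confine $x$ to $A$, it invokes Cotton-Clay's analysis (\cite[\S4.3]{CC}) to characterise exactly when a fixed point in a neighbouring component $A'$ can be $\vf_{sm}$--related to a point of $A$, and then handles the in-$A$ case via Jiang--Guo's Lemmas~1.2 and~2.2. Your second step (the within-$A$ analysis) is essentially the same as the paper's, but to repair your argument you must drop the confinement step and instead analyse the adjacent-component case directly, as the paper does.
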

\begin{proof}
 This comes directly from Cotton-Clay's adaptation to $\vf_{sm}$ of the work of Jiang and Guo \cite{JG} about $\vf_c$. We prove the lemma for $\vf_c$ reducible: the irreducible case is a direct consequence. 
 
 Let $A'\neq A$ be some component of the canonical decomposition of $S$ induced by $\vf_c$ having has some boundary component $C$ in common with $A$. The discussion in Section 4.3 of \cite{CC} implies that a fixed point of $\vf_{sm}|_{A'}$ is $\vf_{sm}$--related to some point of $A$ if and only if $\vf_c|_{A} = id$, $\vf_c|_{A'}$ is pseudo-Anosov, the boundary component $C$ of $A'$ comes from an unrotated puncture and $x$ is one of the hyperbolic fixed points in (ii'). This is in particular the only case in which a point $x \in \Fix(\vf_{sm}|_{S\setminus A})$ can be $\vf_{sm}$--related to $\partial S$. On the other hand the condition $\vf_c|_{A} = id$ implies that $x$ is $\vf_{sm}$--related to the whole $A$, which in turn is obviously $\vf_{sm}$--related to $\partial S$. This gives the first case. 
 
 Assume now that $x \in A$ and $\vf_c|_{A} \neq id$. If $\vf_c|_{A}$ and is periodic then Lemma 1.2 of \cite{JG} implies that no fixed point in $\Int(A)$ is $\vf_{sm}$--related to $\partial S$. If $\vf_c|_{A}$ is pseudo-Anosov and $\partial S$ comes from a rotated puncture then it is easy to see that the hyperbolic fixed points of case (ii') are $\vf_{sm}$--related to $\partial S$: on the other hand Lemma 2.2 of \cite{JG} implies that these are the only fixed points for which the statement can be true (the reader should be aware of the fact that in \cite{JG} the hyperbolic fixed points of case (ii') are assumed to belong to $\partial S$).
\end{proof}

The main step to prove Theorem \ref{Theorem: HFK and the minimal number of fixed points ireducible} is the following result.

\begin{Prop} \label{Proposition: dim HF and dim HF^sharp}
 Let $(K,S,\vf)$ be an open book of genus $g>0$. Let $A \subset S$ be the component of the canonical decomposition of $S$ induced by $\vf_c$ that contains $\partial S$. Then:
 \begin{equation*}
  \dim \left(HF^{\sharp}(\vf)\right) = \left\{ \begin{array}{ll}
                                            \dim \left(HF(\vf)\right) - 1 & \mbox{if } A=S,\ \vf_c = id \mbox{ and } t_c(\vf) = 0;\\
                                            \dim \left(HF(\vf)\right) + 1 & \mbox{otherwise},
                                           \end{array} \right. 
 \end{equation*} 
 where the $\pm$--version of $HF(\vf)$ is the one specified by $\vf_{sm}$ as in Remark \ref{Remark: If vf_c trivial near the boundary then you can choose vf_sm near the boundary}.
\end{Prop}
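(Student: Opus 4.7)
The plan is to compute both sides by constructing a single Hamiltonian representative of $[\vf]$ well adapted to the Cotton--Clay normal form, and then to compare the two chain complexes generator by generator, organized by Nielsen class. Concretely, I would choose a representative (still denoted $\vf$) whose restriction to $S$ outside a thin collar of $\partial S$ equals the special symplectic representative $\vf_{sm}$, and which on the collar realizes the rotation angle $\vartheta_{sm}$ prescribed in Remark \ref{Remark: If vf_c trivial near the boundary then you can choose vf_sm near the boundary}. For the $\sharp$-version the subsequent Morse--Bott perturbation yields the boundary orbits $p_e$ and $p_h$ together with the interior fixed points of $\vf_{sm}$, so after quotienting $p_e$ the complex $CF^\sharp(\vf)$ is generated by $\mathrm{Fix}(\vf_{sm}) \cup \{p_h\}$. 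For the $\pm$-version the boundary rotation eliminates boundary fixed points, so $CF(\vf)$ is generated by $\mathrm{Fix}(\vf_{sm})$ alone. Thus the chain complexes differ, as graded vector spaces, exactly by the extra generator $p_h$.

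Next I would split both complexes by Nielsen class. Writing $\eta_\partial$ for the free homotopy class of the boundary orbit $\gamma_{p_h}$ in $\widehat{T}_\vf$, the Floer differential preserves Nielsen class, and for $\eta \neq \eta_\partial$ the summands $CF^\sharp_\eta$ and $CF_\eta$ have the same generators and the same differential: indeed, holomorphic Floer cylinders in a Nielsen class away from the boundary do not enter the region where the Morse--Bott perturbation is supported, so they can be computed on either model. Consequently the difference $\dim HF^\sharp(\vf) - \dim HF(\vf)$ is concentrated entirely in the $\eta_\partial$-summand, and the proposition reduces to proving that this difference equals $-1$ if $\vf$ is Hamiltonian isotopic to the identity and $+1$ otherwise.

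In Case 1, when $A = S$, $\vf_c = id$ and $t_c(\vf) = 0$, the class $\vf$ is Hamiltonian isotopic to the identity, and the direct calculation carried out in the Initialization step (Section \ref{Subsubsection: Initialization}) gives $\dim HF^\sharp(\vf) = 2g$, while the Morse model identifies $HF(id)$ with $H_*(S;\Z/2\Z)$ of total dimension $2g+1$. This yields the claimed $-1$. In Case 2 (all other configurations), the plan is to show that the differential on $CF^\sharp_{\eta_\partial}$ vanishes so that $p_h$ contributes a genuinely new homology class. By Lemma \ref{Lemma: No fixed point is related to the boundary}, the interior fixed points of $\vf_{sm}$ lying in $\eta_\partial$ are either absent (when $\vf_c|_A$ is periodic $\neq id$, or pseudo-Anosov with $\partial S$ at a rotated puncture, or $t_c(\vf) \neq 0$) or are precisely the positive hyperbolic points produced by Cotton--Clay's smoothing of an unrotated puncture and of the identity components adjacent to $\partial S$. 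Combined with the fact that $p_h$ itself is positive hyperbolic, and with the standard Floer-theoretic observation that the differential inverts Lefschetz signs, all possible partners of $p_h$ in the differential share its sign and thus the coefficient vanishes. Hence $CF^\sharp_{\eta_\partial}$ differs from $CF_{\eta_\partial}$ by a single generator with trivial differential, giving $+1$ in homology.

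The principal obstacle is the sign and Nielsen-class bookkeeping in Sub-case 2b, where $\vf_c|_A = id$ with $A \neq S$ or $\vf_c|_A$ is pseudo-Anosov with $\partial S$ at an unrotated puncture: one must inspect the Cotton--Clay local model near each such puncture or $A$-component, verify that every interior fixed point of $\vf_{sm}$ that is $\vf_{sm}$-related to $\partial S$ is positive hyperbolic (with the same sign as $p_h$), and check that the matching of Morse--Bott limits identifies the Floer complex near the boundary with the expected one. Once this local check is completed, summing over Nielsen classes yields the stated identity.
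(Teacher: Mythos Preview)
Your outline captures the Nielsen-class and Lefschetz-sign argument correctly when $k_{\vf}=0$, but it overlooks the boundary-parallel Dehn twists.  The open-book monodromy satisfies $\vf|_{\partial S}=id$, so on the collar you cannot simply ``realize the rotation angle $\vartheta_{sm}$'': you must interpolate from the rotation by $\vartheta_{sm}$ at the inner edge of the collar to the identity at $\partial S$, and this interpolation winds $k_{\vf}$ times (see Equation~\eqref{Equation: t_c function of theta_c and k_vf}).  When $k_{\vf}\neq 0$ this produces $|k_{\vf}|$ (or $|k_{\vf}|-1$) boundary-parallel circles $C_i$ of fixed points inside the collar, each of which, after Morse--Bott perturbation, contributes two generators to both $CF(\vf)$ and $CF^{\sharp}(\vf)$.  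Thus your claim that $CF(\vf)$ is generated by $\mathrm{Fix}(\vf_{sm})$ alone is false in general, and Lemma~\ref{Lemma: No fixed point is related to the boundary}, which you invoke, explicitly assumes $k_{\vf}=0$.

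More seriously, the sign argument breaks down when $k_{\vf}<0$.  In that case the innermost circle $C_{|k_{\vf}|}$ \emph{is} $\vf$-related to $\partial S$, and its Morse--Bott perturbation yields an elliptic point $p'_e$ and a hyperbolic point $p'_h$ in the Nielsen class $\eta_\partial$.  Since $\varepsilon(p'_e)=+1$ while $\varepsilon(p_h)=-1$, nothing prevents a differential between them; in fact one computes $\partial^{\sharp} p'_e = p_h$ (using the analysis of \cite[\S 8--9]{CGH1} or \cite[\S 4.2]{W}).  So $p_h$ is killed, but $p'_h$ survives and is not in $CF(\vf_{sm})$, giving the net $+1$.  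The paper's proof handles all of this by a case analysis on the signs of $\vartheta_c$ and $t_c(\vf)$ (cases (C.1)--(C.3), subcases (a)/(b)), tracking the circles $C_i$ and their Nielsen classes explicitly; you would need to reproduce that analysis rather than appeal only to $\vf_{sm}$.
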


\begin{proof}
 Observe first Equation \eqref{Equation: F_min = dim HF} implies that in general
\begin{equation} \label{Equation: dim HF^sharp = dim HF pm 1}
 \dim(HF^{\sharp}(\vf)) = \dim(HF(\vf)) \pm 1
\end{equation}
where the contribution $\pm 1$ is given by the generator $p_h \in \partial S$. To prove the result we need to study the interaction under the Floer differential between $p_h$ and the other generators of $CF^{\sharp}(\vf)$. 

Fix a neighborhood $N \cong (0,2] \times S^1$ of $\partial S =\{2\} \times S^1$ on which we fix the usual coordinates $(y,\vartheta)$. Set $N_0 \coloneqq \{y\in (0,1]\}$ and $N_1\coloneqq \{y\in (1,2]\}$. We can then assume that 
\[\vf = \tau_{\vf} \circ \vf_{sm} \] 
where:
\begin{itemize}[leftmargin=*]
 \item the only fixed points of $\vf_{sm}|_{N}$ are the (possible) hyperbolic fixed points obtained as in the smoothing of type (ii') above, in which case they are all contained in $\mathrm{int}(N_0)$ (observe that these fixed points can occur only if $\vartheta_c=0$);
 \item $\vf_{sm}(y,\vartheta) = (y,\vartheta+\vartheta_{sm})$ for every $y \in [0,1]$ where, according to Remark \ref{Remark: If vf_c trivial near the boundary then you can choose vf_sm near the boundary}, we set:
 \begin{enumerate}[leftmargin=*,label=\textbf{(C.\arabic*)}]
  \item if $\vartheta_c \neq 0$ then $\vartheta_{sm} = \vartheta_c$; 
  \item if $\vartheta_c = 0$ and $t_c(\vf) > 0$ then $-1 \ll \vartheta_{sm} < 0$;
  \item if $\vartheta_c = 0$ and $t_c(\vf) \leq 0$ then $0 < \vartheta_{sm} \ll 1$;
 \end{enumerate}
 \item $\tau_{\vf}$ is a symplectomorphism with support in $N_1$ that interpolates between $\vf_{sm}|_{\partial (S \setminus N_1)}$ and $id_{\partial S}$ and realizes $t_c(\vf)$: more precisely, $\tau_{\vf}|_{\overline{N}_1}$ is of the form $\tau_{\vf}(y,\vartheta) = (y,\vartheta - g(\vartheta))$ for some $g \in \mathcal{C}^{\infty}([1,2])$ with $g(1)= g'(1) = 0$ and 
 \[g(2) = 2\pi t_c(\vf) + (\vartheta_{sm} - \vartheta_c) = 2\pi k_{\vf} + \vartheta_{sm}\]
 where $k_{\vf}$ is as in Equation \eqref{Equation: t_c function of theta_c and k_vf};
 \item in order to minimize the number of Dehn twists in $N_1$ and, at the same time, make $\partial T_{\vf}$ into a negative Morse--Bott torus, we require that:
 \begin{enumerate}[leftmargin=*]
  \item[\textbf{(a)}] if $t_c(\vf) \geq 0$ then $g$ is monotone increasing;
  \item[\textbf{(b)}] if $t_c(\vf) < 0$ there exists $0 < \nu\ll 1$ such that $g$ is monotone decreasing in $(1,2-\nu)$ and monotone increasing in $(2-\nu,2]$, $g(2-\nu) = 2\pi k_{\vf}$ and $g(y) \leq 2\pi k_{\vf}+  \vartheta_{sm}$ for all $y \in (2-\nu,2]$ (observe that if $t_c(\vf) < 0$ then $k_{\vf} \lneq 0$ and $\vartheta_{sm} \gneq 0$, so that the function $\vartheta \mapsto \vartheta_{sm} - g(\vartheta)$ has a maximum in $2-\nu$ of value $\vartheta_{sm} + 2\pi |k_{\vf}| $ ).
 \end{enumerate}
\end{itemize}


As explained in Subsection \ref{Subsection: Review of symplectic Floer homology}, the result $\wf$ of a Morse--Bott perturbation of $\vf$ near $\partial_S$ gives two fixed points $p_e$ and $p_h$ that are in the same Nielsen class of $\vf|_{\partial S} = id$ (here we identify the Nielsen classes of $T_{\vf}$ and $\widehat{T}_{\wf}$ via the obvious diffeomorphism). By the property (I.2) above, since $\varepsilon(p_h)=-1$, to prove the proposition it is enough to show that $\partial S$ is not $\vf$--related to any other $x \in \Fix(\vf)$ with $\varepsilon(x)=+1$.
We will do that distinguishing the different cases given by (C.1)--(C.3) and (a) and (b).

\begin{description}
 \item[(C.1-a)] The set of fixed points of $\vf|_{N}$ can be decomposed into a collection of boundary parallel circles
 \begin{equation*} 
  \mathrm{Fix}(\vf|_{N}) = \partial S \sqcup_{i=0}^{k_{\vf}-1} C_{i}
 \end{equation*}
 where $C_{i} \coloneqq \{y = y_i\}$ with $y_i \in (1,2)$ such that $g(y_i) = \vartheta_{sm} + 2i\pi $. Assume that there exists $x \in \mathrm{Fix}(\vf|_{\Int(S)})$ which is $\vf$--related to $\partial S$ via a path $c \colon ([0,1],x,\partial S) \rightarrow (S,x,\partial S)$. 
 
 If $k_{\vf}=0$ then $\Fix(\vf|_N) = \partial S$, so that $x \in S \setminus N$. Since $\vf|_N \sim id$, the restriction of $c$ to $S \setminus \Int(N)$ would give a $\vf_{sm}$--relation between $x$ and $\partial (S\setminus \Int(N))$, which would contradict Lemma \ref{Lemma: No fixed point is related to the boundary} (the condition $\vartheta_c \neq 0$ implies that $\vartheta_c|_A \neq id$).
 
 If $k_{\vf}>0$, the argument to compute the contribution to $\dim(HF(\vf))$ of each $C_i$ is again due to Gautschi and Cotton-Clay and works essentially as follows. Lemma 4.8 and Corollary 4.9 of \cite{CC} imply that each circle of the decomposition above has an its own Nielsen class in $T_{\vf}$ (in \cite{CC} these components are called of type Ib). A Morse--Bott perturbation near $C_i$ gives two new generators of $CF(\vf)$ that, by what we just said, are in their own Nielsen class and, by Morse--Bott theory, there exist exactly two index $1$ holomorphic cylinders between these two fixed points. It follows that their contribution increase by $2$ the total dimensions of $HF(\vf)$ and, by definition, also of $HF^{\sharp}(\vf)$. Summing up over $i$ and adding the contribution of $p_h$ (which also has an its own Nielsen class) we get the result.

 \item[(C.1-b)] The set of fixed points of $\vf|_{N}$ can again be decomposed into a collection of boundary parallel circles
 \begin{equation*} 
  \mathrm{Fix}(\vf|_{N}) = \partial S \sqcup_{i=1}^{-k_{\vf}} C_{i}
 \end{equation*}
 where $C_{i} = \{y = y_i\}$ with $y_i \in (1,2)$ such that $g(y_i) = \vartheta_{sm} - 2i\pi $. 
 Observe that, if $\nu$ is as in the description of the case (b), then $y_{k_\vf} < 2-\nu$.
 
 As in the previous case, each $C_i$ has an its own Nielsen class for $i=1,\ldots, k_{\vf}-1$ and each of these circles contributes by $2$ to both $\dim(HF(\vf))$ and $\dim(HF^{\sharp}(\vf))$. We need then to show that the contribution of $C_{k_{\vf}} \sqcup \partial S$ to $\dim(HF^{\sharp}(\vf))$ is $+1$. First we observe that $C_{k_{\vf}}$ is $\vf$--related to $\partial S$ via a path of the form $\{(y_{k_{\vf}} + t(2-y_{k_{\vf}}),\vartheta_0)|t\in [0,1]\}$ for some $\vartheta_0$. A Morse--Bott perturbation of $C_{k_{\vf}}$ gives two fixed points $p_h'$ and $p_e'$ that are hyperbolic and, respectively, elliptic. The situation here is the same as the one considered in Colin--Ghiggini--Honda \cite[§8-9]{CGH1} and Wendl \cite[§4.2]{W}: their computations directly apply here and we get that $\partial p_e' = p_h$ and $\partial p_h'=0$ (here we use the fact that $p_e$ is not a generator of $CF^{\sharp}(\vf)$). Since $p_h'$ is not $\vf$--related to other fixed points it contributes by $+1$ to $\dim(HF^{\sharp}(\vf))$.

 \item[(C.2)] The conditions $\vartheta_c=0$ and $t_c(\vf)>0$ imply $k_{\vf}\geq 1$. Now we can decompose:
 \begin{equation*} 
  \mathrm{Fix}(\vf|_{N}) = P \sqcup \partial S \sqcup_{i=1}^{k_{\vf}-1} C_{i}
 \end{equation*}
 where $P = \{x_1,\ldots,x_p\}$ is the set of (possible) positive hyperbolic fixed points given by the perturbation of type (ii') above (here $p$ is the number of prongs of the pre-smoothed puncture) and $C_{i} = \{y = y_i\}$ is such that $g(y_i) = \vartheta_{sm} + 2i\pi $. Again each circle of the decomposition has an its own Nielsen class and the proof of the result works as above.

 \item[(C.3-a)] The conditions $\vartheta_c=0$ and $t_c(\vf)=0$ imply $k_{\vf}= 0$, so that
  \begin{equation*} 
  \mathrm{Fix}(\vf|_{N}) = P \sqcup \partial S
 \end{equation*}
 where $P$ is as above. We are then in the situation of Lemma \ref{Lemma: No fixed point is related to the boundary}, which gives only two cases in which $\partial S$ can be $\vf$--related to other fixed points. 
 
 The first possibility is when $\vf_c|_A = id$, so that $P= \emptyset$. For this case we reason as in Lemma 4.14 of \cite{CC} and we assume that $\vf|_A$ is the time $1$ Hamiltonian flow of a small Morse--Smale function on $A$. If $A = S$ we can choose a Morse--Smale function with only one maximum and no minima: this situation has been treated in Section \ref{Subsubsection: Initialization} and gives the special case of the statement.
 If $A \neq S$ we can choose a Morse--Smale function with no maxima or minima, so that the fixed points of $\vf_{sm}|_A$ are all positive hyperbolic. On the other hand, by Lemma \ref{Lemma: No fixed point is related to the boundary}, the only fixed points of $\vf_c$ in the same Nielsen class of $\partial S$ are the fixed points in the interior of $A$ and the (possible) positive hyperbolic fixed points of case (ii') coming from a pseudo-Anosov component $A' \neq A$ that abuts $A$. In any case all these fixed points are positive hyperbolic and by (I.2) they can not interact with $p_h$ under the Floer differential.
 
 The second possibility given by Lemma \ref{Lemma: No fixed point is related to the boundary} is that $\vf|_A$ is pseudo-Anosov, in which case $\partial S$ is in the same Nielsen class of the points of $P$ (if $P \neq \emptyset$). Again these points are all positive hyperbolic and can not interact with $p_h$ under the Floer differential. 
 
 
 \item[(C.3-b)] The conditions $\vartheta_c=0$ and $t_c(\vf)<0$ imply $k_{\vf} \lneq 0$, so that
  \begin{equation*} 
  \mathrm{Fix}(\vf|_{N}) = P \sqcup \partial S \sqcup_{i=1}^{-k_{\vf}} C_{i}
 \end{equation*}
 where $C_{i} = \{y = y_i\}$ is such that $g(y_i) = \vartheta_{sm} - 2i\pi $. Again $P$ and each $C_i$ have a their own Nielsen class and the proof of the result goes as in case (C.1-b).
\end{description}
\end{proof}

\proof[Proof of Theorem \ref{Theorem: HFK and the minimal number of fixed points ireducible}]
 We show that the result holds for $HF^{\sharp}(\vf)$: Theorem \ref{Theorem: Main theorem in introduction} and the fact that $\widehat{HFK}(Y,K,-g+1) \cong \widehat{HFK}(\overline{Y},\overline{K},-g+1)$ will imply the theorem. By last proposition and Lemma \ref{Lemma: F_min = dim HF}, it is enough to check that $p_h$ and the boundary parallel Dehn twists equally contribute to $\mathcal{F}_{min}([\vf])$ and $\dim(HF^{\sharp}(\vf))$. This can be easily done in each of the different cases considered in the proof of Proposition \ref{Proposition: dim HF and dim HF^sharp}, using Lemma \ref{Lemma: F_min = sum of indexes} and the fact that if a circle of fixed points has an its own Nielsen class then the Poincar\'e--Birkhoff fixed point theorem implies that it contributes by $+2$ to $\mathcal{F}_{min}([\vf])$.
\endproof

As explained in Remark \ref{Remark: Monotonicity for reducible maps}, we can not directly generalize the argument above to reducible case. 
Still, in some case the last theorem can be generalized to reducible maps. 

\begin{Thm} \label{Theorem: HFK and the minimal number of fixed points homology 3-sphere}
 Let $Y$ be a rational homology sphere and $(K,S,\vf)$ an open book decomposition of $Y$ with $g(S)\geq 1$.
 Then:
 \begin{equation*}
  \dim\left(\widehat{HFK}(Y,K;-g+1)\right) =  \mathcal{F}_{min}({[\vf]}) + 1.
 \end{equation*} 
\end{Thm}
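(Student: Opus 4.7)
The plan is to combine Theorem \ref{Theorem: Main theorem in introduction} with Proposition \ref{Proposition: dim HF and dim HF^sharp} to reduce the claim to the equality $\dim HF(\vf) = \mathcal{F}_{min}([\vf])$, and then to establish that equality in the possibly reducible case by exploiting the rational homology sphere hypothesis. By the main theorem and the isomorphism $\widehat{HFK}(Y,K;-g+1) \cong \widehat{HFK}(\overline{Y},\overline{K};-g+1)$ already used in the proof of Theorem \ref{Theorem: HFK and the minimal number of fixed points ireducible}, one has $\dim \widehat{HFK}(Y,K;-g+1) = \dim HF^\sharp(\vf)$.

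First I would rule out the exceptional case of Proposition \ref{Proposition: dim HF and dim HF^sharp} (namely $A = S$, $\vf_c = \mathrm{id}$ and $t_c(\vf) = 0$). The Wang exact sequence of the fibration $S \hookrightarrow Y\setminus K \twoheadrightarrow S^1$, combined with the identities $H_2(Y\setminus K;\mathbb{Q}) = 0$ and $H_1(Y\setminus K;\mathbb{Q}) \cong \mathbb{Q}$ (which follow from the long exact sequence of the pair $(Y, Y\setminus K)$ once $Y$ is a rational homology sphere), forces $\vf_* - \mathrm{id}$ to be an isomorphism on $H_1(S;\mathbb{Q})$. Since boundary-parallel Dehn twists act trivially on $H_1(S)$, the condition $\vf_c = \mathrm{id}$ would give $\vf_* = \mathrm{id}$, contradicting the invertibility of $\vf_* - \mathrm{id}$ whenever $g \geq 1$. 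Proposition \ref{Proposition: dim HF and dim HF^sharp} thus yields $\dim HF^\sharp(\vf) = \dim HF(\vf) + 1$.

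It remains to prove $\dim HF(\vf) = \mathcal{F}_{min}([\vf])$ for possibly reducible $\vf_c$. I would run Cotton-Clay's smoothing construction component by component across the canonical decomposition $S \setminus C = A_1 \sqcup \cdots \sqcup A_l$, producing a non-degenerate symplectic representative $\vf_{sm}$ whose restriction to each $A_i$ is the special representative of the irreducible piece $\vf_c^{k_i}|_{A_i}$. Within each Nielsen class of $\vf_{sm}$ all fixed points carry the same Lefschetz sign, so by \ref{Key ingredient 2} the Floer differential on $CF(\vf_{sm})$ vanishes, yielding
\[ \dim HF(\vf) = \dim HF(\vf_{sm}) = \#\mathrm{Fix}(\vf_{sm}) = \sum_\eta |\mathrm{ind}(\eta)| = \mathcal{F}_{min}([\vf]),\]
where the last equality is the general form of Lemma \ref{Lemma: F_min = sum of indexes} (Theorem 1.1 of \cite{CC2}), valid for arbitrary mapping classes.

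The main obstacle is that this last step can fail for general reducible $\vf_c$: a minimizing representative $\vf_{min}$ need not be weakly monotone, so $HF(\vf_{min})$ may not even be defined; moreover, across components of the canonical decomposition, fixed points of opposite Lefschetz sign could in principle be paired and removed by a Hamiltonian isotopy, so $\vf_{sm}$ need not realize $\mathcal{F}_{min}([\vf])$. This is precisely where the rational homology sphere hypothesis enters: a component $A_i$ of positive genus with $\vf_c^{k_i}|_{A_i} = \mathrm{id}$ would push forward nonzero $\vf_*$-fixed classes from $H_1(A_i;\mathbb{Q})$ to $H_1(S;\mathbb{Q})$, contradicting the invertibility of $\vf_* - \mathrm{id}$ proved above. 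Hence the only identity pieces of the canonical decomposition are annuli, which are handled in Proposition \ref{Proposition: dim HF and dim HF^sharp} as boundary-parallel twist regions. With this structural restriction, the component-wise analysis indeed gives $\dim HF(\vf_{sm}) = \mathcal{F}_{min}([\vf])$ and completes the proof.
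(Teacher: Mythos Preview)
Your reduction to $\dim HF^\sharp(\vf) = \dim HF(\vf)+1$ is carried out just as in the paper: the Wang sequence argument showing that $\vf_*-\mathrm{id}$ is invertible on $H_1(S;\mathbb{Q})$ is exactly the statement $\ker(\vf_*-\mathrm{id})=0$ that the paper uses, and this correctly rules out the exceptional case of Proposition~\ref{Proposition: dim HF and dim HF^sharp} and forces identity pieces of the canonical decomposition to be annuli.

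The gap is in your derivation of $\dim HF(\vf)=\mathcal{F}_{min}([\vf])$ for reducible $\vf_c$. You assert that in $CF(\vf_{sm})$ all fixed points within a given Nielsen class carry the same Lefschetz sign, but this fails precisely in annular twist regions: the Morse--Bott perturbation of a fixed circle produces one elliptic and one hyperbolic fixed point in the same Nielsen class (cf.\ the discussion of the $C_i$ in the proof of Proposition~\ref{Proposition: dim HF and dim HF^sharp}). Consequently neither $\#\mathrm{Fix}(\vf_{sm})=\sum_\eta|\mathrm{ind}(\eta)|$ nor the vanishing of the differential follows from \ref{Key ingredient 2} alone, and the formula $\mathcal{F}_{min}=\sum_\eta|\mathrm{ind}(\eta)|$ is stated in the paper only for irreducible maps with $k_\vf=0$. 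More seriously, you explicitly raise the monotonicity obstruction for $\vf_{min}$ but never resolve it; restricting identity pieces to annuli does not by itself make an arbitrary minimizer weakly monotone, so you have no way to conclude $\dim HF(\vf)\le\mathcal{F}_{min}$.

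The paper closes both gaps using the rational homology sphere hypothesis more directly. For $\le$: it invokes the fact (Seidel's definition, or Lemma~3.2 of \cite{CC2}) that when $Y$ is a rational homology sphere \emph{every} non-degenerate symplectic representative is automatically monotone, so $HF(\vf_{min})$ is defined and $\dim HF(\vf)\le\#\mathrm{Fix}(\vf_{min})=\mathcal{F}_{min}$. For $\ge$: rather than arguing about Lefschetz signs, it uses Proposition~4.9 of \cite{CC2}, which gives $\#\mathrm{Fix}(\vf_{sm})=\dim HF(\vf_{sm};Q(\mathbb{Z}/2\mathbb{Z}[M]))$ for a suitable $M\subset\ker(\vf_*-\mathrm{id})$; since this kernel vanishes, the twisted theory collapses to the untwisted one and $\dim HF(\vf)=\#\mathrm{Fix}(\vf_{sm})\ge\mathcal{F}_{min}$. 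These two inputs are what your argument is missing.
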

\begin{proof}
 If $Y$ is a homology $3$--sphere any symplectic and non-degenerate representative of $[\vf]$ is also monotone: this follows directly either from the definition of monotonicity given in \cite{Se2} or from Lemma 3.2 of \cite{CC2}. In particular a symplectomorphism $\vf_{min}$ realizing $\mathcal{F}_{min}([\vf])$ is monotone and, by definition, Equation \eqref{Equation: dim of HF leq f_min} holds also in this case. 
 
 On the other hand Proposition 4.9 of \cite{CC2} implies that there exists a representative $\vf_{sm}$ of $[\vf]$ such that $\#\Fix(\vf_{sm})$ equals the rank of a \emph{twisted version} $HF(\vf_{sm};Q(\Z/(2\Z)[M]))$ of the symplectic homology of $\vf_{sm}$. Here the twisted coefficients are taken over the field of fractions of the group ring $\Z/(2\Z)[M]$ where $M$ is a given subgroup of $\ker(\vf_* - id) \subset H_1(T_{\vf},\R)$ (see \cite[§3.1]{CC2} for the details). If $Y$ is a homology sphere then $\ker(\vf_* - id) = \{0\}$, $HF(\vf_{sm};Q(\Z/(2\Z)[M]))$ reduces to the standard $HF(\vf_{sm})$ and Equations \eqref{Equation: dim of HF geq f_min} and \eqref{Equation: F_min = dim HF} also hold.  
 
 The proof of the theorem is then analogue to that of Theorem \ref{Theorem: HFK and the minimal number of fixed points ireducible}. Observe that we do not need to care about the special case of Theorem \ref{Theorem: HFK and the minimal number of fixed points ireducible} because the condition $\ker(\vf_* - id) = \{0\}$ implies that if $A \subset S$ is a non-contractible component on which the canonical representative of $[\vf]$ is the identity map then $A$ is a boundary-parallel annulus, on which the monodromy can be perturbed to give only a circle of fixed points that can be treated as in the proof of the aforementioned theorem.   
 \end{proof}

A direct consequence of Proposition \ref{Proposition: dim HF and dim HF^sharp} and the last two theorems is the following result. 
\begin{Cor}
 If $(K,S,\vf)$ is an open book decomposition of a $3$--manifold $Y$ with $g(S)>0$ then 
 \begin{equation} \label{Equation: dim HFK_-g+1 geq 1}
  \dim\left(\widehat{HFK}(Y,K;-g+1)\right) \geq 1.
 \end{equation} 
 Moreover, if $[\vf]$ is irreducible or $Y$ is a rational homology sphere then the equality holds if and only if $[\vf]$ admits a symplectic non-degenerate representative with no fixed points in $\Int(S)$. 
\end{Cor}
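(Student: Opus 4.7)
The plan is to simply assemble the three main results already proved in this section: Theorem \ref{Theorem: Main theorem in introduction}, Proposition \ref{Proposition: dim HF and dim HF^sharp}, and Theorems \ref{Theorem: HFK and the minimal number of fixed points ireducible}--\ref{Theorem: HFK and the minimal number of fixed points homology 3-sphere}. First I would reduce to the symplectic side: Theorem \ref{Theorem: Main theorem in introduction}, combined with the identification $\widehat{HFK}(Y,K;-g+1) \cong \widehat{HFK}(\overline{Y},\overline{K};-g+1)$ that is already invoked in the proofs of the two preceding theorems, gives
\[
\dim \widehat{HFK}(Y,K;-g+1) = \dim HF^\sharp(\vf).
\]
Both assertions thereby become statements about $HF^\sharp(\vf)$.

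For the universal lower bound \eqref{Equation: dim HFK_-g+1 geq 1} I would split on the dichotomy of Proposition \ref{Proposition: dim HF and dim HF^sharp}. In its generic ``otherwise'' branch one has $\dim HF^\sharp(\vf) = \dim HF(\vf) + 1 \geq 1$ immediately, so nothing more is needed. In the exceptional branch ($A=S$, $\vf_c=id$, $t_c(\vf)=0$) the mapping class $[\vf]$ is trivial; here the explicit computation in Section \ref{Subsubsection: Initialization} that serves as the base step of the induction for Theorem \ref{Theorem: Main theorem in introduction} produces $\dim HF^\sharp(\vf)=2g \geq 2$ directly, absorbing the $-1$ in the formula.

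For the equivalence, under the hypothesis that $[\vf]$ is irreducible or $Y$ is a rational homology sphere, I would apply Theorem \ref{Theorem: HFK and the minimal number of fixed points ireducible} or Theorem \ref{Theorem: HFK and the minimal number of fixed points homology 3-sphere}. Outside the identity case those theorems give $\dim = \mathcal{F}_{min}([\vf]) + 1$, so the equality $\dim=1$ is literally the same as $\mathcal{F}_{min}([\vf])=0$, i.e. the existence of an area-preserving non-degenerate representative of $[\vf]$ with no interior fixed points.

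The main obstacle --- and the only genuine bookkeeping point --- is the identity case of Theorem \ref{Theorem: HFK and the minimal number of fixed points ireducible}, where the formula reads $\dim = \mathcal{F}_{min}([\vf])-1$. I would dispose of it by showing that both sides of the asserted equivalence fail there. On the Floer side, Section \ref{Subsubsection: Initialization} gives $\dim = 2g \geq 2$, so the equality $\dim = 1$ cannot hold; equivalently $\mathcal{F}_{min}([id]) = 2g+1$. On the dynamical side, $[\vf]=[id]$ admits no non-degenerate representative with zero interior fixed points: after the standard Morse--Bott perturbation of $\partial T_\vf$ the two boundary fixed points $p_e$ and $p_h$ contribute $(+1)+(-1)=0$ to the Lefschetz number, while the total Lefschetz number of any representative of $[id]$ equals $\chi(S)=1-2g\neq 0$ for $g\geq 1$, forcing at least $2g-1$ interior fixed points. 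Thus in the identity case the ``if and only if'' holds vacuously, completing the proof.
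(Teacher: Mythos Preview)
Your proposal is correct and is precisely what the paper intends: the Corollary is stated there as ``a direct consequence of Proposition \ref{Proposition: dim HF and dim HF^sharp} and the last two theorems'' with no further argument, and you have simply unpacked that deduction. Your treatment of the identity branch via the computation of Section \ref{Subsubsection: Initialization} (giving $\dim HF^\sharp(id)=2g\geq 2$) together with the Lefschetz count is a fine independent cross-check, though once $\mathcal{F}_{min}([id])=2g+1$ is read off from Theorem \ref{Theorem: HFK and the minimal number of fixed points ireducible} that extra argument is not strictly needed.
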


We remark that Equation \eqref{Equation: dim HFK_-g+1 geq 1} has recently been proved by Baldwin and Shea Vela-Vick in \cite{BS}.

Last corollary has an interesting consequence about the topology the of $L$--space knots. We recall that an $L$--space is a rational homology sphere $Y$ such that the rank of $\widehat{HF}(Y)$ coincides with the number of elements of $H_1(Y,\Z)$. An $L$--space knot is a knot in $S^3$ that admits a non-trivial surgery to an $L$--space. 


In \cite{OS7}, Ozsv\'{a}th and Szab\'{o} proved that if $K$ is an $L$--space knot then 
\[\dim\left(\widehat{HFK}(S^3,K,i)\right) \leq 1\]
for every $i$. In particular if $K$ is an $L$--space knot the inequality \eqref{Equation: dim HFK_-g+1 geq 1} is sharp and the following holds.
\begin{Cor}
 Let $K \subset S^3$ be an $L$-space knot whose complement fibers with fiber $S$ and monodromy $\vf$. Then $[\vf]$ admits a symplectic non-degenerate representative with no fixed points. 
\end{Cor}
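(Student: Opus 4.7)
The plan is to observe that this corollary is essentially a straightforward combination of the previous corollary with the bound of Ozsv\'ath and Szab\'o recalled just above. I would first note that since $K\subset S^3$ is an $L$-space knot, it is fibered (by the main theorem of \cite{Ghi}, \cite{Nifibred}), and it is a well-known result of Ozsv\'ath and Szab\'o \cite{OS7} that $\dim \widehat{HFK}(S^3,K,i)\le 1$ for every $i\in \Z$. Applied to $i=-g+1$, this gives the upper bound needed. We may assume $g(S)>0$, since if $g=0$ then $K$ is the unknot and there is nothing to prove.

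Next, since $S^3$ is a rational homology sphere, we are in the setting of Theorem \ref{Theorem: HFK and the minimal number of fixed points homology 3-sphere} and of the previous corollary. In particular the lower bound
\[
\dim \widehat{HFK}(S^3,K,-g+1)\ge 1
\]
holds. Combining with Ozsv\'ath and Szab\'o's inequality we conclude that
\[
\dim \widehat{HFK}(S^3,K,-g+1)=1.
\]

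Finally, the equality case of the previous corollary, which applies because $S^3$ is a rational homology sphere, asserts precisely that equality of this dimension with $1$ is equivalent to the existence of a symplectic non-degenerate representative of $[\vf]$ with no fixed points in $\mathrm{Int}(S)$. This is the statement we wanted to prove. Since every step is a direct citation, there is no substantial obstacle; the only point to be mildly careful about is that ``no fixed points'' in the conclusion refers to the interior of $S$ (the boundary $\partial S$ is pointwise fixed by the standard representative of the monodromy, but one passes to $\wf$ as in Section \ref{Subsection: Review of symplectic Floer homology} to count only interior ones).
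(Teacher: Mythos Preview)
Your proposal is correct and follows essentially the same approach as the paper: combine Ozsv\'ath--Szab\'o's bound $\dim\widehat{HFK}(S^3,K,i)\le 1$ with the lower bound from the previous corollary to force equality at $i=-g+1$, then invoke the equality case of that corollary. Your added remarks (handling $g=0$, noting that $L$-space knots are fibered, and clarifying that ``no fixed points'' means in $\Int(S)$) are reasonable elaborations but not essential differences.
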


Theorem \ref{Theorem: Main theorem in introduction} has also interesting applications in the study of the geometric and topological informations that one can extract from the family of knot Floer homologies $\widehat{HFK}(Y^n(K),K^n)$, where $K^n$ is the branched locus of the $n$-th branched cover $Y^n(K)$ of $Y$ over $K$. If $(K,S,\vf)$ is an open book decomposition of $Y$ then $(K^n,S,\vf^n)$ is an open book decomposition of $Y^n(K)$, so that, by Theorem \ref{Theorem: Main theorem in introduction}, we have
\begin{equation}\label{Equation: HFK iso to HF for each iterated}
 \widehat{HFK}(Y^n(K),K^n;-g+1) \cong HF^{\sharp}(\vf^n).
\end{equation}
We remark that, while Heegaard Floer homology of double branched covers has been studied in different situations (like in Manolescu--Owens \cite{MO} and Ozsv\'{a}th--Szab\'{o} \cite{OS6}), the case of higher branched covers appear less often in the literature. 


From Theorem 1.1 of Fel'shtyn \cite{Fel}, we can recover the following result, already proven by Lipshitz, Ozsv\'{a}th and Thurston using bordered Floer homology techniques (cf. Corollary 4.2 and Proposition 3.18 of \cite{LOT2} and Theorem 14 of \cite{LOT1}).
\begin{Cor}\label{Corollary: HFK detects the stretching factor}
 Given a collection of vector spaces $\{A_n\}_{n=1}^{\infty}$, define the \emph{growth rate} of (the dimensions of) $A_n$ by
\[\mathcal{GR}(A_n) \coloneqq  \limsup_{n\rightarrow\infty} \big(dim(A_n)\big)^{\frac{1}{n}}.\]
 If $(K,S,\vf)$ is an open book decomposition of $Y$, then  
 \[ \mathcal{GR}\left(\widehat{HFK}(Y^n(K),K^n;-g+1)\right) = \lambda_{[\vf]}\]
 where $\lambda_{[\vf]}$ is the largest dilatation factor among all the pseudo--Anosov components of the canonical representative of the mapping class $[\vf]$.
\end{Cor}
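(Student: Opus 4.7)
The plan is to reduce the computation to Fel'shtyn's theorem on the exponential growth of Nielsen numbers, via two intermediate reductions. First, by Equation~\eqref{Equation: HFK iso to HF for each iterated} (which is just Theorem~\ref{Theorem: Main theorem in introduction} applied to the open book $(K^n, S, \vf^n)$ of $Y^n(K)$), one has
\[ \dim \widehat{HFK}(Y^n(K), K^n; -g+1) = \dim HF^\sharp(\vf^n) \]
for every $n \geq 1$, so it suffices to compute the growth rate $\mathcal{GR}(HF^\sharp(\vf^n))$. Applying Proposition~\ref{Proposition: dim HF and dim HF^sharp} to each iterate $\vf^n$ yields $|\dim HF^\sharp(\vf^n) - \dim HF(\vf^n)| = 1$, and since the $\pm 1$ correction is absorbed when one takes $n$-th roots and the $\limsup$, we have $\mathcal{GR}(HF^\sharp(\vf^n)) = \mathcal{GR}(HF(\vf^n))$.

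The next step is to identify the growth rate of $\dim HF(\vf^n)$ with the growth rate of the Nielsen number $N(\vf^n)$. For diffeomorphisms with irreducible canonical representative, Lemmas~\ref{Lemma: F_min = dim HF} and~\ref{Lemma: F_min = sum of indexes}, together with the explicit description of the special symplectic representative $\psi_{sm}$ recalled after Lemma~\ref{Lemma: F_min = sum of indexes}, give the identity $\dim HF(\psi) = \sum_\eta |\mathrm{ind}(\eta)|$, where the sum runs over Nielsen classes; for pseudo-Anosov $\psi_c$ every essential Nielsen class contributes $\pm 1$, while for periodic $\psi_c$ all fixed points of $\psi_{sm}$ lie in distinct classes with index $+1$. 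For reducible $\vf_c$ the same identification can be performed component-wise on the canonical decomposition (possibly after passing to a suitable power of $\vf$ so that each piece is preserved), and the total $\dim HF(\vf^n)$ differs from $N(\vf^n)$ only by contributions from periodic pieces and boundary-parallel Dehn twist regions, each of which grows at most polynomially in $n$. Such polynomial corrections are invisible to the limsup of $n$-th roots, so $\mathcal{GR}(HF(\vf^n)) = \mathcal{GR}(N(\vf^n))$.

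Finally, by Theorem 1.1 of Fel'shtyn~\cite{Fel},
\[ \limsup_{n \to \infty} N(\vf^n)^{1/n} = \lambda_{[\vf]}, \]
which together with the two previous reductions concludes the argument. The main obstacle is the reducible step: one needs to bookkeep Nielsen classes of $\vf^n$ when $\vf$ cyclically permutes the components of the Nielsen–Thurston decomposition, and verify that the contributions from non-pseudo-Anosov pieces and from boundary-parallel twists grow polynomially rather than exponentially, so that only the dominant dilatation $\lambda_{[\vf]}$ survives. In particular one must handle the mildly delicate issue, flagged in Remark~\ref{Remark: Monotonicity for reducible maps}, that for reducible iterates the chain complex $CF(\vf^n)$ may not be a priori well defined without additional weakly monotone data, but this is only used to equate dimensions up to polynomial error and so does not affect the exponential growth rate.
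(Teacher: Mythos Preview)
Your first two reductions are exactly what the paper does: apply the main isomorphism \eqref{Equation: HFK iso to HF for each iterated} to pass to $HF^\sharp(\vf^n)$, and then use Proposition~\ref{Proposition: dim HF and dim HF^sharp} to see that $\dim HF^\sharp(\vf^n)$ and $\dim HF(\vf^n)$ differ by $1$, so their growth rates coincide.

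The divergence is in your third step. The paper does not reduce to Nielsen numbers at all: Fel'shtyn's Theorem~1.1 in \cite{Fel} is a statement directly about the growth rate of $\dim HF(\vf^n)$, namely $\mathcal{GR}\big(HF(\vf^n)\big)=\lambda_{[\vf]}$. So once you have $\mathcal{GR}(HF^\sharp(\vf^n))=\mathcal{GR}(HF(\vf^n))$ you are done by citation. Your detour through $N(\vf^n)$ is essentially an attempt to reprove Fel'shtyn's result, and it carries real cost: the assertion that in the pseudo-Anosov case ``every essential Nielsen class contributes $\pm 1$'' is not correct as stated (unrotated $p$-pronged singularities give a single class of index $\pm(p-1)$, cf.\ the description after Lemma~\ref{Lemma: F_min = sum of indexes}), and the reducible case---including the monotonicity issue you flag from Remark~\ref{Remark: Monotonicity for reducible maps}---is precisely the delicate part that Fel'shtyn handles and that you leave as a sketch. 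None of this is fatal to the growth-rate conclusion (bounded multiplicative discrepancies wash out), but it is work you do not need to do and do not fully do. Cite Fel'shtyn for $HF$ and stop there.
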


By classical results about $3$--dimensional geometry and topology (see for example \cite{FLP}) we get the following nice relation between Heegaard Floer homology and $3$--dimensional geometry (cf. Cotton-Clay \cite[Corollary 1.7]{CC}).
\begin{Cor}
 If $K \subset Y$ is a fibered knot then $ \dim\big(\widehat{HFK}(K^n,Y^n(K);-g+1)\big)$ grows exponentially if and only the JSJ decomposition of $Y \setminus K$ has some hyperbolic component. 
\end{Cor}

Another interesting consequence of Theorem \ref{Theorem: Main theorem in introduction} and Proposition \ref{Proposition: dim HF and dim HF^sharp} is about algebraic knots and comes from an analogous result of McLean about fixed point Floer homology (see \cite[Corollary 1.4]{McLean}).
\begin{Cor}\label{Corollary: HFK detects the multiplicity}
 Let $K \subset S^3$ be the $1$--component link of an isolated singularity of an irreducible complex polynomial $f$ with two variables. Then
 \[\min \left\{n>0\ |\ \dim(\widehat{HFK}(Y^n(K),K^n;-g+1)) \neq 1 \right\} = \mathfrak{m}(f) \]
 where $\mathfrak{m}(f)$ is the multiplicity of $f$.
\end{Cor}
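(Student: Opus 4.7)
The plan is to stack three ingredients already available in the paper and in the literature.

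First, iterate Theorem \ref{Theorem: Main theorem in introduction}: the open book $(K^n, S, \vf^n)$ presents the branched cover $(S^3)^n(K)$ with monodromy $\vf^n$, so combining the isomorphism \eqref{Equation: HFK iso to HF for each iterated} with the symmetry $\dim \widehat{HFK}(Y, K; i) = \dim \widehat{HFK}(\overline{Y}, \overline{K}; i)$ gives
\begin{equation*}
\dim \widehat{HFK}((S^3)^n(K), K^n; -g+1) = \dim HF^\sharp(\vf^n)
\end{equation*}
for every $n \geq 1$. So the problem is reduced to detecting $\mathfrak{m}(f)$ from the sequence $\dim HF^\sharp(\vf^n)$.

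Second, apply Proposition \ref{Proposition: dim HF and dim HF^sharp} to each power $\vf^n$. Here the geometry of algebraic knots enters: by classical results on Milnor fibrations of plane curve singularities (A'Campo, L\^e--Michel--Weber), the monodromy $\vf$ of an irreducible algebraic knot is quasi-periodic in the sense of Nielsen--Thurston, with no pseudo-Anosov pieces and with each non-peripheral piece of the canonical decomposition carrying a monodromy of finite order that is never the identity. This property is preserved under taking powers, and in particular the exceptional case in Proposition \ref{Proposition: dim HF and dim HF^sharp} (namely $A = S$, $\vf_c = id$ and $t_c(\vf) = 0$) never occurs for any $\vf^n$. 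Consequently
\begin{equation*}
\dim HF^\sharp(\vf^n) = \dim HF(\vf^n) + 1,
\end{equation*}
so the equality $\dim \widehat{HFK}((S^3)^n(K), K^n; -g+1) = 1$ is equivalent to $HF(\vf^n) = 0$.

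Third, quote McLean's \cite[Corollary 1.4]{McLean}, which says precisely that
\begin{equation*}
\mathfrak{m}(f) = \min\bigl\{\, n > 0 : HF(\vf^n) \neq 0 \,\bigr\}.
\end{equation*}
Concatenating the three equivalences yields the stated formula. The main obstacle is the second step: one must check that the exceptional case of Proposition \ref{Proposition: dim HF and dim HF^sharp} is ruled out \emph{uniformly in $n$} along the sequence of iterates of the monodromy of an algebraic knot, so that the sign of the $\pm 1$ shift is consistently $+1$. Once this is established, the correspondence $\dim HF^\sharp(\vf^n) = 1 \Longleftrightarrow \dim HF(\vf^n) = 0$ is unambiguous and McLean's characterisation transfers directly to $\widehat{HFK}$.
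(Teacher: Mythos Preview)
Your three-step outline---iterate Theorem~\ref{Theorem: Main theorem in introduction}, apply Proposition~\ref{Proposition: dim HF and dim HF^sharp}, then invoke McLean~\cite[Corollary~1.4]{McLean}---is exactly the paper's route; the paper offers no argument beyond naming these three ingredients.

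One correction to your second step: the assertion that the Nielsen--Thurston data of $\vf$ are ``preserved under taking powers'' is false as stated, since a periodic piece becomes the identity at multiples of its period, so your exclusion of the exceptional case of Proposition~\ref{Proposition: dim HF and dim HF^sharp} is not justified as written. The repair is painless and does not require any structural facts about algebraic monodromies. For $1\le n<\mathfrak m(f)$, McLean already gives $HF(\vf^n)=0$, which is incompatible with $\vf^n\simeq\mathrm{id}$ rel $\partial S$ (the identity has nonzero Floer homology when $g\ge 1$), so the exceptional case cannot occur in that range and $\dim HF^\sharp(\vf^n)=0+1=1$. At $n=\mathfrak m(f)$ one does not even need to exclude it: if the exceptional case held, the initialisation computation in \S\ref{Subsubsection: Initialization} gives $\dim HF^\sharp(\mathrm{id})=2g\ge 2\ne 1$, while in the non-exceptional case $\dim HF^\sharp=\dim HF+1\ge 2$; either way the minimum is detected. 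Alternatively, one can argue as you intended via the boundary: algebraic monodromies have strictly positive fractional Dehn twist coefficient, so $t_c(\vf^n)=n\,t_c(\vf)>0$ for every $n$, and the exceptional clause $t_c=0$ is never triggered.
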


\end{document}